\newcounter{thmcount}[section]
\renewcommand{\thethmcount}{\arabic{section}.\arabic{thmcount}}
\renewcommand{\theequation}{\arabic{section}.\arabic{equation}}
\newtheorem{thm}[thmcount]{Theorem}
\newtheorem{prop}[thmcount]{Proposition}
\newtheorem{lem}[thmcount]{Lemma}
\newtheorem{cor}[thmcount]{Corollary}
\theoremstyle{definition}
\newtheorem{rmk}[thmcount]{Remark}
\newtheorem{dfn}[thmcount]{Definition}
\def \H {\mathcal{H}}
\def \cA {\mathcal{A}}
\def \P {\mathbb{P}}
\def \D {\mathcal{D}}
\def \t {\mathfrak{t}}
\def \Re {\mathrm{Re}}
\def \Im {\mathrm{Im}}
\def \id {\mathrm{Id}}
\def \spn {\mathrm{span}}
\def \vp {\varphi}
\begin{document} 

\title{Complex Semigroups for Oscillator Groups}
\author{
Christoph Zellner
\begin{footnote}
{Department Mathematik,
FAU Erlangen-N\"urnberg,
Cauerstra\ss e 11, 91058 Erlangen, Deutschland,
\texttt{zellner@mi.uni-erlangen.de}
}\end{footnote}
\begin{footnote}{Supported by DFG-grant NE 413/7-2, Schwerpunktprogramm 
``Darstellungstheorie''.}\end{footnote} 
}

\maketitle
\date 


\begin{abstract} 
An oscillator group $G$ is a semidirect product of a Heisenberg group with a one-parameter group. In this article we construct Olshanski semigroups for infinite-dimensional oscillator groups. These are complex involutive semigroups which have a polar decomposition. The main application will be for representations $\pi$ of $G$ which are semibounded, i.e., there exists a non-empty open subset $U$ of the Lie algebra $\g$ such that the operators $i\dd\pi(x)$ from the derived representation are uniformly bounded from above for $x\in U$. More precisely we show that every semibounded representation of an oscillator group $G$ extends to a non-degenerate holomorphic representation of such a semigroup and conversely each non-degenerate holomorphic representation of such a semigroup gives rise to a semibounded representation of $G$. The main application of this result is a classification of representations of the canonical commutation relations with a positive Hamiltonian, which will be obtained in a subsequent paper. Moreover it yields direct integral decomposition into irreducible ones and implies the existence of a dense subspace of analytic vectors for semibounded representations of $G$.
\newline
Keywords: infinite-dimensional Lie group; complex semigroup; semibounded representation; oscillator group
\end{abstract}

\tableofcontents

\section{Introduction}

For a finite-dimensional Lie group $G$ and an invariant cone $W$ in its Lie algebra $\g$ (cones are assumed to be convex), such that the spectrum of the operators $\ad x$ is purely imaginary for $x$ in the algebraic interior of $W$, one can associate a corresponding Olshanski semigroup $\Gamma_G(W)$ (Lawson's Theorem). It is a complex involutive semigroup and has a polar decomposition $\Gamma_G(W)=G\exp(iW)$, cf. \cite{Ne00}. For infinite-dimensional Lie groups there is no general analogous result, even not for Banach--Lie groups. However, there are natural examples of Olshanski semigroups which arise as compression semigroups of bounded symmetric domains in complex Banach spaces, symmetric Hilbert domains, and real forms of such domains (\cite{Ne01}). In this paper we address the problem of the existence of Olshanski semigroups $\Gamma_G(W)$ when $G$ belongs to the class of infinite-dimensional oscillator groups described below. The main goal is to obtain a better understanding of the canonical commutation relations with a positive Hamiltonian which is achieved in a subsequent paper using holomorphic extensions of the corresponding representations to the complex semigroups associated to oscillator groups.

Let $(V,\omega)$ be a locally convex symplectic vector space and $\gamma:\R\rightarrow{\rm Sp}(V,\omega)$ be a one-parameter group of symplectomorphisms defining a smooth action of $\R$ on $V$ and denote by $D:=\gamma'(0)$ its generator. Then the Lie group
$$G(V,\omega,\gamma):=\Heis(V,\omega) \rtimes_\gamma\R$$
is called an \textit{oscillator group}, where $\Heis(V,\omega):=\R\times_\omega V$ is the \textit{Heisenberg group} with multiplication given by
$$ (t,x)(s,y) = \big(t+s+\textstyle{\frac{1}{2}}\omega(x,y) ,x+y\big).$$
The Lie algebra of $G(V,\omega,\gamma)$ is $\g(V,\omega,\gamma)=\heis(V,\omega)\rtimes_D \R$ with the bracket
$$[(t,v,s),(t',v',s')]=(\omega(v,v'),sDv'-s'Dv,0).$$
There is a particular interesting class of oscillator groups $G_A:=\Heis(V_A,\omega_A) \rtimes_{\gamma_A} \R$ labeled by injective non-negative operators $A$ on complex Hilbert spaces $H_A$, where $V_A=C^{\infty}(A)\subset H_A$ is the domain of smooth vectors for $A$, $\omega_A(x,y)=\operatorname{Im} \langle Ax,y\rangle$ and $\gamma_A(t) = e^{itA}$. In particular, the symplectic form $\omega_A$ is induced by a complex structure on a pre-Hilbert space $V_A$, which is not always the case for a general symplectic space $(V,\omega)$, see \cite{Ro93}. The groups $G_A$ are called \textit{standard oscillator groups}.

Each $G_A$ contains a canonical dense subgroup $G_A^\cO$ which has a complexification $G_{A,\C}$. For every non-empty open invariant cone $W$ in $G_A$ we construct a complex semigroup $S_W\subset G_{A,\C}$ and show that it is a complex involutive semigroup and has a polar decomposition. The main application will be for semibounded representations of $G_A$.

Let $G$ be a Lie group with an exponential map and $\pi:G\rightarrow \U(\H)$ be a unitary representation which is smooth, i.e., the space of smooth vectors 
$$\H^\infty:=\{v\in \H : G\to\H, g\mapsto \pi(g)v\text{ is smooth}\}$$
is dense in $\H$. For a smooth representation $\pi:G\rightarrow \U(H)$ we define the \textit{derived representation}
$$\dd\pi:\g\rightarrow \End(\H^\infty),\quad  \dd\pi(x)v:= \derat0 \pi(\exp(tx))v,$$
which is a representation of the Lie algebra $\g$ of $G$ by essentially skew-adjoint operators on $\H^\infty$. We call $\pi$ \textit{semibounded} if the \textit{support function}
$$s_\pi : \g \rightarrow \R \cup \{\infty\}, \quad s_\pi(x):= \sup(\Spec(i\dd\pi(x)))$$
is bounded on a non-empty open subset of $\g$. The basic concept of semibounded representations was developed in \cite{Ne09, Ne08}. One important point is that $s_\pi$ can be expressed in terms of the \textit{momentum set} $I_\pi\subset \g'$, where $\g'$ denotes the topological dual of $\g$. It is defined as the weak-*-closed convex hull of the image of the \textit{momentum map}
$$\Phi_\pi : \P(\H^\infty)\rightarrow \g', \quad \Phi_\pi([v])(x):= \frac{1}{i}\frac{\langle\dd\pi(x)v,v\rangle}{\langle v,v\rangle}.$$
Then we have $s_\pi(x) = - \inf \langle I_\pi,x \rangle, x\in \g$ and set $B(I_\pi):=\{x\in\g : s_\pi(x) <\infty\}$.

Generalizing the main result of \cite{MN11} which only applies to Banach--Lie groups we show that every semibounded representation of a simply-connected Mackey-complete Lie group extends to a holomorphic representation of a complex semigroup, provided this semigroup exists. Specializing to oscillator groups this implies that every semibounded representation of $G_A$ extends to a holomorphic representation of some $S_W$, which allows us to construct $C^*$-algebras associated to $S_W$ whose representations correspond to semibounded representations of $G_A$. This in turn implies the existence of direct integral decomposition of semibounded representations into irreducible ones in the separable case. Furthermore it plays a central role in the classification of representations of the canonical commutation relations with a positive Hamiltonian, which will be obtained in \cite{Ze15}, cf. also \cite{Ze14}. Other $C^*$-algebras whose representations correspond to the regular representations of the canonical commutation relations were constructed in \cite{GN09}. For a detailed discussion of complex semigroups and holomorphic extensions of representations of finite-dimensional oscillator groups we refer to \cite{HO92}.

Now we give an outline of this paper. In Section 2 we classify the open invariant cones in an oscillator algebra $\g:=\g(V,\omega,\gamma)$ under the mild condition that $DV\subset V$ is dense and consider some basic properties of semibounded representations $\pi$ of $G(V,\omega,\gamma)$. In particular we give a formula for $s_\pi$. Under the assumption that $DV\subset V$ is dense and $G(V,\omega,\gamma)$ embeds into a standard oscillator group $G_A$, cf. \cite{NZ13}, we show that every semibounded representation $\pi$ of $G(V,\omega,\gamma)$ extends to a semibounded representation of $G_A$. In particular, $\pi$ is automatically smooth with respect to the coarser subspace topology on $G(V,\omega,\gamma)\subset G_A$. Therefore the semibounded representations of standard oscillator groups essentially cover those of general oscillator groups.

Section 3 is devoted to the construction of the Olshanski semigroups corresponding to non-empty open invariant cones $W$ in the Lie algebra $\g_A$ of $G_A$. This is done in the following way: We consider the dense subgroup $G_A^{\cO}:=\Heis(V^\cO,\omega_A) \rtimes_{\gamma_A} \R$ of $G_A$ where $V^\cO \subset V_A$ denotes the space of $\gamma_A$-holomorphic vectors. Here $V^\cO$ is equipped with a natural topology which is finer than the $C^\infty$-topology on $V_A$. Then $G_A^\cO$ is a Fr\'echet-Lie group and admits a complexification $G_{A,\C}$.  For each non-empty open invariant cone $W$ we show that the subset $S_W=G_A^\cO\cdot\exp_\C(i(W\cap \g_A^\cO))$ of $G_{A,\C}$ is an open subsemigroup. Moreover we show that $S_W$ admits an involution and a polar decomposition, i.e., the map 
$$G_A^\cO\times (W\cap \g_A^\cO) \rightarrow S_W,(g,w)\mapsto g\exp_\C(iw)$$
is a diffeomorphism, whose inverse turns out to be analytic if $W$ is a proper cone. 

In Section 4 we consider holomorphic extensions of semibounded representation of general Lie groups, where we assume that the corresponding complex semigroups exist. More precisely we show that every semibounded representation of a simply-connected Mackey complete Lie group extends to a holomorphic representation of such a complex semigroup. This generalizes a result from \cite{MN11} for Banach--Lie groups to Mackey complete Lie groups. Furthermore we prove, under natural sufficient conditions, the extendability of smooth resp. semibounded representations of dense subgroups. This will be used in Section 5 to show that a semibounded representation $\pi$ of $G_A$ extends to non-degenerate holomorphic representation of the complex semigroup corresponding to the cone $B(I_\pi)$, which implies that $\pi$ has a dense space of analytic vectors. Moreover we show that for a non-empty proper open invariant cone $W\subset\g_A$ there is a one-to-one correspondence between semibounded representations $\pi$ of $G_A$ satisfying $W\subset B(I_\pi)^0$ and non-degenerate holomorphic representations of $S_W$ which preserves commutants. From \cite{Ne08} it is known that the non-degenerate holomorphic representations of $S_W$ correspond to representations of certain $C^*$-algebras. This will allow us to apply $C^*$-methods to semibounded representations of $G_A$. 

A result on the existence of complex semigroups associated to elliptic open invariant cones $W$ in a general Banach--Lie group $G$ would be desirable, where $W$ is called elliptic if $e^{\R\ad x} \subset \End(\g)$ is equicontinuous for all $x\in W$. This has not been obtained so far. However an alternative approach avoiding complex semigroups is established in \cite{NSZ15}, which makes $C^*$-methods available for semibounded representations of any metrizable Lie group yielding in particular direct integral decomposition of semibounded representations into irreducible ones. Moreover a refinement of some methods used from \cite{MN11} shows that every semibounded representation $\pi$ of a Banach--Lie group $G$ extends locally to a holomorphic map in the following sense: There exists an open $0$--neighborhood $U$ in $\g$ and a complex manifold structure on $U\times (U\cap W_\pi)$ such that the map
$U\times (U\cap W_\pi)\to B(\cH), (x,w)\mapsto \pi(\exp(x))e^{i\dd\pi(w)}$ is holomorphic. Now one can consider the $C^*$-algebra $\cA_\pi\subset B(\cH)$ generated by $\pi(G)e^{i\dd\pi(W_\pi)}$ and conclude that every non-degenerate representation $\rho$ of $\cA_\pi$ yields a semibounded representation $\rho_G$ of $G$ determined by $\rho(\pi(g)a)=\rho_G(g)\rho(a), g\in G,a\in \cA_{\pi}$. This also makes $C^*$-methods available for semibounded representations of Banach--Lie groups and moreover yields a dense subspace of analytic vectors for such.

\section{Oscillator groups}\label{sectog}

\subsection{The exponential function of an oscillator group}

We always want the oscillator groups $G(V,\omega,\gamma)$ to have an exponential map. Concerning this we have the following proposition.

\begin{prop}\label{propexpmapgen}
Let $V$ be a locally convex space, $Z$ a Mackey complete locally convex space, $\omega:V \times V \rightarrow Z$ a skew-symmetric bilinear map and $\gamma:\R\rightarrow \GL(V)$ a one-parameter group with $\omega(\gamma(t)x,\gamma(t)y)=\omega(x,y)$ for $x,y\in V,t\in \R$, and such that $\gamma$ induces a smooth action of $\R$ on $V$. Consider the Lie group $G:=(Z\times_\omega V) \rtimes_\gamma \R$ and assume that for every $x\in V$ the map $b_x(t):=\gamma(t)x$ has a $V$-valued integral, i.e., there is a smooth curve $B_x:\R\rightarrow V$ with $B_x'=b_x$. In particular $\int_a^b \gamma(t)x dt=B_x(b)-B_x(a)$ for $a,b\in \R$. Then $G$ has an exponential map given by
\begin{align}\label{expmapformulagen}
\exp(z,x,s)=\left(z+\frac{1}{2}\int_0^1\int_0^1 \omega\left(\gamma(stt')x,\gamma(st)x\right)tdt'dt\ ,\  \int_0^1\gamma(st)xdt\ ,\ s\right)
\end{align}
for $(z,x,s)\in\g=\L(G)$.
\end{prop}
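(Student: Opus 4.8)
The plan is to produce, for each $(z,x,s)\in\g$, the one-parameter subgroup $\eta\colon\R\to G$ with $\eta'(0)=(z,x,s)$ in explicit form and to read off that $\eta(1)$ is the right-hand side of \eqref{expmapformulagen}. Writing a generic element of $G=(Z\times_\omega V)\rtimes_\gamma\R$ as a triple, the multiplication is
\begin{align*}
(z,v,r)(z',v',r')=\big(z+z'+\tfrac12\omega(v,\gamma(r)v'),\ v+\gamma(r)v',\ r+r'\big),
\end{align*}
so a smooth curve $\eta=(a,b,c)\colon\R\to G$ with $\eta(0)=e$ is a group homomorphism precisely when $c$ is additive, hence $c(t)=st$ with $s=c'(0)$, and
\begin{align*}
b(t+t')=b(t)+\gamma(st)b(t'),\qquad a(t+t')=a(t)+a(t')+\tfrac12\omega\big(b(t),\gamma(st)b(t')\big)
\end{align*}
for all $t,t'\in\R$.

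First I would pin down the candidate. Imposing $\eta'(0)=(z,x,s)$ and differentiating the two relations in $t'$ at $t'=0$ (permissible since $\gamma(st)$ and $\omega(v,\cdot)$ are continuous linear maps) gives $b'(t)=\gamma(st)x$ and $a'(t)=z+\tfrac12\omega\big(b(t),\gamma(st)x\big)$; integrating back via the fundamental theorem of calculus and substituting the first identity into the second forces
\begin{align*}
b(t)=\int_0^t\gamma(s\tau)x\,d\tau,\qquad a(t)=tz+\tfrac12\int_0^t\!\!\int_0^\tau\omega\big(\gamma(s\sigma)x,\gamma(s\tau)x\big)\,d\sigma\,d\tau,\qquad c(t)=st.
\end{align*}
Here the $V$-valued integral defining $b$ exists by the hypothesis on $B_x$ (explicitly $b(t)=\tfrac1s\big(B_x(st)-B_x(0)\big)$ for $s\neq0$ and $b(t)=tx$ for $s=0$), while the $Z$-valued integrals exist because $Z$ is Mackey complete and the integrands are smooth, the $\R$-action on $V$ being smooth and $\omega$ continuous bilinear. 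Thus $\eta:=(a,b,c)$ is a well-defined smooth curve in $G$ with $\eta(0)=e$ and $\eta'(0)=(z,x,s)$, and the substitution $\sigma=\tau t'$ in the inner integral, followed by relabelling, turns $\eta(1)$ into exactly the right-hand side of \eqref{expmapformulagen}.

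It remains to check that this $\eta$ is a homomorphism. For the $V$-component, pulling the continuous operator $\gamma(st)$ inside the integral and using $\gamma(st)\gamma(s\tau)=\gamma(s(t+\tau))$ reduces $b(t)+\gamma(st)b(t')=b(t+t')$ to the additivity $\int_0^t+\int_t^{t+t'}=\int_0^{t+t'}$. For the $Z$-component I would verify $a(t+t')=a(t)+a(t')+\tfrac12\omega\big(b(t),\gamma(st)b(t')\big)$ either directly, by the change of variables $u=t+v$, $\tau=t+\rho$ in the double integral together with the invariance $\omega(\gamma(r)\cdot,\gamma(r)\cdot)=\omega(\cdot,\cdot)$, or, more cheaply, by noting that for fixed $t$ both sides are $C^1$ functions of $t'$ which agree at $t'=0$ and have equal derivative in $t'$ (using the $V$-component identity just established, the invariance of $\omega$, and $a'(u)=z+\tfrac12\omega(b(u),\gamma(su)x)$), hence coincide, since a $C^1$ curve in $Z$ with vanishing derivative is constant. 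This makes $\eta$ a smooth one-parameter subgroup with $\eta'(0)=(z,x,s)$; since the computation above shows that any such subgroup is given by these very formulas, $\exp(z,x,s)$ is unambiguously defined and equals $\eta(1)$, which is formula \eqref{expmapformulagen}. Finally, as the right-hand side of \eqref{expmapformulagen} is built from the smooth $\R$-action on $V$, the continuous map $\omega$, and parameter-dependent Riemann integrals, it depends smoothly on $(z,x,s)$, so $\exp\colon\g\to G$ is smooth as well.

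The only genuine difficulty is the bookkeeping: one must ensure that the integrals occurring exist --- which is exactly what Mackey completeness of $Z$ and the $B_x$-hypothesis guarantee --- and that they commute with the continuous linear maps $\gamma(r)$, $\omega(v,\cdot)$ and with one another, and one must track carefully the change of variables relating the telescoped form of $a(t)$ to the symmetric double integral appearing in \eqref{expmapformulagen}. The conceptual skeleton --- that $b$ is forced to solve the twisted additivity relation, and that $a$ is then its Heisenberg-cocycle primitive --- is routine.
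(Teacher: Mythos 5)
Your argument is correct and carries out essentially the same computation as the paper's proof: both identify the one-parameter group through the forced ODEs $b'(t)=\gamma(st)x$, $a'(t)=z+\tfrac12\omega\big(b(t),\gamma(st)x\big)$, and both arrive at the stated formula by the same substitution $\sigma=\tau t'$. The organizational difference is that the paper starts from the formula, sets $\alpha(k):=\exp(kz,kx,ks)$, and verifies that the left logarithmic derivative of $\alpha$ is constantly $(z,x,s)$ (invoking the log-derivative characterization of one-parameter groups), whereas you derive the candidate $\eta$ from the necessary conditions imposed by the homomorphism property and then confirm that property directly, with a pleasant ODE-in-$t'$ argument for the central component in place of the log-derivative tool. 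The latter is a bit more hands-on and self-contained, the former a bit slicker once the $\delta(\alpha)$ machinery is in place; the underlying integral bookkeeping is identical.
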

\begin{proof}
Let $\exp$ be defined as in \eqref{expmapformulagen}. Differentiation under the integral sign shows that $\exp$ is smooth. Now let $(z,x,s)\in \g$ and define $\alpha(k):=\exp(kz,kx,ks)$ for $k\in \R$.
Then integration by substitution yields
\begin{align*}
\alpha(k)&=\left(kz+\frac{1}{2}\int_0^1\int_0^1 \omega\left(\gamma(sktt')x,\gamma(skt)x\right)k^2tdt'dt\ ,\  \int_0^1\gamma(skt)kxdt\ ,\ ks\right)\\
&=\left(kz+\frac{1}{2}\int_0^k\int_0^1 \omega\left(\gamma(stt')x,\gamma(st)x\right)tdt'dt\ ,\  \int_0^k\gamma(st)xdt\ ,\ ks\right).
\end{align*}
Therefore
\begin{align}\label{propexpmapgeneq1}
\alpha'(k) = \left(z+\frac{1}{2}\int_0^1\omega\left(\gamma(skt')x,\gamma(sk)x\right)kdt'\ ,\  \gamma(sk)x\ ,\ s\right).
\end{align}
For $g\in G$ denote by $l_g(h)=gh, h\in G$, the left multiplication with $g$. Then we have
\begin{align*}
d(l_{(z_1,x_1,s_1)})(0,0,0)(z_2,x_2,s_2)&=\derat0((z_1,x_1,s_1)\cdot (tz_2,tx_2,ts_2))\\
&=(z_2+\textstyle{\frac{1}{2}}\omega(x_1,\gamma(s_1)x_2),\gamma(s_1)x_2,s_2).
\end{align*}
In particular
$$d(l_{\exp(kz,kx,ks)})(0,0,0)(z,x,s)=\left(z+\frac{1}{2}\int_0^1\omega\left(\gamma(kst)kx,\gamma(ks)x\right)dt\ ,\  \gamma(ks)x\ ,\ s\right).$$
With \eqref{propexpmapgeneq1} we obtain that the logarithmic derivative of $\alpha$ satisfies $\delta(\alpha) = (z,x,s)$. Since $\exp(0,0,0)=(0,0,0)$, we conclude that $\alpha$ is a one-parameter group and therefore $\exp$ is an exponential map for $G$.
\end{proof}

\begin{rmk}
If $V$ is Mackey complete then the condition in Proposition \ref{propexpmapgen} that the maps $b_x(t)=\gamma(t)x, x\in V$, have a $V$-valued integral is automatically satisfied.
\end{rmk}

When we talk about an oscillator group $G(V,\omega,\gamma)$ we shall always assume that the maps $t\mapsto\gamma(t)x, x\in V$, have a $V$-valued integral. Hence $G(V,\omega,\gamma)$ has an exponential map and it is given by formula \eqref{expmapformulagen}.

\begin{dfn}\label{defstandosc}
Let $H\neq0$ be a complex Hilbert space, $\gamma:\R \rightarrow \U(H)$ a unitary one-parameter group and $A=A^*$ its self-adjoint generator with $A\geq0$ and $\ker A=0$. Let $V_A:=C^\infty(A)$ be the space of $\gamma$-smooth vectors in $H_A:=H$ equipped with the $C^\infty$-topology and $\omega_A(x,y):=\Im\langle Ax,y\rangle$ for $x,y\in V_A$. Then the oscillator group
$$G_A := \Heis(V_A,\omega_A)\rtimes_\gamma \R = \R\times_{\omega_A} V_A\rtimes_\gamma \R$$
is called a \textit{standard oscillator group}. Its Lie algebra is $\g_A := \heis(V_A,\omega_A) \rtimes_D \R$, where $D:=iA\vert_{V_A}$. Note $\omega_A(x,y)=-\Re\langle Dx,y\rangle$ and that $D(V_A)\subset V_A$ is dense (\cite[Lem.~4.1(b)]{NZ13}).
\end{dfn}

\begin{prop}
The adjoint action of $G_A$ is given by
\begin{equation}
\Ad(t',x',s')(t,x,s) = \left(t-\Re\la Dx',\gamma(s')x\ra+\frac{1}{2}\|Dx'\|^2s,\gamma(s')x-sDx',s\right). \label{adjointGA}
\end{equation}
For $\lambda=(t^*,\alpha,s^*)$, the coadjoint action of $G_A$ is given by
\begin{equation}\label{coadjointactionGA}
\Ad^*(t',x',s')\lambda = \left(t^*,\alpha\circ \gamma(-s') + t^* \Re \langle Dx',\cdot \rangle,\frac{t^*}{2}\|Dx'\|^2 + \alpha(D\gamma(-s')x')+s^*\right).
\end{equation}
\end{prop}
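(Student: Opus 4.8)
The plan is to compute $\Ad$ directly from the group law and then obtain $\Ad^*$ by dualizing. First I would record the multiplication on $G_A\cong\R\times V_A\times\R$,
$$(t,x,s)(t',x',s')=\bigl(t+t'+\tfrac12\omega_A(x,\gamma(s)x'),\ x+\gamma(s)x',\ s+s'\bigr),$$
the inverse $(t',x',s')^{-1}=(-t',-\gamma(-s')x',-s')$, and the fact that $\R\times\{0\}\times\{0\}$ is central. The key simplification is the factorization $(t',x',s')=(t',0,0)(0,x',0)(0,0,s')$, which gives $\Ad(t',x',s')=\Ad(t',0,0)\,\Ad(0,x',0)\,\Ad(0,0,s')$, so it suffices to treat the three factors separately.

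For each $g$ I would use $\Ad(g)(t,x,s)=\frac{d}{dr}\big|_{r=0}\,c_g(rt,rx,rs)$, where $r\mapsto(rt,rx,rs)$ is a smooth curve through $e$ with velocity $(t,x,s)$; this is legitimate for computing $T_e(c_g)$, and it agrees to first order with $r\mapsto\exp(r(t,x,s))$ by \eqref{expmapformulagen}. Since $(t',0,0)$ is central, $\Ad(t',0,0)=\id$. A direct computation of $(0,0,s')(a,b,c)(0,0,-s')=(a,\gamma(s')b,c)$ gives $\Ad(0,0,s')(t,x,s)=(t,\gamma(s')x,s)$. The only genuine computation is $\Ad(0,x',0)$: expanding $(0,x',0)(rt,rx,rs)(0,-x',0)$ and differentiating at $r=0$, the $V_A$-component equals $x-sDx'$ because $\frac{d}{dr}\big|_{0}\gamma(rs)x'=sDx'$, and the central component equals $t+\omega_A(x',x)+\frac{s}{2}\|Dx'\|^2$, using bilinearity and skew-symmetry of $\omega_A$ together with $\omega_A(x',Dx')=-\Re\langle Dx',Dx'\rangle=-\|Dx'\|^2$. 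Since $\omega_A(x',x)=-\Re\langle Dx',x\rangle$, composing the three factors (i.e. applying $\Ad(0,x',0)$ to $(t,\gamma(s')x,s)$) yields \eqref{adjointGA}.

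For the coadjoint action I would use the convention $\langle\Ad^*(g)\lambda,v\rangle=\langle\lambda,\Ad(g^{-1})v\rangle$. Substituting $g^{-1}=(-t',-\gamma(-s')x',-s')$ into \eqref{adjointGA} and simplifying with $D\gamma(r)=\gamma(r)D$ and the unitarity of $\gamma$ (so that $\|\gamma(r)v\|=\|v\|$ and $\langle\gamma(r)u,\gamma(r)v\rangle=\langle u,v\rangle$) gives
$$\Ad(g^{-1})(t,x,s)=\bigl(t+\Re\langle Dx',x\rangle+\tfrac{s}{2}\|Dx'\|^2,\ \gamma(-s')x+sD\gamma(-s')x',\ s\bigr).$$
Pairing this against $\lambda=(t^*,\alpha,s^*)$ and collecting the coefficients of $t$, $x$ and $s$ reads off the three components of $\Ad^*(t',x',s')\lambda$ exactly as in \eqref{coadjointactionGA}.

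The argument is entirely routine; the only steps requiring care are the signs in the central component of $\Ad(0,x',0)$, which rest on the identity $\omega_A(\,\cdot\,,D\,\cdot\,)=-\|D\,\cdot\,\|^2$ and the skew-symmetry of $\omega_A$, and the bookkeeping of the $\gamma(-s')$-twists when passing to $g^{-1}$ in the coadjoint computation.
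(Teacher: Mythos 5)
Your computation is correct and complete. The paper itself does not carry out this calculation: it simply cites \cite[Rmk.~2.5]{NZ13} for the coadjoint formula and notes that the adjoint formula is proved ``similarly.'' Your proof therefore supplies the argument that the paper outsources, and it does so by what is surely the intended route: factor the conjugating element as $(t',0,0)(0,x',0)(0,0,s')$, exploit centrality of the $\R\times\{0\}\times\{0\}$ factor, compute the single nontrivial conjugation $\Ad(0,x',0)$ by differentiating the group law along the curve $r\mapsto(rt,rx,rs)$, and dualize. All the signs work out: your use of $\omega_A(x,y)=-\Re\langle Dx,y\rangle$ (so $\omega_A(x',Dx')=-\|Dx'\|^2$), the commutation $D\gamma(-s')=\gamma(-s')D$, and the unitarity of $\gamma$ are exactly what is needed to land on \eqref{adjointGA} and \eqref{coadjointactionGA}. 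This is a legitimate self-contained replacement for the citation, not a different approach.
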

\begin{proof}
For \eqref{coadjointactionGA} see \cite[Rmk.~2.5]{NZ13} and \eqref{adjointGA} is shown similarly.
\end{proof}

Since we are mainly interested in semibounded representations of $G:=G(V,\omega,\gamma)$ we may assume by \cite[Thm.~5.9 and Rmk.~2.4]{NZ13} to have a standard oscillator group $G_A=\Heis(V_A,\omega_A) \rtimes_\gamma \R$ with Lie algebra $\g_A$ and a dense inclusion $V\hookrightarrow V_A$ such that the corresponding inclusions $G\hookrightarrow G_A$ and  $\g \hookrightarrow \g_A$ are morphisms of Lie groups resp. Lie algebras. We keep this assumption for Section \ref{sectog}. Note that the topology on $\g$ is finer than the topology on $\g_A$. 

\subsection{Open invariant cones in oscillator algebras}\label{sectionopeninvariantcones}
In this subsection we want to classify all open invariant cones in the Lie algebra $\g$. We shall see that every open invariant cone $W$ in $\g$ is completely determined by its intersection with the two-dimensional maximal abelian subalgebra $\t:=\R\times \{ 0 \} \times \R$. Furthermore we will clarify which open cones in $\t$ give rise to proper invariant open cones in $\g$.

\begin{dfn}\label{Wddef}
We define
$$W_\infty:= \R \times V_A \times]0,\infty[, \quad C_\infty:= W_\infty \cap \t = \R \times\{0\} \times ]0,\infty[$$
and
$$W_d:=\big\{(t,x,s) \in W_\infty : t-\textstyle{\frac{1}{2s}}\|x\|^2+ds>0\big\}, \quad C_d:= W_d \cap \t $$
for $d\in \R$.
\end{dfn}

\begin{rmk}
Note that $C_d=\{(t,0,s) : s>0, t+ds>0\}$. Hence the sets $C_d$ for  $d\in \overline{\R}:=\R\cup \{\infty\}$ are exactly the non-empty open cones in the upper half-plane $\R \times\{0\} \times ]0,\infty[$ for which one boundary ray is given by the positive axis $]0,\infty[ \times\{0\} \times \{0\}$.
\end{rmk}

\begin{rmk}\label{mapf_d}
For $d\in \R$ we define the map
$$f_d: W_\infty \rightarrow \R,(t,x,s)\mapsto t+ds-\textstyle{\frac{1}{2s}}\|x\|^2.$$
Note that $W_d=f_d^{-1}(\R_{>0})$ for all $d\in \R$.
\end{rmk}

\begin{thm}\label{conelemma}
Let $D(V)\subset V$ be dense. Then the following assertions hold:
\begin{itemize}
\item[\rm(a)] Every proper open invariant cone $W$ in $\g$ is either contained in $W_\infty$ or $-W_\infty$.
\item[\rm(b)] For every open invariant cone $W\subset \g \cap W_\infty$ we have $]0,\infty[ \times\{0\} \times \{0\} \subset \overline{W\cap \t}$.
\item[\rm(c)] $W_d={\rm int}(\overline{\Ad(G_A)C_d})$ is a proper open invariant cone in $\g_A$ for all $d\in \overline{\R}$.
\end{itemize}
\end{thm}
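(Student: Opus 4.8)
The plan is to handle (a) and (b) directly from the adjoint formula \eqref{adjointGA} together with the density of $D(V)$ in $V$, and to handle (c) via the observation that $f_d$ is an \emph{honest} $\Ad(G_A)$-invariant. For \emph{part (a)}: since $G\hookrightarrow G_A$ and $\g\hookrightarrow\g_A$ are Lie morphisms, $\Ad(G)$ acts on $\g$ through the restriction of \eqref{adjointGA} and in particular fixes the $\R$-component, so the projection $p\colon(t,x,s)\mapsto s$ maps a proper open invariant cone $W$ onto an open convex cone of $\R$, i.e.\ onto one of $\emptyset,\R_{>0},\R_{<0},\R$; it therefore suffices to rule out $p(W)=\R$, that is $W\cap\{s=0\}\neq\emptyset$. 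If $W\cap\{s=0\}\neq\emptyset$ then --- being open in the hyperplane $\{s=0\}=\R\times V\times\{0\}$, with $V\neq0$ --- it contains some $(t,x,0)$ with $x\neq0$; choosing $x'\in V$ with $\omega(x',x)\neq0$ (non-degeneracy of $\omega$), the orbit $\Ad(0,\lambda x',0)(t,x,0)=(t-\lambda\Re\langle Dx',x\rangle,x,0)$ runs over $\{(c,x,0):c\in\R\}\subseteq W$, and scaling $(\pm n,x,0)\in W$ by $1/n$ shows $\pm(1,0,0)\in\overline W$, so $\overline W$ contains a line, contradicting properness. Hence $p(W)\in\{\R_{>0},\R_{<0}\}$, i.e.\ $W\subseteq W_\infty$ or $W\subseteq -W_\infty$.

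\emph{Part (b).} Let $W\subseteq\g\cap W_\infty$ be open and invariant. First, $W\cap\t\neq\emptyset$: given $(t,x,s)\in W$ (so $s>0$), density of $D(V)$ in $V$ and openness of $W$ let us assume $x\in D(V)$, and then, choosing $x'\in V$ with $Dx'=x/s$, one computes $\Ad(0,x',0)(t,x,s)=(t-\tfrac{1}{2s}\|x\|^2,0,s)\in W\cap\t$. Fix such a point $(t_0,0,s)\in W\cap\t$ (so $s>0$). For every $x'\in V$, $\Ad(0,x',0)(t_0,0,s)=(t_0+\tfrac{s}{2}\|Dx'\|^2,-sDx',s)\in W$, and averaging this with its value at $-x'$ gives $(t_0+\tfrac{s}{2}\|Dx'\|^2,0,s)\in W\cap\t$; letting $\|Dx'\|$ run over all of $[0,\infty[$ (scale a fixed nonzero element of $D(V)$) yields $[t_0,\infty[\,\times\{0\}\times\{s\}\subseteq W\cap\t$. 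Rescaling $(u,0,s)\in W\cap\t$ with $u\geq\max(t_0,1)$ by $1/u$ gives $(1,0,s/u)\in W\cap\t$, and $u\to\infty$ shows $(1,0,0)\in\overline{W\cap\t}$; since $W\cap\t$ is a cone, $\R_{>0}\times\{0\}\times\{0\}\subseteq\overline{W\cap\t}$.

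\emph{Part (c).} For $d\in\R$: \eqref{adjointGA} fixes the $\R$-component, so $W_\infty$ is $\Ad(G_A)$-invariant, and I would next check $f_d\circ\Ad(g)=f_d$ on $W_\infty$ for every $g=(t',x',s')\in G_A$: substituting \eqref{adjointGA} into $f_d$ and expanding $\|\gamma(s')x-sDx'\|^2$ with $\|\gamma(s')x\|=\|x\|$, the two terms $\tfrac{s}{2}\|Dx'\|^2$ cancel and the mixed terms $-\Re\langle Dx',\gamma(s')x\rangle$ and $+\Re\langle\gamma(s')x,Dx'\rangle$ cancel because each is the real part of the conjugate of the other, leaving exactly $f_d(t,x,s)$. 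Thus $W_d=f_d^{-1}(\R_{>0})$ is open and $\Ad(G_A)$-invariant, and it is a convex cone because $f_d$ is positively homogeneous of degree one and concave on $W_\infty$ (its only nonlinear term $(x,s)\mapsto\tfrac{1}{2s}\|x\|^2$ is the perspective of a convex function, hence convex). For $W_d={\rm int}(\overline{\Ad(G_A)C_d})$: ``$\supseteq$'' holds since $C_d\subseteq W_d$ from the definitions, hence $\Ad(G_A)C_d\subseteq W_d$ by invariance, while ${\rm int}\,\overline{W_d}=W_d$ as $W_d$ is nonempty, open and convex; for ``$\subseteq$'', given $(t,x,s)\in W_d$ set $t_*:=t-\tfrac{1}{2s}\|x\|^2$, so $(t_*,0,s)\in C_d$ precisely because $t_*+ds=f_d(t,x,s)>0$, and, using that $D(V_A)$ is dense in $V_A$ (Definition~\ref{defstandosc}), pick $x_n'\in V_A$ with $Dx_n'\to -x/s$, whence $\Ad(0,x_n',0)(t_*,0,s)=(t_*+\tfrac{s}{2}\|Dx_n'\|^2,-sDx_n',s)\to(t,x,s)$, so $(t,x,s)\in\overline{\Ad(G_A)C_d}$ and, $W_d$ being open, $(t,x,s)\in{\rm int}(\overline{\Ad(G_A)C_d})$. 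Finally $\overline{W_d}$ is pointed: points of $W_d$ converging to $(t,x,0)$ satisfy $t_n+ds_n>\tfrac{1}{2s_n}\|x_n\|^2\geq0$, forcing $\|x_n\|^2<2s_n(t_n+ds_n)\to0$, so $x=0$ and $t\geq0$; hence $\overline{W_d}\cap\{s=0\}\subseteq\{(t,0,0):t\geq0\}$ and $\overline{W_d}\cap(-\overline{W_d})=\{0\}$. For $d=\infty$, \eqref{adjointGA} gives $\Ad(G_A)C_\infty=\R\times D(V_A)\times\R_{>0}$ at once, whose closure is $\R\times V_A\times[0,\infty[$ with interior $W_\infty$.

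\emph{Expected main obstacle.} The one genuinely non-mechanical insight is that $f_d$ is a bona fide $\Ad(G_A)$-invariant (not merely that $W_d$ is invariant); granting this, the invariance of $W_d$, its polar-type description, and the pointedness of $\overline{W_d}$ are routine, as is the dichotomy in (a). The remaining work is topological bookkeeping --- exploiting $\overline{D(V)}=V$ (resp.\ $\overline{D(V_A)}=V_A$) so that the adjoint orbits both enter $\t$ and approach the positive $t$-axis, and verifying ${\rm int}\,\overline{W_d}=W_d$ for open convex cones --- which should present no real difficulty beyond keeping the limits straight.
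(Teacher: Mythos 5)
Your parts (b) and (c) are sound. In (c) you replace the paper's appeal to Lemma~2.8 of \cite{Ne10b} (via ${\rm int}(\overline{\Ad(G_A)C_d})\subset{\rm int}(\overline{W_d})=W_d$, then density of $A(V_A)$ to get the reverse inclusion) with a direct verification that the function $f_d$ is $\Ad(G_A)$-invariant and then argue via level sets; this is essentially the content of the paper's Lemma~\ref{adinvariantmapF} (which the paper proves only \emph{after} Theorem~\ref{conelemma}), so it is a legitimate and arguably cleaner route, and the convexity-via-perspective observation is the same computation as in the paper's display $2(f_d(a_1+a_2)-f_d(a_1)-f_d(a_2))\geq 0$.

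Part (a) has a genuine gap. After producing the line $\{(c,x,0):c\in\R\}\subset W$, you scale to obtain $\pm(1,0,0)\in\overline W$ and declare this ``contradicts properness because $\overline W$ contains a line.'' But in this paper ``proper'' does \emph{not} mean pointed closure: $W_\infty$ itself is a proper open invariant cone by part (c), and its closure $\R\times V_A\times[0,\infty[$ visibly contains the line $\R\cdot(1,0,0)$. So a line in $\overline W$ is no contradiction. What is actually needed — and what the paper's argument establishes — is $0\in W$ (equivalently $W=\g$). The paper gets there differently: from $p(W)=\R$ pick $(t_0,0,s)\in W$ with $s>0$ and $(t_0',0,s')\in W$ with $s'<0$ (moving into $\t$ via density of $D(V)$), use the averaging identity \eqref{adjointGAcons} to see $(t,0,s)\in W$ for all $t\geq t_0$ and $(t',0,s')\in W$ for all $t'\leq t_0'$, then scale one of them and take a convex combination to land exactly on $0$. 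Your line $\{(c,x,0)\}\subset W$ by itself does not yield $0\in W$; e.g.\ $W_\infty$ contains the line $\{(c,0,1):c\in\R\}$ yet $0\notin W_\infty$. To repair your proof you would still have to invoke the two points of $W$ with $s>0$ and $s<0$ and run an argument like the paper's; the line observation, while correct, does not by itself close the case $p(W)=\R$.
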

\begin{proof}
Recall the adjoint action of $G_A$ from \eqref{adjointGA}:
\begin{equation}
\Ad(t',x',s')(t,x,s) = \left(t-\Re\la Dx',\gamma(s')x\ra+\frac{1}{2}\|Dx'\|^2s,\gamma(s')x-sDx',s\right). \label{adjointGAcopy1}
\end{equation}
Since $D(V)\subset V$ is dense, we conclude
\begin{equation}
(\Ad(G)U) \cap \t \neq \emptyset \ \ \text{ for every non-empty open subset $U\subset \g$}\label{adjointGAcons0}
\end{equation}
and
\begin{equation}
\frac{1}{2}\Ad(0,x',0)(t,x,s)+\frac{1}{2}\Ad(0,-x',0)(t,x,s) = (t+\frac{1}{2}\|Ax'\|^2s,x,s). \label{adjointGAcons}
\end{equation}
(a) Suppose there is a proper open invariant cone $W$ in $\g$ which is neither contained in $W_\infty$ nor in $-W_\infty$. Then we find $(t_0,x,s), (t_0',x',s') \in W$ such that $s>0$ and $s'<0$. Since $W$ is open and invariant, we may in view of  \eqref{adjointGAcons0} achieve that $x=x'=0$. As $W$ is invariant and convex, we then conclude with \eqref{adjointGAcons} that $(t,0,s), (t',0,s')\in W$ for all $t\geq t_0$ and $t'\leq t_0'$. Let $\lambda:=-\frac{s'}{s}$. Since $W$ is a cone we have $\lambda(t,0,s)=(\lambda t,0,-s')\in W$ for all $t\geq t_0$. Now we choose $t_1\geq t_0, t_2\leq t_0'$ with $\lambda t_1+t_2=0$ and conclude that $0 = \frac{1}{2}(\lambda t_1,0,-s')+\frac{1}{2}(t_2,0,s') \in W$, a contradiction to the assumption that $W$ is proper.\newline
(b) Since $W\subset W_\infty$ equation \eqref{adjointGAcons} implies that $]0,\infty[ \times\{0\} \times \{0\} \subset \text{lim}(W\cap \t) \subset \overline{W\cap \t}$.\newline
(c) Let $d\in \R$. Obviously $W_d$ is open, proper and $\lambda W_d \subset W_d$ for $\lambda \in \R_{>0}$. Consider
$$f_d: W_\infty \rightarrow \R,(t,x,s)\mapsto t+ds-\textstyle{\frac{1}{2s}}\|x\|^2.$$
Recall $W_d=f_d^{-1}(\R_{>0})$ from Remark \ref{mapf_d}. Let $a_1=(t_1,x_1,s_1)$ and $a_2=(t_2,x_2,s_2)$ be in $W_d$. Set $y_1:=\frac{1}{s_1}x_1$ and $y_2:=\frac{1}{s_2}x_2$. By convexity of the map $x\mapsto \|x\|^2$ we have
$$\frac{s_1}{s_1+s_2}\|y_1\|^2+\frac{s_2}{s_1+s_2}\|y_2\|^2\geq \left\|\frac{s_1y_1+s_2y_2}{s_1+s_2}\right\|^2 =\frac{\|x_1+x_2\|^2}{(s_1+s_2)^2}.$$
Thus we obtain
$$2(f_d(a_1+a_2)-f_d(a_1)-f_d(a_2)) = s_1\|\textstyle{\frac{1}{s_1}x_1}\|^2+s_2\|\textstyle{\frac{1}{s_2}x_2}\|^2-\textstyle{\frac{1}{s_1+s_2}}\|x_1+x_2\|^2\geq0.$$
This shows $a_1+a_2\in W_d$ and we obtain that $W_d$ is a cone. Obviously $W_\infty$ is an open invariant cone. Now let $d\in\overline{\R}$. By \eqref{adjointGAcopy1} we obtain $\Ad(G_A)C_d\subset W_d$ and with \cite[Lem. 2.8]{Ne10b} we conclude $\text{int}(\overline{\Ad(G_A)C_d})\subset \text{int}(\overline{W_d})=W_d$. Since $A(V_A) \subset V_A$ is dense we obtain from \eqref{adjointGAcopy1} that $(\Ad(G_A)U) \cap \t \neq \emptyset$ for every non-empty open subset $U\subset \g_A$. Since $W_d$ is open we conclude that $W_d\subset \overline{\Ad(G_A)C_d}$. Hence we get $W_d = \text{int}(\overline{\Ad(G_A)C_d})$. Now we see that $W_d$ is $\Ad$-invariant.
\end{proof}

\begin{prop}\label{openinvconesgen}
Assume that $D(V)\subset V$ is dense. The non-empty proper open invariant cones in the oscillator algebra $\g$ are given by $\pm W_d \cap \g$, where $d\in \overline{\R}$. In particular, every open invariant cone in $\g$ is the intersection of an open invariant cone in $\g_A$ with $\g$.
\end{prop}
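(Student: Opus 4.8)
The plan is to combine Theorem~\ref{conelemma} with a reduction argument that transports an open invariant cone $W$ in $\g$ to one in $\g_A$. First I would dispose of the degenerate direction: by Theorem~\ref{conelemma}(a), any non-empty proper open invariant cone $W\subset\g$ is contained in $W_\infty$ or in $-W_\infty$, and by replacing $W$ with $-W$ we may assume $W\subset W_\infty\cap\g$. So it suffices to show that such a $W$ equals $W_d\cap\g$ for a unique $d\in\overline{\R}$, and conversely that each $W_d\cap\g$ is a non-empty proper open invariant cone in $\g$. The converse direction is the easy half: $W_d$ is a proper open invariant cone in $\g_A$ by Theorem~\ref{conelemma}(c), hence $W_d\cap\g$ is an open invariant cone in $\g$ (invariance under $\Ad(G)$ follows since $G\subset G_A$), it is proper because $W_d$ is, and it is non-empty because $C_d=W_d\cap\t\subset\g$ (as $\t=\R\times\{0\}\times\R\subset\g$).

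For the main direction, I would first reconstruct $W$ from its trace on $\t$. By \eqref{adjointGAcons0}, the set $W\cap\t$ is a non-empty open cone in the half-plane $\R\times\{0\}\times\,]0,\infty[$. By Theorem~\ref{conelemma}(b) one of its boundary rays is the positive $t$-axis, so by the Remark following Definition~\ref{Wddef} there is a unique $d\in\overline{\R}$ with $W\cap\t=C_d$. The claim is then that $W=W_d\cap\g$. For the inclusion $W\subset W_d\cap\g$: any $a\in W$ can be moved by $\Ad(G)$ into $\t$ — more precisely, using \eqref{adjointGAcons0} applied to small open neighborhoods, or directly using that the $x$-component can be killed by the subgroup $\exp(0,\cdot,0)$ up to a controlled shift in the $t$-component via \eqref{adjointGAcons} — and since $W_d$ is $\Ad(G_A)$-invariant with $W_d\cap\t=C_d=W\cap\t$, one concludes $a\in W_d$; membership in $\g$ is automatic. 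For the reverse inclusion $W_d\cap\g\subset W$: given $a\in W_d\cap\g$, I would find $g\in G$ with $\Ad(g)a\in\t$; by the formula \eqref{adjointGAcopy1} the $s$-component is preserved and one can make the $x$-component vanish (density of $D(V)$ in $V$ is not needed here, only that $\gamma(s')x-sDx'$ can be made zero by choosing $x'$ in $V$ appropriately, or an approximation argument using openness of $W$), landing in $W_d\cap\t=C_d\subset W$; then applying $\Ad(g^{-1})$ and invariance of $W$ gives $a\in W$.

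The delicate point — and the step I expect to be the main obstacle — is the reduction ``move a point of $W_d$ into $\t$ by $\Ad(G)$'' when working with $\g$ rather than $\g_A$: the element $x'$ needed to kill the $x$-component via $\gamma(s')x-sDx'=0$ must lie in $V$, i.e.\ one needs $\tfrac1s\gamma(s')x\in D(V)$, which is only dense. The clean way around this is to exploit that $W$ is \emph{open}: it suffices to move a point into an $\Ad(G)$-translate of an open subset meeting $\t$, and then use \eqref{adjointGAcons0} together with the fact that $W_d$ is open to carry out the argument for a nearby point of $W$ with $x$-component in $D(V)$, finally passing to the limit using that $W=f_d^{-1}(\R_{>0})\cap\g$ is cut out by the continuous function $f_d$. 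Once this reduction is in place, uniqueness of $d$ follows from $W\cap\t=C_d$ and the injectivity of $d\mapsto C_d$ noted in the Remark, and the last sentence of the Proposition is immediate since we have exhibited $W=W_d\cap\g$ (or $-W_d\cap\g$) with $W_d$ (resp.\ $-W_d$) an open invariant cone in $\g_A$.
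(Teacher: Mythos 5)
Your overall strategy matches the paper's proof closely: invoke Theorem~\ref{conelemma}(c) for the converse, use parts~(a) and~(b) to reduce to $W\subset W_\infty$ and pin down $W\cap\t=C_d$, and then establish the two inclusions $W\subset W_d\cap\g$ and $W_d\cap\g\subset W$ via the adjoint action, approximation from density of $D(V)$, and openness of $W$ together with the cutting function $f_d$. The ideas are all present, but two precision points deserve attention.

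First, your claim that ``density of $D(V)$ is not needed'' for the direction $W_d\cap\g\subset W$ is wrong: the alternative you offer --- choosing $x'\in V$ so that $\gamma(s')x-sDx'=0$ exactly --- requires $\gamma(s')x\in sD(V)$, hence $x\in D(V)$, which need not hold (the Remark after Lemma~\ref{adinvariantmapF} notes explicitly that some orbits miss $\t$ when $D$ is not surjective). Density of $D(V)$ is exactly what makes the approximation fallback work: choose $x'_n\in V$ with $Dx'_n\to x/s$ in $V$, so that $\Ad(0,-x'_n,0)a\to F(a)\in C_d\subset W$; then openness of $W$ gives $\Ad(0,-x'_n,0)a\in W$ for large $n$, hence $a\in W$ by invariance. (The paper instead shows $W_d\cap\g\subset\overline{W}$ and uses $\operatorname{int}\overline{W}=W$, which is an equivalent formulation.) Second, the clause ``$W=f_d^{-1}(\R_{>0})\cap\g$ is cut out by the continuous function $f_d$'' is circular as written --- that identity is precisely the conclusion of the Proposition --- and should read $W_d=f_d^{-1}(\R_{>0})$; with that correction the last step goes through as in the paper: from $W\subset\overline{\Ad(G)C_d}\cap W_\infty\subset\overline{W_d}\cap W_\infty$ one gets $f_d\geq0$ on $W$, and if $f_d(a_0)=0$ for some $a_0\in W$, openness of $W$ together with the non-degeneracy of $f_d$ (it is affine in $t$) would produce $a_1\in W$ with $f_d(a_1)<0$, a contradiction.
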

\begin{proof}
From Theorem \ref{conelemma}(c) we obtain that $\pm W_d \cap \g$ is an open invariant cone in $\g$. Now let $W \subset \g$ be a non-empty proper open invariant cone in $\g$. By Theorem \ref{conelemma}(a) we may assume that $W\subset W_\infty \cap \g$ and with Theorem \ref{conelemma}(b) we further obtain $W\cap \t = C_d$ for some $d\in \overline{\R}$. Then $\Ad(G)C_d \subset W$. From the density of $D(V)$ in $V$ and equation~\eqref{adjointGA} we further obtain $W_d\cap \g \subset \overline{W}$ (cf. definition of $W_d$). Thus $W_d\cap \g \subset \text{int}\overline{W}=W$. If $d=\infty$ we obtain $W=W_\infty\cap\g$. Hence we may assume $d\in \R$. Recall
$$f_d: W_\infty \rightarrow \R,(t,x,s)\mapsto t+ds-\textstyle{\frac{1}{2s}}\|x\|^2$$
and $W_d=f_d^{-1}(\R_{>0})$ from Remark \ref{mapf_d}. From \eqref{adjointGAcons0} we obtain $W\subset \overline{\Ad(G)C_d}\cap W_\infty\subset \overline{W_d}\cap W_\infty$. Thus $f_d(a)\geq0$ for all $a\in W$ since $\overline{W_d}\cap W_\infty$ is the closure of $W_d$ in $W_\infty$ because $W_\infty$ is open in $\g_A$. Since $W$ is open in $\g$ the equality $f_d(a_0)=0$ for some $a_0\in W$ would imply the existence of a $a_1 \in W$ with $f_d(a_1)<0$ which is a contradiction. Thus $W\subset f_d^{-1}(\R_{>0})=W_d$. Hence in all, we get $W=W_d\cap \g$. This completes the proof.
\end{proof}

\begin{rmk}\label{openinvconesgenrmk}
The cones $W_d,d\in \overline{\R}$ are open in $\g_A=\R\times V_A\times\R$ when $V_A$ is equipped with the $\|\cdot\|$-topology, since $W_d=f_d^{-1}(\R_{>0})$ and $f_d: W_\infty \rightarrow \R$ is $\|\cdot\|$-continuous for $d\in \R$. Hence if $D(V)\subset V$ is dense, then by the preceding proposition the open invariant cones in $\g=\R\times V\times\R$ are also open when $V$ is equipped with the $\|\cdot\|$-topology.
\end{rmk}

\subsection{Semibounded representations}\label{settingsbrepsect}
Let $\pi:G=G(V,\omega,\gamma)\rightarrow \U(\H)$ be a semibounded representation. If $D(V)\subset V$ is dense, then the open invariant cone $B(I_\pi)^0$ is either equal to $\g$ or contained in $W_\infty$ or $-W_\infty$ (Theorem~\ref{conelemma}(a)).

\begin{lem}\label{centraltrivrep}
Let $\pi:G \rightarrow \U(\H)$ be a smooth representation with $B(I_\pi)^0\neq\emptyset$ and such that $I_\pi \subset \{0\} \times V' \times \R$. Then $\pi\vert_{\R\times D(V)\times\{0\}}$ is trivial.
\end{lem}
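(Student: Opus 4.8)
The plan is to read off from the hypothesis $I_\pi\subset\{0\}\times V'\times\R$, with the help of the coadjoint invariance of the momentum set, exactly how the functionals in $I_\pi$ act on $D(V)$, and then to push this information back through the momentum map to $\dd\pi$.

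Writing elements of $\g'=\R\times V'\times\R$ as triples $(t^*,\alpha,s^*)$, I would first record (by a one-line computation from the bracket of $\g$, or as the $s'=0$, $t^*=0$ case of \eqref{coadjointactionGA}) that for $x'\in V$
$$\Ad^*(\exp(0,x',0))(0,\alpha,s^*)=(0,\alpha,\alpha(Dx')+s^*).$$
Since $\Phi_\pi$ is $G$-equivariant, $\im\Phi_\pi$ is $\Ad^*(G)$-invariant, and hence so is its weak-$*$-closed convex hull $I_\pi$ (each $\Ad^*(g)$ being a linear weak-$*$ homeomorphism, it preserves closed convex hulls); this is a standard property of momentum sets, cf.\ \cite{Ne09}. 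Now suppose, for contradiction, that some $\lambda=(0,\alpha,s^*)\in I_\pi$ had $\alpha(Dx_0)\neq0$ for some $x_0\in V$. Applying the displayed formula with $x'=rx_0$ shows $(0,\alpha,r)\in I_\pi$ for all $r\in\R$, so that $\inf\langle I_\pi,(t,v,\sigma)\rangle=-\infty$ as soon as $\sigma\neq0$; equivalently $s_\pi\equiv+\infty$ outside the proper closed subspace $\R\times V\times\{0\}$, whence $B(I_\pi)\subseteq\R\times V\times\{0\}$ and $B(I_\pi)^0=\emptyset$ — contradicting the hypothesis. So every $\lambda=(0,\alpha,s^*)\in I_\pi$ must vanish on $D(V)$. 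I expect this dichotomy to be the heart of the argument; everything else is routine.

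Together with $t^*=0$ for all elements of $I_\pi$, this yields $\langle\lambda,(t,Dv,0)\rangle=\alpha(Dv)=0$ for all $\lambda\in I_\pi$, $t\in\R$, $v\in V$, i.e.\ $\langle I_\pi,(t,Dv,0)\rangle=\{0\}$. Fixing $t\in\R$, $v\in V$ and setting $x=(t,Dv,0)$, for every nonzero $w\in\H^\infty$ we have $\Phi_\pi([w])\in I_\pi$, so $\langle\dd\pi(x)w,w\rangle=i\|w\|^2\,\Phi_\pi([w])(x)=0$; as this holds on the dense subspace $\H^\infty$, polarization gives $\dd\pi(x)w=0$ for all $w\in\H^\infty$. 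Then for $w\in\H^\infty$ the smooth curve $r\mapsto\pi(\exp(rx))w$ has derivative $\pi(\exp(rx))\dd\pi(x)w=0$, hence equals $w$ identically; by density of $\H^\infty$ and unitarity, $\pi(\exp(rx))=\id$ for all $r$. Finally, the vanishing $\R$-component of $x$ trivializes the integrals in \eqref{expmapformulagen}, so $\exp(t,Dv,0)=(t,Dv,0)$, and taking $r=1$ gives $\pi(t,Dv,0)=\id$. Since the elements $(t,Dv,0)$, $t\in\R$, $v\in V$, exhaust the subgroup $\R\times D(V)\times\{0\}$, this proves the claim. (Note that the argument uses no density hypothesis on $D(V)$.)
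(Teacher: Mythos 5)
Your proof is correct and follows essentially the same path as the paper's: the same coadjoint computation along $\Ad^*(0,v,0)$, the same deduction that $\alpha(D(V))=\{0\}$, and the same passage via the momentum map and polarization to $\dd\pi=0$ on $\R\times D(V)\times\{0\}$, followed by integration to the subgroup. The only cosmetic difference is that you derive $\alpha(D(V))=\{0\}$ by contradiction (showing $B(I_\pi)$ would sit in the nowhere-dense subspace $\R\times V\times\{0\}$), whereas the paper picks a point $(t,x,s)\in B(I_\pi)^0$ with $s\neq0$ and reads off finiteness of the infimum directly; these are logically equivalent.
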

\begin{proof}
Let $\lambda=(0,\alpha,s^*)\in I_\pi$ and choose $(t,x,s)\in B(I_\pi)^0,s\neq0$. By \eqref{coadjointactionGA} we have
$$(\Ad^*(0,v,0)\lambda)(t,x,s) = \alpha(x)+s^*s+s\alpha(Dv)$$
for $v\in V$. Since $\inf_{v\in V}(\Ad^*(0,v,0)\lambda)(t,x,s) > -\infty$ we conclude that $\alpha(D(V))=\{0\}$. Thus we obtain $I_\pi \subset (\R\times D(V)\times \{0\})^\perp$, i.e., $\big\langle\dd\pi(a)w,w\big\rangle=0$
for all $a\in [\g,\g]=\R\times D(V)\times \{0\}$ and $w\in \H^\infty$. Hence $\R\times D(V)\times \{0\} \subset \ker(\dd\pi)$ since $\H^\infty\subset \H$ is dense. Now the statement follows.
\end{proof}

\begin{prop}[Bounded representations]\label{boundedreps}
Let $\pi:G \rightarrow \U(\H)$ be a smooth representation with $B(I_\pi)=\g$. Then $\pi\vert_{\R\times D(V)\times\{0\}}$ is trivial. In particular, if $D(V)\subset V$ is dense, then $\pi\vert_{\Heis(V,\omega)}$ is trivial.
\end{prop}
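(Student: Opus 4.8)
The plan is to reduce the statement to Lemma~\ref{centraltrivrep} by showing that the hypothesis $B(I_\pi)=\g$ forces the momentum set $I_\pi$ to lie in $\{0\}\times V'\times\R$, i.e.\ that the central coordinate functional $t^*$ vanishes on $I_\pi$. First I would recall that $B(I_\pi)=\{x\in\g : s_\pi(x)<\infty\}$ with $s_\pi(x)=-\inf\langle I_\pi,x\rangle$, so $B(I_\pi)=\g$ means $\langle I_\pi,x\rangle$ is bounded below for \emph{every} $x\in\g$; equivalently $I_\pi$ is weak-$*$ bounded below in each coordinate direction, in particular in the central direction $(1,0,0)$. Since $[\g,\g]=\R\times D(V)\times\{0\}$, the central line $\R\times\{0\}\times\{0\}$ lies in $[\g,\g]$, hence the coadjoint orbits are constrained: from the formula \eqref{coadjointactionGA}, the first coordinate $t^*$ of $\lambda=(t^*,\alpha,s^*)\in\g'$ is $\Ad^*(G_A)$-invariant, so $t^*$ is constant along $I_\pi$ and equals a single value $c\in\R$.

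Next I would use dilation invariance of the momentum set under the central action together with the third coordinate of \eqref{coadjointactionGA}. Applying $\Ad^*(0,x',0)$ to $\lambda=(c,\alpha,s^*)\in I_\pi$ gives the element $\big(c,\ \alpha+c\,\Re\langle Dx',\cdot\rangle,\ \tfrac{c}{2}\|Dx'\|^2+\alpha(Dx')+s^*\big)$, which again lies in $I_\pi$ because $I_\pi$ is $\Ad^*(G)$-invariant (and $G\hookrightarrow G_A$). Pairing this with a test vector $(t,x,s)\in\g$ with $s\neq 0$ — which is allowed precisely because $B(I_\pi)=\g$, so every such $(t,x,s)$ gives a finite value of $s_\pi$ — the pairing contains the term $\tfrac{c}{2}\|Dx'\|^2 s$. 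If $c\neq 0$, choosing the sign of $s$ opposite to $c$ and letting $\|Dx'\|\to\infty$ (using density of $D(V)$ in $V$, hence unboundedness of $\|D\cdot\|$ on $V$, or more simply just scaling $x'$) drives the pairing to $-\infty$, contradicting $s_\pi(t,x,s)<\infty$. Hence $c=0$, i.e.\ $I_\pi\subset\{0\}\times V'\times\R$. Now Lemma~\ref{centraltrivrep} applies directly (its hypotheses $B(I_\pi)^0\neq\emptyset$ and $I_\pi\subset\{0\}\times V'\times\R$ are met, the former because $B(I_\pi)^0=\g\neq\emptyset$), yielding that $\pi|_{\R\times D(V)\times\{0\}}$ is trivial.

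Finally, for the ``in particular'' clause: assume $D(V)\subset V$ is dense. Triviality of $\pi$ on $\R\times D(V)\times\{0\}$ means $\R\times D(V)\times\{0\}\subset\ker(\dd\pi)$, and since $\ker(\dd\pi)$ is a closed subspace of $\g$ (the operators $\dd\pi(\cdot)v$ depend continuously on the Lie algebra element for fixed $v\in\H^\infty$, and $\H^\infty$ is dense) containing the dense subspace $D(V)$ of $V$, it contains all of $\R\times V\times\{0\}=\heis(V,\omega)$. Since $\Heis(V,\omega)$ is connected and generated by $\exp(\heis(V,\omega))$, it follows that $\pi|_{\Heis(V,\omega)}$ is trivial.

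The main obstacle I expect is the topological step that triviality on the \emph{dense} subspace $D(V)\times\{0\}$ of $\heis(V,\omega)$ propagates to all of $\heis(V,\omega)$, and thence to the subgroup $\Heis(V,\omega)$: one must be careful that ``$\pi|_{\R\times D(V)\times\{0\}}$ trivial'' is being upgraded via continuity of $\dd\pi$ in the (finer) $\g$-topology and then integrated using that $\Heis(V,\omega)$ is in the image of $\exp$. Everything else is a direct unwinding of the coadjoint formula \eqref{coadjointactionGA} and the definition of $s_\pi$, reducing cleanly to Lemma~\ref{centraltrivrep}.
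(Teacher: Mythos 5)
Your overall plan is exactly the paper's: show that the hypothesis $B(I_\pi)=\g$ forces $I_\pi\subset\{0\}\times V'\times\R$ and then invoke Lemma~\ref{centraltrivrep}. The mechanism you use for $t^*=0$ is, however, a genuinely different choice than the paper's. You apply $\Ad^*(0,\mu x',0)$ and exploit the quadratic paraboloid term $\tfrac{t^*}{2}\|D(\mu x')\|^2 = \tfrac{t^*\mu^2}{2}\|Dx'\|^2$ in the third coordinate of \eqref{coadjointactionGA}, pairing with a test vector $(t,x,s)$ with $s\neq 0$ and using the dominance of $\mu^2$ over the linear terms $\mu\,\alpha(Dx')$, $\mu\,t^*\Re\langle Dx',x\rangle$; the only ingredient needed is $D\neq 0$. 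The paper instead applies $\Ad^*(0,cv,0)$ and pairs with $(0,x,0)$ (so $s=0$), exploiting the \emph{second} coordinate: the term $-c\,t^*\omega(v,x)$ is already linear in $c$, and one only needs some pair with $\omega(v,x)\neq 0$, i.e.\ $\omega\neq 0$. The two conditions are equivalent here since $\omega_A(v,x)=-\Re\langle Dv,x\rangle$ and $D$ is injective, so both arguments are available; the paper's is slightly shorter because it avoids the quadratic-versus-linear domination step, while yours avoids having to pick $v,x$ with $\omega(v,x)\neq 0$.

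Two small caveats. First, your side remark that ``$t^*$ is constant along $I_\pi$ and equals a single value $c$'' is not justified: $t^*$ is constant on each coadjoint orbit (it is $\Ad^*$-invariant), but $I_\pi$ is the weak-$*$ closed convex hull of the image of the momentum map and can meet many orbits, so $t^*$ need not be constant on $I_\pi$. This does not damage your proof since the subsequent argument is applied to each $\lambda\in I_\pi$ individually, but the sentence should be deleted or corrected. Second, your alternative phrasing ``letting $\|Dx'\|\to\infty$ using density of $D(V)$'' is not on its own convincing (the linear terms, in particular $\alpha(Dx')$ with $\alpha$ merely $V$-continuous, are not controlled by $\|Dx'\|$); only the scaling argument $x'\mapsto\mu x'$ that you also mention is airtight, so it should be the primary formulation. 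The explanation of the ``in particular'' clause via continuity of $\dd\pi(\cdot)v$ on $\g$, density of $D(V)$, and $\Heis(V,\omega)$ being generated by the image of $\exp$, is correct and is more explicit than what the paper records.
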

\begin{proof}
Recall our assumption $V\neq0$. Let $\lambda=(t^*,\alpha,s^*)\in I_\pi$ and choose $0\neq x \in V$ and $v\in V$ with $\omega(v,x)\neq0$. By equation \eqref{coadjointactionGA}
$$(\Ad^*(0,cv,0)\lambda)(0,x,0) = \alpha(x)-ct^*\omega(v,x)$$
for all $c\in\R$. Since $B(I_\pi)=\g$ we have $(0,x,0)\in B(\cO_\lambda)$ and thus 
$$-\infty < \inf_{c\in \R}(\Ad^*(0,cv,0)\lambda)(0,x,0) = \alpha(x)-\sup\{ct^*\omega(v,x) : c\in \R\}.$$
Hence $t^*=0$ since $\omega(v,x)\neq0$. Thus we obtain $I_\pi\subset \{0\} \times V' \times \R$. Lemma \ref{centraltrivrep} now implies that $\pi\vert_{\R\times D(V)\times\{0\}}$ is trivial.
\end{proof}

\begin{rmk} \label{rem:switchomega} Note that the map 
$$ \psi : \heis(V,\omega) \rtimes_D \R \to \heis(V,-\omega) \rtimes_{D} \R, 
\quad \psi(t,v,s) := (-t,v,s)$$ 
defines an isomorphism of Lie algebras. An analogous 
statement holds for the corresponding Lie groups.
\end{rmk}

\begin{rmk}\label{settingsbrep}
Let $\pi:G=G(V,\omega,\gamma)\rightarrow \U(\H)$ be a semibounded representation and assume $D(V)\subset V$ dense. By applying \cite[Rmk.~2.4]{NZ13} and Remark \ref{rem:switchomega} we may switch from $B(I_\pi)^0$ to $-B(I_\pi)^0$ while $\omega(Dx,y)$ remains positive definite, so that we still have $G\subset G_A$ as above. Thus we may further assume that $B(I_\pi)^0\subset W_\infty$ (Proposition~\ref{boundedreps} and Theorem~\ref{conelemma}(a)).
\end{rmk}

\begin{lem}\label{adinvariantmapF}
The map 
$$F:\{ (t,x,s)\in\g : s\neq0\} \rightarrow \t, (t,x,s)\mapsto \big(t-\textstyle{\frac{\|x\|^2}{2s}},0,s\big)$$
is $\|\cdot\|$-continuous, open and $\Ad$-invariant. Moreover, if $D(V)\subset V$ is dense, then
$$(t,x,s)\in\overline{\Ad(G)F(t,x,s)} \quad\text{and}  \quad F(t,x,s)\in\overline{\Ad(G)(t,x,s)}$$
hold for all $(t,x,s)\in\g, s\neq0$.
\end{lem}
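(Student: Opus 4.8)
The plan is to establish the four assertions --- $\Ad$-invariance, $\|\cdot\|$-continuity, openness, and the two closure relations --- largely independently: the first three by direct computation from \eqref{adjointGA}, and the last by a density argument. I would begin with $\Ad$-invariance. Writing $g=(t',x',s')$ and $\Ad(g)(t,x,s)=(T,X,s)$ with $T=t-\Re\la Dx',\gamma(s')x\ra+\tfrac12\|Dx'\|^2s$ and $X=\gamma(s')x-sDx'$ as in \eqref{adjointGA}, unitarity of $\gamma(s')$ yields $\|X\|^2=\|x\|^2-2s\,\Re\la\gamma(s')x,Dx'\ra+s^2\|Dx'\|^2$. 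Consequently $T-\tfrac{\|X\|^2}{2s}=t-\tfrac{\|x\|^2}{2s}$: the terms $-\Re\la Dx',\gamma(s')x\ra$ and $+\Re\la\gamma(s')x,Dx'\ra$ cancel (being equal as real parts of conjugate numbers), and the terms $\pm\tfrac s2\|Dx'\|^2$ cancel. Hence $F(\Ad(g)(t,x,s))=F(t,x,s)$. The $\|\cdot\|$-continuity of $F$ is then immediate, since $(t,x,s)\mapsto t-\tfrac{\|x\|^2}{2s}$ is $\|\cdot\|$-continuous on $\{(t,x,s)\in\g:s\neq0\}$.

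For openness I would use the observation that $F$ restricts to the identity on $\t\cap\{s\neq0\}$, so that $F$ is a retraction onto this plane. Given an open $U$ in the domain and $(t_0,x_0,s_0)\in U$, the affine map $(a,b)\mapsto(t_0+a,x_0,s_0+b)$ is continuous, so there is $\delta>0$ with $(t_0+a,x_0,s_0+b)\in U$ whenever $|a|,|b|<\delta$; here $F(t_0+a,x_0,s_0+b)=\big(t_0+a-\tfrac{\|x_0\|^2}{2(s_0+b)},0,s_0+b\big)$, and the planar map $(a,b)\mapsto\big(a-\tfrac{\|x_0\|^2}{2(s_0+b)}+\tfrac{\|x_0\|^2}{2s_0},\,b\big)$ fixes $(0,0)$ with invertible differential there, hence is open near the origin. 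Therefore $F(U)$ contains a neighbourhood of $F(t_0,x_0,s_0)$ in $\t$, i.e.\ $F$ is open.

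The two closure relations are where the density of $D(V)$ in $V$ enters, and this is the step I expect to require the most care --- mainly in keeping track of which topology the closures are taken in. Fix $(t,x,s)\in\g$ with $s\neq0$. Applying \eqref{adjointGA} with $g=(0,x',0)$ (so $\gamma(s')=\id$) and substituting $Dx'=\tfrac1sx+\e$ for $\e\in V$, a short expansion --- again invoking $\Re\la x,\e\ra=\Re\la\e,x\ra$ --- gives
$$\Ad(0,x',0)(t,x,s)=\big(t-\tfrac{\|x\|^2}{2s}+\tfrac s2\|\e\|^2,\,-s\e,\,s\big).$$
Since $\tfrac1sx\in V$ and $D(V)$ is dense in $V$, one can choose $x'\in V$ so that $\e=Dx'-\tfrac1sx$ lies in any prescribed $0$-neighbourhood of $V$; letting $\e\to0$ in $V$ then forces $-s\e\to0$ in $V$ and, since $\|\cdot\|$ is continuous on $V$, $\tfrac s2\|\e\|^2\to0$ in $\R$. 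Hence $\Ad(0,x',0)(t,x,s)\to F(t,x,s)$ in $\g$, which proves $F(t,x,s)\in\overline{\Ad(G)(t,x,s)}$.

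For the reverse relation I would run the analogous computation applied to $F(t,x,s)=(t-\tfrac{\|x\|^2}{2s},0,s)$: with $Dx'=-\tfrac1sx+\e$ one obtains
$$\Ad(0,x',0)F(t,x,s)=\big(t-\Re\la x,\e\ra+\tfrac s2\|\e\|^2,\,x-s\e,\,s\big),$$
which tends to $(t,x,s)$ as $\e\to0$ in $V$; since $-\tfrac1sx\in V$ is again approximable by elements of $D(V)$, this gives $(t,x,s)\in\overline{\Ad(G)F(t,x,s)}$. No single step is genuinely difficult; the points to watch are the sign bookkeeping in the $\Ad$-invariance cancellation and the fact that the approximations take place in the (finer) topology of $V$, not merely with respect to $\|\cdot\|$.
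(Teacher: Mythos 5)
Your proof is correct and follows essentially the same route as the paper: direct computation with the adjoint formula~\eqref{adjointGA} for $\Ad$-invariance, continuity, and openness, and a density approximation (choosing $x'$ with $Dx'$ close to $\pm\frac{1}{s}x$ in the topology of $V$) for the two closure relations. You supply more explicit detail than the paper (which dismisses openness and continuity as "obvious" and gives only one of the two closure computations), and you are right that the point requiring care is that the approximation must take place in the given locally convex topology on $V$, not merely in $\|\cdot\|$; your treatment of this is accurate.
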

\begin{proof}
Recall the adjoint action \eqref{adjointGA}:
$$\Ad(t',x',0)(t,x,s) = \big(t-\Re\langle Dx',x\rangle+\textstyle{\frac{s}{2}}\|Dx'\|^2,x-sDx',s\big).$$
Hence
\begin{align*}
F(\Ad(t',x',0)(t,x,s)) &=\big(t-\Re\langle x,Dx'\rangle+\textstyle{\frac{s}{2}}\|Dx'\|^2-\textstyle{\frac{\|x-sDx'\|^2}{2s}},0,s\big) \\
&= \big(t-\textstyle{\frac{\|x\|^2}{2s}},0,s\big) = F(t,x,s).
\end{align*}
Moreover $F(\Ad(0,0,s')(t,x,s))=F(t,\gamma(s')x,s)=F(t,x,s)$. Hence $F$ is $\Ad$-invariant. Obviously $F$ is $\|\cdot\|$-continuous and open. Now suppose that $D(V)\subset V$ is dense and let $(t,x,s) \in \g, s\neq0$. Then
\begin{equation*}
\Ad\Big(0,-\textstyle{\frac{x'}{s}},0\Big)\Big(t-\textstyle{\frac{\|x\|^2}{2s}},0,s\Big) = \Big(t-\textstyle{\frac{\|x\|^2}{2s}}+\textstyle{\frac{\|Dx'\|^2}{2s}},Dx',s\Big).
\end{equation*}
shows that $(t,x,s)\in\overline{\Ad(G)F(t,x,s)}$. That $F(t,x,s)$ lies in $\overline{\Ad(G)(t,x,s)}$ is shown by a similar argument.
\end{proof}
\begin{rmk}
Note that $F$ restricts to the identity on $\{(t,0,s)\in\t : s\neq0\}$.  Thus, if the adjoint orbit $\cO_a$ of $a:=(t,x,s)\in\g, s\neq0$ intersects $\t$, the element $F(t,x,s)$ is the unique intersection point of $\cO_a$ with $\t$. If $D:V\rightarrow V$ is not surjective then there are orbits $\cO_a$ which do not intersect $\t$.
\end{rmk}

\begin{prop}\label{supportfunctiongen}
Let $\pi:G\rightarrow \U(\H)$ be a smooth representation with $G\subset G_A$ as above. Assume $D(V)\subset V$ dense. Then the support function satisfies
\begin{equation}\label{supfuncformula}
s_\pi(t,x,s)=s_\pi\Big(t-\frac{\|x\|^2}{2s},0,s\Big)
\end{equation}
for all $(t,x,s) \in \g$ with $s\neq0$. Moreover $s_\pi\vert_{B(I_\pi)^0}$ is continuous when $V$ is equipped with the $\|\cdot\|$-topology.
\end{prop}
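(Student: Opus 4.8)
The plan is to read off \eqref{supfuncformula} from two soft properties of the support function --- that $s_\pi$ is $\Ad(G)$-invariant and lower semicontinuous --- together with the orbit-closure statement in Lemma~\ref{adinvariantmapF}, and then to obtain the continuity assertion by transporting everything to the two-dimensional subalgebra $\t$, on which $s_\pi$ becomes a convex function on $\R^2$ and is therefore automatically continuous on the interior of the set where it is finite.

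For the formula itself, I would first record that $\dd\pi(\Ad(g)x)=\pi(g)\dd\pi(x)\pi(g)^{-1}$ for $g\in G$, so conjugation by the unitary $\pi(g)$ leaves the spectrum unchanged and $s_\pi$ is constant on each adjoint orbit $\Ad(G)a$; moreover $s_\pi(x)=\sup_{\lambda\in I_\pi}(-\langle\lambda,x\rangle)$ is a pointwise supremum of continuous linear functionals on $\g$, hence convex and lower semicontinuous, and nowhere equal to $-\infty$ (as $I_\pi\ne\eset$). Now fix $a=(t,x,s)\in\g$ with $s\ne0$ and set $b:=F(t,x,s)=(t-\tfrac{\|x\|^2}{2s},0,s)$. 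By Lemma~\ref{adinvariantmapF}, which applies since $D(V)\subset V$ is dense, we have $b\in\overline{\Ad(G)a}$ and $a\in\overline{\Ad(G)b}$. Taking a net in $\Ad(G)a$ converging to $b$, constancy of $s_\pi$ along the orbit together with lower semicontinuity gives $s_\pi(b)\le s_\pi(a)$, and the same argument with $a$ and $b$ interchanged gives $s_\pi(a)\le s_\pi(b)$. Hence $s_\pi(a)=s_\pi(b)$, which is \eqref{supfuncformula} (as an equality in $\R\cup\{\infty\}$).

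For the continuity statement I would first dispose of two degenerate cases. If $B(I_\pi)^0=\eset$ there is nothing to prove. If $B(I_\pi)^0=\g$, then $s_\pi(x)<\infty$ and $s_\pi(-x)<\infty$ for all $x$, so each $i\dd\pi(x)$ has bounded spectrum and $\pi$ is bounded; Proposition~\ref{boundedreps} (using $D(V)$ dense) then forces $\dd\pi$ to annihilate $\heis(V,\omega)$, so by linearity $i\dd\pi(t,x,s)=sT$ with $T:=i\dd\pi(0,0,1)$ bounded self-adjoint, whence $s_\pi(t,x,s)=\max\{s\sup\Spec T,\ s\inf\Spec T\}$ depends only on $s$ and is trivially $\|\cdot\|$-continuous. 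Otherwise $B(I_\pi)^0$ is a non-empty open invariant cone with $B(I_\pi)^0\ne\g$, hence by Theorem~\ref{conelemma}(a) contained in $W_\infty$ or in $-W_\infty$; in either case $s\ne0$ throughout $B(I_\pi)^0$, so \eqref{supfuncformula} is available there. I would then consider the $\|\cdot\|$-continuous map
$$\phi:\{(t,x,s)\in\g:s\ne0\}\to\R^2,\qquad\phi(t,x,s):=\Big(t-\tfrac{\|x\|^2}{2s},\,s\Big),$$
and the linear embedding $\iota:\R^2\to\t\subset\g$, $\iota(u,s):=(u,0,s)$, so that \eqref{supfuncformula} reads $s_\pi=h\circ\phi$ on $B(I_\pi)^0$ with $h:=s_\pi\circ\iota$. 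The function $h$ is convex on $\R^2$ and nowhere $-\infty$, hence finite and continuous on $\text{int}\{h<\infty\}$. Since $B(I_\pi)^0$ is open in $\g$, for each fixed $x_0$ the slice $\{(u,s):(u,x_0,s)\in B(I_\pi)^0\}$ is open in $\R^2$ and $(u,s)\mapsto\phi(u,x_0,s)$ is a homeomorphism of $\R\times(\R\setminus\{0\})$ onto itself, so $\phi(B(I_\pi)^0)$ is open in $\R^2$; and it is contained in $\{h<\infty\}$ because $s_\pi<\infty$ on $B(I_\pi)^0$, hence in $\text{int}\{h<\infty\}$. Thus $h$ is continuous on $\phi(B(I_\pi)^0)$, and composition with the $\|\cdot\|$-continuous $\phi$ yields the $\|\cdot\|$-continuity of $s_\pi|_{B(I_\pi)^0}$.

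I expect the only genuine point requiring care to be the last step of the continuity argument: one must check that $\phi$ maps $B(I_\pi)^0$ into the interior of $\{h<\infty\}$ \emph{computed inside} the finite-dimensional space $\t$ (where it is nonempty), rather than inside $\g$ (where $\t$ has empty interior), which is exactly what lets the classical continuity of convex functions on $\R^n$ take over. By comparison, \eqref{supfuncformula} is a soft consequence of Lemma~\ref{adinvariantmapF} and the lower semicontinuity of $s_\pi$.
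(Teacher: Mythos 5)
Your proof is correct and follows essentially the same route as the paper's: both derive \eqref{supfuncformula} from $\Ad$-invariance and lower semicontinuity of $s_\pi$ via the orbit-closure statements of Lemma~\ref{adinvariantmapF}, and both reduce the continuity claim to the classical continuity of a convex function on the finite-dimensional subalgebra $\t$ using openness of the $\Ad$-invariant projection. The only cosmetic differences are that you re-derive the openness of $\phi$ slice-wise rather than simply invoking the openness of $F$ recorded in Lemma~\ref{adinvariantmapF}, and you state the $B(I_\pi)^0=\g$ case with the single expression $\max\{s\sup\Spec T,\,s\inf\Spec T\}$ instead of the paper's two-case formula, which are equivalent.
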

\begin{proof}
Recall $F$ from Lemma \ref{adinvariantmapF} and let $(t,x,s) \in \g, s\neq0$. As a supremum of linear functionals, the support function $s_\pi:\g \rightarrow \R\cup \{\infty\}$ is lower semicontinuous, i.e. the sets $\{a\in \g : s_{\pi}(a)\leq c\}$ are closed in $\g$ for all $c\in\R$. Since $s_\pi$ is also $\Ad$-invariant, we obtain from $(t,x,s)\in\overline{\Ad(G)F(t,x,s)}$ (see Lemma \ref{adinvariantmapF}) that $(t,x,s)$ lies in the closed set 
$$\{a\in \g : s_{\pi}(a)\leq s_\pi(F(t,x,s))\}.$$
Hence $s_\pi(t,x,s)\leq s_\pi(F(t,x,s))$. Moreover $F(t,x,s)\in\overline{\Ad(G)(t,x,s)}$ by Lemma~\ref{adinvariantmapF}, which implies $s_\pi(F(t,x,s))\leq s_\pi((t,x,s))$ in the same fashion. Therefore we have shown that $s_\pi(t,x,s)= s_\pi(F(t,x,s))=s_\pi\Big(t-\frac{\|x\|^2}{2s},0,s\Big)$.

If $B(I_\pi)^0=\g$ then Proposition \ref{boundedreps} implies that $s_\pi(t,x,s)=s\cdot s_\pi(0,0,1)$ for $s\geq 0$ and $s_\pi(t,x,s)=-s\cdot s_\pi(0,0,-1)$ for $s\leq 0$. In particular, then $s_{\pi}$ is continuous. In view of Proposition \ref{openinvconesgen} we thus may assume $B(I_\pi)^0 \subset W_\infty$, i.e. we have for every $(t,x,s) \in B(I_\pi)^0$ that $s>0$. From \eqref{supfuncformula} we obtain $F(B(I_\pi))\subset B(I_\pi)\cap\t$ and since $F$ is open, $F(t,x,s)=\Big(t-\frac{\|x\|^2}{2s},0,s\Big)$ lies in the interior of $B(I_\pi) \cap \t$ in $\t$ for every $(t,x,s)\in B(I_\pi)^0$. As $\t$ is finite-dimensional the convex function $s_\pi\vert_{\text{int}(B(I_\pi) \cap \t)}$ is continuous. Thus we conclude with \eqref{supfuncformula} that $s_\pi\vert_{B(I_\pi)^0}$ is $\|\cdot\|$-continuous.
\end{proof}

\begin{rmk}\label{supportfunctiongenrmk}
Suppose we are in the situation of Proposition \ref{supportfunctiongen} and let $W_d\cap \g\subset B(I_\pi)^0$ for some $d\in \overline{\R}$. The Definition \ref{Wddef} of $W_d$ shows that $(t-\frac{\|x\|^2}{2s},0,s)\in W_d\cap \t$ holds for all $(t,x,s) \in W_d$. Since $W_d\cap\t\subset B(I_\pi)^0\cap\t$ the map
$$\hat s:W_d \rightarrow \R, (t,x,s) \mapsto s_\pi\Big(t-\frac{\|x\|^2}{2s},0,s\Big)$$
is a $\|\cdot\|$-continuous extension of $s_\pi\vert_{W_d\cap \g}$ to the open subset $W_d\subset \g_A$.
\end{rmk}

\begin{cor}
Let $D(V)\subset V$ be dense. A smooth representation $\pi:G\rightarrow \U(\H)$ is semibounded if and only if the set $B(I_\pi)$ contains interior points.
\end{cor}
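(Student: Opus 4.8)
The plan is to obtain both implications essentially from the material already developed in this subsection; in fact the ``only if'' direction will be immediate from the definitions, while the ``if'' direction reduces at once to the continuity statement in Proposition \ref{supportfunctiongen}.

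For the ``only if'' direction I would argue directly: if $\pi$ is semibounded then, by definition, $s_\pi$ is bounded above on some non-empty open subset $U\subseteq\g$, so in particular $s_\pi(x)<\infty$ for every $x\in U$, i.e.\ $U\subseteq B(I_\pi)$; since $U$ is open this forces $U\subseteq B(I_\pi)^0$, so $B(I_\pi)$ has interior points. No structural input on oscillator groups is needed here.

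For the ``if'' direction I would start from the assumption $B(I_\pi)^0\neq\emptyset$. Since $D(V)\subseteq V$ is dense (and $G\subseteq G_A$ as fixed throughout Section \ref{sectog}), Proposition \ref{supportfunctiongen} applies and shows that $s_\pi\vert_{B(I_\pi)^0}$ is continuous for the $\|\cdot\|$-topology on $V$. As the topology of $\g$ is finer than the $\|\cdot\|$-topology, $s_\pi\vert_{B(I_\pi)^0}$ is then also continuous for the topology of $\g$, and a real-valued function continuous on an open set is locally bounded there. Choosing any $x_0\in B(I_\pi)^0$ and an open neighbourhood $U\subseteq B(I_\pi)^0$ of $x_0$ on which $s_\pi$ is bounded, one sees that $s_\pi$ is bounded on the non-empty open subset $U$ of $\g$, which is precisely the assertion that $\pi$ is semibounded.

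I do not expect a genuine obstacle in the corollary itself: the substantive work has already been carried out in Proposition \ref{supportfunctiongen}, whose proof uses the identity $s_\pi(t,x,s)=s_\pi(t-\frac{\|x\|^2}{2s},0,s)$ to reduce $s_\pi$ (on the relevant invariant cones) to its restriction to the two-dimensional subalgebra $\t$, where convexity forces continuity on the interior of $B(I_\pi)\cap\t$, and then combines this with the classification of open invariant cones (Theorem \ref{conelemma}(a) and Proposition \ref{openinvconesgen}). Given that, the corollary is essentially formal, and the only point worth a moment's care is the comparison of the $\|\cdot\|$-topology with the finer topology of $\g$ used in the ``if'' direction.
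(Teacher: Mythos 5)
Your proof is correct and follows essentially the same route as the paper: the ``only if'' direction is immediate from the definition of semiboundedness, and the ``if'' direction reduces to the observation that Proposition~\ref{supportfunctiongen} gives continuity (hence local boundedness) of $s_\pi$ on $B(I_\pi)^0$. Your extra care about passing from the $\|\cdot\|$-topology to the finer topology of $\g$ is a reasonable point to spell out, though the paper treats it as implicit.
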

\begin{proof}
If $B(I_\pi)^0\neq\emptyset$ then $s_\pi$ is locally bounded on $B(I_\pi)^0$ by Proposition \ref{supportfunctiongen}, so that $\pi$ is semibounded.
\end{proof}

\begin{lem}\label{lemma0}
Let $\pi:G_A \rightarrow \U(\H )$ be a smooth representation. If $-i\dd\pi(0,0,1)$ is bounded from below then $\pi$ is semibounded. If $\pi$ is semibounded with $\pi(t,0,0)=e^{it}\1$ then $-i\dd\pi(0,0,1)$ is bounded from below and $B(I_{\pi})^0 = \R \times V_A \times ]0, \infty[$.
\end{lem}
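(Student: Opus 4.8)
\emph{Setup and first statement.} Write $P:=i\dd\pi(1,0,0)$ and $Q:=i\dd\pi(0,0,1)$ for the self-adjoint derived generators; note $P$ is central, $s_\pi(1,0,0)=\sup\Spec P$, $s_\pi(0,0,1)=\sup\Spec Q$, so $-i\dd\pi(0,0,1)$ is bounded below $\iff\sup\Spec Q<\infty\iff s_\pi(0,0,1)<\infty$. I will use that the momentum set $I_\pi\subset\g_A'$ is $\Ad^*$-invariant, that $D(V_A)\subset V_A$ is dense (Definition~\ref{defstandosc}) so that Proposition~\ref{supportfunctiongen} applies to $G_A$ and gives $s_\pi(t,x,s)=s_\pi\big(t-\frac{\|x\|^2}{2s},0,s\big)$ for $s\neq0$, and that $s_\pi(y)=-\inf\langle I_\pi,y\rangle$. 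Fix $x_0'\in V_A$ with $Dx_0'\neq0$. Now assume $-i\dd\pi(0,0,1)$ is bounded below, i.e.\ there is $m\in\R$ with $s^*\geq m$ for every $\lambda=(t^*,\alpha,s^*)\in I_\pi$. Applying $\Ad^*$-invariance along the elements $(0,cx_0',0)$ and reading off the last coordinate from \eqref{coadjointactionGA}, the numbers $\tfrac{t^*}{2}c^2\|Dx_0'\|^2+c\,\alpha(Dx_0')+s^*$ are again last coordinates of points of $I_\pi$, hence $\geq m$ for all $c\in\R$; a quadratic polynomial in $c$ bounded below on $\R$ has nonnegative leading coefficient, so $t^*\geq0$. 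Thus $t^*\geq0$ and $s^*\geq m$ on all of $I_\pi$, so for $t,s>0$ we get $\langle\lambda,(t,0,s)\rangle=t^*t+s^*s\geq ms$, whence $s_\pi(t,0,s)\leq-ms$; by Proposition~\ref{supportfunctiongen}, $s_\pi(t,x,s)=s_\pi\big(t-\frac{\|x\|^2}{2s},0,s\big)\leq-ms$ whenever $s>0$ and $t>\frac{\|x\|^2}{2s}$. In particular $s_\pi$ is bounded above on the nonempty open set $W_0\cap\{s<1\}$ (recall $W_0=\{(t,x,s)\in W_\infty:t>\frac{\|x\|^2}{2s}\}$ is open, see Definition~\ref{Wddef} and Remark~\ref{openinvconesgenrmk}), so $\pi$ is semibounded.

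\emph{Second statement, reductions.} Assume $\pi$ is semibounded and $\pi(t,0,0)=e^{it}\1$. Then $\dd\pi(1,0,0)=i\1$, so $s_\pi(\pm1,0,0)=\mp1$, and since $s_\pi(\pm1,0,0)=-\inf\{\pm t^*:\lambda\in I_\pi\}$ this forces $t^*=1$ for every $\lambda\in I_\pi$. Running the argument of the previous paragraph with $t^*=1$: the numbers $\tfrac12 c^2\|Dx_0'\|^2+c\,\alpha(Dx_0')+s^*$ are last coordinates of points of $I_\pi$ and tend to $+\infty$ as $|c|\to\infty$, so $\sup\{s^*:\lambda\in I_\pi\}=+\infty$. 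Consequently $s_\pi(t',0,s')=+\infty$ for all $t'\in\R$ and $s'<0$ (here $\langle\lambda,(t',0,s')\rangle=t'+s^*s'$ is unbounded below over $\lambda\in I_\pi$), and then by Proposition~\ref{supportfunctiongen} $s_\pi\equiv+\infty$ on $\{(t,x,s):s<0\}$. Hence $B(I_\pi)\subset\{s\geq0\}$, and since $B(I_\pi)^0$ is open, $B(I_\pi)^0\subset\{s>0\}=W_\infty$.

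\emph{Second statement, conclusion.} By semiboundedness pick $(t_0,x_0,s_0)\in B(I_\pi)^0$, so $s_0>0$; Proposition~\ref{supportfunctiongen} and $t^*\equiv1$ give
\[
\infty>s_\pi(t_0,x_0,s_0)=-\Big(t_0-\tfrac{\|x_0\|^2}{2s_0}\Big)-s_0\cdot\inf\{s^*:\lambda\in I_\pi\},
\]
so $m:=\inf\{s^*:\lambda\in I_\pi\}>-\infty$; then $s_\pi(0,0,1)=-m<\infty$, i.e.\ $-i\dd\pi(0,0,1)$ is bounded below. Finally, using $t^*\equiv1$, $s^*\geq m$ on $I_\pi$ and Proposition~\ref{supportfunctiongen} once more, $s_\pi(t,x,s)=-\big(t-\frac{\|x\|^2}{2s}\big)-s\cdot\inf\{s^*\}\leq-\big(t-\frac{\|x\|^2}{2s}\big)-ms<\infty$ for every $s>0$, so $W_\infty\subset B(I_\pi)$; as $W_\infty$ is open this gives $W_\infty\subset B(I_\pi)^0$, which together with the previous paragraph yields $B(I_\pi)^0=W_\infty=\R\times V_A\times\,]0,\infty[$.

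\emph{Main point.} The essential difficulty is to upgrade a single finiteness, $s_\pi(0,0,1)<\infty$ (resp.\ the normalization $\dd\pi(1,0,0)=i\1$), to finiteness of $s_\pi$ on an \emph{open} invariant cone. This is exactly what the $\Ad^*$-invariance of $I_\pi$ together with the quadratic $Dx'$-dependence in the coadjoint formula \eqref{coadjointactionGA} provides, linking the ``Hamiltonian'' direction $(0,0,1)$ with the central direction $(1,0,0)$; Proposition~\ref{supportfunctiongen} then carries the estimate from $\t$ out to $W_0$ resp.\ $W_\infty$. Equivalently, on the representation side $\ad(0,v,0)^2(0,0,1)=\|Av\|^2(1,0,0)$, so conjugating $i\dd\pi(0,0,1)$ by $\pi(\exp(t(0,v,0)))$ generates the term $\tfrac12 t^2\|Av\|^2\,i\dd\pi(1,0,0)$, from which the same conclusions follow by spectral-theory arguments.
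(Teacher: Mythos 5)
Your proof is correct, and both parts deliver exactly the claimed conclusions, but the route differs from the paper's, especially in the first part. The paper normalizes $s_\pi(0,0,1)=0$ by tensoring with a central character $\chi_a$, then considers the closed convex cone $C:=s_\pi^{-1}(]-\infty,0])$ and observes, via Proposition~\ref{supportfunctiongen}, that the paraboloid $\{(\|x\|^2/2,x,1):x\in V_A\}$ lies in $C$; the cone generated by its closed convex hull then forces $C^0\supset W_0\neq\emptyset$. You instead extract the sign constraint $t^*\geq 0$ on $I_\pi$ from $\Ad^*$-invariance and the quadratic-in-$c$ last coordinate of \eqref{coadjointactionGA}, and combine it with $s^*\geq m$ to bound $s_\pi$ on $W_0\cap\{s<1\}$; this is a clean, more computational alternative that makes explicit the momentum-set constraints the paper keeps implicit. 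For the second part, the paper's proof is more streamlined: it uses the quadratic form in $v$ to get $B(I_\pi)^0\subset W_\infty$ directly and then condenses the rest into the identity $s_\pi(t,x,s)=\frac{\|x\|^2}{2s}-t+s\,s_\pi(0,0,1)$ (their equation \eqref{suppfuncspec}), from which everything follows immediately. You reach the same endpoints by tracking $\sup\{s^*\}=\infty$ and $\inf\{s^*\}>-\infty$ separately; this is correct but more roundabout, and worth noting that you reproduce \eqref{suppfuncspec} in disguise in your final display. Two small cosmetic points: the openness of $W_0$ is proved in Theorem~\ref{conelemma}(c), not just stated in Definition~\ref{Wddef}; and in the final step the inequality $\leq-(t-\frac{\|x\|^2}{2s})-ms$ is in fact an equality, as you note implicitly.
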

\begin{proof}
Let $b:=s_\pi(0,0,1)<\infty$. By passing to $\pi\otimes \chi_a$ for a suitable character $\chi_a(t,x,s)=e^{ias},a\in\R$, on $G_A$ we may assume that $b=0$. Consider the closed cone $C:=s_\pi^{-1}(]-\infty,0])$ in $\g_A$. By Proposition~\ref{supportfunctiongen} the paraboloid $\big\{\big(\frac{\|x\|^2}{2},x,1\big):x\in V_A\big\}$ is contained in $C$ hence also its closed convex hull. As $C$ is a cone we conclude $C^0 \neq \emptyset$ and thus $\pi$ is semibounded. Now assume $\pi(t,0,0)=e^{it}\1$. Then $I_{\pi}\subset \{1\} \times  V_A'  \times \R$. Let $\lambda=(1,\alpha,s^*)\in I_\pi$. By Proposition \ref{coadjointactionGA}
$$(\Ad^*(0,v,0)\lambda)(t,x,s)=t+\alpha(x)+ \Re\langle Dv,x\rangle+ \textstyle{\frac{s}{2}}\|Dv\|^2+s\alpha(Dv)+s^*s,$$
which is a quadratic form in $v$. If $(t,x,s)\in B(I_\pi)^0$ it is bounded from below and thus $s>0$. Hence $B(I_\pi)^0\subset \R \times V_A \times ]0, \infty[$. By Proposition \ref{supportfunctiongen} and $\pi(t,0,0)=e^{it}\1$
\begin{equation}\label{suppfuncspec}
s_\pi(t,x,s)=\frac{\|x\|^2}{2s}-t+s\cdot s_\pi(0,0,1)
\end{equation}
for $s\neq0$. If $\pi$ is semibounded we may choose $(t_0,x_0,s_0)\in B(I_\pi)^0$. Then \eqref{suppfuncspec} implies $(0,0,1)\in B(I_\pi)$ and furthermore $\R\times V_A \times ]0, \infty[ \ \subset B(I_\pi)^0$.
\end{proof}

\begin{thm}
Let $G\subset G_A$ be as above and assume that $D(V)\subset V$ is dense. Then every semibounded representation $\pi:G\to\U(\H)$ extends (uniquely) to a semibounded representation $\pi^\sharp:G_A\to\U(\H)$.
\end{thm}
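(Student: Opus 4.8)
The plan is to proceed in four stages: settle uniqueness; reduce to a ``standard'' representation; extend the Weyl operators from $V$ to $V_A$ and reassemble; and check semiboundedness of the extension. Uniqueness is immediate: $V$ is dense in $V_A$, so the image of $G$ in $G_A$ is dense, and a strongly continuous unitary representation of $G_A$ is determined by its restriction to a dense subgroup.

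For the reduction, if $B(I_\pi)=\g$ then by Proposition~\ref{boundedreps} (using $D(V)\subset V$ dense) $\pi$ is trivial on $\Heis(V,\omega)$, hence factors through the quotient morphism $G\to\R$, $\pi(t,x,s)=\rho(s)$, and $\pi^\sharp(t,x,s):=\rho(s)$ is a semibounded extension. Otherwise $\emptyset\neq B(I_\pi)^0\neq\g$, and after the isomorphism of Remark~\ref{rem:switchomega} (as in Remark~\ref{settingsbrep}) we may assume $B(I_\pi)^0\subset W_\infty$. Decomposing $\H$ spectrally along the self-adjoint generator $Z:=-i\,\dd\pi(1,0,0)$ of the center $\R\times\{0\}\times\{0\}\subset\g\cap\g_A$ (which commutes with $\pi(G)$), the part over $\ker Z$ is trivial on $\Heis(V,\omega)$ (Lemma~\ref{centraltrivrep}) and is handled as in the bounded case, while on $(\ker Z)^\perp$ one reduces --- block by block, applying Remark~\ref{rem:switchomega} where $Z<0$ and rescaling the Heisenberg coordinates --- to the standard situation $\pi(t,0,0)=e^{it}\1$, in which $K:=-i\,\dd\pi(0,0,1)$ is bounded below (Lemma~\ref{lemma0}); after a character twist take $K\geq0$.

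It then remains to extend the Weyl operators $W(x):=\pi(0,x,0)$, $x\in V$, to $x\in V_A$. Since the $W(x)$ are unitary and $V$ is dense in $V_A$, it is enough to show that $x\mapsto W(x)$ is continuous on $V$ for the coarser subspace topology inherited from $V_A$: then $W$ extends to a continuous map $V_A\to\U(\H)$, and passing to the limit in $W(x)W(y)=e^{\frac i2\omega(x,y)}W(x+y)$ and in $\pi(0,0,s)W(x)\pi(0,0,s)^{-1}=W(\gamma(s)x)$ (valid for $x,y\in V$, with $\omega=\omega_A|_{V\times V}$, $\gamma=\gamma_A|_V$, and $\omega_A$, $\gamma_A(s)$ continuous on $V_A$) makes $W$ a $\gamma_A$-covariant unitary representation of $\Heis(V_A,\omega_A)$, which together with $U:=\pi|_{\{0\}\times\{0\}\times\R}$ assembles via $\pi^\sharp(t,x,s):=e^{it}W(x)U(s)$ into a continuous unitary representation $\pi^\sharp$ of $G_A$ with $\pi^\sharp|_G=\pi$ (the non-standard blocks being reassembled accordingly). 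The continuity of $x\mapsto W(x)$ in the coarser topology is where positivity of the energy enters: since $K\geq0$ generates the flow inducing $\gamma$ and its one-particle generator $A\geq0$ has trivial kernel, $\pi|_{\Heis(V,\omega)\rtimes_\gamma\R}$ is a positive-energy representation, so by the structure theory of representations of the canonical commutation relations with a positive Hamiltonian (cf.\ \cite{NZ13}) $\pi|_{\Heis(V,\omega)}$ is quasi-equivalent to the Fock representation attached to the complex structure underlying $\omega_A$; there $\langle W(x)\Omega,\Omega\rangle=e^{-\frac14 g(x,x)}$ for a $V_A$-continuous inner product $g$ and $\|\dd\pi(0,x,0)\xi\|\le\sqrt 2\,\|x\|_A\,\|(N+1)^{1/2}\xi\|$ on the dense space of finite-particle vectors ($N$ the number operator, $\|\cdot\|_A$ a continuous seminorm on $V_A$), and transporting these back gives the claimed continuity. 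A direct alternative would start from $-(K+\tfrac12\|x\|^2)\1\le -i\,\dd\pi(0,x,0)\le(K+\tfrac12\|x\|^2)\1$ (which follows from $s_\pi(\tfrac12\|x\|^2,x,1)=s_\pi(0,0,1)$ by formula~\eqref{supfuncformula}) together with the dynamical relation $[\dd\pi(0,0,1),\dd\pi(0,x,0)]=\dd\pi(0,Dx,0)$ to bound $\|\dd\pi(0,x,0)\xi\|$ on the bounded spectral subspaces of $K$; this is the technical core.

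Finally $\pi^\sharp$ is semibounded: it is smooth (the finite-particle vectors are analytic, hence smooth, for $\pi^\sharp$), and by Remark~\ref{supportfunctiongenrmk} its support function coincides with the $\|\cdot\|$-continuous, hence locally bounded, extension $\hat s$ of $s_\pi$ to the open cone $W_d\subset\g_A$ (with $W_d\cap\g=B(I_\pi)^0$), so $s_{\pi^\sharp}$ is bounded on a non-empty open subset of $\g_A$. The main obstacle is precisely the continuity step: forcing $\pi|_{\Heis(V,\omega)}$ to be of Fock type out of positivity of the Hamiltonian --- equivalently, upgrading the order bound $|{-i\dd\pi(0,x,0)}|\le K+\tfrac12\|x\|^2$ to an $L^2$-estimate of $\|\dd\pi(0,x,0)\xi\|$ by a continuous seminorm of $x$ on $V_A$, for $\xi$ ranging over a dense subspace.
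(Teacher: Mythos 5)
Your proposal takes a genuinely different route from the paper: you attempt a hands-on construction, extending the Weyl operators $W(x)=\pi(0,x,0)$ from $V$ to $V_A$ via a continuity estimate and then reassembling $\pi^\sharp$ block by block after a spectral decomposition over the center. The paper does none of this; it observes that $B(I_\pi)^0=W_d\cap\g$ (Proposition~\ref{openinvconesgen}), that $s_\pi$ extends $\|\cdot\|$-continuously to $W_d\subset\g_A$ (Remark~\ref{supportfunctiongenrmk}), and then invokes a ready-made extension criterion, Proposition~6.2 of \cite{NSZ15}, which turns exactly this continuous extension of the support functional into a semibounded extension of the representation. So the paper's argument is two lines once Section~2 is in hand, whereas yours recreates the substance of that NSZ15 proposition from scratch for the oscillator case.

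There is, however, a genuine gap, which you yourself flag: the continuity of $x\mapsto W(x)$ in the coarser $V_A$-topology is never actually established. Your primary route --- that positivity of $K=-i\,\dd\pi(0,0,1)$ forces $\pi|_{\Heis(V,\omega)}$ to be quasi-equivalent to a Fock representation --- is not available here and would be circular: \cite{NZ13} does not contain such a structure theorem, and the classification of CCR representations with a positive Hamiltonian is, per the abstract and introduction of this very paper, obtained only in the \emph{subsequent} paper \cite{Ze15} as an application of the complex-semigroup machinery developed here (including the present theorem). Your ``direct alternative'' via the order bound $|{-i\,\dd\pi(0,x,0)}|\le K+\tfrac12\|x\|^2$ is the right kind of estimate, but upgrading a form bound to a graph-norm estimate $\|\dd\pi(0,x,0)\xi\|\lesssim\|x\|\cdot(\text{something depending on }\xi)$ on a dense invariant domain, and then propagating this through the Weyl relation, is precisely the technical core you leave undone; you do not supply the commutator bootstrap, the choice of dense domain, or the argument that the resulting $\pi^\sharp$ is smooth (the claim that ``the finite-particle vectors are analytic'' again presupposes a Fock structure you have not produced). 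A secondary soft spot is the ``block by block'' reassembly after the direct-integral decomposition over $Z$: you would need measurability of the field of extensions and an argument that semiboundedness persists under the reassembly, none of which is indicated. In short, the scaffolding and the reductions are sound and match the paper's Remarks~\ref{settingsbrep} and~\ref{supportfunctiongenrmk}, but the load-bearing analytic step is missing.
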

\begin{proof}
By  Remark~\ref{settingsbrep} we may assume that $B(I_\pi)^0 \subset W_\infty$. Hence $B(I_\pi)^0=W_d\cap \g$ for some $d\in \overline{\R}$ (Proposition~\ref{openinvconesgen}). By Remark \ref{supportfunctiongenrmk} the support functional $s_\pi\vert_{B(I_\pi)^0}$ extends to a continuous map $\hat s:W_d \rightarrow \R$. Thus the assertion follows from \cite[Prop.~6.2]{NSZ15}.
\end{proof}

\section{Complex semigroups associated to $G_A$}\label{sectcomplexsemigroupsassoctoG_A}

In this section we associate complex involutive semigroups to the standard oscillator group $G_A$. More precisely, we consider a dense subgroup $G_A^\cO\subset G_A$ (with a finer topology) which admits a complexification $G_{A,\C}$. For the open invariant cones $W_d\subset \g_A, d\in\overline\R$, we then consider the subset $S_d=G_A^\cO\cdot\exp_\C(iW_d^\cO)$ of $G_{A,\C}$, where $W_d^\cO=W_d\cap \g_A^\cO$. We show that $S_d$ is an open complex subsemigroup which admits a natural involution turning it into a complex involutive semigroup. The semigroups $S_d$ will be useful for studying the semibounded representations of $G_A$ (cf. Section \ref{applsbrepr}).

\subsection{Complex oscillator groups}

Let us first recall the definition of a complex involutive semigroup from \cite{Ne08}.

\begin{dfn}\label{complexsemigrpdef}
\begin{itemize}
\item[\rm(a)] An \textit{involutive complex semigroup} is a complex manifold $S$ modeled on a locally convex space endowed with a holomorphic semigroup multiplication and an involution denoted by $*:S\rightarrow S,s\mapsto s^*$ which is an antiholomorphic antiautomorphism.
\item[\rm(b)] A function $\alpha: S\rightarrow \R_{\geq0}$ is called an \textit{absolute value} if
$$\alpha(s)=\alpha(s^*) \quad \text{ and } \quad \alpha(st) \leq \alpha(s)\alpha(t)$$
for all $s,t\in S$.
\item[\rm(c)] A \textit{holomorphic representation} $\pi$ of a complex involutive semigroup $S$ on the Hilbert space $\H$ is a morphism $\pi:S\rightarrow B(\H)$ of semigroups with $\pi(s^*)=\pi(s)^*$ for $s\in S$ such that $\pi$ is holomorphic if $B(\H)$ is endowed with its natural complex Banach space structure defined by the operator norm. If $\alpha$ is an absolute value on $S$, then the representation $\pi$ is said to be \textit{$\alpha$-bounded} if $\|\pi(s)\| \leq \alpha(s)$ holds for each $s\in S$. The representation $\pi$ is called \textit{non-degenerate} if  $\pi(S)v = \{0\}$ implies $v=0$.
\end{itemize}
\end{dfn}

Consider a standard oscillator group $G_A=\Heis(V,\omega) \rtimes_{\gamma} \R$, where $V:=C^{\infty}(A)\subset H_A$. Denote the complex structure on $V$ resp. $H_A$ by $I$. Consider the real inner product $b(x,y):=\Re\langle x,y\rangle$ on $H_A$ and let $((H_A)_\C,\langle \cdot, \cdot \rangle_\C)$ be the complex inner product space obtained by complexification of $(H_A,b)$, where $(H_A)_\C=H_A\oplus iH_A$. By complex-linear extension we obtain $\gamma:\R \rightarrow \U((H_A)_\C)$ and denote its self-adjoint generator by $A_c$. Thus $iA_c\vert_{V_\C}=D_\C$ where $D_\C:V_\C\rightarrow V_\C$ is the complex-linear extension of $D$. Consider the canonical conjugation $\sigma$ on $(H_A)_\C, \sigma(x+iy)=x-iy$. Recall from Proposition \ref{equicontholvecprop} the dense subspace $V_\C^\cO \subset V_\C$, which is a Fr\'echet space when equipped with the $C^\cO$-topology, and the action
$$\gamma_{\C}:\C \times V^{\cO}_{\C} \rightarrow V^{\cO}_{\C},\gamma_{\C}(z,v)=\gamma_{\C}(z)(v)=\sum_{k\geq0}\frac{1}{k!}z^kD_\C^kv$$
which extends the action $\gamma$ on $V^{\cO}$. Note that $(V_{\C})^{\cO}=(V^{\cO})_{\C}$ by Remark \ref{equicontcomplexificationrmk}. Let $\omega_\C: V^\cO_\C \times V^\cO_\C \rightarrow \C$ be the complex bilinear extension of $\omega_A\vert_{V^\cO \times V^\cO}$.
\begin{dfn}\label{complexsemigroupsG_A}
\begin{itemize}
\item[\rm(a)] Define the complex Lie group
$$G_{A,\C}:=\Heis_\C(V_{\C}^{\cO},\omega_\C) \rtimes_{\gamma_{\C}} \C$$
with Lie algebra $\g_{A,\C}^{\cO}$. Since $V^{\cO}_\C$ is a Fr\'echet space, $G_{A,\C}$ is a Fr\'echet--Lie group.
\item[\rm(b)] We also consider the Fr\'echet--Lie group 
$$G_A^{\cO}:=\Heis(V^\cO,\omega) \rtimes_{\gamma} \R$$
with Lie algebra $\g_A^\cO$. Obviously $\g_{A,\C}^{\cO}$ is the complexification of $\g_A^\cO$. Furthermore $G_A^{\cO}\subset G_A$ is a dense subgroup of $G_A$ but the topology on $G_A^{\cO}$ is finer than that on~$G_A$.
\item[\rm(c)] Define $S_A:=\{(z,x,s) \in G_{A,\C} : \Im s > 0 \}$, which is an open complex subsemigroup  in $G_{A,\C}$. We set $W_\infty^\cO:=\R \times V^{\cO} \times ]0,\infty[$.
\end{itemize}
\end{dfn}

The complex Lie group $G_{A,\C}$ has an exponential map which is given by
\begin{align}\label{expformula}
\exp_{\C}(z,x,s)=\left(z+\frac{1}{2}\int_0^1\int_0^1 \omega_{\C}\left(\gamma_{\C}(stt')x,\gamma_{\C}(st)x\right)tdt'dt\ ,\  \int_0^1\gamma_{\C}(st)xdt\ ,\ s\right)
\end{align}
with $z,s \in \C$ and $x\in V^{\cO}_{\C}$ (Proposition \ref{propexpmapgen}). Certainly $\exp_\C\vert_{\g_A^\cO}$ is the exponential map of $G_A^\cO$. Let us give another expression for $\exp_\C$. We define by functional calculus of the self-adjoint operator $A_c$ on $(H_A)_\C$ the operators
$$B_z:=\frac{e^{iA_c z}-1}{iA_c z}$$
for $z\in \C^\times$ and set $B_0:=\id \in \End((H_A)_\C)$. Note that $V_\C^\cO$ lies in the domain of $B_z$ because the vectors in $V_\C^\cO$ are holomorphic for the unitary one-parameter group generated by $A_c$. Obviously $B_z\vert_{V_\C^\cO}=(D_\C z)^{-1}(\gamma_\C(z)-1)$, in particular $V_\C^\cO$ is invariant under $B_z$ .

\begin{prop}
For all $(z,x,s)\in \g_{A,\C}$ with $s\neq0$ we have:
\begin{align}\label{expformulaspec}
\exp_\C(z,x,s)= \left(z+\frac{1}{2s}\left\langle x,\sigma(B_sx-x) \right\rangle_\C \ ,\ B_sx\ ,s\right).
\end{align}
\end{prop}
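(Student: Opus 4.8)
The plan is to evaluate the general exponential formula \eqref{expformula} componentwise and match it against the claimed right‑hand side. The third component is $s$ on both sides. For the second component I would expand $\gamma_\C(st)x=\sum_{k\ge0}\frac{(st)^k}{k!}D_\C^kx$ — a series converging in $V_\C^\cO$ uniformly for $t\in[0,1]$ since $x\in V_\C^\cO$ — and integrate it term by term, obtaining $\int_0^1\gamma_\C(st)x\,dt=\sum_{k\ge0}\frac{s^k}{(k+1)!}D_\C^kx$, which is precisely the power series of $(D_\C s)^{-1}(\gamma_\C(s)-1)x=B_sx$. Replacing $s$ by $st$ in the same identity gives $\int_0^1\gamma_\C(stt')x\,dt'=B_{st}x$, a fact used below.

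For the first component the preliminary step is to rewrite $\omega_\C$. Since $\langle\cdot,\cdot\rangle_\C$ is the complexification of the real inner product $b(x,y)=\Re\langle x,y\rangle$ and $\sigma$ is the associated conjugation, the form $b_\C(p,q):=\langle p,\sigma(q)\rangle_\C$ is complex bilinear and restricts to $b$ on real vectors; as $\omega_A(x,y)=-\Re\langle Dx,y\rangle=-b(Dx,y)$ on $V^\cO$, passing to complex bilinear extensions yields $\omega_\C(u,v)=-\langle D_\C u,\sigma(v)\rangle_\C$ for $u,v\in V_\C^\cO$. Then I would pull the $V_\C^\cO$‑valued integral through the continuous linear map $\omega_\C(\,\cdot\,,\gamma_\C(st)x)$ and use $\int_0^1\gamma_\C(stt')x\,dt'=B_{st}x$ together with $D_\C B_{st}x=\frac1{st}(\gamma_\C(st)-1)x$; the inner $t'$-integral becomes $-\frac1{st}\langle(\gamma_\C(st)-1)x,\sigma(\gamma_\C(st)x)\rangle_\C$, and after multiplying by $t$ and integrating (the integrand extending continuously across $t=0$) the first component equals
$$z-\frac1{2s}\int_0^1\big\langle(\gamma_\C(st)-1)x,\ \sigma(\gamma_\C(st)x)\big\rangle_\C\,dt.$$

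It remains to identify this with $z+\frac1{2s}\langle x,\sigma(B_sx-x)\rangle_\C$. Pulling $\sigma$ and $\langle x,\cdot\rangle_\C$ inside the $t$-integral gives $\langle x,\sigma(B_sx-x)\rangle_\C=\int_0^1 b_\C(x,\gamma_\C(st)x-x)\,dt$, so it suffices to show $\int_0^1\big[b_\C(\gamma_\C(st)x-x,\gamma_\C(st)x)+b_\C(x,\gamma_\C(st)x-x)\big]\,dt=0$. By bilinearity the integrand collapses, with $u:=\gamma_\C(st)x$, to $b_\C(u,u)-b_\C(x,x)$. Finally, each $\gamma(r)$, $r\in\R$, lies in $\U(H_A)$ and hence preserves $b$, so $\gamma_\C$ preserves $b_\C$ for real times; since $z\mapsto b_\C(\gamma_\C(z)x,\gamma_\C(z)x)$ is holomorphic on $\C$ (the action of $\gamma_\C$ on $V_\C^\cO$ being holomorphic and $b_\C$ continuous bilinear) and constant on $\R$, it is constant, so $b_\C(u,u)=b_\C(x,x)$ and the integrand vanishes. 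Assembling the three components proves \eqref{expformulaspec}.

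The step I expect to be the main obstacle is the last one: the expression $-\frac1{2s}\int_0^1\langle(\gamma_\C(st)-1)x,\sigma(\gamma_\C(st)x)\rangle_\C\,dt$ produced by the direct integration does not visibly agree with $\frac1{2s}\langle x,\sigma(B_sx-x)\rangle_\C$, and one needs the symmetrization above together with the observation that the real inner product $b$, being $\gamma$-invariant for real times, stays invariant under $\gamma_\C$ for all complex times by analyticity. The remaining ingredients — interchanging integrals with continuous linear maps and term-by-term integration of the exponential series — are routine in the $C^\cO$-topology of $V_\C^\cO$, where all the relevant series converge.
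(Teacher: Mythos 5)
Your proof is correct and follows the same overall strategy as the paper: reduce the double integral to $-\int_0^1\langle \gamma_\C(st)x-x,\sigma\gamma_\C(st)x\rangle_\C\,dt$ using $\omega_\C(u,v)=-\langle D_\C u,\sigma v\rangle_\C$, and then rely on the invariance identity $\langle\gamma_\C(z)x,\sigma\gamma_\C(z)x\rangle_\C=\langle x,\sigma x\rangle_\C$ to collapse the remaining term. The one genuine deviation is in how you establish this key identity: you argue by analytic continuation (the quantity is $\gamma$-invariant for real $z$ and holomorphic in $z$, hence constant), whereas the paper proves it by a one-line algebraic manipulation using $\sigma\gamma_\C(z)=\gamma_\C(\overline z)\sigma$ and $\gamma_\C(\overline z)^*=\gamma_\C(-z)$. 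Both work; the paper's version is shorter and purely formal, while yours is a fallback that would apply even if the intertwining relation for $\sigma$ were less transparent. Your ``symmetrization'' step is also just a rephrasing of the paper's splitting of the integrand, not a different idea.
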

\begin{proof}
Obviously $B_sx=\int_0^1\gamma_{\C}(st)xdt$ for $s\neq0$. Note that $\omega_\C(x,y)=-\langle D_\C x,\sigma y \rangle_\C$ and
$$\langle \gamma_\C(z)x,\sigma\gamma_\C(z) x \rangle_\C = \langle \gamma_\C(z)x,\gamma_\C(\overline{z}) \sigma x\rangle_\C = \langle \gamma_\C(-z)\gamma_\C(z)x,\sigma x \rangle_\C = \langle x,\sigma x\rangle_\C$$
for $x,y\in V_\C^\cO, z\in \C$.  Hence
\begin{align*}
s\cdot\int_0^1\int_0^1 \omega_{\C}\left(\gamma_{\C}(stt')x,\gamma_{\C}(st)x\right)tdt'dt &= - \int_0^1 \langle \gamma_\C(st)x-x,\sigma \gamma_\C(st)x \rangle_\C \ dt \\
&= \int_0^1 \langle x,\sigma\gamma_\C(st)x \rangle_\C \ dt - \langle x,\sigma x\rangle_\C = \langle x,\sigma(B_sx-x)\rangle_\C
\end{align*}
for all $x\in V_\C^\cO, s\in \C^\times$. In view of \eqref{expformula} we now obtain the statement.
\end{proof}

Note also that $D(V^\cO)\subset V^\cO$ is dense. This follows from Proposition \ref{Otopprop} since $D(V^b)\subset V$ is norm dense, which again follows from the norm density of both $D(V)$ and $V^b$ in $V$.

\begin{prop}\label{involutiononGAC}
Let $\sigma: V_\C^\cO\rightarrow V_\C^\cO$ denote the canonical complex conjugation on $V_\C^\cO$. The map 
$$*: G_{A,\C} \rightarrow G_{A,\C}, (z,v,s)\mapsto (\overline{z},\sigma(v),\overline{s})^{-1}$$
is an involutive antiholomorphic antiautomorphism. We have
$$(g\exp_\C(iw))^*=\exp_\C(iw)g^{-1}$$
for all $g\in G_A^\cO, w \in \g_A^\cO$.
\end{prop}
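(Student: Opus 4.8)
The plan is to exhibit $*$ as group inversion composed with a suitable antiholomorphic automorphism. Set
$$\tau : G_{A,\C} \to G_{A,\C}, \qquad \tau(z,v,s) := (\overline z,\ \sigma(v),\ \overline s),$$
so that by definition $g^* = \tau(g)^{-1}$ for $g\in G_{A,\C}$. First I would show that $\tau$ is an antiholomorphic automorphism of the Fr\'echet--Lie group $G_{A,\C}$. Antiholomorphy is clear, since $z\mapsto\overline z$ and $s\mapsto\overline s$ are antiholomorphic on $\C$, and $\sigma$ is the canonical conjugation of the complexification $V_\C^\cO=(V^\cO)_\C$, hence antilinear and continuous. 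To see that $\tau$ is a homomorphism one computes directly with the semidirect-product multiplication
$$(z,v,s)(z',v',s') = \Big(z+z'+\tfrac{1}{2}\,\omega_\C\big(v,\gamma_\C(s)v'\big),\ v+\gamma_\C(s)v',\ s+s'\Big),$$
using the compatibility identities $\sigma\circ\gamma_\C(z)=\gamma_\C(\overline z)\circ\sigma$ (already recorded in the proof of \eqref{expformulaspec}) and $\omega_\C(\sigma v,\sigma v')=\overline{\omega_\C(v,v')}$, the latter holding because $\omega_\C$ is the complex-bilinear extension of the real-valued form $\omega_A|_{V^\cO\times V^\cO}$ and $\sigma$ is conjugation with respect to $V^\cO$. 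Finally $\tau\circ\tau=\id$, as $\sigma$ and complex conjugation are involutions.

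Next, inversion $\iota:G_{A,\C}\to G_{A,\C}$, $g\mapsto g^{-1}$, is holomorphic (inversion in a complex Lie group) and satisfies $\iota(gh)=\iota(h)\iota(g)$, i.e.\ it is an antiautomorphism; moreover, since $\tau$ is a homomorphism, $\iota$ and $\tau$ commute. Hence $*=\iota\circ\tau$ is antiholomorphic (the antiholomorphic $\tau$ followed by the holomorphic $\iota$), is an antiautomorphism (an automorphism followed by an antiautomorphism), and $*\circ*=\iota\tau\iota\tau=\iota^2\tau^2=\id$. This gives the first assertion.

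For the polar formula, take $g\in G_A^\cO$ and $w\in\g_A^\cO$. The coordinates of $g$ are real and $\sigma$ fixes $V^\cO$, so $\tau(g)=g$, whence $g^*=\tau(g)^{-1}=g^{-1}$. Since $\tau$ is a smooth homomorphism, $\tau\circ\exp_\C=\exp_\C\circ\,d\tau_e$; in the global linear chart $\g_{A,\C}^\cO=\C\times V_\C^\cO\times\C$ the differential $d\tau_e$ is again $(z,v,s)\mapsto(\overline z,\sigma(v),\overline s)$, which sends $iw=(it,ix,is)$ (for $w=(t,x,s)\in\g_A^\cO$) to $(-it,-ix,-is)=-iw$. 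Therefore $\tau(\exp_\C(iw))=\exp_\C(-iw)=\exp_\C(iw)^{-1}$, so $\exp_\C(iw)^*=\tau(\exp_\C(iw))^{-1}=\exp_\C(iw)$. Using that $*$ is an antiautomorphism,
$$(g\exp_\C(iw))^* = \exp_\C(iw)^*\,g^* = \exp_\C(iw)\,g^{-1},$$
which is the second assertion.

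The only step carrying real content is the verification that $\tau$ respects the twisted multiplication, and this comes down to the two compatibility identities above — the first already available in the text — so I do not expect a genuine obstacle; the single thing to keep track of is that $\sigma$, $\gamma_\C$ and $\omega_\C$ are continuous for the $C^\cO$-topology, which is built into the construction of $V_\C^\cO$ (and underlies the fact that $G_{A,\C}$ is a Fr\'echet--Lie group).
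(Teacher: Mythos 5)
Your proof is correct and follows essentially the same route as the paper: you introduce the same antiholomorphic map $\tau$, verify via the same two compatibility identities ($\sigma\circ\gamma_\C(z)=\gamma_\C(\overline z)\circ\sigma$ and $\omega_\C(\sigma v,\sigma v')=\overline{\omega_\C(v,v')}$) that $\tau$ is a group homomorphism, compose with inversion, and then deduce the polar formula from $\tau|_{G_A^\cO}=\id$ together with $\L(\tau)(iw)=-iw$. The minor extra detail you give (commutation of $\iota$ and $\tau$, the explicit check $*\circ *=\id$) is just an unpacking of the same argument.
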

\begin{proof}
Let $\tau: G_{A,\C} \rightarrow G_{A,\C},(z,v,s)\mapsto (\overline{z},\sigma(v),\overline{s})$. Obviously $\tau$ is involutive and antiholomorphic. Since $\omega_\C$ is the complex bilinear extension of $\omega_A\vert_{V^\cO\times V^\cO}$, we have $\overline{\omega_\C(v,w)} = \omega_\C(\sigma(v),\sigma(w))$ for all $v,w\in V_\C^\cO$. Moreover $\sigma(\gamma_\C(z)v) = \gamma_\C(\overline{z})\sigma(v)$ for $z\in \C,v\in V_\C^\cO$. Thus we conclude with the explicit formula for the multiplication in $G_{A,\C}$ that $\tau$ is a Lie group homomorphism. Since the inversion $h\mapsto h^{-1}$ in the complex Lie group $G_{A,\C}$ is a holomorphic antiautomorphism, $*$ is an antiholomorphic antiautomorphism. Obviously $\tau\vert_{G_A^\cO} = \id_{G_A^\cO}$. Note that $\L(\tau)(it,ix,is)=(-it,-ix,-is)$ for $(t,x,s)\in \g_A^\cO$ and therefore $\tau(\exp_\C(iw))=\exp_\C(-iw)$ for $w \in \g_A^\cO$. We obtain 
\[(g\exp_\C(iw))^*= \tau(\exp_\C(iw))^{-1}\tau(g)^{-1}= \exp_\C(iw)g^{-1}.\qedhere\]
\end{proof}

\subsection{Polar Decomposition for $G_{A,\C}$ and $S_A$}\label{sectpolardecG_A}

In this subsection we obtain a polar decomposition of $G_{A,\C}$ and $S_A$. More specifically we want to show that the map 
$$\psi: G_A^\cO \times \g_A^\cO \rightarrow G_{A,\C}, (g,w)\mapsto g\exp_\C(iw)$$
is a diffeomorphism. Note that we cannot use the Inverse Function Theorem here, since the involved Lie groups are Fr\'echet--Lie groups and in general not Banach. Moreover we want to consider smoothness resp. analyticity properties of $\psi^{-1}$ and $\psi^{-1}\vert_{S_A}$ w.r.t. certain topologies. To do so we shall explicitly determine $\psi^{-1}$. Note that, for $s=g\exp_\C(iw)$, we have $s^*s = \exp_\C(2iw)$. Thus the main part to determine $\psi^{-1}$ is to determine the inverse of $\g_A^\cO  \ni w \mapsto \exp_\C(iw)$. In view of \eqref{expformulaspec}, we should therefore express $x\in V^\cO$ in terms of $B_{is}ix\in V^\cO_\C, s\in \R$.  To obtain a suitable form of $\psi^{-1}$, which allows us to show smoothness properties w.r.t. certain topologies, we shall consider the real and imaginary part of $B_{is}ix$ separately. As we shall see in a moment (see equation \eqref{polardecomplemeq1}), this will lead us to the following functions:

\begin{rmk}
For $z\in \C\backslash \pi i\Z$ define 
$$\tilde{f}(z):=\frac{2z}{e^{z}-e^{-z}} \quad \text{ and } \quad \tilde{g}(z):= \frac{e^{z}+e^{-z}-2}{i(e^{z}-e^{-z})}.$$
The singularities of $\tilde{f}$ and $\tilde{g}$ at $z=0$ are removable by defining $\tilde{f}(0)=1$ and $\tilde{g}(0)=0$. Thus the maps $\tilde{f}, \tilde{g}:\{0\} \cup \C \backslash \pi i\Z \rightarrow \C$ are holomorphic. Let 
$$\C_r:=\{z\in \C : \Re z \neq 0\} = \C \backslash i\R.$$
We define 
$$f:(\C_r\cup \{0\}) \times \R \rightarrow \C, (s,x) \mapsto \tilde{f}(xs) \quad\text{ and } \quad g:(\C_r\cup \{0\}) \times \R \rightarrow \C, (s,x) \mapsto \tilde{g}(xs).$$
Then $f(\cdot,x)$ and $g(\cdot,x)$ are holomorphic on $\C_r$ for each $x\in \R$. Note that for every $s\in \C_r$ we have $\lim_{x\rightarrow \pm\infty} f(s,x) = 0$ and $\lim_{x\rightarrow \pm\infty} g(s,x)\in \{-i,i\}$. In particular $f(s,\cdot)$ and $g(s,\cdot)$ are bounded smooth functions for each $s\in \C_r$. Moreover the maps $f\vert_{\R\times\R}$ and $g\vert_{\R\times\R}$ are smooth. We set $f_s(x):=f(s,x)$ and $g_s(x):=f(s,x)$ for $(s,x)\in (\C_r\cup\{0\})\times \R$. 
\end{rmk}

\begin{dfn}\label{deff(A)g(A)}
By functional calculus of $A$ we define the operators $f_s(A),g_s(A)$, i.e.
$$f_s(A)= \frac{2sA}{e^{sA}-e^{-sA}} \in B(H_A), \quad g_s(A)= \frac{e^{sA}+e^{-sA}-2}{I(e^{sA}-e^{-sA})} \in B(H_A)$$
for $s\in \C_r$ and $f_0(A)=\id, g_0(A)=0$. Note that $f_s(A),g_s(A) \in B(H_A)^\gamma$, where $B(H_A)^\gamma$ denotes the space of bounded operators on $H_A$ which commute with $\gamma(t)$ for every $t\in \R$. Hence the operators $f_s(A)$ and $g_s(A)$ leave the subspace $V^\cO$ invariant by Lemma \ref{equiconthollem1}(a). In particular we may also consider them as operators on $V^\cO$.
\end{dfn}

Recall the operators $B_z=\frac{e^{iA_c z}-1}{iA_c z}$ on $(H_A)_\C$ for $z\in \C^\times$ and $B_0:=\id \in \End((H_A)_\C)$.

\begin{lem}
For all $s\in \R^\times, x\in V^\cO$ we have
\begin{align}\label{polardecomplemeq1}
B_{is}ix=g_s(A)f_s(A)^{-1}x+if_s(A)^{-1}x.
\end{align}
\end{lem}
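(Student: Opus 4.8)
The plan is to compute $B_{is}(ix)$ by spectral calculus for the self-adjoint operator $A_c$ on $(H_A)_\C$ and then translate the answer back into operators on $H_A$ built from $A$ and the complex structure $I$. Using $D_\C=iA_c$ on $V_\C$ one has $is\,D_\C=-sA_c$ and $\gamma_\C(is)x=\sum_{k\ge0}\frac{(is)^k}{k!}D_\C^kx=\sum_{k\ge0}\frac{(-s)^k}{k!}A_c^kx=e^{-sA_c}x$ for $x\in V_\C^\cO$, so the identity $B_z|_{V_\C^\cO}=(D_\C z)^{-1}(\gamma_\C(z)-1)$ gives
\[
B_{is}=\frac{e^{-sA_c}-1}{-sA_c}=\phi(A_c),\qquad \phi(t):=\frac{1-e^{-st}}{st},
\]
an entire function, so $\phi(A_c)$ acts on the holomorphic vectors $V^\cO$; since $\phi(A_c)$ is $\C$-linear, $B_{is}(ix)=i\,\phi(A_c)x$. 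Thus it remains to evaluate $\phi(A_c)x$ for $x\in V^\cO$. Here I would use that on $V^\cO$ one has $iA_c=D_\C=IA$ (in this relation $i$ is the imaginary unit of $(H_A)_\C$ and $I$ the complex structure of $H_A$, both acting $\C$-linearly on $(H_A)_\C$, and $i$, $I$, $A$ pairwise commute), whence $A_c=-iIA$ on $V^\cO$ and consequently $A_c^2=-I^2A^2=A^2$.

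Next I would decompose $\phi=\phi_e+\phi_o$ into even and odd parts, $\phi_e(t)=\frac{\sinh st}{st}$ and $\phi_o(t)=\frac{1-\cosh st}{st}$. Since $\phi_e$ is even and $A_c^2=A^2$, we get $\phi_e(A_c)=\phi_e(A)=\frac{\sinh sA}{sA}$. Writing the odd function as $\phi_o(t)=t\,\eta(t^2)$ with $\eta$ entire, $\phi_o(A_c)=A_c\,\eta(A_c^2)=A_c\,\eta(A^2)$, so for $x\in V^\cO$, using $A_c=-iIA$ together with $A\,\eta(A^2)=\phi_o(A)$,
\[
\phi_o(A_c)x=-iI\,\phi_o(A)x=-i\,\frac{1-\cosh sA}{sA}\,Ix.
\]
Adding the two contributions and multiplying by $i$ yields
\[
B_{is}(ix)=i\,\phi(A_c)x=i\,\frac{\sinh sA}{sA}\,x+\frac{1-\cosh sA}{sA}\,Ix.
\]

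It then remains to match this with Definition \ref{deff(A)g(A)}: one has $f_s(A)^{-1}=\frac{e^{sA}-e^{-sA}}{2sA}=\frac{\sinh sA}{sA}$, which is exactly the coefficient of $i$ above, while
\[
g_s(A)f_s(A)^{-1}=\frac{e^{sA}+e^{-sA}-2}{I(e^{sA}-e^{-sA})}\cdot\frac{\sinh sA}{sA}=\frac{\cosh sA-1}{sA}\,I^{-1}=\frac{1-\cosh sA}{sA}\,I
\]
(using $I^{-1}=-I$), which is the remaining summand; hence $B_{is}(ix)=g_s(A)f_s(A)^{-1}x+i\,f_s(A)^{-1}x$, as claimed. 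The step requiring most care is the bookkeeping between the two complex structures ($I$ on $H_A$ versus the imaginary unit of $(H_A)_\C$) together with the identity $A_c^2=A^2$; the unboundedness of operators such as $\frac{\sinh sA}{sA}$ causes no trouble, since $x\in V^\cO$ lies in the domain of every entire function of $A$ occurring above.
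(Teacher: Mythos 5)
Your proof is correct and rests on the same underlying computation as the paper's: split the relevant power series into even and odd parts in the generator and exploit $(IA)^2=-A^2$, equivalently $A_c^2=A^2$, together with the bookkeeping between the imaginary unit $i$ of $(H_A)_\C$ and the complex structure $I$ of $H_A$. The only difference is presentation: the paper expands $\sum_k \frac{(is)^k}{k!}D^k x$ directly and regroups even and odd terms by hand, whereas you phrase it as functional calculus of $A_c$ with the decomposition $\phi=\phi_e+\phi_o$ and the factorization $\phi_o(t)=t\,\eta(t^2)$, which is a slightly more conceptual packaging of the same even/odd argument. Either way the final identification with $f_s(A)^{-1}$ and $g_s(A)f_s(A)^{-1}$ (using $I^{-1}=-I$) goes through exactly as you state.
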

\begin{proof}
We calculate:
\begin{align*}
B_{is}x &= (iD_\C s )^{-1}\left(-x+\sum_{k=0}^\infty\frac{1}{k!}(is)^k D^kx\right) \\
&= (iD_\C s)^{-1}\left( -x + \sum_{k=0}^\infty\frac{1}{(2k)!}(sI)^{2k} D^{2k}x + iI^{-1}\sum_{k=0}^\infty\frac{1}{(2k+1)!}(sI)^{2k+1} D^{2k+1}x \right)\\
&= (2iD_\C s)^{-1}\left( -2x + e^{sA}x+e^{-sA}x -iI^{-1}(e^{sA}x-e^{-sA}x) \right) \\
&= \frac{1}{i}g_s(A)f_s(A)^{-1}x+f_s(A)^{-1}x.
\end{align*}
Now the assertion follows by multiplying the last equation with $i$.
\end{proof}

The purpose of the next four lemmas is to obtain smoothness and analyticity properties of $f_s(A)$ and $g_s(A)$ w.r.t. certain topologies.

\begin{lem}\label{ftsechsq}
For all $x\in \R$ we have 
$$\tilde{f}(x) = \int_\R \hat f(t)e^{ixt} dt, \ \text{ where }\ \hat f(t):= \frac{\pi}{(e^{\frac{\pi t}{2}}+e^{-\frac{\pi t}{2}})^2}.$$
\end{lem}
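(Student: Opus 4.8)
The plan is to evaluate the right-hand side directly by residue calculus. Since $e^{\pi t/2}+e^{-\pi t/2}=2\cosh(\pi t/2)$, the asserted identity is equivalent to
\[
\frac{\pi}{4}\int_\R \frac{e^{ixt}}{\cosh^2(\pi t/2)}\,dt=\frac{x}{\sinh x}\qquad(x\in\R),
\]
the removable singularity at $x=0$ being treated as in the statement. Both sides are even in $x$: the left side because $\hat f$ is even, so the integral equals $\tfrac{\pi}{4}\int_\R\cos(xt)\cosh^{-2}(\pi t/2)\,dt$; and for $x=0$ the identity reduces to the elementary computation $\int_\R\cosh^{-2}(\pi t/2)\,dt=\tfrac{4}{\pi}$. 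Hence it suffices to prove it for $x>0$.

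Fix $x>0$ and put $h(z)=e^{ixz}\cosh^{-2}(\pi z/2)$. Since $\cosh(\pi z/2)$ has simple zeros exactly at $z_k=i(2k+1)$, $k\in\Z$, the function $h$ is meromorphic with a double pole at each $z_k$ and no other singularities. One integrates $h$ over the positively oriented rectangle with vertices $\pm T$ and $\pm T+2Ni$, which encloses $z_0,\dots,z_{N-1}$. On the vertical edges $\Re z=\pm T$ we have $|\cosh(\pi z/2)|^2=\cosh^2(\pi T/2)-\sin^2(\tfrac{\pi}{2}\Im z)\ge\sinh^2(\pi T/2)$ while $|e^{ixz}|\le1$, so these contributions vanish as $T\to\infty$; on the top edge $\Im z=2N$ we have $|\cosh(\pi z/2)|=\cosh(\tfrac{\pi}{2}\Re z)$ and $|e^{ixz}|=e^{-2Nx}$, so that edge contributes at most $\tfrac{4}{\pi}e^{-2Nx}$. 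Letting first $T\to\infty$ and then $N\to\infty$ gives $\int_\R h(t)\,dt=2\pi i\sum_{k\ge0}\Res_{z_k}h$, the series converging absolutely once we see each residue is $O(e^{-(2k+1)x})$.

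The computational core is the residue at the double pole $z_k$. Using $\cosh(\pi z_k/2)=0$, $\sinh(\pi z_k/2)=i(-1)^k$, and that the second derivative of $\cosh$ also vanishes at $\pi z_k/2$, a short Taylor expansion yields $\cosh^2(\pi z/2)=-\tfrac{\pi^2}{4}(z-z_k)^2\bigl(1+O((z-z_k)^2)\bigr)$ near $z_k$, where the factor $-\pi^2/4$ is $\bigl(\tfrac{\pi}{2}\sinh(\pi z_k/2)\bigr)^2=\bigl(\tfrac{i\pi}{2}(-1)^k\bigr)^2$. Consequently
\[
\Res_{z_k}h=\lim_{z\to z_k}\frac{d}{dz}\Bigl[(z-z_k)^2 h(z)\Bigr]=\frac{ix\,e^{ixz_k}}{-\pi^2/4}=-\frac{4ix}{\pi^2}\,e^{-(2k+1)x},
\]
using $e^{ixz_k}=e^{-(2k+1)x}$. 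Summing the geometric series gives $\sum_{k\ge0}\Res_{z_k}h=-\tfrac{4ix}{\pi^2}\cdot\tfrac{e^{-x}}{1-e^{-2x}}=-\tfrac{2ix}{\pi^2\sinh x}$, hence $\int_\R h(t)\,dt=\tfrac{4x}{\pi\sinh x}$ and $\tfrac{\pi}{4}\int_\R h(t)\,dt=\tfrac{x}{\sinh x}=\tilde{f}(x)$.

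The only genuinely delicate point is the uniformity of the bound on the top edge: the integrand there must decay in $\Re z$ so that the edge contribution really is $O(e^{-2Nx})$, which is why the rectangular contour is used rather than a circular one (on a large semicircle the denominator would approach the poles clustered near the imaginary axis). Everything else is routine Taylor-expansion bookkeeping, the main pitfall being to track the sign and the factor $\pi^2/4$ correctly in the residue.
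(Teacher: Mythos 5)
Your residue calculation is correct, and it is exactly the method the paper alludes to: the paper merely states ``this can be shown with the Residue Theorem'' and cites \cite{Ha00} without giving details. Your proposal fills in that proof self-containedly -- the choice of rectangular contour, the residues $-\tfrac{4ix}{\pi^2}e^{-(2k+1)x}$ at the double poles $z_k=i(2k+1)$, the geometric series, and the edge estimates all check out.
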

\begin{proof}
This can be shown with the Residue Theorem and for a proof we refer to \cite[Sect. A.15 Eq. (A.15.6)]{Ha00}.
\end{proof}

\begin{dfn}
Let $U_1\subset X_1$ and $U_2\subset X_2$ be open subsets of the topological spaces $X_1$ resp. $X_2$ and $f:U_1 \times U_2 \rightarrow \C, (x_1,x_2)\mapsto f(x_1,x_2)$ be a map. We say that $f(x_1,\cdot)$ is a \textit{bounded function locally uniformly in $x_1$}, if for every $p\in X_1$ there is a neighborhood $N$ of $p$ in $X_1$ such that $\sup_{x_1\in N,x_2\in U_2} f(x_1,x_2) <\infty$. In this case we also say that $f(x_1,x_2)$ is a \textit{bounded function in $x_2$ locally uniformly in $x_1$}.
\end{dfn}

\begin{lem}\label{polardecomplemma0}
The maps 
$$b_1:\C_r\rightarrow B(H_A)^\gamma,s\mapsto f_s(A) \quad \text{and}\quad b_2:\C_r\rightarrow B(H_A)^\gamma,s\mapsto g_s(A)$$
are holomorphic.
\end{lem}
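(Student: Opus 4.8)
The plan is to reduce everything to scalar estimates on the half-line via the functional-calculus norm bound $\|\phi(A)\|\le\sup_{x\ge0}|\phi(x)|$ for bounded Borel $\phi$ on $[0,\infty)\supseteq\spec A$, and then, near each $s_0\in\C_r$, to produce a power series for $b_1$ (resp.\ $b_2$) converging in operator norm. Two elementary facts about the scalar functions underpin this. First, $\tilde f$ and $\tilde g$ are holomorphic on $\Omega:=\C\setminus\pi i(\Z\setminus\{0\})$ --- all of their poles lie on $i\R$. Second, for every $\theta\in(0,\pi/2]$ both are bounded on the double sector $\Sigma_\theta:=\{z\in\C:|\Re z|\ge(\sin\theta)|z|\}$: for $|\Re z|\ge1$ one has $|e^z-e^{-z}|\ge\tfrac{1}{2}e^{|\Re z|}$, giving $|\tilde f(z)|\le4|z|e^{-|\Re z|}$ and $|\tilde g(z)|\le8$, while on the compact part $\Sigma_\theta\cap\{|z|\le R\}$ both functions are continuous (no pole $\pi i k$, $k\neq0$, lies in $\Sigma_\theta$), hence bounded there. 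I would also note at the outset, from Definition \ref{deff(A)g(A)}, that $b_1$ and $b_2$ take values in the norm-closed subspace $B(H_A)^\gamma\subseteq B(H_A)$, so it suffices to prove holomorphy as $B(H_A)$-valued maps.

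Fixing $s_0\in\C_r$ and writing $\alpha:=|\Re s_0|>0$, the key geometric point is that for every $x\ge0$ the closed disc $\overline{B}(s_0x,\tfrac{\alpha}{2}x)$ is disjoint from $i\R$ (since $\dist(s_0x,i\R)=|\Re(s_0x)|=\alpha x$), hence lies in $\Omega$; moreover each $\zeta$ in it obeys $|\Re\zeta|\ge\tfrac{\alpha}{2}x$ and $|\zeta|\le(|s_0|+\tfrac{\alpha}{2})x$, so the union of these discs over $x\ge0$ sits inside one fixed sector $\Sigma_{\theta_0}$. Cauchy's estimate then yields $|\tilde f^{(n)}(s_0x)|\le n!\,(2/\alpha)^n x^{-n}M$ with $M:=\sup_{\Sigma_{\theta_0}}|\tilde f|<\infty$, so the bounded Borel functions $c_n(x):=\tfrac{1}{n!}x^n\tilde f^{(n)}(s_0x)$ satisfy $\sup_{x\ge0}|c_n(x)|\le(2/\alpha)^n M$, whence $\sum_{n\ge0}\|(s-s_0)^n c_n(A)\|\le M\sum_{n\ge0}(2|s-s_0|/\alpha)^n<\infty$ for $|s-s_0|<\alpha/2$. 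For such $s$ and each $x\ge0$ one has the Taylor expansion $f_s(x)=\tilde f(sx)=\sum_{n\ge0}(s-s_0)^n c_n(x)$ (the radius of convergence of $\sigma\mapsto\tilde f(\sigma x)$ at $\sigma=s_0$ is at least $\dist(s_0,i\R)=\alpha$), with tails dominated uniformly in $x$ by the geometric series; hence $\|f_s(A)-\sum_{n\le N}(s-s_0)^n c_n(A)\|\le\sup_{x\ge0}|f_s(x)-\sum_{n\le N}(s-s_0)^n c_n(x)|\to0$. This exhibits $b_1(s)=f_s(A)=\sum_{n\ge0}(s-s_0)^n c_n(A)$ as a locally norm-convergent power series, so $b_1$ is holomorphic on $\C_r$; the same argument with $\tilde g$ in place of $\tilde f$ handles $b_2$.

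The only real obstacle is the unboundedness of $\spec A$: it forces the Cauchy radius to be chosen proportional to $x$ --- small enough to clear the poles on $i\R$, large enough that $x^n/\rho(x)^n$ stays bounded --- and this works precisely because $\dist(s_0x,i\R)$ grows at the same linear rate in $x$; it is exactly this balance that produces $\sup_x|c_n(x)|\le(2/\alpha)^n M$ and the uniform domination of the Taylor tails. Everything else (continuity of the functional calculus, passage from uniform convergence of scalar functions to norm convergence of operators) is routine. A fallback I would be ready to use instead is to verify that $s\mapsto\langle f_s(A)v,w\rangle$ is holomorphic for all $v,w\in H_A$ by differentiating under the spectral integral --- the needed domination being again the exponential decay of $\tilde f'$ along rays in $\C_r$ --- together with local norm-boundedness of $b_1$, and then invoke the standard fact that a locally bounded weakly holomorphic Banach-space-valued map is holomorphic.
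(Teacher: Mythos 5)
Your proposal is correct, but it takes a genuinely different route from the paper. The paper's proof is short because it routes through the auxiliary Lemma~\ref{equiconthollem2}: it explicitly computes $\partial f/\partial s$ and $\partial g/\partial s$, observes that these are bounded in $x$ locally uniformly in $s$, and then cites Lemma~\ref{equiconthollem2}, whose own proof uses the Reed--Simon theorem on pointwise-bounded convergence of spectral multipliers to get strong convergence of the difference quotients $g_n(A)\to 0$, followed by the standard fact that a locally bounded, weakly holomorphic Banach-space-valued map is holomorphic. Your argument is more constructive: you establish boundedness of $\tilde f,\tilde g$ on sectors $\Sigma_\theta$, use Cauchy estimates on the discs $\overline{B}(s_0x,\tfrac{\alpha}{2}x)$ --- whose radii scale linearly with $x$ precisely to offset the $x^n$ factor in $c_n(x)=\tfrac{1}{n!}x^n\tilde f^{(n)}(s_0x)$ --- and obtain a uniform-in-$x$ bound $\sup_x|c_n(x)|\le(2/\alpha)^nM$, which yields a norm-convergent operator power series $\sum_n(s-s_0)^nc_n(A)$ with tails dominated uniformly. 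This avoids both the Reed--Simon theorem and the weak-holomorphy criterion, and gives slightly stronger output (an explicit norm-convergent Taylor expansion). Your fallback --- verifying $s\mapsto\langle f_s(A)v,w\rangle$ is holomorphic by differentiating under the spectral integral, then invoking local boundedness --- is essentially equivalent to the paper's route. Both approaches are sound; yours just spends the geometric work up front that the paper defers to the auxiliary lemma.
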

\begin{proof}
For $s\in \C_r$ we calculate
$$\frac{\partial f}{\partial s}(s,x) = \frac{2x}{e^{xs}-e^{-xs}}- \frac{2sx^2(e^{xs}+e^{-xs})}{(e^{xs}-e^{-xs})^2}$$
if $x\neq 0$ and $\frac{\partial f}{\partial s}(s,0)=0$. From this we see that $\frac{\partial f}{\partial s}(s,\cdot)$ is a bounded function locally uniformly in $s\in \C_r$. Note that 
$$g(s,x)=\frac{\left(e^{\frac{xs}{2}}-e^{-\frac{xs}{2}}\right)^2}{i(e^{\frac{xs}{2}}+e^{-\frac{xs}{2}})(e^{\frac{xs}{2}}-e^{-\frac{xs}{2}})}=\frac{e^{xs}-1}{i(e^{xs}+1)}$$
and therefore
$$\frac{\partial g}{\partial s}(s,x) = \frac{(e^{xs}+1)e^{xs}x-(e^{xs}-1)e^{xs}x}{i(e^{xs}+1)^2}=\frac{2x}{i(e^{-xs}+e^{xs}+2)}$$
for all $s\in \C_r, x\in \R$. Hence $\frac{\partial g}{\partial s}(s,\cdot)$ is a bounded function locally uniformly in $s\in \C_r$. With Lemma \ref{equiconthollem2} we conclude that the maps $b_1$ and $b_2$ are holomorphic. 
\end{proof}

\begin{lem}\label{polardecomplemma1}
\begin{itemize}
\item[\rm(a)] The maps
$$\C_r \times V^\cO \rightarrow V^\cO, (s,v)\mapsto f_s(A)v \quad \text{ and } \quad \C_r \times V^\cO \rightarrow V^\cO, (s,v)\mapsto g_s(A)v$$
are both holomorphic. Moreover, for every $k\in \N_0$, these maps are still holomorphic when $V^\cO$ is equipped with the topology given by the norm $x\mapsto \|A^k x\|$. Furthermore these maps are holomorphic when $V^\cO$ is equipped with the $C^\infty$-topology.
\item[\rm(b)] The map 
$$h:\R \times \C_r \times V^\cO \rightarrow V^\cO, (t,s,v)\mapsto \gamma(t)f_s(A)v$$
is analytic when $V^\cO$ is equipped with the $C^\infty$-topology as well as when $V^\cO$ is equipped with the $\|\cdot\|$-topology.
\end{itemize}
\end{lem}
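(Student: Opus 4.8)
The engine of the proof is the observation that $B(H_A)^\gamma$ acts ``boundedly'' on $V^\cO$ with respect to every topology in sight. Consider the evaluation map
$$\mu : B(H_A)^\gamma \times V^\cO \to V^\cO, \quad (T,v)\mapsto Tv,$$
which is well defined by Lemma~\ref{equiconthollem1}(a). Any $T\in B(H_A)^\gamma$ commutes with $\gamma(t)$ for all $t$, hence with $A$ and with every $\gamma_\C(z)$ on the relevant domain; consequently $p(Tv)\le \|T\|\,p(v)$ for each of the generating seminorms $p$ of the $C^\cO$-topology, of the $C^\infty$-topology, or of the $\|A^k\cdot\|$-topology on $V^\cO$ (and trivially for $\|\cdot\|$). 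Thus $\mu$ is continuous at the origin, hence jointly continuous, hence holomorphic, since a continuous complex-bilinear map of locally convex spaces is always holomorphic; and this holds simultaneously for all the topologies just listed.

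Part (a) is then immediate. The map $(s,v)\mapsto f_s(A)v$ equals $\mu\circ(b_1\times\id_{V^\cO})$ and $(s,v)\mapsto g_s(A)v$ equals $\mu\circ(b_2\times\id_{V^\cO})$, where $b_1,b_2:\C_r\to B(H_A)^\gamma$ are the holomorphic maps of Lemma~\ref{polardecomplemma0}. Composing the holomorphic map $b_i\times\id_{V^\cO}$ with the holomorphic map $\mu$ yields holomorphy of $(s,v)\mapsto f_s(A)v$ and $(s,v)\mapsto g_s(A)v$ into $V^\cO$; carrying out the same argument with the $C^\infty$-topology or the $\|A^k\cdot\|$-topology (both in the $V^\cO$-factor of the domain and in the codomain) gives the remaining assertions of~(a).

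For part (b), the naive approach of composing the holomorphic extension $\gamma_\C$ of $\gamma$ with the map from (a) only handles the finest ($C^\cO$) topology, because $\gamma_\C$ is not holomorphic — indeed not even real-analytic — from $V^\cO$ equipped with the coarser $\|\cdot\|$- or $C^\infty$-topology: the Taylor coefficients $w\mapsto A^k w$ of $z\mapsto\gamma_\C(z)w$ fail to be continuous there. This is the main obstacle. The remedy is not to split off $\gamma(t)$ but to treat $\gamma(t)f_s(A)$ as a single operator from the functional calculus of $A$, with symbol $k_{t,s}(x)=e^{itx}\tilde f(sx)$. For $x\ge 0$ the factor $\tilde f(sx)$ decays like $|sx|\,e^{-|\Re s|\,x}$, which dominates the growth $e^{-(\Im z)x}$ of $e^{izx}$ whenever $\Im z>-|\Re s|$; hence $k_{z,s}$ is a bounded holomorphic symbol on the open neighborhood $\{(z,s)\in\C\times\C_r : \Im z>-|\Re s|\}$ of $\R\times\C_r$. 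Computing $\partial_z k$ and $\partial_s k$ and checking that they are bounded functions locally uniformly in $(z,s)$, Lemma~\ref{equiconthollem2} shows that $(z,s)\mapsto k_{z,s}(A)\in B(H_A)^\gamma$ is holomorphic on that neighborhood; restricting $z$ to $\R$ produces a real-analytic map $\R\times\C_r\to B(H_A)^\gamma$.

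Finally, $h$ is the composition of $(t,s,v)\mapsto (k_{t,s}(A),v)$ with $\mu$. Since $(t,s)\mapsto k_{t,s}(A)$ is real-analytic into $B(H_A)^\gamma$ and $\mu$ is continuous bilinear — hence analytic — with respect to the $C^\infty$-topology as well as the $\|\cdot\|$-topology on $V^\cO$, this composition is real-analytic for each of these topologies, which is precisely the assertion of~(b). (Alternatively, Lemma~\ref{ftsechsq} expresses $\gamma(t')f_s(A)v$ as the rapidly convergent average $\tfrac1s\int_\R \hat f(t/s)\,\gamma(t+t')v\,dt$, and one may differentiate under the integral sign; but the combined functional-calculus route above keeps the uniformity over the two topologies transparent.)
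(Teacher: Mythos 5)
Your proof is correct and, for both parts, follows essentially the same approach as the paper: part (a) composes $b_1,b_2$ with the continuous bilinear evaluation map of Lemma~\ref{equiconthollem1}(b), and part (b) treats $e^{izA}f_s(A)$ as a single functional-calculus operator, establishes holomorphic $B(H_A)^\gamma$-valued dependence on $(z,s)$ in a complex neighborhood of $\R\times\C_r$, and then composes with the same bilinear map. The only divergence is in how operator-valued holomorphy is obtained in (b): you propose computing the symbol derivatives and applying Lemma~\ref{equiconthollem2} (which is stated for a single complex variable, so a two-variable application still has to pass through separate holomorphy plus Hartogs), whereas the paper derives separate holomorphy of $(z,s)\mapsto T(z,s)v$ from part (a) and Proposition~\ref{equicontholvecprop}(c) and then upgrades via Hartogs and density --- both routes are sound, and your explicit identification of the region $\Im z>-|\Re s|$ of local boundedness is a useful concretization of what the paper only asserts to exist.
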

\begin{proof}
(a) is an immediate consequence of Lemma \ref{polardecomplemma0} and Lemma \ref{equiconthollem1}(b).\newline
(b) Let $(t_0,s_0)\in \R\times \C_r$. Then we find small neighborhoods $U$ of $t_0$ in $\C$ and $U'$ of $s_0$ in $\C_r$ such that the operators
$$h_{z,s}(A):=\frac{2sAe^{zIA}}{e^{sA}-e^{-sA}}$$
defined by functional calculus are uniformly bounded for $z\in U, s\in U'$. Then the map
$$T: U\times U' \rightarrow B(H_A)^\gamma, (z,s)\mapsto h_{z,s}(A)$$
is locally bounded. Let $v\in V^\cO$. By part (a) the map 
$$U'\rightarrow H_A,s\mapsto T(z,s)v=f_s(A)e^{zIA}v$$
is holomorphic for every $z\in U$ since $e^{zIA}v\in V^\cO$. By Proposition \ref{equicontholvecprop}(c) the map \break $U\rightarrow H_A,z\mapsto T(z,s)v=e^{zIA}f_s(A)v$ is holomorphic for every $s\in U'$. With Hartog's Theorem we conclude that $(z,s)\mapsto T(z,s)v$ is holomorphic for every $v\in V^\cO$. Since $V^\cO\subset H_A$ is dense and $T$ is locally bounded this yields that $T$ is holomorphic, cf. \cite[Lem. 3.4]{Ne08}. Now we obtain the assertion from Lemma \ref{equiconthollem1}(b), since the map $U\times U'\times V^\cO \rightarrow V^\cO, (z,s,v)\mapsto T(z,s)v$ is a local extension of $h$.
\end{proof}

\begin{lem}\label{polardecomplemma1.5}
The map $\alpha: \R \times V^\cO \rightarrow V^\cO, (s,v) \mapsto f_s(A)v$ is smooth.
\end{lem}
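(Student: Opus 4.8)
The plan is to obtain an explicit integral formula for $\alpha$ and then differentiate under the integral sign; the one point not already covered by Lemma \ref{polardecomplemma1} is the value $s=0$, which lies outside $\C_r$, so the functional-calculus argument used there is unavailable and Fourier inversion is used instead. First, since $A=A^*\ge 0$ we have $\spec(sA)\subset\R$ for every $s\in\R$, so Lemma \ref{ftsechsq} together with the spectral theorem yields
$$f_s(A)=\tilde f(sA)=\int_\R\hat f(t)\,e^{istA}\,dt=\int_\R\hat f(t)\,\gamma(st)\,dt \qquad (s\in\R),$$
the middle equality being Fourier inversion inside the functional calculus (Fubini applies because $\hat f\in L^1(\R)$ and $t\mapsto e^{istA}$ is a bounded, strongly measurable curve of unitaries). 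Hence $\alpha(s,v)=\int_\R\hat f(t)\,\gamma(st)v\,dt$ for all $(s,v)\in\R\times V^\cO$, the integrand being $V^\cO$-valued because $\gamma(\R)$ preserves $V^\cO$.

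The second step is a domination estimate. Recall that $V^\cO$ carries the seminorms $p_c(v)=\|e^{cA}v\|$, $c\in\N$ (the space of $\gamma$-holomorphic vectors of the unitary one-parameter group $e^{itA}$, $A\ge0$), that $(r,v)\mapsto\gamma(r)v$ is a smooth (indeed analytic) action of $\R$ on $V^\cO$ (Proposition \ref{equicontholvecprop}), and that $\frac{d}{dr}\gamma(r)v=\gamma(r)Dv$ with $D$ commuting with every $\gamma(r)$. Since $e^{cA}$ commutes with the unitary $\gamma(r)=e^{irA}$, each $\gamma(r)$ is a $p_c$-isometry, i.e. $p_c(\gamma(r)v)=p_c(v)$. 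The $n$-th partial derivative in $s$ of the integrand is $\hat f(t)\,t^n\,\gamma(st)D^nv$, and because $\lambda^n e^{c\lambda}\le C_{n,c}\,e^{(c+1)\lambda}$ on $[0,\infty)$ we have $D^nv\in V^\cO$ with $p_c(D^nv)\le C_{n,c}\,p_{c+1}(v)$. Therefore, for $s$ ranging over any bounded set, this derivative is bounded in the seminorm $p_c$ by the $t$-integrable function $t\mapsto C_{n,c}\,|\hat f(t)|\,|t|^n\,p_{c+1}(v)$, with a bound depending continuously on $v$; replacing $v$ by an arbitrary direction handles the mixed $s$- and $v$-derivatives by linearity in $v$.

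Finally, since $V^\cO$ is Fréchet, hence Mackey complete, the standard theorem on differentiation of parameter-dependent integrals applies: all iterated directional derivatives of $\alpha$ exist, are given by absolutely convergent $V^\cO$-valued integrals, and are jointly continuous in $(s,v)$, so $\alpha$ is smooth. (At $s=0$ one recovers $\partial_s^n\alpha(0,v)=\tilde f^{(n)}(0)\,A^nv$, in agreement with $f_0(A)=\id$.) I do not anticipate a serious obstacle: the only step that looks delicate — estimating $\gamma(st)v$ uniformly in $t\in\R$ — is in fact harmless, because for real $s,t$ the operator $\gamma(st)$ is unitary and commutes with the seminorm-defining operators $e^{cA}$, so the exponential growth that appears for complex times never enters this particular integral. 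The substantive work (holomorphy for $s\in\C_r$) was already done in Lemma \ref{polardecomplemma1}, and here only the easier smoothness statement, valid also at $s=0$, is needed.
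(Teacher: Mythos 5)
Your proof is correct and takes essentially the same route as the paper: both start from the Fourier-inversion formula $f_s(A)v=\int_\R\hat f(t)\gamma(st)v\,dt$ obtained from Lemma \ref{ftsechsq}, and both differentiate under the integral sign, using the $\gamma$-invariance of the seminorms on $V^\cO$ to dominate the $s$-derivatives of the integrand by $t$-integrable functions. The only cosmetic differences are that you work with the exponential seminorms $p_c(v)=\|e^{cA}v\|$ instead of the $q_n$ of Remark \ref{equiconthilbertrmk} (their equivalence is true but is worth a line of justification), and that the paper makes the vector-valued differentiation precise by first differentiating the scalar-valued integrals $\lambda(f_s(A)v)$ for $\lambda\in(V^\cO)'$ and then invoking Grothendieck's theorem, whereas you appeal more compactly to a direct vector-valued parameter-dependent-integral theorem for the Mackey-complete target $V^\cO$.
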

\begin{proof}
Recall $f_s(x)=\tilde f(sx)$ for $s,x\in \R$. Further recall $\hat f(t)=\frac{\pi}{(e^{\frac{\pi t}{2}}+e^{-\frac{\pi t}{2}})^2}$ and $f_s(x)= \int_\R \hat f(t)e^{isxt} dt$ from Lemma \ref{ftsechsq}. Thus
$$f_s(A)v=\int_\R \hat f(t)\gamma(st)v dt$$
for $s\in\R, v\in V^\cO$ as a weak integral in the Hilbert space $V_A$. By Remark \ref{equicontsomermks}(b), this equation also holds as a weak integral in $V^\cO$ w.r.t. the $C^\cO$-topology. Let $\lambda\in (V^\cO)'$ be a continuous linear form on $V^\cO$. Then
\begin{align}\label{pdlem15eq1}
\lambda(f_s(A)v)= \int_\R u(t,s) dt, \quad u(t,s):= \hat f(t)\lambda(\gamma(st)v).
\end{align}
Since $\hat f$ and $\gamma:\R\times V^\cO\rightarrow V^\cO$ are smooth (Proposition \ref{equicontholvecprop}(c)), the map $u:\R\times \R\rightarrow V^\cO$ is smooth. Since $\lambda\in (V^\cO)'$ and the topology on $V^\cO$ is generated by $\gamma$-invariant seminorms, there is a $\gamma$-invariant seminorm $q$ on $V^\cO$ such that $|\lambda(v)|\leq q(v), v\in V^\cO$. Then we have
$$\frac{\partial^k u}{\partial s^k}(t,s)=\hat f(t)t^k\cdot \lambda(\gamma(st)D^kv) \quad \text{ and } \quad |\lambda(\gamma(st)D^kv)|\leq q(D^kv).$$
Since $\R\ni t\mapsto \hat f(t)t^k$ is an integrable function for every $k\in \N_0$ we obtain by differentiation under the integral sign that the map $\R\ni s\mapsto \lambda(f_s(A)v)$ is smooth with derivatives 
$$\frac{\partial^k \lambda(f_s(A)v)}{\partial s^k}(t,s)=\int_\R \frac{\partial^k u}{\partial s^k}(t,s) dt=\lambda\left(\int_\R\hat f(t)t^k\gamma(st)D^kvdt\right).$$
Here we note that the integral $\int_\R\hat f(t)t^k\gamma(st)D^kvdt$ exists as a weak integral in $V^\cO$ by Remark \ref{equicontsomermks}(b). By Grothendieck's Theorem (\cite[Thm.~D.1]{Gl04}) we conclude that $\R\ni s\mapsto \alpha(s,v)=f_s(A)v$ is smooth with derivatives
$$\frac{\partial^k \alpha}{\partial s^k}(s,v)=\int_\R\hat f(t)t^k\gamma(st)D^kvdt.$$
For a $\gamma$-invariant seminorm $q$ on $V^\cO$ we thus have
\begin{align*}
q\Big(\textstyle{\frac{\partial^k \alpha}{\partial s^k}}(s,v)-\textstyle{\frac{\partial^k \alpha}{\partial s^k}}(s_n,v_n)\Big)\leq
q\Big(\textstyle{\frac{\partial^k \alpha}{\partial s^k}}(s,v)-\textstyle{\frac{\partial^k \alpha}{\partial s^k}}(s_n,v)\Big)+\int_\R\hat f(t)t^kdt\cdot q(D^k(v-v_n)).
\end{align*}
This shows that $\frac{\partial^k \alpha}{\partial s^k}$ is continuous for all $k\in\N_0$, since we have already shown that $\R\ni s\mapsto \alpha(s,v)$ is smooth for all $v\in V^\cO$.	As $\alpha(s,v)$ is linear in $v$ for fixed $s$, we conclude that $\alpha$ is smooth.
\end{proof}

Now we want to express $\exp_\C(0,ix,is)$ in terms of $f_s(A)$ and $g_s(A)$. Therefore we consider the following map.

\begin{dfn}
We define the map $\theta: V^\cO \times \R \rightarrow \Heis(V_\C^\cO,\omega_\C)\subset G_{A,\C}$ by
$$\theta(v,s): = \left(\frac{i}{2s}\|f_s(A)v\|^2-\frac{i}{2s}\Re \langle f_s(A)v,v \rangle\ ,\  -g_s(A)v-iv\ ,\ 0\right)$$
for $s\neq 0$ and $\theta(v,0):=(0,-iv,0)$.
\end{dfn}

\begin{lem}\label{polardecomplemma2}
\begin{itemize}
\item[\rm(a)] $\exp_\C(0,ix,is)\exp_\C(0,0,-is)= \theta(f_s(A)^{-1}x,s)^{-1}$ holds for all $x\in V^\cO$ and $s\in \R$.
\item[\rm(b)] The map $\theta\vert_{V^\cO \times \R^\times}$ is analytic. Moreover $\theta\vert_{V^\cO \times \R^\times}$ is analytic when $V^\cO$ and $V^\cO_\C$ are equipped with the $C^\infty$-topology. Furthermore $\theta\vert_{V^\cO \times \R^\times}$ is analytic when $V^\cO$ and $V^\cO_\C$ are equipped with the topology induced by the norm $x\mapsto \|x\|+\|Ax\|, x \in V^\cO$.
\end{itemize}
\end{lem}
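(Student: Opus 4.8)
The plan is to prove (a) by a direct computation with the explicit formula \eqref{expformulaspec}, and (b) by exhibiting the coordinates of $\theta$ as compositions of analytic maps, the analytic dependence on $s$ coming from Lemma~\ref{polardecomplemma1}(a). For (a): when $s=0$ the identity is immediate, since \eqref{expformula} together with $\gamma_\C(0)=\id$ and $\omega_\C(ix,ix)=0$ gives $\exp_\C(0,ix,0)\exp_\C(0,0,0)=(0,ix,0)=\theta(x,0)^{-1}$ and $f_0(A)=\id$. For $s\neq0$ I would apply \eqref{expformulaspec} to the triple $(0,ix,is)$ to get
\[
\exp_\C(0,ix,is)=\big(z_1,\,B_{is}(ix),\,is\big),\qquad z_1:=\tfrac{1}{2is}\big\langle ix,\sigma\big(B_{is}(ix)-ix\big)\big\rangle_\C ,
\]
observe $\exp_\C(0,0,-is)=(0,0,-is)$, and multiply in $G_{A,\C}=\Heis_\C(V_\C^\cO,\omega_\C)\rtimes_{\gamma_\C}\C$: since $\gamma_\C(is)0=0$ the $\C$-component cancels and one is left with $\big(z_1,B_{is}(ix),0\big)$. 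As $(z,v,0)^{-1}=(-z,-v,0)$ in this group (because $\omega_\C(v,-v)=0$), it remains to verify $\theta(f_s(A)^{-1}x,s)=\big(-z_1,-B_{is}(ix),0\big)$. The $V_\C^\cO$-component is $-B_{is}(ix)$ precisely by \eqref{polardecomplemeq1}, which also exhibits $f_s(A)^{-1}x$ as the imaginary part of $B_{is}(ix)\in V_\C^\cO$, so that $\theta(f_s(A)^{-1}x,s)$ is well defined and $g_s(A)f_s(A)^{-1}x,\ f_s(A)^{-1}x\in V^\cO$. By \eqref{polardecomplemeq1} the real and imaginary parts of $B_{is}(ix)-ix$ are $g_s(A)f_s(A)^{-1}x$ and $f_s(A)^{-1}x-x$, so $\sigma(B_{is}(ix)-ix)=g_s(A)f_s(A)^{-1}x-i(f_s(A)^{-1}x-x)$; expanding $z_1$ by sesquilinearity of $\langle\cdot,\cdot\rangle_\C$ and using that it restricts to $\Re\langle\cdot,\cdot\rangle$ on $H_A$ yields
\[
z_1=\tfrac{1}{2s}\Re\big\langle x,g_s(A)f_s(A)^{-1}x\big\rangle+\tfrac{i}{2s}\Re\big\langle x,f_s(A)^{-1}x-x\big\rangle .
\]

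The only non-routine point is that the first summand vanishes. Writing $g_s(A)f_s(A)^{-1}=-I\cdot\tfrac{e^{sA}+e^{-sA}-2}{2sA}$ presents this operator as $I$ times a symmetric function of $A$, hence $\big\langle x,g_s(A)f_s(A)^{-1}x\big\rangle$ is purely imaginary for $x\in V^\cO$ and its real part is $0$. Thus $z_1=\tfrac{i}{2s}\big(\Re\langle x,f_s(A)^{-1}x\rangle-\|x\|^2\big)$, which is exactly the negative of the $\C$-component $\tfrac{i}{2s}\|x\|^2-\tfrac{i}{2s}\Re\langle x,f_s(A)^{-1}x\rangle$ of $\theta(f_s(A)^{-1}x,s)$ (recall $f_s(A)f_s(A)^{-1}x=x$); this proves (a). I regard this vanishing as the main content of (a); everything else is bookkeeping with the group law and the decomposition $V_\C^\cO=V^\cO\oplus iV^\cO$.

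For (b): I would show that the two nonconstant coordinates $\theta_1(v,s)=\tfrac{i}{2s}\|f_s(A)v\|^2-\tfrac{i}{2s}\Re\langle f_s(A)v,v\rangle$ and $\theta_2(v,s)=-g_s(A)v-iv$ of $\theta$ are analytic, for each of the three pairs of topologies in the statement. The key observation is $\R^\times\subseteq\C_r$, so Lemma~\ref{polardecomplemma1}(a) gives that $(s,v)\mapsto f_s(A)v$ and $(s,v)\mapsto g_s(A)v$ are holomorphic, hence analytic, on $\C_r\times V^\cO$ for the $C^\cO$-topology, the $C^\infty$-topology, and the norm topologies of $\|A^k{\cdot}\|$ for all $k\in\N_0$ — in particular for the $\|{\cdot}\|+\|A{\cdot}\|$-topology, which is the join of the $k=0$ and $k=1$ cases; restricting the first variable to the real-analytic submanifold $\R^\times$ of $\C_r$ preserves analyticity. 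Combining this with the analyticity of $s\mapsto\tfrac{i}{2s}$ on $\R^\times$, of the continuous inclusions $V^\cO\hookrightarrow H_A$ and $V^\cO\hookrightarrow V_\C^\cO$, of multiplication by $i$ on $V^\cO$, and of the continuous polynomial maps $x\mapsto\|x\|^2$ and $(x,y)\mapsto\Re\langle x,y\rangle$ on $H_A$, and using stability of analyticity under sums, products and compositions, one concludes that $\theta_1$ and $\theta_2$ are analytic into the relevant spaces, so $\theta=(\theta_1,\theta_2,0)$ is analytic into $\Heis(V_\C^\cO,\omega_\C)\subseteq G_{A,\C}$. Here I do not anticipate a genuine obstacle beyond keeping track of which map is being viewed in which topology; the substantive input for both parts is the analytic $s$-dependence of $f_s(A)$ and $g_s(A)$ furnished by Lemma~\ref{polardecomplemma1}.
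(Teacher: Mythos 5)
Your proposal is correct and follows essentially the same route as the paper: for (a) you apply \eqref{expformulaspec} and \eqref{polardecomplemeq1}, reduce the computation of the central component to the vanishing of $\Re\langle x, g_s(A)f_s(A)^{-1}x\rangle$, and justify this by the skew-symmetry of $g_s(A)f_s(A)^{-1}$ (your phrasing "$I$ times a symmetric function of $A$" is exactly the paper's "skew-symmetric on $V^\cO$, which is easily seen from the definition"); for (b) you invoke Lemma~\ref{polardecomplemma1}(a) just as the paper does. The only difference is that you spell out the $s=0$ case, the group-law bookkeeping, and the combination of the $k=0$ and $k=1$ norm topologies more explicitly than the paper, which simply says these are "obvious" or a "consequence."
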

\begin{proof}
Recall $D=IA$ on $V^\cO$.  Note that
$$g_s(A)f_s(A)^{-1} = \frac{e^{sA}+e^{-sA}-2}{2sIA}.$$
Let $x\in V^\cO,s\in \R^\times$. Recall \eqref{polardecomplemeq1}:
$$B_{is}ix=g_s(A)f_s(A)^{-1}x+if_s(A)^{-1}x.$$
Thus we obtain:
\begin{align}
\frac{1}{is}\langle ix,\sigma(B_{is}ix-ix)\rangle_\C& = \frac{1}{s}\langle x,g_s(A)f_s(A)^{-1}x-if_s(A)^{-1}x\rangle_\C+\frac{1}{is}\|x\|^2 \nonumber \\
&= \frac{i}{s}\left(\Re\langle x,f_s(A)^{-1}x\rangle - \|x\|^2\right). \label{polardecomplemma2eq1}
\end{align}
Here the last equation follows from the fact that $g_s(A)f_s(A)^{-1}$ is skew-symmetric on $V^{\cO}$, which is easily seen from the definition of $f_s(A)$ and $g_s(A)$ for $s\in \R$, cf. Definition~\ref{deff(A)g(A)}. By \eqref{expformulaspec} we have:
$$\exp_\C(0,ix,is)\exp_\C(0,0,-is)= \left(\frac{1}{2is}\left\langle ix,\sigma(B_{is}ix-ix) \right\rangle_\C \ ,\ B_{is}ix\ ,0\right).$$
This equation together with \eqref{polardecomplemma2eq1} and \eqref{polardecomplemeq1} yields (a) for $s\neq 0$. Obviously (a) also holds for $s=0$. Note that (b) is a consequence of Lemma \ref{polardecomplemma1}(a).
\end{proof}
We also record the following lemma which we will need later.

\begin{lem}\label{swsemigrplem0}
Let $x=(x_0,0,x_2)\in \t=\R\times\{0\}\times \R, x_2\neq 0$. Then for every $y\in \g_A^\cO$ there exists $a\in \overline{[x,\g_A^\cO]}$, such that the logarithmic derivative of $\exp_\C$ satisfies
\begin{align}\label{swsemigrplem0eq1}
\delta(\exp_\C)_{ix}(i(y+a)) \in iy+\g_A^\cO.
\end{align}
\end{lem}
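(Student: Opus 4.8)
The plan is to compute the left logarithmic derivative $\delta(\exp_\C)_{ix}$ explicitly and then to solve a linear equation for $a$. Write $x=(x_0,0,x_2)$ with $x_2\neq0$ and $y=(y_0,y_1,y_2)\in\g_A^\cO$, so that $y_1\in V^\cO$. From the bracket $[(t,v,s),(t',v',s')]=(\omega_\C(v,v'),sD_\C v'-s'D_\C v,0)$ of $\g_{A,\C}^\cO$ one reads off that $\ad(ix)(z',v',s')=(0,ix_2D_\C v',0)$ and hence, for $k\geq1$, $\ad(ix)^k(z',v',s')=(0,T^kv',0)$ with $T:=ix_2D_\C\in\End(V_\C^\cO)$ — an operator leaving $V_\C^\cO$ invariant and for which $e^{zT}$ converges on $V_\C^\cO$ for every $z\in\C$, since the elements of $V_\C^\cO$ have entire $\gamma_\C$-orbits. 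Since moreover $\exp_\C(ix)=ix$ by \eqref{expformula}, the usual formula $\delta(\exp_\C)_{ix}=\sum_{k\geq0}\frac{(-1)^k}{(k+1)!}\ad(ix)^k$ — which here is just as easily checked directly by differentiating \eqref{expformulaspec} at $ix$ and left-translating by $ix$ — gives
$$\delta(\exp_\C)_{ix}(z',v',s')=(z',\Psi v',s'),\qquad \Psi:=\sum_{k\geq0}\frac{(-1)^k}{(k+1)!}T^k=\frac{1-e^{-ix_2D_\C}}{ix_2D_\C}.$$

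The next step is to split $V_\C^\cO=V^\cO\oplus iV^\cO$ according to the conjugation $\sigma$, with $P_+$ the projection onto $V^\cO$. As $D_\C$ is the complexification of the real operator $D$ on $V^\cO$, it preserves $V^\cO$, so $T=ix_2D_\C$ interchanges $V^\cO$ and $iV^\cO$, whereas $T^2$ acts on $V^\cO$ as $-x_2^2D^2=x_2^2A^2$ (recall $D=IA$, so $D^2=-A^2$ on $V^\cO$). Therefore, for $\eta\in V^\cO$ only the even powers of $T$ contribute to the $V^\cO$-component of $\Psi\eta$, and
$$P_+\Psi\big|_{V^\cO}=\sum_{j\geq0}\frac{x_2^{2j}A^{2j}}{(2j+1)!}=\frac{\sinh(x_2A)}{x_2A}=f_{x_2}(A)^{-1},$$
a well-defined operator on $V^\cO$ (note $x_2\in\R^\times\subseteq\C_r$; see Definition \ref{deff(A)g(A)}).

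I would then make the ansatz $a=(0,w,0)$ with $w\in V^\cO$, which is the right shape: one has $[x,\g_A^\cO]=\{(0,x_2Dv',0):v'\in V^\cO\}=\{0\}\times D(V^\cO)\times\{0\}$ (using $x_2\neq0$) and $D(V^\cO)$ is dense in $V^\cO$, hence $\overline{[x,\g_A^\cO]}=\{0\}\times V^\cO\times\{0\}$. For such $a$ one gets $\delta(\exp_\C)_{ix}(i(y+a))=(iy_0,\,i\Psi(y_1+w),\,iy_2)$; the first and third coordinates automatically lie in $iy_0+\R$ and $iy_2+\R$, so the requirement $\delta(\exp_\C)_{ix}(i(y+a))\in iy+\g_A^\cO$ reduces to $i(\Psi(y_1+w)-y_1)\in V^\cO$. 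Writing $\Psi(y_1+w)-y_1$ as the sum of its $V^\cO$- and $iV^\cO$-parts (here $y_1+w\in V^\cO$) and multiplying by $i$, which swaps $V^\cO$ and $iV^\cO$, this holds iff the $V^\cO$-part of $\Psi(y_1+w)-y_1$ vanishes, i.e. $P_+\Psi(y_1+w)=y_1$, i.e. $f_{x_2}(A)^{-1}(y_1+w)=y_1$. Thus the choice
$$w:=\big(f_{x_2}(A)-1\big)y_1,\qquad a:=(0,w,0)$$
works: $w\in V^\cO$ because $f_{x_2}(A)$ leaves $V^\cO$ invariant (Definition \ref{deff(A)g(A)}), so $a\in\{0\}\times V^\cO\times\{0\}=\overline{[x,\g_A^\cO]}$, and by construction $\delta(\exp_\C)_{ix}(i(y+a))-iy\in\{0\}\times V^\cO\times\{0\}\subseteq\g_A^\cO$.

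The computation itself is short; the care required is entirely of a topological-vector-space nature: one must verify that $T^k$, $\Psi$ and $f_{x_2}(A)^{\pm1}$ genuinely map $V_\C^\cO$ (respectively $V^\cO$) into itself and that the defining series converge there, which follows from the entirety of $\gamma_\C$-orbits in $V_\C^\cO$ together with Definition \ref{deff(A)g(A)} and the established properties of the operators $B_z$. The one structurally substantive point is to see that, once real and imaginary parts with respect to $\sigma$ are separated, the entire condition collapses to the single linear equation $P_+\Psi(y_1+w)=y_1$ with $P_+\Psi|_{V^\cO}=f_{x_2}(A)^{-1}$ — i.e. that the ``even part'' of $\delta(\exp_\C)_{ix}$ in the Heisenberg coordinate is exactly the operator that governs the polar decomposition.
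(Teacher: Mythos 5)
Your proof is correct and follows essentially the same route as the paper's: both identify the Heisenberg coordinate of $\delta(\exp_\C)_{ix}$ with $B_{-ix_2}=\Psi$, split it into $V^\cO$- and $iV^\cO$-parts with respect to $\sigma$, and solve the resulting linear condition by $a=(0,(f_{x_2}(A)-1)y_1,0)\in\{0\}\times V^\cO\times\{0\}=\overline{[x,\g_A^\cO]}$. The only variation is cosmetic: the paper evaluates $\delta(\exp_\C)_{ix}=\int_0^1\Ad(-tix)\,dt$ and invokes the previously established identity \eqref{polardecomplemeq1} for $B_{is}$, whereas you expand the same logarithmic derivative as a power series and reconstruct the $V^\cO$-part $P_+\Psi|_{V^\cO}=\sinh(x_2A)/(x_2A)=f_{x_2}(A)^{-1}$ from the parity of powers of $T=ix_2D_\C$.
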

\begin{proof}
Note $\overline{[x,\g_A^\cO]}= \{0\}\times V^\cO\times \{0\}$ since $D(V^\cO)\subset V^\cO$ is dense. Let $a=(0,a_1,0)\in\overline{[x,\g_A^\cO]}$ and $y=(y_0,y_1,y_2)\in \g_A^\cO$. Then by \cite[Prop. II.5.7]{Ne06} and \eqref{polardecomplemeq1}
\begin{align*}
\delta(\exp_\C)_{ix}(i(y+a)) &= \int_0^1 \Ad(-tix)i(y+a)\ dt= \left(iy_0, \int_0^1 \gamma_\C(-tix_2)i(y_1+a_1)\ dt,iy_2\right) \\
&= \left(iy_0,B_{-ix_2}i(y_1+a_1),iy_2\right) \\
&= \left(iy_0,\ g_{-x_2}(A)f_{-x_2}(A)^{-1}(y_1+a_1)+if_{-x_2}(A)^{-1}(y_1+a_1),\ iy_2\right)
\end{align*}
Thus in order to satisfy \eqref{swsemigrplem0eq1} we may choose $a_1:=f_{-x_2}(A)y_1-y_1$. Actually one may show that then even $(0,a_1,0)\in [x,\g_A^\cO]$.
\end{proof}

For $v=x+iy\in V_\C$ with $x,y\in V$ we set $\Re(v):=x$ and $\Im(v):=y$.

\begin{prop}[Polar decomposition for $G_{A,\C}$]\label{polarmapG_C}
\begin{itemize}
\item[\rm(a)] The map 
$$\psi: G_A^{\cO} \times \g_A^\cO \rightarrow G_{A,\C}, (g,w) \mapsto g\exp_{\C}(iw)$$
is a diffeomorphism and $\psi$ is analytic.
\item[\rm(b)] The map $\vp: G_{A,\C} \rightarrow G_A^\cO \times \g_A^\cO, \vp(z,y,r)=(g,(t,x,s))$ defined by 
$$s=\Im(r),\ \ x=\gamma(-\Re(r))f_s(A)(\Im(y))$$
and
\begin{align}\label{polardecprfeq1G_C}
g\cdot(it,0,0)=(z,y,0)\cdot\theta(\Im(y),\Im(r))\cdot(0,0,\Re(r))
\end{align}
is the inverse of $\psi$.
\end{itemize}
\end{prop}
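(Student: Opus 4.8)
The plan is to verify directly that the explicitly defined map $\vp$ is a two-sided inverse of $\psi$ and that $\vp$ is smooth; together with the analyticity of $\psi$ this gives the diffeomorphism claim in (a) and the inverse formula in (b). Analyticity of $\psi$ is immediate: $\exp_\C$ is the exponential map of the complex Fr\'echet--Lie group $G_{A,\C}$ and is given by the manifestly holomorphic formula \eqref{expformula} (Proposition \ref{propexpmapgen}), so $w\mapsto \exp_\C(iw)$ is analytic on the real form $\g_A^\cO$, and composing with the holomorphic multiplication of $G_{A,\C}$ shows that $\psi$ is analytic.

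Next I would record three elementary identities in $G_{A,\C}$: (i) the central splitting $\exp_\C(it,ix,is)=(it,0,0)\exp_\C(0,ix,is)$, valid because $(it,0,0)$ lies in the center of $\g_{A,\C}^\cO$; (ii) Lemma \ref{polardecomplemma2}(a) rewritten as $\exp_\C(0,ix,is)=\theta(f_s(A)^{-1}x,s)^{-1}(0,0,is)$; and (iii) the conjugation rule $(0,0,p)\,\theta(\gamma(-p)v,s)\,(0,0,-p)=\theta(v,s)$ for $p\in\R$, which follows from the explicit formula for $\theta$ together with the facts that $\gamma$ is unitary and commutes with $f_s(A)$ and $g_s(A)$. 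I would also check that $\vp$ is well defined: for $(z,y,r)\in G_{A,\C}$ one sets $s:=\Im r$ and $x:=\gamma(-\Re r)f_s(A)\Im y$, which lies in $V^\cO$ since $f_s(A)$ and $\gamma$ preserve $V^\cO$ (Definition \ref{deff(A)g(A)}); a short computation in the semidirect product then shows that the element $\xi=(\xi_z,\xi_v,\xi_r)$ on the right-hand side of \eqref{polardecprfeq1G_C} satisfies $\xi_v=\Re y-g_s(A)\Im y\in V^\cO$ and $\xi_r=\Re r\in\R$, so $\xi$ is of the form $g\cdot(it,0,0)$ for a uniquely determined $g\in G_A^\cO$ (with center component $\Re\xi_z$) and a uniquely determined $t\in\R$ (namely $t=\Im\xi_z$), since $(it,0,0)$ is central.

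To prove $\vp\circ\psi=\id$, given $(g,w)$ with $w=(t,x,s)$ I would put $h:=\psi(g,w)=g\exp_\C(iw)$ and use (i)--(ii) to write $h=g(it,0,0)\,\theta(f_s(A)^{-1}x,s)^{-1}(0,0,is)$. Reading off the third, $V_\C^\cO$- and $\C$-components of this product (routine semidirect-product bookkeeping, again using that $\gamma$ is unitary and commutes with $f_s(A),g_s(A)$) yields $\Im r=s$ and $x=\gamma(-\Re r)f_s(A)\Im y$, while solving $g(it,0,0)=h(0,0,-is)\,\theta(f_s(A)^{-1}x,s)$ and applying (iii) with $p=\Re r$, $v=\Im y$ (note $f_s(A)^{-1}x=\gamma(-\Re r)\Im y$) turns the right-hand side into $(z,y,0)\,\theta(\Im y,\Im r)\,(0,0,\Re r)$, which is precisely the defining relation for $\vp$; by the uniqueness established above, $\vp(h)=(g,w)$. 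For $\psi\circ\vp=\id$ one runs the same identities in reverse: substituting $g(it,0,0)=(z,y,0)\,\theta(\Im y,s)\,(0,0,\Re r)$ and $f_s(A)^{-1}x=\gamma(-\Re r)\Im y$ into $g\exp_\C(iw)=g(it,0,0)\,\theta(f_s(A)^{-1}x,s)^{-1}(0,0,is)$ and telescoping with (iii) collapses the product to $(z,y,0)(0,0,\Re r)(0,0,is)=(z,y,r)$.

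Finally, $\vp$ is smooth: $s=\Im r$ is linear, the map $\alpha(s,v)=f_s(A)v$ is smooth on $\R\times V^\cO$ by Lemma \ref{polardecomplemma1.5} and $\gamma$ is smooth, so $x$ depends smoothly on $(z,y,r)$; and $\theta$ is smooth on all of $V^\cO\times\R$ because, rewriting Lemma \ref{polardecomplemma2}(a) with $x=f_s(A)v$, one has $\theta(v,s)=\exp_\C(0,0,is)\,\exp_\C(0,if_s(A)v,is)^{-1}$, a composition of $\alpha$ with $\exp_\C$, inversion and multiplication, all of which are smooth. Hence $\xi$, and with it $g$ and $t$, depend smoothly on $(z,y,r)$, so $\vp=\psi^{-1}$ is smooth and $\psi$ is a diffeomorphism. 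I expect the main obstacle to be organising the semidirect-product component computations cleanly --- in particular verifying identity (iii) and extracting the three components of $g(it,0,0)\,\theta(\cdot)^{-1}(0,0,is)$ --- together with handling smoothness of $\theta$ across the locus $s=0$, for which the reformulation $\theta(v,s)=\exp_\C(0,0,is)\,\exp_\C(0,if_s(A)v,is)^{-1}$ combined with Lemma \ref{polardecomplemma1.5} is the decisive device.
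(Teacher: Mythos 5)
Your proposal is correct and follows essentially the same route as the paper: both rely on Lemma \ref{polardecomplemma2}(a) and conjugation by $(0,0,\Re r)$ (your identity (iii) is the explicit version of the paper's step $\exp_\C(\Ad(0,0,\Re r)\cdot)^{-1}$) to verify the inverse formula, and both reduce the smoothness of $\vp$ to Lemma \ref{polardecomplemma1.5} via a division-free reformulation. Your explicit observation that $\theta(v,s)=\exp_\C(0,0,is)\exp_\C(0,if_s(A)v,is)^{-1}$ is a slightly cleaner way to handle $s=0$ than the paper's $g\exp_\C(it,0,0)=q\exp_\C(0,-ix,-is)$, but the two devices are interchangeable.
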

\begin{proof}
First note that with the explicit formula for the multiplication in $G_{A,\C}$ and the definition of $\theta$ one sees that the right hand side of \eqref{polardecprfeq1G_C} lies indeed in $\C\times V^\cO\times \R$. Hence \eqref{polardecprfeq1G_C} defines $g$ and $t$ in terms of $(z,y,r)$. Now let $g\in G_A^\cO, (t,x,s)\in \g_A^\cO$ and set $$(z,y,r):=\psi(g,(t,x,s))=g\exp_\C(it,ix,is).$$
With \eqref{expformulaspec} and \eqref{polardecomplemeq1} (and an extra consideration if $s=0$) we obtain
$$(z,y,r)=g\cdot\left(u,\ g_s(A)f_s(A)^{-1}x+if_s(A)^{-1}x,\ is\right)$$
for some $u\in\C$ which we do not specify. This equation together with the explicit formula for the multiplication in $G_{A,\C}$ implies 
$$\Im(r)=s \quad \text{ and } \quad \Im(y)=\gamma(\Re(r))f_s(A)^{-1}x.$$
With these equations and Lemma \ref{polardecomplemma2}(a) we further obtain
\begin{align*}
g\cdot(it,0,0) &=(z,y,r)\cdot\exp_\C(0,ix,is)^{-1} \\
&= (z,y,is)\cdot\exp_\C\left(\Ad(0,0,\Re(r))(0,ix,is)\right)^{-1}\cdot(0,0,\Re(r))\\
&=(z,y,0)\cdot(0,0,is)\cdot\exp_\C(0,i\gamma(\Re(r))x,is)^{-1}\cdot(0,0,\Re(r))\\
& =(z,y,0)\cdot\theta(f_s(A)^{-1}\gamma(\Re(r))x,s)\cdot(0,0,\Re(r))\\
&=(z,y,0)\cdot\theta(\Im(y),\Im(r))\cdot(0,0,\Re(r)).
\end{align*}
This shows that $\vp \circ \psi = \id$. From Lemma \ref{polardecomplemma2}(a) and the definition of $\vp$ one easily derives $\psi\circ \vp = \id$, so that $\psi$ is bijective with inverse $\vp$. Certainly the map $\psi$ is analytic. To show that $\vp=\psi^{-1}$ is smooth we argue as follows: Let 
$$\psi(g,(t,x,s)) = g\exp_\C(it,ix,is)=q\in G_{A,\C}.$$
We must show that $g,t,x,s$ depend smoothly on $q$. Since the action $\gamma: \R\times V^\cO \rightarrow V^\cO$ is smooth we obtain from the definition of $\vp=\psi^{-1}$ and Lemma \ref{polardecomplemma1.5} that both $s$ and $x$ depend smoothly on $q$. Since $g\exp_\C(it,0,0)=q\exp_\C(0,-ix,-is)$ we conclude that $g,t,x,s$ depend smoothly on $q$, i.e. that $\psi^{-1}$ is smooth.
\end{proof}

\begin{prop}
If $A$ is not bounded then the  map $\psi^{-1}:G_{A,\C}\rightarrow G_A^{\cO} \times \g_A^\cO $ from Proposition \ref{polarmapG_C} is not analytic.
\end{prop}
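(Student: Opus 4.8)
The plan is to argue by contradiction: assume $\psi^{-1}=\vp$ is analytic and deduce that $A$ must be bounded. The whole obstruction sits in the $V^\cO$-component of $\psi^{-1}(z,y,r)=(g,(t,x,s))$, where by Proposition~\ref{polarmapG_C}(b) the parameter $s=\Im(r)$ enters through $x=\gamma(-\Re(r))f_s(A)(\Im(y))$. So I want to isolate the map $s\mapsto f_s(A)v$ and show that it fails to be analytic when $A$ is unbounded. The one subtlety is that $V^\cO$ carries the fine $C^\cO$-topology, so it is cleaner to test analyticity against a single curve and a single continuous linear functional rather than to argue at the level of operators; that reduction is the step I expect to require the most care, everything afterwards being a computation with a scalar function.

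Concretely, I would fix $v\in V^\cO$ and restrict $\psi^{-1}$ to the analytic curve $c(\tau):=(0,iv,i\tau)$ in $G_{A,\C}$, which is affine in the global chart $\C\times V_\C^\cO\times\C$, hence analytic. By Proposition~\ref{polarmapG_C}(b), and using $\Im(i\tau)=\tau$, $\Re(i\tau)=0$, $\Im(iv)=v$, one has $\psi^{-1}(c(\tau))=\bigl(g(\tau),(t(\tau),f_\tau(A)v,\tau)\bigr)$. Composing $\psi^{-1}\circ c$ with the continuous linear projection $\g_A^\cO=\R\times V^\cO\times\R\to V^\cO$ and then with the continuous functional $w\mapsto\langle w,v\rangle$ on $V^\cO$ (continuous because the $C^\cO$-topology refines the Hilbert norm), analyticity of $\psi^{-1}$ would force $\phi\colon\R\to\C$, $\phi(\tau):=\langle f_\tau(A)v,v\rangle$, to be analytic.

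Next I would compute the Taylor coefficients of $\phi$ at $0$. Writing $E$ for the spectral resolution of $A$ and $\nu:=\langle E(\cdot)v,v\rangle$ for the (finite, positive) spectral measure of $v$, and using $f_\tau(A)=\tilde f(\tau A)$, we get $\phi(\tau)=\int_{[0,\infty)}\tilde f(\tau\mu)\,d\nu(\mu)$. Since $\tilde f$ and all its derivatives are bounded on $\R$ (indeed $\tilde f\in\mathcal S(\R)$) and $\int\mu^k\,d\nu<\infty$ for every $k$ because $v\in V^\cO\subset\bigcap_k\mathrm{dom}(A^k)$, differentiation under the integral sign yields $\phi^{(2n)}(0)=(2n)!\,b_{2n}\,\|A^nv\|^2$ and $\phi^{(2n+1)}(0)=0$, where $\tilde f(z)=\sum_{n\ge0}b_{2n}z^{2n}$ on $|z|<\pi$. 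The nearest singularities of $\tilde f$ are the simple poles at $\pm i\pi$, so $b_{2n}\sim 2(-1)^n\pi^{-2n}$ and in particular $|b_{2n}|^{1/2n}\to 1/\pi$. Hence analyticity of $\phi$ at $0$ would force its Taylor series $\sum_n b_{2n}\|A^nv\|^2\tau^{2n}$ to have positive radius of convergence, i.e. $\tfrac1\pi\limsup_n\|A^nv\|^{1/n}=\limsup_n\bigl(|b_{2n}|^{1/2n}\|A^nv\|^{1/n}\bigr)<\infty$.

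Finally I would choose $v$ so that this fails once $A$ is unbounded, which gives the contradiction. Since $\lim_n\|A^nv\|^{1/n}$ equals the supremum of the spectral support of $v$, it suffices to produce an entire vector for $A$ with unbounded spectral support: pick $\mu_k\in\sigma(A)$ with $\mu_{k+1}>\mu_k+2$ and $\mu_k\to\infty$, choose unit vectors $u_k\in E([\mu_k-1,\mu_k+1])H_A$ (automatically orthonormal, and nonzero since $\mu_k\in\sigma(A)$), and set $v:=\sum_{k\ge1}e^{-(\mu_k+1)^2}u_k$. From $\|A^nu_k\|\le(\mu_k+1)^n$ one checks $\sum_n\frac{t^n}{n!}\|A^nv\|\le\sum_k e^{\,t(\mu_k+1)-(\mu_k+1)^2}<\infty$ for every $t$, so $v\in V^\cO$, while $\nu=\langle E(\cdot)v,v\rangle$ visibly charges a neighborhood of each $\mu_k$, so its support is unbounded and $\limsup_n\|A^nv\|^{1/n}=\infty$. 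This contradicts the bound from the previous paragraph, so $\psi^{-1}$ cannot be analytic. The only remaining chores — justifying the differentiation under the integral sign and the elementary singularity analysis $|b_{2n}|^{1/2n}\to 1/\pi$ — are routine.
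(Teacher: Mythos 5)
Your proof is correct, but it takes a genuinely different route from the paper's. Both arguments reduce, via Proposition~\ref{polarmapG_C}(b), to the claim that $(s,v)\mapsto f_s(A)v$ cannot be analytic, and both exploit the pole of $\tilde f(z)=2z/(e^z-e^{-z})$ nearest the origin, at $z=\pm i\pi$. Beyond that the strategies diverge. The paper assumes a \emph{local holomorphic extension} $\tilde\alpha$ of $(s,v)\mapsto f_s(A)v$, exists on some $B_R\times U_\C$ with a uniform bound $C_1$; using unboundedness of $A$ it then picks a spectral value $x_0>4/R$, chooses a test vector $v_0$ localized in $P_A([x_0-\eps,x_0+\eps])V$ small enough to lie in $U$ (this requires the seminorm comparison $q_N(v)\le e^{N(x_0+1)}\|v\|$ on the spectral window), and evaluates $\tilde\alpha$ at a complex parameter $i(t_0-\delta)$ near $i\pi/x_0$, where $\tilde f(isx)$ blows up; the Identity Theorem forces $\|\tilde\alpha(i(t_0-\delta),v_0)\|\ge 2C_1$, a contradiction. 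You instead fix a single entire vector $v$ with unbounded spectral support (a nontrivial but correct construction as a rapidly decaying sum of spectrally localized unit vectors), compose $\psi^{-1}$ with the analytic curve $c(\tau)=(0,iv,i\tau)$ and the continuous functional $\langle\cdot,v\rangle$, and reduce everything to the scalar function $\phi(\tau)=\int\tilde f(\tau\mu)\,d\nu_v(\mu)$; analyticity then fails because $|b_{2n}|^{1/2n}\to1/\pi$ while $\|A^nv\|^{1/n}\to\sup\supp\nu_v=\infty$, so the Taylor series of $\phi$ at $0$ has radius of convergence zero. Your route is more elementary at its core (it is in the end a statement about a scalar power series), avoids the seminorm estimates on spectral windows, and cleanly isolates the operator-theoretic reduction from the one-variable function theory; the cost is the construction of an entire vector with unbounded spectral support and the asymptotic $|b_{2n}|^{1/2n}\to1/\pi$, both of which you handle correctly.
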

\begin{proof}
For $r>0$ we set $B_r:=\{z \in \C : |z|<r\}$. Suppose that $\psi^{-1}$ is analytic. By Proposition \ref{polarmapG_C}(b) the map
$$\alpha : \R \times V^\cO \rightarrow V^\cO, (s,v) \mapsto f_s(A)v$$
is then also analytic. Hence there exists $R>0, U\subset V^\cO$ an open $0$-neighborhood, $C_1>0$ and a holomorphic map $\tilde \alpha: B_R \times U_\C \rightarrow V^\cO$ such that 
$$\tilde\alpha \vert_{(B_R\cap \R) \times U} = \alpha\vert_{(B_R \cap \R)\times U}$$
and $\|\tilde\alpha(a)\|\leq C_1$ holds for all $a\in B_R \times U_\C$. Here $U_\C=U+iU$. Since $A$ is not bounded and $A\geq0$ we find $x_0>\frac{4}{R}$ such that $x_0\in \spec(A)$, i.e. $P_A([x_0-\eps,x_0+\eps])V\neq \{0\}$ for all $\eps >0$. Recall that the $C^\cO$-topology on $V^\cO$ is generated by the norms $q_n(v)=\sum_{k\geq0}\frac{n^k}{k!}\|A^kv\|, n\in \N$. Since $q_n(v)\leq q_{n'}(v)$ for $n\leq n'$, we find $N\in \N$ and $N'>0$ such that 
$$\{ v \in V^\cO : q_N(v) < N'\} \subset U.$$
For $v\in P_A([x_0-1,x_0+1])V$ we have $q_N(v) \leq e^{N(x_0+1)}\|v\|$. Thus there is $C_2>0$ such that 
$$(P_A([x_0-1,x_0+1])V) \cap \{v\in  V^\cO : \|v\| \leq C_2\} \subset U.$$ 
Recall the holomorphic map
$$\tilde{f} :\{0\} \cup \C \backslash \pi i\Z \rightarrow \C, \tilde{f}(z) = \frac{2z}{e^{z}-e^{-z}}.$$
Now choose $0<\eps<\min(1,x_0)$, $t_0\in ]0,R[$ and $0<\delta <t_0$ such that 
\begin{align}
tx<\pi \quad &\text{ for all } t\in [0,t_0], x\in [x_0-\eps,x_0+\eps], \label{psiinvnotanalyteq1}\\
|\tilde{f}(itx)| \geq \frac{2C_1}{C_2} \quad &\text{ for all }  t\in [t_0-\delta,t_0], x \in ]x_0-\eps,x_0+\eps[\label{psiinvnotanalyteq2}.
\end{align}
This is possible since $x_0>\frac{4}{R}>\frac{\pi}{R}$ and $\lim_{z\rightarrow \pi} |\tilde{f}(iz)|=\infty$. Now choose 
$$v_0\in P_A([x_0-\eps,x_0+\eps])V \ \text{ with } \ \|v_0\|= C_2.$$
In particular $v_0\in U$. Consider the holomorphic map
$$h: B_{t_0} \rightarrow V^\cO,h(s) = \int_{x_0-\eps}^{x_0+\eps} \tilde f(sx) \ dP_A(x)v_0,$$
which is well-defined by \eqref{psiinvnotanalyteq1}. Certainly $h(s)= f_s(A)v_0$ for all $s\in B_{t_0}\cap \R$ and thus the Identity Theorem yields $h(s) = \tilde \alpha(s,v_0)$ for all $s\in B_{t_0}$. Hence we obtain with \eqref{psiinvnotanalyteq2}:
$$\|\tilde\alpha(i(t_0-\delta),v_0)\|^2 = \int_{x_0-\eps}^{x_0+\eps} |\tilde f(i(t_0-\delta)x)|^2 d\langle P_A(x)v,v \rangle \geq \frac{4C_1^2}{C_2^2}C_2^2 = 4C_1^2.$$
This is a contradiction to $\|\tilde\alpha(i(t_0-\delta),v_0)\| \leq C_1$.
\end{proof}

Recall the open invariant cone $W_\infty^\cO=\R \times V^\cO \times ]0,\infty[\subset \g_A^\cO$ and the complex subsemigroup $S_A$ of $G_{A,\C}$ from Definition \ref{complexsemigroupsG_A}(c).

\begin{prop}[Polar decomposition for $S_A$]\label{polarmap}
\begin{itemize}
\item[\rm(a)] The map 
$$\psi: G_A^{\cO} \times W_\infty^\cO \rightarrow S_A, (g,w) \mapsto g\exp_{\C}(iw)$$
is a diffeomorphism and both $\psi$ and $\psi^{-1}$ are analytic.
\item[\rm(b)] The map $\vp: S_A \rightarrow G_A^\cO \times W_\infty^\cO, \vp(z,y,r)=(g,(t,x,s))$ defined by 
$$s=\Im(r),\ \ x=\gamma(-\Re(r))f_s(A)(\Im(y))$$
and
\begin{align}\label{polardecprfeq1}
g\cdot(it,0,0)=(z,y,0)\cdot\theta(\Im(y),\Im(r))\cdot(0,0,\Re(r))
\end{align}
is the inverse of $\psi$.
\item[\rm(c)] The map $\psi^{-1}=\vp$ is also analytic when both $S_A$ and $G_A^\cO \times W_\infty^\cO$ are equipped with the topology induced by the $C^\infty$-topology on $V^\cO$. Moreover $\psi^{-1}$ is analytic when $G_A^\cO \times W_\infty^\cO$ is equipped with the topology induced by the norm-topology on $V^\cO$ and $S_A$ is equipped with the topology induced by the norm $x\mapsto \|x\|+\|Ax\|$ on $V^\cO$.
\end{itemize}
\end{prop}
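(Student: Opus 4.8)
The plan is to obtain (a) and (b) by restricting the polar decomposition of $G_{A,\C}$ from Proposition~\ref{polarmapG_C}, the only genuinely new point being that $\psi^{-1}$ is analytic \emph{on $S_A$} — in contrast to the situation on all of $G_{A,\C}$, where it is merely smooth — because on $S_A$ the parameter responsible for the loss of analyticity, namely $s=\Im r$, stays in $\R^\times\subset\C_r$, where the relevant analyticity lemmas apply.

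For (a) and (b) I would argue as follows. For $g=(g_0,g_1,g_2)\in G_A^\cO$, so $g_2\in\R$, and $w=(t,x,s)\in\g_A^\cO$, the group law in $G_{A,\C}$ shows that the last coordinate of $\psi(g,w)=g\exp_\C(iw)$ equals $g_2+is$; hence $\psi(g,w)\in S_A$ if and only if $s>0$, i.e.\ if and only if $w\in W_\infty^\cO$. Since $\psi\colon G_A^\cO\times\g_A^\cO\to G_{A,\C}$ is a bijection by Proposition~\ref{polarmapG_C}, this yields $\psi(G_A^\cO\times W_\infty^\cO)=S_A$ and $\psi^{-1}(S_A)=G_A^\cO\times W_\infty^\cO$, so $\psi$ restricts to a bijection between the open subsets $G_A^\cO\times W_\infty^\cO$ and $S_A$ whose inverse is the restriction of $\vp$ from Proposition~\ref{polarmapG_C}(b) — this is exactly the formula asserted in (b), and it indeed takes values in $G_A^\cO\times W_\infty^\cO$. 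As $\psi$ is an analytic diffeomorphism on the larger domain, the restriction is a diffeomorphism and $\psi$ itself is analytic; it remains only to prove that $\psi^{-1}=\vp$ is analytic on $S_A$.

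For this I would go through the formula for $\vp$ term by term, using throughout that on $S_A$ one has $s:=\Im r\in\,]0,\infty[\,\subset\C_r$. The maps $r\mapsto(\Re r,\Im r)$ and $y\mapsto(\Re y,\Im y)$ are analytic; the component $x=\gamma(-\Re r)f_s(A)(\Im y)$ is analytic by Lemma~\ref{polardecomplemma1}(b), which provides the analyticity of $h(t,s,v)=\gamma(t)f_s(A)v$ on $\R\times\C_r\times V^\cO$ (and in the $C^\infty$- resp.\ $\|\cdot\|$-topology on $V^\cO$), precomposed with the analytic maps just named; and $\theta(\Im y,\Im r)$ is analytic by Lemma~\ref{polardecomplemma2}(b), again in the various topologies since $\Im r\in\R^\times$. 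Finally $g$ and $t$ are read off from \eqref{polardecprfeq1}: since $(z,y,0)$ and $\theta(\Im y,\Im r)$ have vanishing last coordinate, the group multiplication in the product $(z,y,0)\cdot\theta(\Im y,\Im r)\cdot(0,0,\Re r)$ never invokes a nontrivial $\gamma_\C$ and so involves only addition in $\C$, addition in $V_\C^\cO$, and the complex bilinear form $\omega_\C$; writing the resulting element as $(w,x',r')$ one has $g=(\Re w,x',r')$ and $t=\Im w$, all depending analytically on $(z,y,r)$. For the $C^\infty$-topology version in (c) this argument runs unchanged. For the norm version one uses that $\omega_\C$ is a bounded bilinear form for the norm $x\mapsto\|x\|+\|Ax\|$ (as $\omega_\C(x,y)=-\langle D_\C x,\sigma y\rangle_\C$ with $D_\C=iA_c$) but not for $\|\cdot\|$ alone — which is exactly why the source $S_A$ must carry the stronger norm while the $V^\cO$-valued outputs $x$ and $x'=\Re y-g_s(A)(\Im y)$, being images of $\Im y$ under bounded operators, are already analytic into the $\|\cdot\|$-topology.

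I expect the only delicate point to be the bookkeeping in (c): keeping track of which topology each factor in the formula for $\vp$ is analytic in, and pinning down the single loss of one power of $A$ caused by $\omega_\C$ in the group multiplication, so that the asymmetric pair of topologies in the second assertion of (c) is precisely the right one. Everything else follows directly from Proposition~\ref{polarmapG_C} and Lemmas~\ref{polardecomplemma1}--\ref{polardecomplemma2}.
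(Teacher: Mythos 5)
Your proposal follows the paper's proof essentially verbatim: restrict $\psi$ and $\vp$ from Proposition~\ref{polarmapG_C}, observe that the last coordinate of $\psi(g,w)$ is $g_2+is$ so the restriction maps onto $S_A$, and then obtain the analyticity of $\psi^{-1}$ from Lemmas~\ref{polardecomplemma1} and~\ref{polardecomplemma2} together with the observation that on $S_A$ one has $\Im r\in\R^\times\subset\C_r$ and that the multiplications in \eqref{polardecprfeq1} never invoke a nontrivial $\gamma_\C$ (for the asymmetric norm statement in (c), this is exactly the paper's remark that $\Heis(V_1)\times\Heis(V_1)\to\Heis(V_2)$ is analytic). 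The only slip is a citation: for the default $C^\cO$-topology assertion in (a) you invoke Lemma~\ref{polardecomplemma1}(b), but that lemma only covers the $C^\infty$- and $\|\cdot\|$-topologies; the correct reference there is Lemma~\ref{polardecomplemma1}(a) combined with the holomorphicity of $\gamma_\C$ from Proposition~\ref{equicontholvecprop}(c), which is what the paper uses.
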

\begin{proof}
Note that $\psi$ and $\vp$ are the restrictions of the corresponding maps in Proposition~\ref{polarmapG_C}. We denote these corresponding maps by $\psi_F$ and $\vp_F$ in this proof, i.e. $\psi$ is the restriction of $\psi_F$ and $\vp$ the restriction of $\vp_F$. Note that $\vp$ is defined since $\vp_F(S_A)\subset G_A^\cO \times W_\infty^\cO$. From Proposition~\ref{polarmapG_C} we obtain that $\psi$ is a diffeomorphism and that (b) holds. Certainly the map $\psi$ is analytic. The definition of $\vp=\psi^{-1}$, Lemma~\ref{polardecomplemma1}(a) and Lemma~\ref{polardecomplemma2}(b) imply that $\psi^{-1}$ is also analytic. Moreover from Lemma~\ref{polardecomplemma1}(b) and Lemma~\ref{polardecomplemma2}(b) we obtain that $\psi^{-1}=\vp$ is also analytic when both $S_A$ and $G_A^\cO \times W_\infty^\cO$ are equipped with the topology induced by the $C^\infty$-topology on $V^\cO$. Now let $V_1:=V$ be $V$ equipped with the topology given by the norm $x\mapsto \|x\|+\|Ax\|$ and $V_2:=V$ be $V$ equipped with the $\|\cdot\|$-topology. Then the map 
\begin{multline*}
\Heis(V_1)\times \Heis(V_1) \rightarrow \Heis(V_2),\\((t,x),(t',x'))\mapsto (t,x)\cdot(t',x')=\big(t+t'+\textstyle{\frac{1}{2}}\Im\langle Ax,x'\rangle,x+x'\big)
\end{multline*}
is analytic. This observation, Lemma~\ref{polardecomplemma1}(b) and Lemma~\ref{polardecomplemma2}(b) imply that $\psi^{-1}=\vp$ is also analytic when $G_A^\cO \times W_\infty^\cO$ is equipped with the topology induced by the norm-topology on $V^\cO$ and $S_A$ is equipped with the topology induced by the norm $x\mapsto \|x\|+\|Ax\|$.
\end{proof}

\begin{prop}\label{involutiong_a}
The map 
$$*:S_A \rightarrow S_A,g\exp_\C(ix) \mapsto g^{-1}\exp_\C(i\Ad(g)x)$$ 
defines an involution and turns $S_A$ into an involutive complex semigroup.
\end{prop}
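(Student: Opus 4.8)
The plan is to identify the map in the statement as nothing but the restriction to $S_A$ of the involutive antiholomorphic antiautomorphism $*\colon G_{A,\C}\to G_{A,\C}$ already constructed in Proposition \ref{involutiononGAC}. Granting this, everything reduces to two checks: that this restriction really maps $S_A$ into $S_A$, and that on $S_A$ it is given by the polar-decomposition formula $g\exp_\C(iw)\mapsto g^{-1}\exp_\C(i\Ad(g)w)$. First I would note, using Proposition \ref{polarmap}(a), that every element of $S_A$ is \emph{uniquely} of the form $g\exp_\C(iw)$ with $g\in G_A^\cO$ and $w\in W_\infty^\cO$, so the formula defines a map on $S_A$ without ambiguity; one also needs $\Ad(g)W_\infty^\cO=W_\infty^\cO$, which follows because $\Ad(g)$ restricts to an automorphism of $\g_A^\cO$ and $W_\infty$ is $\Ad(G_A)$-invariant (Theorem \ref{conelemma}(c)), so $\Ad(g)W_\infty^\cO=\Ad(g)(W_\infty\cap\g_A^\cO)=W_\infty\cap\g_A^\cO=W_\infty^\cO$, whence $g^{-1}\exp_\C(i\Ad(g)w)\in G_A^\cO\exp_\C(iW_\infty^\cO)=S_A$.

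Next I would match the two descriptions of $*$ and prove stability of $S_A$. From Proposition \ref{involutiononGAC} we have $(g\exp_\C(iw))^*=\exp_\C(iw)g^{-1}$ for $g\in G_A^\cO$ and $w\in\g_A^\cO$; rewriting $\exp_\C(iw)g^{-1}=g^{-1}\bigl(g\exp_\C(iw)g^{-1}\bigr)=g^{-1}\exp_\C\!\bigl(\Ad_{G_{A,\C}}(g)(iw)\bigr)$ and using that $\Ad_{G_{A,\C}}(g)$ is the complex-linear extension of $\Ad(g)$ to $\g_{A,\C}^\cO$, so that $\Ad_{G_{A,\C}}(g)(iw)=i\Ad(g)w$, we obtain exactly $(g\exp_\C(iw))^*=g^{-1}\exp_\C(i\Ad(g)w)$. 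For stability: if $w=(t,x,s)\in W_\infty^\cO$, then by \eqref{expformula} the last coordinate of $\exp_\C(iw)$ equals $is$, with imaginary part $s>0$, so $\exp_\C(iw)\in S_A$; and since the last coordinate of a product in $\Heis_\C(V_\C^\cO,\omega_\C)\rtimes_{\gamma_\C}\C$ is the sum of the last coordinates, right multiplication by $g^{-1}$ --- whose last coordinate is real --- leaves the imaginary part of the last coordinate unchanged. Hence $(g\exp_\C(iw))^*=\exp_\C(iw)g^{-1}\in S_A$, i.e. $*(S_A)\subseteq S_A$.

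Finally, I would assemble the conclusion: $S_A$ is an open complex subsemigroup of $G_{A,\C}$ by Definition \ref{complexsemigroupsG_A}(c), hence a complex manifold with holomorphic multiplication, and $*|_{S_A}\colon S_A\to S_A$ inherits from $*$ on $G_{A,\C}$ the properties of being antiholomorphic (a restriction of an antiholomorphic map to an open subset), an antiautomorphism ($(s_1s_2)^*=s_2^*s_1^*$ holds for all $s_1,s_2\in S_A$, since then also $s_1s_2\in S_A$), and involutive ($(s^*)^*=s$ for all $s\in S_A$, legitimate since $s^*\in S_A$ by the previous step). Thus $(S_A,*)$ is an involutive complex semigroup in the sense of Definition \ref{complexsemigrpdef}(a). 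The only point that needs genuine care is the stability $*(S_A)\subseteq S_A$ together with the reconciliation of the two formulas for $*$; all the structural properties are then inherited verbatim from Proposition \ref{involutiononGAC}, so I do not expect any real obstacle beyond this bookkeeping.
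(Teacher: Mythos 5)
Your proposal is correct and takes essentially the same route as the paper's proof: recognize the map as the restriction of the involution $*\colon G_{A,\C}\to G_{A,\C}$ from Proposition \ref{involutiononGAC} (via the identity $g^{-1}\exp_\C(i\Ad(g)x)=\exp_\C(ix)g^{-1}$), use the polar decomposition of $S_A$ (Proposition \ref{polarmap}) for well-definedness, and inherit the antiholomorphic involutive antiautomorphism properties. You simply spell out in more detail the intermediate verifications (stability of $S_A$ under $*$, invariance of the cone, the bookkeeping with $\Ad$) that the paper leaves implicit.
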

\begin{proof}
The map $*$ is well-defined and analytic by Proposition \ref{polarmap}. Since $g^{-1}\exp_\C(i\Ad(g)x)=\exp_\C(ix)g^{-1}$ the map $*$ is the restriction of the map $*:G_{A,\C} \rightarrow G_{A,\C}$ from Proposition \ref{involutiononGAC}. Now the assertion follows from Proposition \ref{involutiononGAC}.
\end{proof}

\subsection{The complex semigroups $S_d$}\label{sectdefsd}
Recall the open invariant cones $W_d$ in $\g_A=\heis(V,\omega_A)\rtimes_D\R$, where $V:=C^\infty(A)$ and $C_d=W_d\cap \t$ in $\t=\R\times \{0\} \times \R$ for $d\in \overline{\R}$ from Section \ref{sectionopeninvariantcones}. We set $W_d^\cO=W_d \cap \g_A^\cO$ for all $d\in \overline{\R}$. From Proposition \ref{polarmap} we know that 
$$S_d:=G_A^\cO\exp_\C(iW_d^\cO)$$
is open in $S_A=S_\infty$ and the map $G_A^\cO\times W_d^\cO \rightarrow S_d, (g,w) \mapsto g\exp_\C(iw)$ is an analytic diffeomorphism. We now want to see that $S_d$ is a subsemigroup of $G_{A,\C}$ for every $d\in \overline{\R}$. To do so we adapt the proof of \cite[Theorem XI.1.10]{Ne00} to our infinite-dimensional oscillator group.

For $d\in \overline{\R}$ choose a smooth function $\tilde{h}_d:C_d\rightarrow \R$ such that:
\begin{itemize}
\item[\rm(a)] $\tilde{h}_d(x_n)\rightarrow \infty$ whenever $x_n\rightarrow x\in \partial C_d \subset \t$.
\item[\rm(b)] $\tilde{h}_d(x+y)\leq \tilde{h}_d(x)$ for $x,y\in C_d$.
\end{itemize}
See also \cite[Thm. VIII.3.16]{Ne00} for the existence of such a function. Note that $(t,x,s)\in W_d$ implies $(t-\frac{1}{2s}\|x\|^2,0,s)\in C_d$ which may be seen from the explicit definition of $W_d$. Now we define the smooth functions
$$h_d:W_d\rightarrow \R, \quad h_d(t,x,s):=\tilde{h}_d\Big(t-\textstyle{\frac{1}{2s}}\|x\|^2,0,s\Big).$$

\begin{lem}\label{swsemigrplem1}
For every $d\in \overline{\R}$, the smooth map $h_d$ has the following properties:
\begin{itemize}
\item[\rm(a)] $h_d$ is $\Ad$-invariant.
\item[\rm(b)] $h_d(w_n)\rightarrow \infty$ whenever $w_n\rightarrow w\in \partial W_d \subset \g_A$.
\item[\rm(c)] $h_d(w+w')\leq h_d(w)$ for $w,w'\in W_d$.
\end{itemize}
\end{lem}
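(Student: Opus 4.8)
The plan rests on the observation that $h_d=\tilde{h}_d\circ(F|_{W_d})$, where $F$ is the $\Ad$-invariant map from Lemma~\ref{adinvariantmapF}; the remark preceding Lemma~\ref{swsemigrplem1} already records $F(W_d)\subseteq C_d$, so this composition makes sense and is smooth. Part~(a) is then immediate: $W_d$ is $\Ad$-invariant by Theorem~\ref{conelemma}(c) and $F$ is $\Ad$-invariant, so $h_d(\Ad(g)w)=\tilde{h}_d(F(\Ad(g)w))=\tilde{h}_d(F(w))=h_d(w)$ for $g\in G_A$ and $w\in W_d$.

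For part~(c), fix $w=(t,x,s)$ and $w'=(t',x',s')$ in $W_d$; then $w+w'\in W_d$ since $W_d$ is a convex cone (Theorem~\ref{conelemma}(c)). Put $y:=F(w+w')-F(w)=\big(t'-\frac{\|x+x'\|^2}{2(s+s')}+\frac{\|x\|^2}{2s},\,0,\,s'\big)$. Its last coordinate $s'$ is positive, and a short computation using $f_d(w')>0$ (as $w'\in W_d=f_d^{-1}(\R_{>0})$, Remark~\ref{mapf_d}) together with the convexity estimate $\frac{\|x\|^2}{s}+\frac{\|x'\|^2}{s'}\ge\frac{\|x+x'\|^2}{s+s'}$ from the proof of Theorem~\ref{conelemma}(c) shows the second defining inequality of $C_d$ holds at $y$ (for $d=\infty$ only $s'>0$ is required). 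Hence $y\in C_d$; since $F(w)\in C_d$ as well, the inequality $\tilde{h}_d(u+v)\le\tilde{h}_d(u)$ for $u,v\in C_d$ yields $h_d(w+w')=\tilde{h}_d(F(w)+y)\le\tilde{h}_d(F(w))=h_d(w)$.

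Part~(b) is the delicate point, because $F$ does not extend continuously to the part of $\partial W_d$ lying in $\{s=0\}$. Let $w_n=(t_n,x_n,s_n)\in W_d$ with $w_n\to w=(t,x,s)\in\partial W_d$; note $s\ge 0$, each $s_n>0$, and that convergence in $\g_A$ implies $\|\cdot\|$-convergence of the middle components. If $s>0$, then $w\in W_\infty$, and since $f_d$ is $\|\cdot\|$-continuous on $W_\infty$ (Remark~\ref{openinvconesgenrmk}), $f_d>0$ on $W_d$, and $f_d\le 0$ on $W_\infty\setminus W_d$, we obtain $f_d(w)=0$; so $F(w)$ lies on the boundary ray $\{(a,0,s):s>0,\ a+ds=0\}\subseteq\partial C_d$, and $\|\cdot\|$-continuity of $F$ gives $F(w_n)\to F(w)\in\partial C_d$ with $F(w_n)\in C_d$, whence $h_d(w_n)=\tilde{h}_d(F(w_n))\to\infty$ by the boundary-blowup property of $\tilde{h}_d$. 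If $s=0$ and $d\in\R$, then $f_d(w_n)>0$ forces $0\le\frac{\|x_n\|^2}{2s_n}<t_n+ds_n$, a bounded quantity, so $F(w_n)=(a_n,0,s_n)$ with $a_n:=t_n-\frac{\|x_n\|^2}{2s_n}$ is a bounded sequence in the two-dimensional space $\t$ while $s_n\to 0^+$. Every subsequence then has a further subsequence with $a_{n_k}\to a^*$, and $a_{n_k}+ds_{n_k}=f_d(w_{n_k})>0$ forces $a^*\ge 0$, so $F(w_{n_k})\to(a^*,0,0)\in\partial C_d$ and $h_d(w_{n_k})\to\infty$; as this holds along a sub-subsequence of every subsequence, $h_d(w_n)\to\infty$.

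Finally, when $d=\infty$ the compactness above fails (the $a_n$ may run off to $-\infty$), so one argues differently: since $C_\infty=\R\times\{0\}\times\,]0,\infty[$ contains $(r,0,\varepsilon)$ for every $r\in\R$ and $\varepsilon>0$, the monotonicity $\tilde{h}_\infty(u+v)\le\tilde{h}_\infty(u)$ together with continuity of $\tilde{h}_\infty$ force it to be independent of its first coordinate and non-increasing in the third, so $h_\infty(t,x,s)=\phi(s)$ for a non-increasing $\phi$ with $\lim_{s\to 0^+}\phi(s)=\infty$ (apply the boundary-blowup property to $(0,0,1/n)\to 0$); then~(a),~(b),~(c) all follow at once, using for~(b) that $w_n\to w\in\partial W_\infty=\R\times V_A\times\{0\}$ forces $s_n\to 0^+$. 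The one genuine obstacle is precisely this $s=0$ stratum of~(b): one must abandon pointwise control of $F$ and pass to subsequences in $\t$, leaning on the cone constraint ($d\in\R$) or the structural reduction ($d=\infty$) to tame the coordinate along which $F$ degenerates.
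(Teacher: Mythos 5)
Your proof is correct and follows the paper's proof essentially point by point: the same $\Ad$-invariance structure for (a) (you lean on Lemma~\ref{adinvariantmapF} directly rather than rederiving the invariance inline), the same $s>0$ vs.\ $s=0$ case split with passage to subsequences for (b), and the same recession-cone/convexity estimate for (c). Your one genuine addition is the explicit treatment of $d=\infty$ in (b): the paper's proof parametrizes $W_d$ via the constraint $t_n+ds_n>0$ and uses it to bound $(t_n)$ from below, which only applies literally when $d\in\R$, so your observation that the axioms on $\tilde{h}_\infty$ force it to be independent of its first coordinate is what correctly handles the sequence $t_n-\|x_n\|^2/(2s_n)\to-\infty$ arising when $w\in\partial W_\infty$ has nonzero middle component -- a small but worthwhile completion.
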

\begin{proof}
(a) Since
\begin{align}\label{adjointactionont}
\Ad(t',x',s')(t,0,s)=\Big(t+\textstyle{\frac{1}{2}}s\|Dx'\|^2,-sDx',s\Big)
\end{align}
by \eqref{adjointGA}, the definition of $h_d$ shows that $h_d$ is constant on $\overline{\Ad(G_A)a}$ for every $a\in C_d$. From the definition of $W_d$ we see that $\bigcup_{a\in C_d}\overline{\Ad(G_A)a} = W_d$. Now (a) follows.\newline
(b) We write $w=(t',x,s)$ and by Definition \ref{Wddef} we may write
$$w_n=\Big(t_n+\textstyle{\frac{1}{2s_n}}\|x_n\|^2,x_n,s_n\Big) \quad \text{ with $t_n+ds_n>0, s_n>0$. }$$
Suppose first that $s>0$. Since $w_n\rightarrow w$, the sequence $(t_n)_n$ converges in this case, say $t_n\rightarrow t$ so that $t'=t+\frac{1}{2s}\|x\|^2$. Since $w\notin W_d$ we must have $t+ds=0$ which implies $(t,0,s)\in \partial C_d$. Hence $h_d(w_n)=\tilde{h}_d(t_n,0,s_n)\rightarrow \infty$ by the choice of $\tilde{h}_d$. Now suppose $s=0$. Since $t_n+ds_n>0$ and $s_n\rightarrow 0$ the sequence $(t_n)_n$ is bounded below. Moreover $t_n\leq t_n+\frac{1}{2s_n}\|x_n\|^2\rightarrow t'$ implies that $t_n$ is also bounded above. Thus we find a subsequence $(t_{n_k})_k$ such that $t_{n_k}\rightarrow t$ and 
\begin{align}\label{swsemigrplem1eq0}
h_d(w_{n_k})\rightarrow \liminf_n(h_d(w_n))\qquad \text{ as $k\rightarrow \infty$}.
\end{align}
Since $h_d(w_{n_k})=\tilde{h}_d(t_{n_k},0,s_{n_k})$ and $(t_{n_k,}0,s_{n_k})\rightarrow (t,0,0)\in \partial C_d$, we conclude that the sequence $(h_d(w_{n_k}))_k$ converges to $\infty$. Hence $h_d(w_n)\rightarrow \infty$ by \eqref{swsemigrplem1eq0}.\newline
(c) Write $w=(t+\frac{1}{2s}\|x\|^2,x,s)$ and $w'=(t'+\frac{1}{2s'}\|x'\|^2,x',s')$. Then
\begin{align}\label{swsemigrplem1eq1}
h_d(w+w') = \tilde{h}_d\big(t+t'+\delta,0,s+s'\big)=\tilde{h}_d\big((t,0,s)+(t'+\delta,0,s')\big)
\end{align}
where $\delta:=\frac{1}{2s}\|x\|^2+\frac{1}{2s'}\|x'\|^2-\frac{1}{2(s+s')}\|x+x'\|^2$. By convexity of the map $x\mapsto \|x\|^2$, we have
$$\textstyle{\frac{s}{s+s'}}\big\|\textstyle{\frac{1}{s}}x\big\|^2+\textstyle{\frac{s'}{s+s'}}\big\|\textstyle{\frac{1}{s'}}x'\big\|^2\geq \left\|\textstyle{\frac{x+x'}{s+s'}}\right\|^2 =\textstyle{\frac{\|x+x'\|^2}{(s+s')^2}}.$$
Hence $\delta\geq 0$. Note that $(t,0,s),(t',0,s')\in C_d$. Since $\R_{\geq0}\times\{0\} \times \{0\} \subset \lim(C_d)$, we thus obtain $(t'+\delta,0,s')\in C_d$. Hence equation \eqref{swsemigrplem1eq1} and the choice of $\tilde{h}_d$ imply $h_d(w+w')\leq h_d(w)$.
\end{proof}

\begin{lem}\label{swsemigrplem2}
Let $f:W_d^\cO\rightarrow \R$ be an $\Ad$-invariant smooth function and set $F:S_d\rightarrow\R, F(g\exp_\C(ix))=f(x)$. Then
$$dF(s)(s.(iy))= df(x)y$$
holds for $s=g\exp_\C(ix)\in S_d, y\in \g_A^\cO$.
\end{lem}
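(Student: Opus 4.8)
The plan is to reduce to the case $g=e$, i.e.\ $s=\exp_\C(ix)$, and then to compute via the polar decomposition. First, $F$ is smooth, being the composition of the analytic polar map $\psi^{-1}$ (Proposition~\ref{polarmap}, restricted to $S_d$) with the projection onto $W_d^\cO$ and the smooth function $f$. Moreover $F$ is invariant under left translations by $G_A^\cO$: if $h\in G_A^\cO$ and $s=g\exp_\C(ix)$ is the polar decomposition of $s\in S_d$, then $hs=(hg)\exp_\C(ix)$ is the polar decomposition of $hs$, so $F(hs)=f(x)=F(s)$. Writing $L_g\colon S_d\to S_d,\ s'\mapsto gs'$ (a diffeomorphism, as $g^{-1}\in G_A^\cO$), this says $F\circ L_g=F$, whence $dF(s)\circ dL_g(\exp_\C(ix))=dF(\exp_\C(ix))$; since $\ell_s=L_g\circ\ell_{\exp_\C(ix)}$ one also gets $s.(iy)=dL_g(\exp_\C(ix))\bigl(\exp_\C(ix).(iy)\bigr)$, so $dF(s)(s.(iy))=dF(\exp_\C(ix))(\exp_\C(ix).(iy))$. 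As the polar parts of $s$ and of $\exp_\C(ix)$ agree, it suffices to prove the identity for $s=\exp_\C(ix)$.

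Next I would introduce the smooth curve $\alpha(t):=\exp_\C(ix)\exp_\C(tiy)$, which stays in $S_d$ for small $|t|$ (as $S_d$ is open), has $\alpha'(0)=\exp_\C(ix).(iy)$, and has constant left logarithmic derivative $\delta(\alpha)\equiv iy$. Using the analytic polar map $\psi^{-1}$, write $\alpha(t)=g(t)\exp_\C(ix(t))$ with $g(\cdot),x(\cdot)$ smooth, $g(0)=e$, $x(0)=x$, and $x(t)\in W_d^\cO$. Then $F(\alpha(t))=f(x(t))$ by the definition of $F$, so $dF(\exp_\C(ix))(\exp_\C(ix).(iy))=\frac{d}{dt}\big|_{t=0}F(\alpha(t))=df(x)(x'(0))$, and it remains to prove $df(x)(x'(0))=df(x)(y)$.

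To this end I would compute $\delta(\alpha)(0)$ through the product $\alpha=g\cdot\exp_\C(ix(\cdot))$. By the product rule for left logarithmic derivatives, $g(0)=e$, and the formula $\delta(\exp_\C)_{ix}(\xi)=\int_0^1\Ad(\exp_\C(-tix))\xi\,dt$ (cf.\ \cite[Prop.~II.5.7]{Ne06}), one obtains
$$iy=\Ad(\exp_\C(-ix))\dot g+\int_0^1\Ad(\exp_\C(-tix))\bigl(ix'(0)\bigr)\,dt,\qquad \dot g:=g'(0)\in\g_A^\cO.$$
Since $(\ad a)^3=0$ for every $a$ in the oscillator algebra (immediate from the explicit bracket), one has $\Ad(\exp_\C(-ix))=\id-\ad(ix)+\tfrac12\ad(ix)^2$ and $\int_0^1\Ad(\exp_\C(-tix))\,dt=\id-\tfrac12\ad(ix)+\tfrac16\ad(ix)^2$; and since $\ad(ix)$ interchanges $\g_A^\cO$ and $i\g_A^\cO$, taking imaginary parts of the above identity yields
$$y=x'(0)-[x,\dot g]-\tfrac16\bigl[x,[x,x'(0)]\bigr].$$
Now the $\Ad$-invariance of $f$ gives $df(x)([x,w])=0$ for all $w\in\g_A^\cO$ (differentiate $t\mapsto f(\Ad(\exp_\C(tw))x)$ at $t=0$), so applying $df(x)$ to the last equation annihilates the last two summands and gives $df(x)(y)=df(x)(x'(0))$, as required.

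The main obstacle will be this middle computation: keeping the logarithmic-derivative bookkeeping straight and, above all, correctly extracting the imaginary part of the identity for $\delta(\alpha)(0)$. What makes the argument short is that one need not solve for $x'(0)$ or $\dot g$ at all---after passing to imaginary parts, every summand other than $x'(0)$ and $y$ lies in $[x,\g_A^\cO]$ and is therefore killed by $df(x)$, by $\Ad$-invariance of $f$. (The smoothness of $t\mapsto x(t)$, which makes $x'(0)$ meaningful, is precisely the analyticity of $\psi^{-1}$ from Proposition~\ref{polarmap}.)
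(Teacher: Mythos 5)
Your proof is correct, and it takes a genuinely different route from the paper's. After the common first step---reducing to $s=\exp_\C(ix)$ via left $G_A^\cO$-invariance of $F$---the paper does \emph{not} compute directly at a general $x\in W_d^\cO$. Instead it invokes Lemma~\ref{swsemigrplem0}, which is only formulated for $x\in\t$ (so for $x\in C_d$), to adjust $y$ by an element $a\in\overline{[x,\g_A^\cO]}$ so that $\delta(\exp_\C)_{ix}(i(y+a))\in iy+\g_A^\cO$; then the two invariance identities $df(x)(\overline{[\g,x]})=0$ and $dF(s)(s.\g)=0$ give the claim for $x\in C_d$, and one extends to all of $W_d^\cO$ by $\Ad$-covariance of both sides together with the density of $\Ad(G_A^\cO)C_d$ in $W_d^\cO$ in the $C^\cO$-topology. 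You instead introduce the polar decomposition $\alpha(t)=\exp_\C(ix)\exp_\C(tiy)=g(t)\exp_\C(ix(t))$, turn the problem into showing $df(x)(x'(0))=df(x)(y)$, and close it with the observation that $(\ad a)^3=0$ allows the BCH-type series to be truncated explicitly; separating real and imaginary parts then puts every term other than $x'(0)$ and $y$ into $[x,\g_A^\cO]$, which $df(x)$ kills. This works for \emph{all} $x\in W_d^\cO$ at once, so you dispense with both Lemma~\ref{swsemigrplem0} (and its dependence on the operators $f_s(A)$, $B_{is}$) and the covariance-plus-density extension step. Your version is shorter and more explicit, but it leans specifically on the $2$-step nilpotency of the oscillator algebra; the paper's route follows the template of \cite[Thm.~XI.1.10]{Ne00}, which is less tied to the nilpotent structure. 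Both correctly rely on the analyticity of $\psi^{-1}$ from Proposition~\ref{polarmap} (you for the smoothness of $t\mapsto(g(t),x(t))$, the paper implicitly in the continuity step).
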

\begin{proof}
Let $G:=G_A^\cO, \g:=\g_A^\cO$. Note that $S_d$ is invariant under both left and right multiplication by $G$, since $W_d^\cO$ is $\Ad$-invariant. Note also that $F$ is $G$-biinvariant since $f$ is $\Ad_{G}$-invariant. The left $G$-invariance $F\circ \lambda_g =F$ yields for $s=g\exp_\C(ix)$
$$dF(s)(s.(iy))=dF(g\exp_\C(ix))(g.(\exp_\C(ix).(iy)))=dF(\exp_\C(ix))(\exp_\C(ix).(iy)).$$
Thus it suffices to show that
\begin{align}\label{swsemigrplem2redeq}
dF(\exp_\C(ix))(\exp_\C(ix).(iy))= df(x)y
\end{align}
holds for all $x\in W_d^\cO,y\in \g$. The $\Ad$-invariance of $f$ and the right $G$-invariance of $F$ show
\begin{align}\label{swsemigrplem2eq1}
df(x)\big(\overline{[\g,x]}\big)=\{0\} \quad \text{ and } \quad dF(s)(s.\g) =\{0\}.
\end{align}
Since
$$df(x)y=dF(\exp_\C(ix))(\exp_\C(ix).\delta(\exp_\C)_{ix}(iy))$$
we obtain from Lemma \ref{swsemigrplem0} and \eqref{swsemigrplem2eq1} that \eqref{swsemigrplem2redeq} holds for all $x\in C_d,y\in \g$. Next we note that both the left and the right hand side of \eqref{swsemigrplem2redeq} do not change, if both $x$ and $y$ are replaced by $\Ad(g)x$ resp. $\Ad(g)y$ for some $g\in G$. This follows from the invariance properties of $f$ and $F$. Thus \eqref{swsemigrplem2redeq} holds for all $x\in \Ad(G)C_d,y\in \g$. By continuity \eqref{swsemigrplem2redeq} therefore holds for all $x\in \overline{\Ad(G)C_d}\cap W_d^\cO$ and $y\in \g$. Here the closure is taken w.r.t. the $C^\cO$-topology.  Since $D(V^\cO)\subset V^\cO$ is dense, the definition of $W_d$ and \eqref{adjointactionont} show that $\Ad(G)C_d$ is dense in $W_d^\cO$ w.r.t. the $C^\cO$-topology. Hence \eqref{swsemigrplem2redeq} holds for all $x\in W_d^\cO,y\in \g$. This completes the proof.
\end{proof}

\begin{prop}\label{G_Acomplexifiable}
For every $d\in \overline{\R}$, the pair $(G_A^\cO,W_d^\cO)$ is complexifiable (see Definition \ref{complexifiablepairdef} below) with corresponding complex involutive semigroup $S_d$.
\end{prop}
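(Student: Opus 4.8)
The goal is to verify, for the pair $(G_A^\cO,W_d^\cO)$, the conditions of Definition~\ref{complexifiablepairdef}, the candidate complex involutive semigroup being $S_d$. The polar‑decomposition assertion — that $\psi\colon G_A^\cO\times W_d^\cO\to S_d,\ (g,w)\mapsto g\exp_\C(iw)$ is an analytic diffeomorphism onto the open subset $S_d$ of $G_{A,\C}$ — is already recorded, as a consequence of Proposition~\ref{polarmap}, in the paragraph preceding Lemma~\ref{swsemigrplem1}; in particular $S_d$ is a complex manifold whose multiplication, being the restriction of that of $G_{A,\C}$, is holomorphic. The involution $*$ of Proposition~\ref{involutiong_a} restricts to $S_d$ because $W_d^\cO=W_d\cap\g_A^\cO$ is $\Ad(G_A^\cO)$-invariant — $W_d$ is $\Ad(G_A)$-invariant by Theorem~\ref{conelemma}(c) and $\Ad(G_A^\cO)$ preserves $\g_A^\cO$ — so that $(g\exp_\C(iw))^*=g^{-1}\exp_\C(i\Ad(g)w)$ again lies in $G_A^\cO\exp_\C(iW_d^\cO)=S_d$. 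The one substantive point left is that $S_d$ is a subsemigroup of $G_{A,\C}$, and to establish it I would adapt the connectedness argument of \cite[Thm.~XI.1.10]{Ne00}.

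First I would reduce $S_d\cdot S_d\subseteq S_d$ to the claim $\exp_\C(iw_1)\exp_\C(iw_2)\in S_d$ for all $w_1,w_2\in W_d^\cO$: for $g\in G_A^\cO$ one has $\exp_\C(iw_1)g=g\exp_\C(i\Ad(g^{-1})w_1)$ with $\Ad(g^{-1})w_1\in W_d^\cO$, so a general product $g_1\exp_\C(iw_1)\cdot g_2\exp_\C(iw_2)$ equals $g_1g_2\exp_\C(iw_1')\exp_\C(iw_2)$ with $w_1'\in W_d^\cO$, and $S_d$ is left $G_A^\cO$-invariant. To prove the claim, fix $w_1,w_2\in W_d^\cO$, write $w_j=(t_j,x_j,r_j)$ with $r_j>0$, and put $s(t):=\exp_\C(iw_1)\exp_\C(itw_2)$ for $t\in[0,1]$. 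In the semidirect product $\Heis_\C(V_\C^\cO,\omega_\C)\rtimes_{\gamma_\C}\C$ the last coordinate is additive, so that of $s(t)$ is $i(r_1+tr_2)$, which has positive imaginary part for every $t\in[0,1]$; hence $s(t)\in S_A$ throughout, and by the polar decomposition of $S_A$ (Proposition~\ref{polarmap}) we may write $s(t)=g(t)\exp_\C(iw(t))$ with $g(t)\in G_A^\cO$ and $w(t)\in W_\infty^\cO$ depending continuously (w.r.t.\ the $C^\cO$-topology) on $t$, and with $s(t)\in S_d$ exactly when $w(t)\in W_d^\cO$. Let $J$ be the connected component of $0$ in $\{t\in[0,1]:s(t)\in S_d\}$; since $S_d$ is open in $G_{A,\C}$, $J$ is a relatively open subinterval containing $0$, and $w(0)=w_1$.

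It then remains to show $J=[0,1]$. Assume $J=[0,T)$ with $0<T\le 1$. With the $\Ad$-invariant smooth function $h_d$ of Lemma~\ref{swsemigrplem1}, set $F:=h_d\circ\pr_2\circ\psi^{-1}$ on $S_d$, so $F(s(t))=h_d(w(t))$. Since $s'(t)=s(t).(iw_2)$ (the left translate of $iw_2\in\g_{A,\C}^\cO$), Lemma~\ref{swsemigrplem2} gives $\frac{d}{dt}h_d(w(t))=dF(s(t))(s(t).(iw_2))=dh_d(w(t))(w_2)$, while Lemma~\ref{swsemigrplem1}(c) applied with $w(t)\in W_d$ and $\eps w_2\in W_d$, letting $\eps\downarrow 0$, forces $dh_d(w(t))(w_2)\le 0$. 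Hence $t\mapsto h_d(w(t))$ is non-increasing on $[0,T)$, so $h_d(w(t))\le h_d(w_1)<\infty$ there. On the other hand $s(T)\in S_A$ but $s(T)\notin S_d$ (otherwise openness of $S_d$ would enlarge $J$), so by continuity and uniqueness of the polar decomposition of $S_A$, $w(t)\to w(T):=\pr_2(\psi^{-1}(s(T)))\in W_\infty^\cO$, with $w(T)\notin W_d$ while $w(t)\in W_d$ for $t<T$; thus $w(T)\in\partial W_d$, and Lemma~\ref{swsemigrplem1}(b) yields $h_d(w(t))\to\infty$ — a contradiction. Therefore $J=[0,1]$ and $\exp_\C(iw_1)\exp_\C(iw_2)=s(1)\in S_d$, so $S_d$ is a subsemigroup. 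Together with the polar diffeomorphism and the involution, this shows that $S_d$ is a complex involutive semigroup and that $(G_A^\cO,W_d^\cO)$ is complexifiable with corresponding complex involutive semigroup $S_d$.

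I expect the delicate step to be the boundary analysis in the previous paragraph: one must know that $w(T)=\lim_{t\to T}w(t)$ exists and lies in $\partial W_d$ in a topology for which the blow-up statement Lemma~\ref{swsemigrplem1}(b) applies. This is where one uses that the polar decomposition of $S_A$ is continuous (indeed analytic) for the fine $C^\cO$-topology on $V^\cO$, and that this topology is finer than the $\g_A$-topology in which $h_d$ is controlled, so that $C^\cO$-convergence $w(t)\to w(T)\in\partial W_d$ already forces $h_d(w(t))\to\infty$. Everything else — the reduction via $\Ad(G_A^\cO)$-invariance, the additivity of the $\C$-coordinate, and the derivative computation through Lemmas~\ref{swsemigrplem2} and \ref{swsemigrplem1}(c) — is routine.
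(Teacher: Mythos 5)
Your proof is correct and follows essentially the same route as the paper: the reduction to $\exp_\C(iw_1)\exp_\C(iw_2)\in S_d$ via left $G_A^\cO$-invariance and the $\Ad$-invariance of $W_d^\cO$, the curve $s(t)=\exp_\C(iw_1)\exp_\C(itw_2)$ lying in $S_A$ by additivity of the last coordinate, the derivative computation $\frac{d}{dt}h_d(w(t))=dh_d(w(t))(w_2)\le 0$ from Lemmas~\ref{swsemigrplem2} and \ref{swsemigrplem1}(c), and the blow-up from Lemma~\ref{swsemigrplem1}(b) as the obstruction to leaving $S_d$. The only difference is that you spell out the connectedness/maximal-interval argument and the boundary analysis (that $w(t)\to w(T)\in\partial W_d$ in a suitable topology) that the paper compresses into ``the curve does not leave $S_d$'', which is a welcome clarification rather than a different approach.
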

\begin{proof}
From Proposition \ref{polarmap} we know that $S_d=G_A^\cO\exp_\C(iW_d^\cO)$ is open in $G_{A,\C}$ and that the map $G_A^\cO\times W_d^\cO \rightarrow S_d, (g,x) \mapsto g\exp_\C(ix)$ is an analytic diffeomorphism. Note also that $S_d$ is invariant under the involution $*$ on $S_A$ from Proposition \ref{involutiong_a}. Thus we only have to show that $S_d$ is a subsemigroup of $G_{A,\C}$. Let $x,y\in W_d^\cO$. We consider the curve $c(t):=\exp_\C(ix)\exp_\C(tiy)$ for $t\in \R_{\geq0}$. Note that this curve does not leave the semigroup $S_A=S_\infty$ so that we may write $c(t)=g_t\exp(ix_t)$ for all $t$ according to the polar decomposition from Proposition \ref{polarmap}. Recall the function $h_d$ from Lemma \ref{swsemigrplem1}. We set $H_d(g\exp_\C(iw)):=h_d(w)$ for $g\exp_\C(iw)\in S_d$. By Lemma \ref{swsemigrplem2}, we have
$$(H_d\circ c)'(t) = d H_d(c(t)) (c(t).iy)= d h_d(x_t)(y) \leq 0$$
as long as $c(t)\in S_d$, where the last inequality follows from Lemma \ref{swsemigrplem1}(c). Hence the function $H_d\circ c$ is decreasing. Thus Lemma \ref{swsemigrplem1}(b) implies that the curve does not leave $S_d$. In particular $c(1)\in S_d$. This proves $\exp_\C(ix)\exp_\C(iy)\in S_d$ for all $x,y\in W_d^\cO$. Since 
$$g\exp_\C(ix)g'\exp_\C(ix')=gg'\exp_\C(i\Ad(g'^{-1})x)\exp_\C(ix'),$$
we conclude that $S_d$ is a subsemigroup.
\end{proof}

\section{Extensions of representations of general Lie groups}\label{extensionsofrepofgenliegroups}

In this section we discuss holomorphic extensions of semibounded representations for general Lie groups. Let $G$ be a simply-connected locally convex analytic Lie group with exponential map $\exp:\g \rightarrow G$ and assume that $\g$ is Mackey complete. Let $W\subset \g$ be an $\Ad$-invariant open cone in $\g$.

\begin{dfn}\label{complexifiablepairdef}
The pair $(G,W)$ is called \textit{complexifiable} if there exists a complex Lie group $G_\C$ with Lie algebra $\g_\C$, an exponential map $\exp_\C:\g_\C \rightarrow G_\C$ and a Lie group embedding $G\subset G_\C$ which induces the inclusion $\g\hookrightarrow \g_\C$ of Lie algebras, such that $S_W:=G\exp_\C(iW)$ is an open subsemigroup of $G_\C$,
$$\psi: G \times W \rightarrow S_W, (g,w) \mapsto g\exp_\C(iw)$$
is an analytic diffeomorphism, i.e. $\psi$ and $\psi^{-1}$ are both analytic, and 
$$*:S_W\rightarrow S_W, g\exp_\C(iw) \mapsto \exp_\C(iw)g^{-1}=g^{-1}\exp_\C(i\Ad(g)w)$$
turns $S_W$ into an involutive complex semigroup.
\end{dfn}

\begin{rmk}
In the situation of the preceding definition suppose that there is a group automorphism $\tau: G_\C \rightarrow G_\C$ which is antiholomorphic, involutive and satisfies $\L(\tau)(x+iy)=x-iy$ for $x,y\in \g$. Then $g^*:=\tau(g)^{-1}$ defines an antiholomorphic involutive antiautomorphism of $G_\C$ which satisfies $(g\exp_\C(iw))=\exp_\C(iw)g^{-1}$. Hence the existence of $*:S_W\rightarrow S_W$ in Definition \ref{complexifiablepairdef} follows from the existence of $\tau$. In particular, if $G_\C$ is regular and simply-connected, then $\tau$ exists by \cite[Thm. III.1.5]{Ne06}.
\end{rmk}

\begin{thm}\label{holextgen}
Let $(G,W)$ be complexifiable and $\pi:G \rightarrow \U(\H)$ be a semibounded representation with $W \subset B(I_\pi)^0$. Then the map 
$$\hat{\pi}: S_W \rightarrow B(\H), g\exp_\C(ix) \rightarrow \pi(g)e^{i\dd\pi(x)}$$ 
is a holomorphic representation of the involutive complex semigroup $S_W$.
\end{thm}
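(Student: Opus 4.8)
The plan is to check in turn that $\hat\pi$ is (i) well defined and locally bounded, (ii) holomorphic, (iii) multiplicative, and (iv) compatible with the involutions; steps (i) and (iv) are routine, (iii) reduces to a one-parameter computation once (ii) is in hand, and (ii) is the substantive point.

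First, for (i): since the polar map $\psi\colon G\times W\to S_W$ is an analytic diffeomorphism, every $s\in S_W$ is uniquely of the form $g\exp_\C(iw)$, so $\hat\pi$ is well defined. For $w\in W\subseteq B(I_\pi)^0$ the operator $i\dd\pi(w)$ is essentially self-adjoint with $\sup\Spec(i\dd\pi(w))=s_\pi(w)<\infty$, so $e^{i\dd\pi(w)}$, defined by spectral calculus of the self-adjoint closure, is a bounded, positive, injective operator with dense range and $\|e^{i\dd\pi(w)}\|=e^{s_\pi(w)}$; since $s_\pi$ is continuous on $B(I_\pi)^0$ (general theory of semibounded representations, \cite{Ne08,Ne09}), the formula $\|\hat\pi(g\exp_\C(iw))\|=e^{s_\pi(w)}$ shows that $\hat\pi$ is locally bounded. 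For (iv), from the defining formula $s^*=g^{-1}\exp_\C(i\Ad(g)w)$ for $s=g\exp_\C(iw)$ together with $\pi(g)\dd\pi(y)\pi(g)^{-1}=\dd\pi(\Ad(g)y)$, which passes to closures, one gets $e^{i\dd\pi(\Ad(g)w)}=\pi(g)e^{i\dd\pi(w)}\pi(g)^{-1}$, whence $\hat\pi(s^*)=\pi(g)^{-1}\bigl(\pi(g)e^{i\dd\pi(w)}\pi(g)^{-1}\bigr)=e^{i\dd\pi(w)}\pi(g)^{-1}=\bigl(\pi(g)e^{i\dd\pi(w)}\bigr)^*=\hat\pi(s)^*$, using that $e^{i\dd\pi(w)}$ is self-adjoint.

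Next, for (ii), I would start from the observation that $\hat\pi$ is already holomorphic along the one-parameter semigroups: for $w\in W$ and $z=x+iy$ with $y>0$ one has $\exp_\C(zw)=\exp(xw)\exp_\C(iyw)\in S_W$ and $\hat\pi(\exp_\C(zw))=e^{z\,\overline{\dd\pi(w)}}$, a bounded operator produced by spectral calculus of the self-adjoint, bounded-above operator $\overline{i\dd\pi(w)}$, hence holomorphic in $z$ on the upper half plane, strongly continuous up to the real axis, and multiplicative in $z$; moreover $e^{i\dd\pi(w)}$ maps $\H$ into $\H^\infty$ (standard smoothing for holomorphic semigroups with self-adjoint, bounded-above generator). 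To pass from this to holomorphy of $\hat\pi$ on all of $S_W$, I would follow the method of \cite{MN11}: the identities $\hat\pi(gs)=\pi(g)\hat\pi(s)$ and $\hat\pi(sg)=\hat\pi(s)\pi(g)$ (immediate from the definition and from $g^{-1}\exp_\C(i\eta)g=\exp_\C(i\Ad(g^{-1})\eta)$), combined with the fact that left and right translations on $G_\C$ are biholomorphic, reduce the problem to holomorphy of $\hat\pi$ near each point $\exp_\C(iw_0)$, $w_0\in W$; then, for $v$ in the dense subspace $\H^\infty$, one shows that the orbit map $s\mapsto\hat\pi(s)v$ is holomorphic near $\exp_\C(iw_0)$, the decisive input being the strong smoothing (and analyticity) of the operators $e^{i\dd\pi(w)}$, $w\in W$, which lets the relevant $\H$-valued maps extend holomorphically across $\exp_\C(iW)$ and, together with the one-parameter holomorphy above, is enough to conclude that $s\mapsto\hat\pi(s)v$ is holomorphic; finally, density of $\H^\infty$ and local boundedness give holomorphy of $\hat\pi$ itself. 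I expect this step to be the main obstacle: in contrast to the Banach--Lie setting of \cite{MN11}, there is no inverse function theorem, so $\exp_\C$ need not furnish local charts on $G_\C$; one must work throughout with the given analytic polar map $\psi$ and use the Mackey-completeness of $\g$ to justify the weak integrals and ordinary differential equations that define the holomorphic extensions of the orbit maps.

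Finally, for (iii), using $\hat\pi(gs)=\pi(g)\hat\pi(s)$, $\hat\pi(sg)=\hat\pi(s)\pi(g)$ and $e^{i\dd\pi(\Ad(g)w)}=\pi(g)e^{i\dd\pi(w)}\pi(g)^{-1}$, the identity $\hat\pi(s_1s_2)=\hat\pi(s_1)\hat\pi(s_2)$ reduces to the case $s_j=\exp_\C(iw_j)$, $w_j\in W$, i.e. to $\hat\pi\bigl(\exp_\C(iw_1)\exp_\C(iw_2)\bigr)=e^{i\dd\pi(w_1)}e^{i\dd\pi(w_2)}$. I would prove this by a uniqueness argument for a linear differential equation: the curve $c(t)=\exp_\C(iw_1)\exp_\C(itw_2)$ lies in $S_W$ for $t\in[0,1]$ and has constant left logarithmic derivative $iw_2$, so by the holomorphy from (ii) the $B(\H)$-valued map $t\mapsto\hat\pi(c(t))$ is norm-$C^1$ and, by $\C$-linearity of $d\hat\pi$ and the identity $\hat\pi(sg)=\hat\pi(s)\pi(g)$, satisfies $\tfrac{d}{dt}\hat\pi(c(t))=\hat\pi(c(t))\,\overline{i\dd\pi(w_2)}$ as a bounded operator (here the smoothing from (ii) is used); this is exactly the equation satisfied by $t\mapsto e^{i\dd\pi(w_1)}e^{it\dd\pi(w_2)}$, which agrees with $t\mapsto\hat\pi(c(t))$ at $t=0$, so comparing the two solutions at $t=1$ — tested on the dense set of vectors of the form $e^{i\dd\pi(w_2)}v_0$, for which the necessary estimates hold — gives the claimed identity. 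Together, (i)--(iv) show that $\hat\pi$ is a $*$-compatible holomorphic morphism of semigroups, i.e. a holomorphic representation of the involutive complex semigroup $S_W$.
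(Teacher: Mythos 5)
Your overall plan---well-definedness and local boundedness, $*$-compatibility, reduction of multiplicativity to the one-parameter case by an ODE/uniqueness argument, and holomorphy by reducing to the exponential image via the left/right translation identities---is the same route the paper takes, which in turn follows \cite{MN11}; steps (i), (iii) and (iv) match in substance. The gap is in step (ii), which you correctly flag as the hard part but then leave at the level of ``one shows that the orbit map is holomorphic \dots the decisive input being the strong smoothing of $e^{i\dd\pi(w)}$.'' That is precisely the statement the paper has to \emph{prove}, and its proof is the one genuinely new ingredient here: a lemma extending \cite[Lemma 5.2]{MN11} to the Mackey-complete case, asserting that for every $v\in\H^\infty$ the map $(g,x)\mapsto\pi(g)e^{i\dd\pi(x)}v$ is $C^1$ on $G\times W$ with an explicit differential. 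The proof of that lemma does not proceed as you sketch. It introduces the auxiliary subspace $\D\supseteq\H^\infty$ of ``good vectors'' (those $v$ for which the maps $(x_1,\dots,x_n)\mapsto\overline{\dd\pi}(x_1)\cdots\overline{\dd\pi}(x_n)v$ are continuous $n$-linear and satisfy the commutator identity), shows $\D$ is $\pi(G)$- and $e^{i\dd\pi(W)}$-invariant so that $\overline{\dd\pi}$ restricts to a strongly continuous representation $\alpha:\g_\C\to\End(\D)$, and then invokes \cite[Prop.~B.5]{MN11} for $\alpha$. Your diagnosis of the obstacle is also slightly misplaced: you emphasize the lack of an inverse function theorem and the need to replace $\exp_\C$-charts by the polar map, but the complexifiability hypothesis (Definition 4.1) already supplies $\psi$ as a global analytic chart, so that is not where effort is spent. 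The difficulty that actually requires new work in passing from Banach to Mackey-complete is obtaining a dense, $\g_\C$-stable domain on which the derived representation is strongly continuous---hence the good-vector construction---so that the $C^1$/ODE machinery of \cite[App.~B]{MN11} applies; Mackey completeness enters through that machinery, not through chart construction. Once the lemma is available, the remaining steps (your (i), (iii), (iv)) go through exactly as you propose and as in \cite[Thms.~5.4, 5.7]{MN11}.
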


Theorem \ref{holextgen} was proved in \cite[Thm.~5.7]{MN11} when $G$ is assumed to be a Banach-Lie group. Actually, most of the arguments in the proof of \cite[Thm.~5.7]{MN11} carry over to our more general context. Let us give an outline of the proof of Theorem \ref{holextgen}. At first we show that \cite[Lemma 5.2]{MN11} also holds if $G$ is not assumed to be Banach.

\begin{dfn}
\begin{itemize}
\item[(a)] Let $\g$ be a locally convex Lie algebra, $E$ be a Banach space and $\D \subset E$ a dense subspace. Let $\alpha : \g \rightarrow \End(\D)$ be a representation of $\g$. Then $\alpha$ is said to be \textit{strongly continuous}, if for every $v\in \D$ the map $\alpha^v: \g \rightarrow \D, x\mapsto \alpha(x)v$ is continuous.
\item[(b)] A locally convex Lie algebra is called \textit{$\ad$-integrable} if for every $x\in \g$ there is a one-parameter group $\Phi^x:\R\rightarrow \GL(\g)$ such that the corresponding action $\R\times \g \rightarrow \g$ is smooth and $\derat0 \Phi^x(t)y = \ad x(y)$ holds for all $x,y\in \g$. We then use the notation $e^{t\ad x}:=\Phi^x(t)$, see also \cite[Def.~B.2]{MN11}.
\end{itemize}
\end{dfn}

\begin{lem}
Let $G$ be a locally convex Lie group with an exponential map such that $\g$ is Mackey complete and $\g_\C$ is $\ad$-integrable. Let $\pi:G \rightarrow \U(\H)$ be a semibounded representation with $W \subset B(I_\pi)^0$. For every $v\in \H^\infty$ the map $\rho^v:W \rightarrow \H, x \mapsto e^{i\dd\pi(x)}v$ is $C^1$ with
$$d\rho^v(x)(y)= e^{i\dd\pi(x)}\dd\pi\left(\int_0^1e^{-s\ad ix}iy ds\right)v.$$
Furthermore the map $G\times W \rightarrow \H, (g,x) \mapsto \pi(g)e^{i\dd\pi(x)}v$ is also $C^1$.
\end{lem}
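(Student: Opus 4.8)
The plan is to reduce to a neighbourhood of an arbitrary point of $W$ and then run a Duhamel-type interpolation argument, essentially following the proof of \cite[Lemma~5.2]{MN11} and checking that the Banach hypothesis there is only used for statements that survive in our setting.

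\emph{Step 1: localisation.} Fix $x_0\in W$. Since $\pi$ is semibounded and $W\subseteq B(I_\pi)^0$, the support function $s_\pi$ is finite on $W$ and locally bounded on $B(I_\pi)^0$, so I can choose a convex open neighbourhood $U$ of $x_0$ with $\overline U\subseteq W$ and $C:=\sup_{x\in U}s_\pi(x)<\infty$. For $x\in B(I_\pi)^0$ the operator $i\dd\pi(x)$ is essentially self-adjoint on $\H^\infty$ with $\sup\Spec\big(\overline{i\dd\pi(x)}\big)\le s_\pi(x)$, so $e^{i\dd\pi(x)}\in B(\H)$ is well defined and $\|e^{i\dd\pi(x)}\|\le e^{C}$ for $x\in U$; since $v\in\H^\infty$ lies in the domain of every power of $\overline{i\dd\pi(x)}$, the curve $s\mapsto e^{si\dd\pi(x)}v$ is $C^1$ on $[0,1]$. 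It suffices to prove the $C^1$-statement and the derivative formula on $U$.

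\emph{Step 2: the interpolation formula.} Fix $x\in U$, $y\in\g$, $v\in\H^\infty$, and $\eps>0$ with $x+ty\in U$ for $|t|\le\eps$. For such $t$ I would differentiate $\phi_t(s):=e^{(1-s)i\dd\pi(x)}e^{si\dd\pi(x+ty)}v$ in $s$, obtaining (formally) $\phi_t'(s)=t\,e^{(1-s)i\dd\pi(x)}i\dd\pi(y)e^{si\dd\pi(x+ty)}v$ and hence
\[
\tfrac1t\big(e^{i\dd\pi(x+ty)}v-e^{i\dd\pi(x)}v\big)=\int_0^1 e^{(1-s)i\dd\pi(x)}i\dd\pi(y)e^{si\dd\pi(x+ty)}v\,ds .
\]
As $t\to0$: the uniform bound $\|e^{si\dd\pi(x+ty)}\|\le e^{C}$ ($s\in[0,1]$) together with a Trotter--Kato type argument — strong convergence of $i\dd\pi(x+ty)$ to $i\dd\pi(x)$ on the common core $\H^\infty$, using strong continuity of $\dd\pi$ — gives $e^{si\dd\pi(x+ty)}v\to e^{si\dd\pi(x)}v$ uniformly in $s\in[0,1]$; and the conjugation relation $e^{-si\dd\pi(x)}\dd\pi(y)e^{si\dd\pi(x)}=\dd\pi\big(e^{-s\ad(ix)}y\big)$ on $\H^\infty$ rewrites the integrand as $e^{i\dd\pi(x)}\dd\pi\big(e^{-s\ad(ix)}iy\big)v$. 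Here $s\mapsto e^{-s\ad(ix)}iy$ is a continuous curve in $\g_\C$ and the Riemann integral $\int_0^1 e^{-s\ad(ix)}iy\,ds$ exists in $\g_\C$ because $\g_\C$ is $\ad$-integrable and $\g$ (hence $\g_\C$) is Mackey complete. Passing to the limit under the integral and pulling out the bounded operator $e^{i\dd\pi(x)}$ and the (closed) operator $\dd\pi$ — legitimate since the integrand is $\H^\infty$-valued and continuous and $\dd\pi$ is strongly continuous — yields
\[
d\rho^v(x)(y)=e^{i\dd\pi(x)}\int_0^1\dd\pi\big(e^{-s\ad(ix)}iy\big)v\,ds=e^{i\dd\pi(x)}\dd\pi\Big(\int_0^1 e^{-s\ad(ix)}iy\,ds\Big)v .
\]

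\emph{Step 3: making Step 2 rigorous — the main obstacle.} The difficulty, and the only place where one must work, is that $e^{si\dd\pi(x+ty)}v$ need not lie in $\H^\infty$, so $\phi_t$ is not obviously differentiable and neither $i\dd\pi(y)$ nor the conjugation relation may be applied to it directly. As in \cite{MN11} I would first establish the displayed formula for $v$ ranging over the dense subspace of analytic vectors of $\pi$ — which exists for semibounded representations and on which all the exponential series and term-by-term manipulations above are legitimate, such $v$ staying in a common analytic domain — and then pass to general $v\in\H^\infty$ by approximation: both sides are continuous in $v$ (the left one because $\|e^{i\dd\pi(x)}\|\le e^C$, the right one because $\dd\pi$ is strongly continuous into $\H^\infty$ with its natural complete topology), and a locally uniform limit of $C^1$ maps with locally uniformly convergent derivatives is $C^1$. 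Continuity of $x\mapsto d\rho^v(x)$ on $U$ then follows from the explicit formula, using that $x\mapsto\int_0^1 e^{-s\ad(ix)}iy\,ds$ is smooth $U\to\g_\C$ (smoothness of the $\ad$-action on $\g_\C$), that $\dd\pi$ is strongly continuous, and that $x\mapsto e^{i\dd\pi(x)}$ is strongly continuous on $U$. Since $x_0\in W$ was arbitrary, $\rho^v$ is $C^1$ on $W$.

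\emph{Step 4: joint $C^1$-ness of $(g,x)\mapsto\pi(g)e^{i\dd\pi(x)}v$.} Carrying out Steps 2--3 with $v$ allowed to vary costs no extra work, $\rho$ being linear in $v$ and bounded by $e^{C}$ locally uniformly in $x$, so that joint $C^1$-ness of $\rho\colon W\times\H^\infty\to\H$ reduces to joint continuity of $(x,v)\mapsto e^{i\dd\pi(x)}\dd\pi\!\big(\int_0^1 e^{-s\ad(ix)}iy\,ds\big)v$, which holds by strong continuity of $\dd\pi$ and of $x\mapsto e^{i\dd\pi(x)}$ together with separate continuity of $(z,v)\mapsto\dd\pi(z)v$. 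Now use $\pi(g)e^{i\dd\pi(x)}=e^{i\dd\pi(\Ad(g)x)}\pi(g)$ to write $\pi(g)e^{i\dd\pi(x)}v=\rho\big(\Ad(g)x,\pi(g)v\big)$. Since $(g,x)\mapsto\Ad(g)x$ is smooth into $W$ ($W$ being open and $\Ad$-invariant) and $g\mapsto\pi(g)v$ is smooth into $\H^\infty$ (as $v\in\H^\infty$), the map $(g,x)\mapsto\pi(g)e^{i\dd\pi(x)}v$ is $C^1$ as a composite.
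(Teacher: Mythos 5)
Your Step~3 is circular: the existence of a dense subspace of analytic vectors for a semibounded representation of a (non-Banach) locally convex Lie group is one of the downstream \emph{consequences} of the holomorphic-extension theorem that this lemma feeds into --- see Remark~\ref{pihatrmks}(b), where $\mathrm{span}(\hat\pi(S_W)\H)$ is shown to consist of analytic vectors, and the abstract's statement that the main result ``implies the existence of a dense subspace of analytic vectors.'' In the Banach setting of \cite{MN11} this input is available independently, which is precisely the hypothesis the present paper removes; for a general Mackey-complete $\g$ you cannot assume it a priori. Even granted such a dense subspace, ``both sides are continuous in $v$'' does not close the argument: to pass the derivative formula from analytic $v_n$ to arbitrary $v\in\H^\infty$ you need $\dd\pi\bigl(\int_0^1 e^{-s\ad ix}iy\,ds\bigr)v_n \to \dd\pi\bigl(\int_0^1 e^{-s\ad ix}iy\,ds\bigr)v$, which requires $v_n\to v$ in the $C^\infty$-topology on $\H^\infty$, not merely in $\H$, together with local uniformity in $x$ --- none of which you verify, and none of which is obvious.

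The paper's proof avoids both problems by forgoing approximation entirely. It introduces the subspace $\D\subset\D^\infty_\g$ of ``good vectors,'' namely those $v$ for which $(x_1,\dots,x_n)\mapsto\overline{\dd\pi}(x_1)\cdots\overline{\dd\pi}(x_n)v$ is continuous $n$-linear and satisfies the commutation relation, checks directly that $\H^\infty\subset\D$ and that $\D$ is $\pi(G)$- and $\overline{\dd\pi}(\g)$-invariant, obtains a strongly continuous representation $\alpha\colon\g_\C\to\End(\D)$, and then derives $e^{i\dd\pi(x)}\D\subset\D$ together with $\overline{\dd\pi}(y)e^{i\dd\pi(x)}v=e^{i\dd\pi(x)}\overline{\dd\pi}(e^{-\ad ix}y)v$ by the argument of \cite[Lemma~5.2]{MN11}; finally \cite[Prop.~B.5]{MN11} applied to $\alpha$ gives the $C^1$-statement and the derivative formula for \emph{every} $v\in\D\supseteq\H^\infty$, with no density or limiting step. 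To rescue your Duhamel route you would have to replace ``analytic vectors'' by a subspace which you can prove contains $\H^\infty$ and on which the formal manipulation is rigorous --- which is exactly what $\D$ is designed to do. Your Step~4, conjugating by $\pi(g)$ via $\Ad(g)$, is essentially the same reduction the paper uses and is fine once Step~3 is repaired.
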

\begin{proof}
Consider the subspace
$$\D^\infty_\g:= \bigcap_{y_1,\dots,y_n\in \g, n \in \N} \D(\overline{\dd\pi}(y_n) \cdots \overline{\dd\pi}(y_1))$$
of $\H$ (cf. also \cite[Def. 3.1(d)]{Ne10a}). Note that $\H^\infty \subset \D^\infty_\g$. In particular $\D^\infty_\g$ is dense in $\H$. Obviously $\D^\infty_\g$ is invariant under $\overline{\dd\pi}(x)$ for all $x\in\g$. Let $v\in \D^\infty_\g$. From $\pi(\exp(tx))\pi(g)v=\pi(g)\pi(\exp(t\Ad(g)^{-1}x))v$ we obtain 
$$\pi(g)v \in \D(\overline{\dd\pi}(x))\ \ \text{ and }\ \ \overline{\dd\pi}(x)\pi(g)v=\pi(g)\overline{\dd\pi}(\Ad(g)^{-1}x)v.$$
Hence $\overline{\dd\pi}(x)\pi(g)(\D^\infty_\g) \subset \pi(g)(\D^\infty_\g)$. By induction we conclude that $\D^\infty_\g$ is invariant under $\pi(G)$. 

For $v\in \D^\infty_\g$ and $n\in \N_0$ we define
$$\omega_n^v:\g^n \rightarrow \H,\omega^v_n(x_1,\dots,x_n)=\overline{\dd\pi}(x_1)\cdots\overline{\dd\pi}(x_n)v.$$
We call $v\in \D^\infty_\g$ a $\textit{good vector}$ if $\omega_n^v$ is $n$-linear and continuous for all $n\in \N$ and 
$$\overline{\dd\pi}(x)\overline{\dd\pi}(y)\omega^v_n(x_1,\dots,x_n)-\overline{\dd\pi}(y)\overline{\dd\pi}(x)\omega^v_n(x_1,\dots,x_n)=\overline{\dd\pi}([x,y])\omega_n^v(x_1,\dots,x_n)$$
holds for all $x,y,x_1,\dots,x_n\in \g, n\in \N_0$. Let $\D$ denote the space of good vectors in $\D^\infty_\g$. Note that $\H^\infty \subset \D$. Note also that $\D$ is invariant under $\overline{\dd\pi(x)}$ for all $x\in \g$. We thus obtain a strongly continuous representation
$$\alpha:\g \rightarrow \End(\D),x \mapsto \overline{\dd\pi(x)}\vert_\D.$$
Complex-linear extension yields a strongly continuous representation $\alpha:\g_\C \rightarrow \End(\D)$.
Now one shows as in the proof of \cite[Lemma 5.2]{MN11} that
$$e^{i\dd\pi(x)} \D \subset \D^\infty_\g \ \ \text{ and } \ \ \overline{\dd\pi}(y)e^{i\dd\pi(x)}v=e^{i\dd\pi(x)}\overline{\dd\pi}(e^{-\ad ix}y)v$$
for all $v\in \D,x\in W,y\in \g_\C$. This further implies that $e^{i\dd\pi(x)} \D \subset \D$.
We may thus apply \cite[Prop. B.5]{MN11} to $\alpha$ which shows that
$$\hat{\rho}: W \times \H \rightarrow\H,(x,v) \mapsto e^{i\dd\pi(x)}v$$
is continuous and that for every $v\in \D$, $\rho^v$ is $C^1$ with differential as given. For $v\in \H^\infty$, consider the map
$$f: G \times W \rightarrow \H, (g,x) \mapsto e^{i\dd\pi(x)} \pi(g)v.$$
We want to show that $f$ is $C^1$. Since $v\in \H^\infty$ and $\rho^{\pi(g)v}$ is $C^1$ for each $g\in G$ it follows that the partial derivatives of $f$ exist both in $G$ and $W$ and are given by:
\begin{align*}
d_G f(g,x)(h) &= e^{i\dd\pi(x)} d(\pi^v)(g)(h) \\
d_W f(g,x)(y) &= e^{i\dd\pi(x)} \dd\pi\left(\int_0^1e^{-s\ad ix}iy ds\right)\pi(g)v \\
 & = e^{i\dd\pi(x)}\pi(g)\dd\pi\left(\Ad(g^{-1})\int_0^1e^{-s\ad ix}iy ds\right)v.
\end{align*}
Since $\hat{\rho}$ and the action of $\pi$ on $\H$ are both continuous, we see that these partial derivatives are continuous. Hence $f$ is $C^1$. Since
$$\pi(g)e^{i\dd\pi(x)}v=f(g,\Ad(g)x),$$
the map $G\times W \rightarrow \H, (g,x) \mapsto \pi(g)e^{i\dd\pi(x)}v$ is also $C^1$.
\end{proof}

With this lemma the proof of Theorem \ref{holextgen} now works the same way as the proof of \cite[Thm.5.4]{MN11} and \cite[Thm.5.7]{MN11}.
\begin{rmk}\label{pihatrmks}
The following assertions hold in the situation of Theorem \ref{holextgen}:
\begin{itemize}
\item[\rm(a)] $\pi(g)\hat\pi(s)=\hat\pi(gs)$ for all $g\in G, s\in S_W$.
\item[\rm(b)] The map $G\rightarrow B(\H),g\mapsto \pi(g)\hat\pi(s)$ is analytic for all $s\in S_W$ by (a) and the holomorphy of $\hat \pi$. In particular, the dense subspace $\spn(\hat\pi(S_W)\H)$ consists of analytic vectors for $\pi$.
\item[\rm(c)] $\hat\pi$ is non-degenerate. This can be seen as follows: Choose $w\in W$. The self-adjoint operator $B:=i\overline{\dd\pi(w)}$ is bounded above, say by $C>0$. Then for $v\in \H$ and $t\in [0,1]$ we have
$$\|\hat\pi(\exp_\C(iwt))v-v\|^2=\int_0^C (e^{xt}-1)^2 \ d\langle P_B(x)v,v\rangle,$$
where $P_B$ denotes the spectral measure of $B$. Here $(e^{xt}-1)^2\leq e^{2C}+1$ holds for the integrand. Thus $\lim_{t\rightarrow 0}\hat\pi(\exp_\C(iwt))v = v$ by the Dominated Convergence Theorem.
\end{itemize}
\end{rmk}

\begin{prop}\label{holextprop}
Let $(G,W)$ be complexifiable with corresponding complex involutive semigroup $S_W$ and let $\rho:S_W\rightarrow B(\H)$ be a non-degenerate holomorphic representation.
\begin{itemize}
\item[\rm(a)] The operator $\rho(s)$ is injective and has dense range for every $s\in S_W$.
\item[\rm(b)] There exists a unique smooth unitary representation $\pi:G\rightarrow\U(\H)$ such that $\pi(g)\rho(s)=\rho(gs)$ holds for all $g\in G, s\in S_W$.
\end{itemize}
\end{prop}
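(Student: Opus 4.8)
\emph{Plan.} I would establish (a) and (b) essentially independently: (a) by analysing the self-adjoint one-parameter semigroups $t\mapsto\rho(\exp_\C(itw))$ together with non-degeneracy, and (b) by a GNS/L\"uscher--Mack type construction in which the explicit form of the involution in Definition~\ref{complexifiablepairdef} forces the operators $\pi(g)$ to be unitary.

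\emph{Part (a).} First observe that, for $w\in W$, the element $\exp_\C(iw)$ is fixed by $*$ (apply the formula for $*$ with $g=e$), and that $\exp_\C(iw)=\exp_\C(\tfrac{i}{2}w)^*\exp_\C(\tfrac{i}{2}w)$ (using $\tfrac12 w\in W$), so $T_w:=\rho(\exp_\C(iw))$ is a positive self-adjoint operator. Since $t\mapsto\exp_\C(itw)$ is a one-parameter group, $T_w=(T_{w/2^n})^{2^n}$ with each $T_{w/2^n}$ positive self-adjoint, hence $\ker T_w=\ker T_{w/2^n}$ for all $n$. Thus any $v\in\ker T_w$ lies in $\ker\rho(\exp_\C(iw/2^n))$, so $\rho(\exp_\C(i(k/2^n)w))v=\rho(\exp_\C(iw/2^n))^k v=0$ for all $n$ and all $k\ge1$, and by continuity of $t\mapsto\rho(\exp_\C(itw))$ on $(0,\infty)$ (a consequence of the holomorphy, hence norm-continuity, of $\rho$ on the open set $S_W\subseteq G_\C$) we get $\rho(\exp_\C(itw))v=0$ for every $t>0$. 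Now fix $s\in S_W$; then $s\exp_\C(itw)\in S_W$ for $t>0$ and $s\exp_\C(itw)\to s$ in $G_\C$ as $t\to0^+$, so norm-continuity of $\rho$ gives $\rho(s)v=\lim_{t\to0^+}\rho(s)\rho(\exp_\C(itw))v=0$. As $s$ was arbitrary, non-degeneracy forces $v=0$, so $\rho(\exp_\C(iw))$ is injective and, being self-adjoint, has dense range. Finally, for a general $s=g\exp_\C(iw')\in S_W$ one computes $s^*s=\exp_\C(2iw')$ and $ss^*=\exp_\C(2i\Ad(g)w')$ with $w',\Ad(g)w'\in W$; then $\ker\rho(s)=\ker\rho(s)^*\rho(s)=\ker\rho(s^*s)=\{0\}$ and $\overline{\im\rho(s)}=\overline{\im\rho(s)\rho(s)^*}=\overline{\im\rho(ss^*)}=\H$, which is (a).

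\emph{Part (b).} Set $\cD:=\spn\{\rho(s)v:s\in S_W,\ v\in\H\}$, which is dense because $u\perp\cD$ implies $\rho(s^*)u=0$ for all $s$, i.e.\ $\rho(S_W)u=\{0\}$, hence $u=0$. The key point is that $(gt_l)^*(gt_k)=t_l^*t_k$ for $g\in G$ and $t_k,t_l\in S_W$, which one checks at once on the polar form $t_j=h_j\exp_\C(iw_j)$ because $g^{-1}g$ cancels; consequently $\langle\rho(gt_k)w_k,\rho(gt_l)w_l\rangle=\langle\rho((gt_l)^*(gt_k))w_k,w_l\rangle=\langle\rho(t_k)w_k,\rho(t_l)w_l\rangle$, so $\bigl\|\sum_k\rho(gt_k)w_k\bigr\|=\bigl\|\sum_k\rho(t_k)w_k\bigr\|$. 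Hence $\pi(g)\bigl(\sum_i\rho(s_i)v_i\bigr):=\sum_i\rho(gs_i)v_i$ is well defined and isometric on $\cD$, so it extends to an isometry of $\H$; from $\pi(g)\pi(g^{-1})=\pi(g^{-1})\pi(g)=\id$ on $\cD$ it is unitary, and $\pi(g_1g_2)=\pi(g_1)\pi(g_2)$ and $\pi(g)\rho(s)=\rho(gs)$ are immediate from the definition.

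\emph{Smoothness and uniqueness; main obstacle.} For fixed $s=h\exp_\C(iw_0)$ the map $G\to S_W$, $g\mapsto gs=(gh)\exp_\C(iw_0)$, is smooth --- it is right translation by $h$ in the first coordinate followed by the analytic diffeomorphism $\psi$ of Definition~\ref{complexifiablepairdef} --- and composing with the holomorphic $\rho$ shows $g\mapsto\pi(g)\rho(s)v=\rho(gs)v$ is smooth into $\H$; hence $\cD$ consists of smooth vectors, and since $\cD$ is dense, $\pi$ is a smooth representation (a standard approximation using $\|\pi(g)\|=1$ additionally gives strong continuity on all of $\H$). Uniqueness follows because any continuous unitary $\pi'$ with $\pi'(g)\rho(s)=\rho(gs)$ must agree with $\pi$ on the dense set $\cD$. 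I expect the main difficulty to be part (a): converting $v\in\ker\rho(\exp_\C(iw))\setminus\{0\}$ into a contradiction requires using simultaneously the positivity and self-adjointness of $\rho$ on $\exp_\C(iW)$, the one-parameter group law, and the limit $s\exp_\C(itw)\to s$ within $S_W$ combined with non-degeneracy; once this is in place, the reduction to $s=\exp_\C(iw)$ and the entire argument for (b) are bookkeeping organised around the identity $(gt_l)^*(gt_k)=t_l^*t_k$.
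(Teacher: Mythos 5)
Your proof is correct, but both parts take a route different from the paper's. For part (a), the paper argues via the Identity Theorem for holomorphic functions: from $\rho(s)v=0$ one gets $\rho(S_W s)v=\{0\}$ on the open subset $S_W s$ of the connected manifold $S_W$, hence $\rho(S_W)v=\{0\}$ by analytic continuation, and non-degeneracy gives $v=0$. Your argument instead exploits the polar/involutive structure directly: $\rho(\exp_\C(iw))$ is a positive self-adjoint operator, its kernel is stable under halving $w$, dyadic density plus continuity of $\rho$ along the ray $t\mapsto\exp_\C(itw)$ spreads the kernel over the whole ray, and the boundary limit $s\exp_\C(itw)\to s$ transports it to every $s\in S_W$. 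This is more elementary (it needs only one-variable continuity and spectral theory) and is closer in spirit to the lemma [Ne00, Lemma XI.3.11] the paper points to, but it is longer than the Identity-Theorem shortcut the paper actually writes down. For part (b), the paper simply invokes [Ne08, Prop.~4.3] with $\alpha(s):=\|\rho(s)\|$ to produce $\pi$, and derives smoothness from $\pi(g)\rho(s)=\rho(gs)$ together with holomorphy of $\rho$; you reprove the existence step from scratch as a GNS-type construction, and the identity $(gt_l)^*(gt_k)=t_l^*t_k$ you isolate is precisely the well-definedness/isometry mechanism underlying the cited proposition. Your smoothness argument (analyticity of $\psi$ from Definition~\ref{complexifiablepairdef} plus holomorphy of $\rho$ makes $g\mapsto\rho(gs)v$ smooth on the dense subspace $\cD$, and the uniform bound $\|\pi(g)\|=1$ upgrades this to strong continuity of $\pi$) matches the paper's. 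So the content is the same; what your version buys is self-containedness, while what the paper buys is brevity by delegating both halves to two standard references.
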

\begin{proof}
(a) This can be proven as in \cite[Lemma XI.3.11]{Ne00}: Since $\rho(s^*)=\rho(s)^*$, it suffices to show that $\rho(s)$ is injective for every $s\in S_W$. Let $\rho(s)v=0$. Then we have $\rho(S_Ws)v=\rho(S_W)\rho(s)v=\{0\}$. Since $S_Ws$ is open in $S_W\subset G_\C$, $S_W$ is connected and $\rho$ is holomorphic, we conclude with the Identity Theorem for Holomorphic Functions that $\rho(S_W)v=\{0\}$. Hence $v=0$ as $\rho$ is non-degenerate.\newline
(b) Note that the condition $\pi(g)\rho(s)=\rho(gs)$ determines $\pi$ uniquely since $\rho$ is non-degenerate. From \cite[Prop. 4.3]{Ne08} with $\alpha(s):=\|\rho(s)\|$ we obtain the existence of $\pi$ as a unitary representation. The condition $\pi(g)\rho(s)=\rho(gs)$ and the holomorphy of $\rho$ show that the dense subset $\spn( \pi(S)\H)$ consists of smooth vectors for $\pi$.
\end{proof}

\begin{lem}\label{holextlem}
Equip the semigroup $\C^+=\R+i\R_{\geq0}$ with the involution $(t+is)^*=-t+is$. Let $\rho:\C^+\rightarrow B(\H)$ be a $*$-representation such that $\rho\vert_{{\rm int}(\C^+)}$ is non-degenerate and holomorphic and such that $\rho\vert_\R$ is strongly continuous. Let $B$ denote the self-adjoint generator of $\rho\vert_\R$, i.e. $\rho(t)=e^{iBt}, t\in \R$. Then the following holds:
\begin{itemize}
\item[\rm(a)] $\rho$ is strongly continuous.
\item[\rm(b)] $B$ is bounded below.
\item[\rm(c)] $\rho(z)=e^{iBz}$ holds for every $z\in \C^+$ in the sense of functional calculus of $B$.
\end{itemize}
\end{lem}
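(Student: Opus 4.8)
The plan is to show that $\rho$ is explicitly $z\mapsto e^{izB}$ and to read off (a)--(c) from this. First note that $\rho|_\R$ is a one-parameter group of unitaries, since $\rho(t)^*=\rho(t^*)=\rho(-t)=\rho(t)^{-1}$; as it is strongly continuous by hypothesis, Stone's theorem yields $\rho(t)=e^{itB}$ with $B=B^*$, which is the operator in the statement. It remains to analyse the operators $\rho(is)$ for $s>0$ and to relate their generator to $B$.

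For $s>0$ the operator $\rho(is)$ is positive, being $\rho(is/2)\rho(is/2)^*$ (note $(is/2)^*=is/2$ in $\C^+$). Moreover $R:=\rho(i)$ is injective: if $Rv=0$ then $\rho(\cdot)v$ vanishes on $i+\mathrm{int}(\C^+)$, hence on all of the connected set $\mathrm{int}(\C^+)$ by the identity theorem, contradicting non-degeneracy unless $v=0$. Now the semigroup relation gives $\rho(i/n)^n=\rho(i)=R$, and since positive operators have unique positive $n$-th roots we get $\rho(i/n)=R^{1/n}$, hence $\rho(ik/n)=R^{k/n}$ for $k,n\in\N$, and then $\rho(is)=R^s$ for every real $s>0$ by norm-continuity of $\rho$ on $\mathrm{int}(\C^+)$ and of $r\mapsto R^r$ on $(0,\infty)$. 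Since $R$ is positive and injective, $L:=-\log R$ (functional calculus) is a well-defined self-adjoint operator with $\mathrm{spec}(L)\subseteq[-\log\|R\|,\infty)$, so $L$ is automatically bounded below, and $\rho(is)=e^{-sL}$ for all $s>0$.

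Next I would identify $B$ with $L$. Both $\zeta\mapsto\rho(i\zeta)$ and $\zeta\mapsto e^{-\zeta L}$ are holomorphic $B(\H)$-valued maps on the right half-plane $\{\Re\zeta>0\}$ (for the latter, $L$ bounded below makes $e^{-\zeta L}$ well-defined, locally bounded and weakly holomorphic there), and they agree on $(0,\infty)$; by the identity theorem they agree on the whole half-plane, so $\rho(i\zeta)=e^{-\zeta L}$ for $\Re\zeta>0$. Letting $\zeta=\xi+i\sigma$ with $\xi\to0^+$, the left side tends strongly to $e^{-i\sigma L}$ (strong continuity of $\xi\mapsto e^{-\xi L}$ at $0$, again since $L$ is bounded below), while $\rho(i(\xi+i\sigma))=\rho(-\sigma)\rho(i\xi)=e^{-i\sigma B}e^{-\xi L}$ tends strongly to $e^{-i\sigma B}$. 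Hence $e^{-i\sigma B}=e^{-i\sigma L}$ for all $\sigma\in\R$, so $B=L$ by uniqueness of the generator; this is (b).

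Finally, for $z=t+is\in\C^+$ we have $\rho(z)=\rho(t)\rho(is)=e^{itB}e^{-sB}=e^{izB}$ in the functional calculus of $B$ (legitimate because $B$ is bounded below, so $e^{-sB}$ is bounded for $s\ge0$), which is (c); and (a) then follows because, for $B$ bounded below, $z\mapsto e^{izB}v=\int e^{iz\lambda}\,dP_B(\lambda)v$ is continuous on $\overline{\C^+}$ for each $v$ by dominated convergence, the integrand being bounded uniformly in $\lambda$ on each strip $\{0\le\Im z\le M\}$. The main obstacle is producing an honest self-adjoint generator from the merely algebraic and holomorphic data; this is overcome by the $n$-th-root identity $\rho(i/n)=R^{1/n}$, after which the holomorphy hypothesis is used once more, in the half-plane identity theorem that forces $B=L$.
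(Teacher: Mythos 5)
Your argument is correct, and it takes a genuinely different route from the paper's. The paper's proof proceeds in the stated order: it first establishes (a) by proving the norm identity $\|\rho(t+is)\|=\|\rho(i)\|^s$ (from $\|\rho(i\,n/m)\|^m=\|\rho(i)\|^n$ plus holomorphy), deduces local boundedness, and upgrades continuity from the dense subspace $\rho(\mathrm{int}\,\C^+)\cH$; it then gets (b) by citing an external result (\cite[Lemma~XI.2.6]{Ne00}) on generators of strongly continuous $*$-representations of $\C^+$; and (c) follows from the scalar identity theorem applied to the two continuous, boundary-agreeing maps $z\mapsto\rho(z)v$ and $z\mapsto e^{iBz}v$. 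You instead build the generator directly: positivity of $\rho(is)$, injectivity of $R:=\rho(i)$ from non-degeneracy and the identity theorem, the $n$-th root computation $\rho(i/n)=R^{1/n}$ forcing $\rho(is)=R^s$, and $L:=-\log R$ which is automatically bounded below because $R$ is a positive bounded operator. The identification $B=L$ then comes from a half-plane identity theorem plus a strong boundary limit, from which (b), (c), (a) all drop out. Your version is more self-contained (it replaces the citation in (b) by an explicit spectral construction) and gives an explicit formula $\rho(is)=R^s$ along the way; the paper's is shorter modulo the cited lemma and does not need to invoke uniqueness of positive roots or the $\log$ functional calculus. Both are valid.

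One small point worth making explicit if you were to write this up: in the step ``$\rho(is)=R^s$ for every real $s>0$ by norm-continuity of $r\mapsto R^r$'' you should note that $r\mapsto R^r$ is norm-continuous on $(0,\infty)$ even when $R$ is not bounded below (i.e.\ $0\in\spec R$); this follows from $\sup_{\lambda\in[0,\|R\|]}|\lambda^r-\lambda^{r'}|\to 0$ as $r'\to r>0$, which is elementary but not entirely trivial near $\lambda=0$.
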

\begin{proof}
(a) Note that $\rho(is)^*=\rho((is)^*) =\rho(is)$, i.e. $\rho(is)$ is self-adjoint for $s\in \R_{\geq 0}$. Thus $\|\rho\left(i\cdot\textstyle{\frac{n}{m}}\right)\|^m=\|\rho\left(i\cdot\textstyle{\frac{n}{m}}\right)^m\|=\|\rho(i)\|^n$ for $n,m\in \N$. This implies
$\|\rho(is)\|=\|\rho(i)\|^s$ for $s\in \R_{\geq0}$ since $\rho\vert_{\text{int}(\C^+)}$ is holomorphic. We conclude $\|\rho(t+is)\|=\|\rho(i)\|^s$ since $\rho(t)$ is unitary for $t\in \R$. In particular the map $\rho:\C^+\rightarrow B(\H)$ is locally bounded. For $y\in \text{int}(\C^+), v\in\H$ the map 
$$\C^+\rightarrow \H, z\mapsto \rho(z)\rho(y)v=\rho(z+y)v$$
is continuous, since $\rho\vert_{\text{int}(\C^+)}$ is holomorphic. As $\rho\vert_{\text{int}(\C^+)}$ is non-degenerate we obtain that $\C^+\ni z\mapsto \rho(z)v$ is continuous for $v$ in a dense subspace of $\H$. The local boundedness of $\rho$ now implies that $\rho$ is strongly continuous.\newline
(b) With (a) we obtain from \cite[Lemma XI.2.6]{Ne00} that $B$ is bounded below.\newline
(c) Part (b) implies $e^{iBz}\in B(\H)$ for every $z\in \C^+$. Now, for $v\in \H$, the maps $z\mapsto \rho(z)v$ and $z\mapsto e^{iBz}v$ are both continuous on $\C^+$, holomorphic on $\text{int}(\C^+)$ and they agree on~$\R$. Thus they must be equal.
\end{proof}

Let $(G,W)$ be complexifiable with corresponding complex involutive semigroup $S_W$. Recall $\hat\pi$ from Theorem \ref{holextgen}. Then we obtain the following correspondence.

\begin{thm}\label{sbrepholrepcorrespondencegen}
The assignment $\pi\mapsto \hat\pi$ yields a bijection
$$\Phi:\left\{
\begin{array}[c]{c}
\text{$\pi:G\rightarrow \U(\H)$ semibounded} \\ 
\text{representations with $W\subset B(I_\pi)^0$}
\end{array}
\right\}
\rightarrow
\left\{
\begin{array}[c]{c}
\text{$\rho:S_W\rightarrow B(\H)$ non-degenerate}\\
\text{holomorphic representations}
\end{array}
\right\}$$
which preserves commutants, i.e. $\pi(G)'=\hat\pi(S_W)'$.
\end{thm}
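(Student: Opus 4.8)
The plan is to show that $\Phi$ is a well-defined bijection by exhibiting an explicit inverse $\Psi$, and then to verify $\pi(G)'=\hat\pi(S_W)'$. That $\Phi$ takes values in the stated target set is already at hand: Theorem~\ref{holextgen} produces the holomorphic representation $\hat\pi$ of $S_W$, and Remark~\ref{pihatrmks}(c) shows it is non-degenerate. To define $\Psi$, start from a non-degenerate holomorphic representation $\rho\colon S_W\to B(\H)$ and let $\Psi(\rho):=\pi\colon G\to\U(\H)$ be the unique smooth unitary representation with $\pi(g)\rho(s)=\rho(gs)$ provided by Proposition~\ref{holextprop}(b). Passing to adjoints in this relation, and using $\rho(s)^*=\rho(s^*)$, $\pi(g)^*=\pi(g^{-1})$ and $(gs)^*=s^*g^{-1}$ (here $S_W$ is right-$G$-invariant because $W$ is $\Ad$-invariant), one also obtains $\rho(s)\pi(g)=\rho(sg)$. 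It then remains to prove, writing $\pi=\Psi(\rho)$, that (i) $\pi$ is semibounded with $W\subset B(I_\pi)^0$, and (ii) $\hat\pi=\rho$. Granting (i)--(ii), $\Phi\circ\Psi=\mathrm{id}$; and $\Psi\circ\Phi=\mathrm{id}$ follows from $\pi(g)\hat\pi(s)=\hat\pi(gs)$ (Remark~\ref{pihatrmks}(a)) together with the uniqueness clause in Proposition~\ref{holextprop}(b).

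The heart of the matter is (i) and (ii), which I would obtain one ray at a time. Fix $w\in W$. Since $W$ is a cone and $\C w\subset\g_\C$ is abelian, $\exp_\C(zw)=\exp((\Re z)w)\exp_\C(i(\Im z)w)$ lies in $G\exp_\C(iW)=S_W$ whenever $\Im z>0$, so one may define $\mu_w\colon\C^+\to B(\H)$ by $\mu_w(z):=\rho(\exp_\C(zw))$ for $\Im z>0$ and $\mu_w(t):=\pi(\exp(tw))$ for $t\in\R$. Using that $\rho$ is a homomorphism, the two covariance relations $\pi(g)\rho(s)=\rho(gs)$ and $\rho(s)\pi(g)=\rho(sg)$, and the identity $\exp_\C((t+is)w)^*=\exp_\C((-t+is)w)$ (again by abelianness of $\C w$), one checks directly that $\mu_w$ is a $*$-representation of $\C^+$. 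Moreover $\mu_w|_{\mathrm{int}(\C^+)}=\rho\circ(z\mapsto\exp_\C(zw))$ is holomorphic and, since each $\rho(\exp_\C(zw))$ is injective by Proposition~\ref{holextprop}(a), non-degenerate, while $\mu_w|_\R=\pi(\exp(\cdot\,w))$ is strongly continuous because $\pi$ is smooth. Hence Lemma~\ref{holextlem} applies: the self-adjoint generator of $\mu_w|_\R$ equals $-i\overline{\dd\pi(w)}$ and is bounded below, which is precisely the statement that $i\overline{\dd\pi(w)}$ is bounded above, i.e. $s_\pi(w)<\infty$; and $\mu_w(z)=e^{z\,\overline{\dd\pi(w)}}$, so evaluating at $z=i$ gives $\rho(\exp_\C(iw))=e^{i\overline{\dd\pi(w)}}=\hat\pi(\exp_\C(iw))$. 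Letting $w$ range over $W$ yields $W\subset B(I_\pi)$, hence $W\subset B(I_\pi)^0$ since $W$ is open; and then $\rho(g\exp_\C(iw))=\pi(g)\rho(\exp_\C(iw))=\pi(g)e^{i\dd\pi(w)}=\hat\pi(g\exp_\C(iw))$ for all $g,w$, i.e. $\hat\pi=\rho$. Semiboundedness of $\pi$ follows because $\rho$, being holomorphic, is locally bounded in operator norm and $w\mapsto\exp_\C(iw)$ is continuous, so $\|\rho(\exp_\C(iw))\|=\|e^{i\overline{\dd\pi(w)}}\|=e^{s_\pi(w)}$ is bounded on a neighbourhood of any fixed $w_0\in W$; thus $s_\pi$ is bounded above on a non-empty open subset of $\g$.

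For the commutant identity: if $T\in\pi(G)'$ then, for each $w\in W$, $T$ commutes with the unitary one-parameter group $t\mapsto\pi(\exp(tw))$, hence with every bounded Borel function of its self-adjoint Stone generator $-i\overline{\dd\pi(w)}$, in particular with $e^{i\overline{\dd\pi(w)}}$; combined with $T\in\pi(G)'$ this gives $T\in\{\pi(g)e^{i\dd\pi(w)}:g\in G,\ w\in W\}'=\hat\pi(S_W)'$. Conversely, let $T\in\hat\pi(S_W)'$. Then $T$ commutes with $\hat\pi(\exp_\C(isw))=e^{s\,i\overline{\dd\pi(w)}}$ for all $s>0$ and $w\in W$, hence with the functional calculus of $i\overline{\dd\pi(w)}$ and so with $\pi(\exp(tw))$ for all $t\in\R$; moreover $\hat\pi(g\exp_\C(isw))=\pi(g)e^{s\,i\overline{\dd\pi(w)}}\to\pi(g)$ strongly as $s\downarrow0$ (by Dominated Convergence, exactly as in Remark~\ref{pihatrmks}(c)), and since $T$ is bounded this forces $T\pi(g)=\pi(g)T$ for every $g\in G$, i.e. $T\in\pi(G)'$.

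The main obstacle is step (i): verifying that the representation $\pi=\Psi(\rho)$ reconstructed from $\rho$ is genuinely semibounded with $W\subset B(I_\pi)^0$. Essentially all the work sits in organizing the data carried by $\rho$ along the one-parameter subsemigroups $\exp_\C(\C^+w)$, $w\in W$, so that Lemma~\ref{holextlem} is applicable; the routine-but-delicate points there are the compatibility checks making each $\mu_w$ a $*$-representation of $\C^+$ and the upgrade from ``$s_\pi$ finite on $W$'' to ``$s_\pi$ locally bounded on $W$'' via local boundedness of the holomorphic map $\rho$. The remaining ingredients --- the two composition identities and the two commutant inclusions --- are bookkeeping with the covariance relation $\pi(g)\rho(s)=\rho(gs)$ and standard spectral theory for the self-adjoint operators $i\overline{\dd\pi(w)}$.
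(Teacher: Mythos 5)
Your proposal follows the same core strategy as the paper: construct the inverse $\Psi$ via Proposition~\ref{holextprop}(b), then apply Lemma~\ref{holextlem} along the ray semigroups $z\mapsto\exp_\C(zw)$ to deduce boundedness of $i\overline{\dd\pi(w)}$, the identity $\rho(\exp_\C(iw))=e^{i\dd\pi(w)}$, and hence $\rho=\hat\pi$, with local boundedness of the holomorphic map $\rho$ upgrading finiteness of $s_\pi$ to local boundedness on $W$. The only differences are presentational: you make explicit the right-covariance $\rho(s)\pi(g)=\rho(sg)$ needed to see that $\mu_w$ is a $*$-representation of $\C^+$ (the paper's proof is terse here) and you write out the commutant equality $\pi(G)'=\hat\pi(S_W)'$ via spectral calculus and the strong limit $\hat\pi(g\exp_\C(isw))\to\pi(g)$ as $s\downarrow 0$, where the paper simply cites \cite[Prop.~XI.3.10]{Ne00}; you also verify $\Phi$ is a bijection via $\Phi\circ\Psi=\mathrm{id}$ and $\Psi\circ\Phi=\mathrm{id}$, while the paper checks injectivity of $\Phi$ directly from $\hat\pi_1=\hat\pi_2$.
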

\begin{proof}
From Theorem \ref{holextgen} and Remark \ref{pihatrmks}(c) we know that $\Phi$ is defined. Suppose that $\Phi(\pi_1)=\Phi(\pi_2)$, i.e. $\hat\pi_1=\hat\pi_2$. Then $\pi_1(g)\hat\pi_1(s)=\hat\pi_1(gs)=\hat\pi_2(gs)=\pi_2(g)\hat\pi_1(s)$. Hence the unitary operators $\pi_1(g)$ and $\pi_2(g)$ coincide on the total subset $\pi_1(S_W)\H$ of $\H$ and must therefore be equal. Thus $\pi_1=\pi_2$, which shows that $\Phi$ is injective. Now let $\rho\colon S_W\rightarrow B(\H)$ be a non-degenerate holomorphic representation. By Proposition~\ref{holextprop}(b) there exists a smooth unitary representation $\pi:G\rightarrow\U(\H)$ with $\pi(g)\rho(s)=\rho(gs)$. For $x\in W$ we define
$$\nu_x: \C^+ \rightarrow B(H), z \mapsto \begin{cases} \rho(\exp_\C(zx)) &\text{ for $z\in\text{int}(\C^+)$} \\ \pi(\exp(zx)) &\text{ for $z\in \R$.}\end{cases}$$
Then $\nu_x$ is a representation since $\pi(g)\rho(s)=\rho(gs)$. Since $\rho$ is non-degenerate the operator $\rho(\exp_\C(ix))=\nu_x(i)$ has dense range by Proposition \ref{holextprop}(a). Therefore $\nu_x\vert_{\text{int}(\C^+)}$ is non-degenerate. Now we see that $\nu_x$ satisfies the conditions in Lemma \ref{holextlem}. Hence $-i\dd\pi(x)$ is bounded below and 
\begin{align}\label{sbcondholext}
\rho(\exp_\C(ix))=\nu_x(i)=e^{i\dd\pi(x)}
\end{align}
holds in the sense of functional calculus of $\overline{i\dd\pi(x)}$. Recall the support function $s_\pi(y)=\sup(\Spec(\overline{i\dd\pi(y)}))$ for $y\in \g$. From \eqref{sbcondholext} we obtain $\|\rho(\exp_\C(ix))\|=e^{s_\pi(x)}$ for all $x\in W$. Since $\rho:S_W\rightarrow B(\H)$ is holomorphic and thus in particular locally bounded, the map $W\ni x\mapsto e^{s_\pi(x)}$ is locally bounded. Hence $s_\pi$ is also locally bounded on $W$ which implies that $\pi$ is semibounded with $W\subset B(I_\pi)^0$. With \eqref{sbcondholext} we now obtain
$$\hat\pi(g\exp_\C(ix))=\pi(g)e^{i\dd\pi(x)}=\pi(g)\rho(\exp_\C(ix))=\rho(g\exp_\C(ix))$$
for all $g\in G, x\in W$. Hence $\rho=\hat\pi=\Phi(\pi)$ which shows the surjectivity of $\Phi$. That $\pi(G)'=\hat\pi(S_W)'$ holds may be shown as in \cite[Prop. XI.3.10]{Ne00}.
\end{proof}

\subsection*{Extending representations of dense subgroups}

In this subsection we will show that every smooth representation of a dense locally exponential Lie subgroup of a locally exponential Lie group $G$ is extendable to a smooth representation of $G$. This will be needed in the next section since the complex Lie group $G_{A,\C}$ associated to the oscillator group $G_A$ is the complexification of the dense subgroup $G_A^\cO \subset G_A$. At first we recall a continuous version for topological groups.

\begin{prop}\label{contextension}
Let $G$ be a topological group, $H\subset G$ a dense subgroup and let $\pi_H:H \rightarrow\U(\H)$ be a continuous unitary representation. Then $\pi_H$ is uniquely extendable to a continuous representation $\pi: G \rightarrow \U(\H)$.
\end{prop}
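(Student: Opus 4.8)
The plan is to define the extension pointwise as a limit of nets and then verify the representation axioms. For $g\in G$ fix a net $(h_\alpha)_{\alpha\in A}$ in $H$ with $h_\alpha\to g$; such a net exists by density (index by the neighbourhood filter of $g$ and pick a point of $H$ in each neighbourhood). First I would check that for each $v\in\H$ the net $\big(\pi_H(h_\alpha)v\big)_\alpha$ is Cauchy. Since the operators $\pi_H(h)$ are unitary,
$$\|\pi_H(h_\alpha)v-\pi_H(h_\beta)v\|=\|\pi_H(h_\beta^{-1}h_\alpha)v-v\| ,$$
and because $(x,y)\mapsto x^{-1}y$ is continuous with value $e$ at $(g,g)$, the elements $h_\beta^{-1}h_\alpha$ eventually lie in any prescribed neighbourhood of $e$ in $G$; continuity of $\pi_H$ at $e$ then makes the right-hand side small. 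As $\H$ is complete the net converges, and I set $\pi(g)v:=\lim_\alpha\pi_H(h_\alpha)v$. Applying the same estimate to two nets converging to $g$ (on the product directed set) shows the limit is independent of the chosen net, so $\pi(g)$ is well defined; linearity and $\|\pi(g)v\|=\|v\|$ pass to the limit, so $\pi(g)$ is a linear isometry of $\H$, and $\pi$ restricts to $\pi_H$ on $H$ by continuity of $\pi_H$.

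Next I would show that $\pi$ is a homomorphism. Given $g_1,g_2\in G$, pick nets $h_\alpha\to g_1$ and $k_\beta\to g_2$ in $H$; then $h_\alpha k_\beta\to g_1g_2$ by continuity of multiplication, so $\pi_H(h_\alpha k_\beta)v\to\pi(g_1g_2)v$ over the product directed set. On the other hand, since each $\pi_H(h_\alpha)$ is isometric,
$$\|\pi_H(h_\alpha)\pi_H(k_\beta)v-\pi(g_1)\pi(g_2)v\|\le\|\pi_H(k_\beta)v-\pi(g_2)v\|+\|\pi_H(h_\alpha)\pi(g_2)v-\pi(g_1)\pi(g_2)v\|\longrightarrow 0 .$$
Since $\pi_H(h_\alpha k_\beta)=\pi_H(h_\alpha)\pi_H(k_\beta)$, the two limits agree, whence $\pi(g_1g_2)=\pi(g_1)\pi(g_2)$. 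In particular $\pi(g)\pi(g^{-1})=\pi(g^{-1})\pi(g)=\id$, so each $\pi(g)$ is an invertible isometry, i.e.\ unitary, and $\pi\colon G\to\U(\H)$ is a representation.

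Finally, for continuity it suffices, $\pi$ being a homomorphism into a group of unitaries, to verify continuity of $g\mapsto\pi(g)v$ at $e$: given $v\in\H$ and $\eps>0$, choose an \emph{open} neighbourhood $V$ of $e$ with $\|\pi_H(h)v-v\|<\eps$ for $h\in V\cap H$ (continuity of $\pi_H$ at $e$); for $g\in V$ the defining net may be taken inside $V\cap H$, so $\|\pi(g)v-v\|\le\eps$. Uniqueness is then immediate, since two continuous extensions coincide on the dense subgroup $H$ and $\U(\H)$ with the strong operator topology is Hausdorff. The point requiring care is the net bookkeeping in the two displayed limits (and, implicitly, reading ``continuous representation'' as strong-operator continuity, equivalently continuity at $e$); a slicker but less self-contained alternative is to invoke the standard extension theorem for uniformly continuous homomorphisms into the complete topological group $\U(\H)$ equipped with the strong-$*$ topology, which on the unitaries coincides with the strong operator topology.
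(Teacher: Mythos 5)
Your proof is correct, but it takes a genuinely different route from the paper's. You construct the extension directly: for $g\in G$ you pick a net $h_\alpha\to g$ in $H$, show via unitarity and the identity $\|\pi_H(h_\alpha)v-\pi_H(h_\beta)v\|=\|\pi_H(h_\beta^{-1}h_\alpha)v-v\|$ together with continuity at $e$ that $(\pi_H(h_\alpha)v)$ is Cauchy, and then check by hand that the pointwise limit is a well-defined strongly continuous unitary representation. This is the classical uniform-continuity extension argument specialized to $\U(\H)$; your alternative remark about invoking the extension theorem for uniformly continuous homomorphisms into the complete group $\U(\H)$ (strong-$*$, which on $\U(\H)$ coincides with strong) is essentially the same mechanism stated abstractly. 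The paper instead argues through positive definite functions: it first reduces to the cyclic case, considers the positive definite function $f_H(h)=\langle\pi_H(h)v,v\rangle$ on $H$, cites Hewitt--Ross for the fact that a continuous positive definite function is uniformly continuous with respect to the left uniform structure of the ambient group, extends $f_H$ to a continuous positive definite function $f$ on $G$, and recovers $\pi$ via GNS. The paper's route outsources all the net bookkeeping to the theory of positive definite functions and is shorter on the page; yours is more self-contained and makes no appeal to the decomposition into cyclic summands or to the GNS construction. One small point of care in your write-up: when you say "for $g\in V$ the defining net may be taken inside $V\cap H$, so $\|\pi(g)v-v\|\le\eps$," you are using both openness of $V$ and density of $H$, which you have; the conclusion is the closed estimate $\le\eps$ rather than $<\eps$, which is harmless. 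Otherwise the argument is sound and complete.
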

\begin{proof}
As $\pi_H$ is a direct sum of cyclic unitary representations (\cite[Prop.~II.2.11(ii)]{Ne00}), we may assume that $\pi_H$ is cyclic with cyclic vector $v\in\H$. Then the positive definite function $f_H:H\rightarrow \C, f_H(h)=\langle \pi_H(h)v,v\rangle$ is uniformly continuous w.r.t. the left uniform structure on $G$ (see \cite[Thm. 32.4(iv)]{HR63}). Hence $f_H$ is (uniquely) extendable to a continuous positive definite function $f:G\rightarrow \C$ which corresponds to a representation $\pi: G\rightarrow \U(\H)$ with cyclic vector $v$ and $\langle \pi(g)v,v\rangle=\langle \pi_H(g)v,v\rangle$ for all $g\in H$. In particular $\pi$ extends $\pi_H$. 
\end{proof}

\begin{lem}\label{klinearcontext}
Let $V_1,\dots,V_k$ be locally convex spaces and $W$ be a Banach space. Further let $L_1\subset V_1,\dots,L_k\subset V_k$ be dense subspaces and $m': L_1 \times \dots \times L_k \rightarrow W$ be a continuous $k$-linear map. Then $m'$ is uniquely extendable to a continuous $k$-linear map $m: V_1 \times \dots \times V_k \rightarrow W$.
\end{lem}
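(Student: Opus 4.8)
The plan is the standard density-and-completeness argument, with the only real work being a telescoping estimate for the $k$-linear map $m'$.

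Uniqueness is automatic: $L_1\times\cdots\times L_k$ is dense in $V_1\times\cdots\times V_k$ and $W$ is Hausdorff, so two continuous extensions of $m'$ must agree everywhere. For existence I would start from continuity of $m'$ at the origin. Since the subspace topology on $L_i$ has a basis of $0$-neighbourhoods of the form $\{x\in L_i:p_i(x)<1\}$ with $p_i$ a continuous seminorm on $V_i$, continuity of the $k$-linear map $m'$ at $0$ yields continuous seminorms $p_1,\dots,p_k$ on $V_1,\dots,V_k$ and a constant $C>0$ with
\[
\|m'(x_1,\dots,x_k)\|\ \le\ C\, p_1(x_1)\cdots p_k(x_k)\qquad\text{for all }x_i\in L_i
\]
(the rescaling is routine; in a slot where $p_i$ vanishes one uses linearity in that slot to see that $m'$ vanishes there, consistently with the bound).

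Next I would construct $m$. Fix $x=(x_1,\dots,x_k)\in V_1\times\cdots\times V_k$ and choose, by density, nets $(a^{(i)}_\alpha)_{\alpha\in A}$ in $L_i$ with $a^{(i)}_\alpha\to x_i$, over a common directed index set (e.g.\ the product of the individual ones). The telescoping identity
\[
m'(a^{(1)}_\alpha,\dots,a^{(k)}_\alpha)-m'(a^{(1)}_\beta,\dots,a^{(k)}_\beta)=\sum_{i=1}^k m'\big(a^{(1)}_\beta,\dots,a^{(i-1)}_\beta,\ a^{(i)}_\alpha-a^{(i)}_\beta,\ a^{(i+1)}_\alpha,\dots,a^{(k)}_\alpha\big),
\]
combined with the displayed bound, the eventual boundedness of each $p_j(a^{(j)}_\alpha)$ (since $a^{(j)}_\alpha\to x_j$ and $p_j$ is continuous), and $p_i(a^{(i)}_\alpha-a^{(i)}_\beta)\to 0$, shows that $\big(m'(a^{(1)}_\alpha,\dots,a^{(k)}_\alpha)\big)_\alpha$ is a Cauchy net in the Banach space $W$; I define $m(x_1,\dots,x_k)$ to be its limit. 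The same estimate applied to the difference of two such approximating families (passing to the product index set and using $p_i(a^{(i)}_\alpha-b^{(i)}_\gamma)\to 0$) shows the limit is independent of the chosen nets, so $m\colon V_1\times\cdots\times V_k\to W$ is well defined and restricts to $m'$ on $L_1\times\cdots\times L_k$.

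Finally I would verify the remaining properties. For $k$-linearity, fix one slot, say the first, together with $x_1,x_1'\in V_1$, $x_j\in V_j$ and scalars $\lambda,\mu$; approximating each argument by nets in the $L_i$ and using that $m$ may be evaluated along any such net, the identity $m(\lambda x_1+\mu x_1',x_2,\dots,x_k)=\lambda m(x_1,\dots)+\mu m(x_1',\dots)$ follows by passing to the limit in the corresponding identity for $m'$; the other slots are symmetric. Passing to the limit in the displayed bound along approximating nets (using continuity of $p_i$ and of $\|\cdot\|$) shows that $m$ still satisfies $\|m(x_1,\dots,x_k)\|\le C\, p_1(x_1)\cdots p_k(x_k)$ for all $x_i\in V_i$, and a $k$-linear map obeying such an estimate is continuous (telescoping near each point, exactly as above), so $m$ is the desired continuous $k$-linear extension. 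The only point needing care is the Cauchy-net argument and the independence of the limit from the approximations, but both are entirely governed by the single estimate above, so there is no genuine obstacle.
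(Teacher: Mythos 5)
Your proof is correct. The only point that deserves a second look is the derivation of the product estimate: continuity of the $k$-linear map $m'$ at the origin yields a $0$-neighbourhood $\{p_1<1\}\times\cdots\times\{p_k<1\}$ (intersected with the $L_i$) on which $\|m'\|\le 1$, and the scaling $y_i=x_i/(2p_i(x_i))$ gives $\|m'(x_1,\dots,x_k)\|\le 2^k\,p_1(x_1)\cdots p_k(x_k)$ for $p_i(x_i)>0$, with the degenerate slots handled exactly as you indicate; so the bound, and everything downstream from it, goes through. Your route is genuinely different from the paper's: the paper chooses balanced convex open sets $U_i$ on which $m'$ is bounded, observes that $m'$ is uniformly continuous on each dilate $n\cdot U_L$ of the intersection with $L_1\times\cdots\times L_k$, invokes Bourbaki's extension theorem for uniformly continuous maps into a complete space to extend over $n\cdot U$, and then glues these local extensions together over the exhaustion $\bigcup_n n\cdot U=V_1\times\cdots\times V_k$. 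You instead establish a single global multilinear estimate and build the extension directly via Cauchy nets. The underlying telescoping calculation is the same in both cases; the paper's version is shorter because it offloads the completeness argument to a citation, while yours is fully self-contained and makes the quantitative dependence on the seminorms explicit, which is arguably cleaner to verify. Either way, the Banach-space hypothesis on $W$ is used precisely where you use it (completeness for Cauchy nets), and uniqueness is, as you say, immediate from density and Hausdorffness.
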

\begin{proof}
Choose non-empty convex balanced open subsets $U_i\subset V_i$ for all $i\in \{1,\dots,k\}$ such that $m'$ is bounded on $U_L:=U\cap (L_1 \times \dots \times L_k)$ where $U:=U_1\times \cdots \times U_k$. It is easily verified that $m'$ is uniformly continuous on each $n\cdot U_L$. Since $n\cdot U_L$ is dense in $n\cdot U$ we obtain a $k$-linear continuous extension $m_n: n\cdot U  \rightarrow W$ of $m'\vert_{n\cdot U_L}$ for every $n\in \N$ (cf. \cite[II.3.6 Thm. 2]{Bo66}). Since $\bigcup_{n\in \N} n\cdot U=V_1\times \dots \times V_k$, these extensions fit together to a $k$-linear continuous extension $m: V_1 \times \dots \times V_k \rightarrow W$ of $m'$.
\end{proof}

Before turning to dense locally exponential subgroups, we state a slightly more general theorem on extending smooth resp. semibounded representations.

\begin{thm}\label{smoothsemibextensionmoregen}
Let $G, H$ be locally convex Lie groups with exponential maps and Lie algebras $\g, \h$. Let $f:H\rightarrow G$ be a morphism of Lie groups and $\pi:G\rightarrow \U(\H)$ a continuous unitary representation. Assume that $\L(f):\h\hookrightarrow \g$ is a topological embedding with dense range. Then the following assertions hold.
\begin{itemize}
\item[(a)] If $G$ is locally exponential and $\pi\circ f$ is smooth then $\pi$ is smooth. Moreover, the space of smooth vectors for $\pi$ and $\pi\circ f$ coincide.
\item[(b)] If $\pi$ is smooth and $\pi\circ f$ is semibounded then $\pi$ is semibounded.
\end{itemize}
\end{thm}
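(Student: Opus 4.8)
\emph{Part (a).} The inclusion $\H^\infty(\pi)\subseteq\H^\infty(\pi\circ f)$ is immediate: for a $\pi$-smooth vector $v$ the orbit map $h\mapsto\pi(f(h))v$ is the composition of the smooth orbit map $g\mapsto\pi(g)v$ with the Lie group morphism $f$. The content is the reverse inclusion, together with the fact that $\dd\pi$ restricts to $\dd(\pi\circ f)$ along $\L(f)(\h)$. The plan is to fix $v\in\H^\infty(\pi\circ f)$, transport the infinitesimal data of $v$ from $\h$ to $\g$ using that $\L(f)$ is a topological embedding with dense range, and then invoke the theory of smooth vectors for locally exponential Lie groups. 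Concretely: since $v$ is smooth for $\pi\circ f$, the iterated maps $\h^n\to\H$, $(y_1,\dots,y_n)\mapsto\dd(\pi\circ f)(y_1)\cdots\dd(\pi\circ f)(y_n)v$, are continuous and $n$-linear, and as $\H$ is Banach and $\L(f)$ is a topological embedding with dense range, Lemma~\ref{klinearcontext} extends each of them uniquely to a continuous $n$-linear map $\tilde\omega_n^v : \g^n\to\H$.

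To see that these $\tilde\omega_n^v$ really compute iterated derivatives of $\pi$, I would run an approximation argument. For $x\in\g$ pick a net $(y_\alpha)$ in $\h$ with $\L(f)y_\alpha\to x$; from $\pi(\exp_G(t\L(f)y_\alpha))v=\pi(f(\exp_H(ty_\alpha)))v=v+\int_0^t\pi(\exp_G(s\L(f)y_\alpha))\,\dd(\pi\circ f)(y_\alpha)v\,ds$ and the continuity of $\pi$ and $\exp_G$ (which, since the $\pi$-operators are unitary, yields convergence of the integrands uniformly for $s$ in compact intervals) one passes to the limit and obtains $\pi(\exp_G(tx))v=v+\int_0^t\pi(\exp_G(sx))\,\tilde\omega_1^v(x)\,ds$. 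Hence $v$ is a $C^1$-vector for every one-parameter group $t\mapsto\exp_G(tx)$, with $\overline{\dd\pi}(x)v=\tilde\omega_1^v(x)$, so $x\mapsto\overline{\dd\pi}(x)v$ is continuous and linear. Iterating --- applying the same reasoning to $\dd(\pi\circ f)(y)v$, which is again smooth for $\pi\circ f$, and using continuity of the $\tilde\omega_n^v$ --- shows that $v$ lies in the domain of every iterated operator $\overline{\dd\pi}(x_1)\cdots\overline{\dd\pi}(x_n)$ with value $\tilde\omega_n^v(x_1,\dots,x_n)$, that $\H^\infty(\pi\circ f)$ is stable under each $\overline{\dd\pi}(x)$, $x\in\g$, and that the resulting $\g$-action on this dense subspace is strongly continuous, by essentially skew-adjoint operators, and satisfies the Lie relations. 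Since $G$ is locally exponential, the characterization of smooth vectors from \cite{Ne10a} (cf.\ also the ``good vector'' formalism used above, following \cite{MN11}) then gives $v\in\H^\infty(\pi)$ and $\dd\pi(\L(f)y)=\dd(\pi\circ f)(y)$, so the two spaces of smooth vectors coincide. The step I expect to be the main obstacle is exactly this last one --- turning regularity of $v$ along all one-parameter subgroups, plus continuity of the derivative maps, into joint smoothness of the $G$-orbit map --- together with the stability of $\H^\infty(\pi\circ f)$ under the extended operators $\overline{\dd\pi}(x)$; both genuinely use local exponentiality.

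\emph{Part (b).} Here the plan is shorter. Since $\pi$ is smooth and $f$ is smooth, $\pi\circ f$ is smooth, $\H^\infty(\pi)\subseteq\H^\infty(\pi\circ f)$, and for $v\in\H^\infty(\pi)$, $y\in\h$ one has $\dd(\pi\circ f)(y)v=\dd\pi(\L(f)y)v$ because $f\circ\exp_H=\exp_G\circ\L(f)$. The operator $i\dd\pi(\L(f)y)$ with domain $\H^\infty(\pi)$ is essentially self-adjoint (dense domain, invariant under the unitary group $\pi(\exp_G(\R\L(f)y))$), while $\overline{i\dd(\pi\circ f)(y)}$ is a self-adjoint extension of it; by uniqueness of the self-adjoint extension of an essentially self-adjoint operator, $\overline{i\dd\pi(\L(f)y)}=\overline{i\dd(\pi\circ f)(y)}$, so $s_\pi(\L(f)y)=s_{\pi\circ f}(y)$ for every $y\in\h$. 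Now choose a non-empty open $U_H\subseteq\h$ with $C:=\sup_{U_H}s_{\pi\circ f}<\infty$ and a $0$-neighbourhood $N_H$ of $\h$ with $y_0+N_H\subseteq U_H$. Since $\L(f)$ is a topological embedding, $\L(f)(N_H)=N_\g\cap\L(f)(\h)$ for some open $0$-neighbourhood $N_\g$ of $\g$, so $O:=\L(f)(y_0)+N_\g$ is a non-empty open subset of $\g$ with $O\cap\L(f)(\h)=\L(f)(y_0+N_H)\subseteq\L(f)(U_H)$, whence $s_\pi\le C$ on $O\cap\L(f)(\h)$. As $\L(f)(\h)$ is dense in $\g$, $O\cap\L(f)(\h)$ is dense in $O$; and $s_\pi$ is lower semicontinuous (a supremum of continuous linear functionals, cf.\ the proof of Proposition~\ref{supportfunctiongen}), so the closed set $\{x\in\g:s_\pi(x)\le C\}$ contains a dense subset of $O$ and hence contains $O$. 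Thus $s_\pi$ is bounded above on the non-empty open set $O$, i.e.\ $\pi$ is semibounded.
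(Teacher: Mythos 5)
Your proposal follows essentially the same route as the paper's proof: part (a) extends the multilinear derivative maps via Lemma~\ref{klinearcontext}, establishes the iterated-derivative identities through the Fundamental Theorem of Calculus along one-parameter subgroups combined with density and continuity, and concludes via the local-exponentiality characterization of smooth vectors from \cite{Ne10a}; part (b) uses $s_{\pi\circ f}=s_\pi\circ\L(f)$, lower semicontinuity of $s_\pi$, and the topological embedding property to propagate the bound to an open subset of $\g$. The only cosmetic difference is in (a): the paper proves the integral identity first on $\h_1^{n+1}$ and then extends both sides by continuity in all variables at once, whereas you pass to the limit inside the integral along a net --- equivalent, though the paper's ordering avoids having to justify uniform convergence of the integrands.
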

\begin{proof}
(a) 
We set $\pi_H:=\pi\circ f$, $\h_1:=\L(f)(\h)$ and denote the set of smooth vectors for $\pi_H$ by $\H^\infty(\pi_H)$ and for $\pi$ by $\H^\infty(\pi)$. In this proof we denote by $\dd\pi(x)$ the skew-adjoint generator of the unitary one-parameter group $t\mapsto \pi(\exp(tx))$ for all $x\in \g$. Note that $$\overline{\dd\pi_H(x)\vert_{\H^\infty(\pi_H)}}=\dd\pi(\L(f)(x))$$ holds for $x\in\h$. For every $v\in \H^\infty(\pi_H)$ and $n\in \N$, the continuous $n$-linear map
$$\h_1 \times \dots \times \h_1 \rightarrow \H,(x_1,\dots,x_n) \mapsto \dd\pi_H(\L(f)^{-1}x_1)\cdots\dd\pi_H(\L(f)^{-1}x_n)v$$
is uniquely extendable to a continuous map $m_n^v:\g \times \dots \times \g \rightarrow \H$ by Lemma \ref{klinearcontext}, since $\h_1\subset \g$ is dense. We set $m_0^v:=v$ for $v\in \H^\infty(\pi_H)$. For $n\in \N$, we consider the subspaces $\tilde{\D}_n := \bigcap_{x_1,\dots,x_n\in \g} \D(\dd\pi(x_1) \cdots \dd\pi(x_n))$ and 
$$\D_n:= \left\{ v\in \tilde{\D}_n : \dd\pi(x_1)\cdots\dd\pi(x_n)v=m_n^v(x_1,\dots,x_n) \text{ for all $x_i\in \g$} \right\}$$
of $\H$.\newline
\textbf{Step 1:} For $n\in \N_0, v\in \H^\infty(\pi_H),t\in \R^\times$ and $x_0,x_1,\dots,x_n\in \g$ we have:
\begin{align}\label{smoothextensiongeneq1}
\pi(\exp(tx_0))m_n^v(x_1,\dots,x_n)-m_n^v(x_1,\dots,x_n) &= t\int_0^1\pi(\exp(tsx_0))m_{n+1}^v(x_0,\dots,x_n)\ ds.
\end{align}
This can be seen as follows. First let $x_0,x_1,\dots,x_n\in \h_1$, choose $y_0\in \h$ such that $x_0=\L(f)(y_0)$ and set $\tilde{v}:=\dd\pi_H(\L(f)^{-1}x_1)\cdots\dd\pi_H(\L(f)^{-1}x_n)v$. Note that $\tilde{v}\in \H^\infty(\pi_H)$. The Fundamental Theorem of Calculus applied to the smooth curve $c(s)=\pi(\exp(tsx_0))\tilde{v}=\pi_H(\exp(tsy_0))\tilde{v}$ yields
$$\pi(\exp(tx_0))\tilde{v}-\tilde{v} = t\int_0^1\pi(\exp(tsx_0))\dd\pi_H(\L(f)^{-1}x_0)\tilde{v}\ ds,$$
which shows that \eqref{smoothextensiongeneq1} holds for all $x_0,\dots,x_n\in \h_1$. The map 
$$\R^2 \times \g \times \dots \times \g \rightarrow \H,(t,s,x_0,\dots,x_n) \mapsto \pi(\exp(tsx_0))m_{n+1}^v(x_0,\dots,x_n)$$
is continuous since $m_{n+1}^v$ and the action $G\times \H\rightarrow \H,(g,w)\mapsto \pi(g)w$ are continuous. Hence the right hand side of \eqref{smoothextensiongeneq1} is continuous in $(x_0,\dots,x_n)\in \g^{n+1}$. Since the left hand side of \eqref{smoothextensiongeneq1} is also continuous in $(x_0,\dots,x_n)\in \g^{n+1}$ and since $\h_1\subset \g$ is dense we conclude that \eqref{smoothextensiongeneq1} holds for all $(x_0,\dots,x_n)\in \g^{n+1}$.\newline
\textbf{Step 2:} $\H^\infty(\pi_H) \subset \D_k$ for all $k\in \N$.\newline
We show this by induction on $k\in \N$. Let $v\in\H^\infty(\pi_H)$ and consider equation \eqref{smoothextensiongeneq1} with $n=0$. Dividing by $t$ and letting $t\rightarrow 0$ in this equation we obtain $v \in \D(\dd\pi(x))$ with $\dd\pi(x)v=m_1^v(x)$ for all $x\in\g$. Hence $v\in \D_1$. This shows $\H^\infty(\pi_H)\subset \D_1$. Now assume that $\H^\infty(\pi_H) \subset \D_k$ holds for some $k\in \N$. Let $v\in \H^\infty(\pi_H)$ and $x_0,x_1,\dots,x_k\in \g$. Then by assumption $\tilde{v}:=\dd\pi(x_1)\dots\dd\pi(x_k)v=m_k^v(x_1,\dots,x_k)$. Now consider equation \eqref{smoothextensiongeneq1} with $n=k$. Dividing by $t$ and letting $t\rightarrow 0$ in this equation we obtain $\tilde{v} \in \D(\dd\pi(x_0))$ with $\dd\pi(x_0)\tilde{v}=m_{k+1}^v(x_0,\dots,x_k)$ for all $x\in\g$. We conclude $v\in\D_{k+1}$ and thus $\H^\infty(\pi_H) \subset \D_{k+1}$. Hence Step 2 is proven.

Since $G$ is locally exponential we know from \cite[Lem.3.4]{Ne10a} that $\bigcap_k \D_k = \H^\infty(\pi)$. So Step 2 shows that $\H^\infty(\pi_H)=\H^\infty(\pi)$ and in particular $\pi$ is smooth.\newline
(b) Since $\pi\circ f$ is semibounded, we find a non-empty open subset $U\subset \h$ such that the support function of $\pi\circ f$ is bounded on $U$, i.e., 
$$\sup_{x\in U} s_{\pi\circ f}(x) \leq c <\infty$$
for some $c\in \R$. As $s_{\pi\circ f}=s_\pi \circ \L(f)$ and since $s_\pi$ is lower semicontinuous we obtain
$$\sup_{x\in \overline{\L(f)U}} s_{\pi}(x) \leq c <\infty.$$
Since $\L(f):\h\rightarrow \g$ is a topological embedding with dense image, the subset $\overline{\L(f)U}\subset \g$ contains interior points. Hence $\pi$ is semibounded.
\end{proof}

\begin{dfn}
Let $G$ be a locally exponential Lie group. A (not necessarily closed) subgroup $H$ of $G$ is called a \textit{locally exponential Lie subgroup}, if $H$ carries the structure of a locally exponential Lie group compatible with the subspace topology $H\subset G$. By \cite[Remark~IV.1.22]{Ne06}, this structure is unique if it exists. If $H\subset G$ is a locally exponential Lie subgroup, then the inclusion $H\subset G$ is smooth and it induces a topological embedding of the corresponding Lie algebras $\h \subset \g$. We regard $\h$ as a subspace of $\g$ in this case.
\end{dfn}

\begin{cor}\label{smoothextensiongen}
Let $G$ be a locally exponential Lie group and $H\subset G$ be a dense locally exponential Lie subgroup. Let $\pi_H:H\rightarrow \U(\H)$ be a smooth representation. Then $\pi_H$ extends uniquely to a continuous representation $\pi:G\rightarrow \U(\H)$. This representation $\pi$ is smooth and the space of smooth vectors for $\pi$ and $\pi_H$ coincide.
\end{cor}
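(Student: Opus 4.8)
The plan is to obtain the corollary by combining two results already established above: the purely topological extension statement (Proposition~\ref{contextension}) and the smoothness-transfer theorem for Lie group morphisms with dense-range differential (Theorem~\ref{smoothsemibextensionmoregen}). So the proof will be short, the only genuinely new input being a density statement at the Lie algebra level.

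First I would observe that $\pi_H$, being smooth, is in particular continuous, and that $H$ is a dense subgroup of the topological group $G$. Proposition~\ref{contextension} then yields a unique continuous unitary representation $\pi:G\to\U(\H)$ with $\pi|_H=\pi_H$. Since that proposition already gives uniqueness among continuous (hence a fortiori among smooth) extensions, the uniqueness clause of the corollary is settled; what remains is to show that $\pi$ is smooth and that $\H^\infty(\pi)=\H^\infty(\pi_H)$.

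Next I would let $f:H\hookrightarrow G$ be the inclusion. By the definition of a locally exponential Lie subgroup, $f$ is a morphism of Lie groups and $\L(f):\h\to\g$ is a topological embedding. To apply Theorem~\ref{smoothsemibextensionmoregen}(a) I need the range $\h\subset\g$ to be dense, and deducing this from the density of $H$ in $G$ is the step I expect to be the main (if mild) obstacle, as it is the only place where the structure theory of locally exponential subgroups is really used. I would argue as follows: choose an open $0$-neighborhood $U\subset\g$ on which $\exp_G$ is a diffeomorphism onto an open identity neighborhood of $G$, small enough that $\exp_G(\h\cap U)$ is an identity neighborhood of $H$ (using that the exponential map of $H$ is $\exp_G|_\h$ near $0$ and that $H$ carries the subspace topology). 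Density of $H$ in $G$ then forces $\h\cap U$ to be dense in some $0$-neighborhood of $\g$; since $\h$ is a linear subspace, $\h$ is dense in $\g$.

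Finally, with $G$ locally exponential, $\L(f)$ a topological embedding with dense range, and $\pi\circ f=\pi_H$ smooth, Theorem~\ref{smoothsemibextensionmoregen}(a) applies directly and gives that $\pi$ is smooth with $\H^\infty(\pi)=\H^\infty(\pi\circ f)=\H^\infty(\pi_H)$, completing the proof.
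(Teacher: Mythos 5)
Your proof follows the same route as the paper: Proposition~\ref{contextension} gives the unique continuous extension, and Theorem~\ref{smoothsemibextensionmoregen}(a) applied to the inclusion $f:H\hookrightarrow G$ gives smoothness and the equality of spaces of smooth vectors. The only thing you add is an explicit verification that $\h$ is dense in $\g$ (the paper tacitly takes this for granted when invoking the theorem), and your local-exponential-chart argument for that step is sound.
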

\begin{proof}
By Proposition \ref{contextension} the representation $\pi_H$ extends uniquely to a continuous representation $\pi:G\rightarrow \U(\H)$. Let $f:H\hookrightarrow G$ denote the inclusion. Then $\pi_H=\pi\circ f$ and the assertion follows from Theorem~\ref{smoothsemibextensionmoregen}(a).
\end{proof}

\begin{cor}\label{smoothextension}
Let $L\subset V_A=C^\infty(A)$ be a dense real subspace which is invariant under both $D$ and $\gamma$. Consider the oscillator group $G_L:=\Heis(L,\omega_A) \rtimes_\gamma \R$ with Lie algebra $\g_L$, where we assume $\exp_{G_A}(\g_L)\subset G_L$. Let $\pi_L:G_L \rightarrow \U(\H)$ be a smooth representation. Then the following assertions hold:
\begin{itemize}
\item[\rm(a)] $\pi_L$ is uniquely extendable to a smooth representation $\pi: G_A \rightarrow \U(\H)$. 
\item[\rm(b)] If $D(L) \subset L$ is dense and $B(I_{\pi_L})^0\supset W_d\cap \g_L$ holds for some $d\in\overline{\R}$, then $\pi$ is semibounded with $B(I_\pi)^0 \supset W_d$.
\end{itemize}
\end{cor}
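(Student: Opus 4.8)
For (a) the plan is to reduce to Corollary~\ref{smoothextensiongen} applied to the Heisenberg factor and then to add the $\R$--factor by mollification; for (b), to identify $s_\pi$ on $W_d$ via Proposition~\ref{supportfunctiongen} and Remark~\ref{supportfunctiongenrmk}. First note that the standing hypothesis $\exp_{G_A}(\g_L)\subset G_L$ says exactly that $t\mapsto\gamma(t)x$ has an $L$--valued integral for every $x\in L$, so $G_L$ is an oscillator group with $\exp_{G_L}=\exp_{G_A}\vert_{\g_L}$ that embeds densely into $G_A$ in the sense of Section~\ref{sectog} (as $L$ is dense in $V_A$); hence Proposition~\ref{contextension} gives a \emph{unique} continuous extension $\pi\colon G_A\to\U(\H)$ of $\pi_L$, and it remains to prove $\pi$ smooth and to compute $s_\pi$ on $W_d$.

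For (a): because $G_A$ need not be locally exponential, Corollary~\ref{smoothextensiongen} does not apply to the inclusion $G_L\subset G_A$ itself, so I would split $G_A=\Heis(V_A,\omega_A)\rtimes_\gamma\R$. The Heisenberg group $\Heis(V_A,\omega_A)$ is locally exponential (its exponential map is the identity of $\R\times V_A$), $\Heis(L,\omega_A)$ is a dense locally exponential Lie subgroup, and $\pi_L\vert_{\Heis(L,\omega_A)}$ is smooth (restriction of $\pi_L$ along the smooth inclusion $\Heis(L,\omega_A)\hookrightarrow G_L$); so Corollary~\ref{smoothextensiongen} shows that $\pi\vert_{\Heis(V_A,\omega_A)}$ is smooth with $\H^\infty(\pi\vert_{\Heis(V_A,\omega_A)})=\H^\infty(\pi_L\vert_{\Heis(L,\omega_A)})$, while $\pi\vert_{\{0\}\times\{0\}\times\R}=\pi_L\vert_{\{0\}\times\{0\}\times\R}$ is trivially smooth. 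To pass from smoothness on the two factors to smoothness on $G_A$ one uses the standard fact that a unitary representation of a semidirect product $N\rtimes_\gamma\R$ with smooth $\R$--action is smooth iff its restrictions to $N$ and to $\R$ are smooth, with $\H^\infty(\pi)=\H^\infty(\pi\vert_N)\cap\H^\infty(\pi\vert_\R)$; concretely, for $v$ smooth for $\pi\vert_N$ the mollified vectors $v_\phi:=\int_\R\phi(r)\,\pi(0,0,r)v\,dr$ $(\phi\in C_c^\infty(\R))$ lie in $\H^\infty(\pi\vert_N)\cap\H^\infty(\pi\vert_\R)$ and then — using the smoothness of the $\R$--conjugation on $N$ and differentiation under the integral sign — in $\H^\infty(\pi)$, and these vectors are dense. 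Uniqueness of the smooth extension is inherited from Proposition~\ref{contextension}. \emph{This semidirect-product smoothness step is the main obstacle; everything else is routine assembly of earlier results.}

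For (b): assume also $D(L)\subset L$ dense. Applying the same fact to $G_L$ and combining with the equalities from part (a) gives $\H^\infty(\pi_L)=\H^\infty(\pi)$, so for $a\in\g_L$ the operators $i\dd\pi_L(a)$ and $i\dd\pi(a)$ agree on the dense $\exp(\R a)$--invariant domain $\H^\infty(\pi_L)$ (recall $\exp_{G_A}=\exp_{G_L}$ on $\g_L$), whence their self-adjoint closures coincide and $s_\pi\vert_\t=s_{\pi_L}\vert_\t$. Since $D(V_A)\subset V_A$ is dense (Definition~\ref{defstandosc}), Proposition~\ref{supportfunctiongen} gives $s_\pi(t,x,s)=s_\pi(t-\frac{\|x\|^2}{2s},0,s)$ for $s\neq0$. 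For $(t,x,s)\in W_d$ we have $(t-\frac{\|x\|^2}{2s},0,s)\in W_d\cap\t=C_d\subset W_d\cap\g_L\subset B(I_{\pi_L})^0$, and Remark~\ref{supportfunctiongenrmk} (valid for $\pi_L$ since $D(L)\subset L$ is dense and $W_d\cap\g_L\subset B(I_{\pi_L})^0$) yields a $\|\cdot\|$--continuous, $\R$--valued extension $\hat s\colon W_d\to\R$ of $s_{\pi_L}\vert_{W_d\cap\g_L}$ with $\hat s(t,x,s)=s_{\pi_L}(t-\frac{\|x\|^2}{2s},0,s)$. Therefore
$$s_\pi(t,x,s)=s_\pi\Big(t-\frac{\|x\|^2}{2s},0,s\Big)=s_{\pi_L}\Big(t-\frac{\|x\|^2}{2s},0,s\Big)=\hat s(t,x,s)\in\R,$$
so $W_d\subset B(I_\pi)$; since $W_d$ is open in $\g_A$, $W_d\subset B(I_\pi)^0$, hence $B(I_\pi)^0\neq\emptyset$ and $\pi$ is semibounded.
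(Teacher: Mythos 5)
Your proof is correct and reaches the same conclusions, but by a somewhat different (and in places more laborious) route than the paper's. For (a), after the identical initial steps (unique continuous extension via Proposition~\ref{contextension}, then Corollary~\ref{smoothextensiongen} on the Heisenberg factor to make $\pi\vert_{\Heis(V_A)}$ smooth with $\H^\infty(\pi_L)\subset\H^\infty(\pi\vert_{\Heis(V_A)})$), the paper closes the argument in one line: for $v\in\H^\infty(\pi_L)$ the matrix coefficient factors as $\langle\pi(t,x,s)v,v\rangle=\langle\pi_L(0,0,s)v,\pi(-t,-x)v\rangle$, each factor is smooth (the first because $v\in\H^\infty(\pi_L)$ and $\{0\}\times\{0\}\times\R\subset G_L$, the second by the Heisenberg step), and then \cite[Thm.~7.2]{Ne10a} gives $v\in\H^\infty(\pi)$. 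In particular the dense set $\H^\infty(\pi_L)$ already sits inside $\H^\infty(\pi)$, so the mollification you invoke to manufacture a dense set in $\H^\infty(\pi\vert_N)\cap\H^\infty(\pi\vert_\R)$ is an unnecessary detour — once one grants the semidirect-product smoothness criterion you cite, $\H^\infty(\pi_L)$ is already such a set, and that criterion is itself most cleanly proved by exactly the matrix-coefficient factorization the paper uses, so your mollification is doing the hard step a second time and by harder means. For (b), your argument is sound and arguably more transparent than the paper's: the paper works with $\langle i\dd\pi(a)v,v\rangle\leq\hat s(a)$ on $W_d\cap\g_L$ and then uses density of $W_d\cap\g_L$ in $W_d$ together with continuity of $\hat s$, while you apply Proposition~\ref{supportfunctiongen} directly to $\pi$ to reduce $s_\pi$ on $W_d$ to its values on $C_d\subset\t\subset\g_L$ and then identify those with $s_{\pi_L}$. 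One small simplification to your write-up: the equality $\H^\infty(\pi)=\H^\infty(\pi_L)$ is not needed — you only use that for $a\in\g_L$ the one-parameter groups $t\mapsto\pi(\exp ta)$ and $t\mapsto\pi_L(\exp ta)$ coincide (immediate from $\pi\vert_{G_L}=\pi_L$ and $\exp_{G_A}\vert_{\g_L}=\exp_{G_L}$), hence their Stone generators and spectral bounds coincide, giving $s_\pi\vert_{\g_L}=s_{\pi_L}$ directly.
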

\begin{proof}
By Proposition \ref{contextension} there exists a unique continuous extension $\pi: G_A \rightarrow \U(\H)$ of $\pi_L$. Let $v\in \H$ be a smooth vector for $\pi_L$. Since $\Heis(V_A)$ and $\Heis(L)$ are locally exponential we obtain by Corollary \ref{smoothextensiongen} that $v$ is smooth for $\pi\vert_{\Heis(V_A)}$. But then the map
$$G_A \rightarrow \C, (t,x,s) \mapsto \langle \pi(t,x,s)v,v \rangle = \langle\pi_L(0,0,s)v,\pi(-t,-x)v \rangle$$
is smooth and thus $v$ is smooth for $\pi$ (cf. \cite[Thm.7.2]{Ne10a}). It follows that $\pi$ is smooth. Now suppose that  $D(L) \subset L$ is dense and $W_d\cap \g_L \subset B(I_{\pi_L})^0$. Let $\hat s:W_d \rightarrow \R$ be the continuous extension of $s_{\pi_L}\vert_{W_d\cap \g_L}$ from Remark \ref{supportfunctiongenrmk}. Recall 
$$s_{\pi_L}(a)=\sup_{v\in \H^\infty,\|v\|=1} \langle i\dd\pi_L(a)v,v\rangle.$$
In particular $\langle i\dd\pi(a)v,v\rangle \leq \hat s(a)$ for all $a\in W_d\cap \g_L, \|v\|=1$. Since $W_d\cap\g_L$ is dense in $W_d$, we obtain by continuity 
$$s_\pi(a) = \sup_{v\in\H^\infty,\|v\|=1} \langle i\dd\pi(a)v,v\rangle \leq \hat{s}(a)$$
for all $a\in W_d$. Therefore $\pi$ is semibounded with $W_d\subset B(I_\pi)^0$.
\end{proof}

\section{Application to semibounded representations of $G_A$}\label{applsbrepr}

With the results from Section \ref{extensionsofrepofgenliegroups} we show that every semibounded representation $\pi:G_A\rightarrow\U(\H)$ extends to a holomorphic representation of an associated complex semigroup. This result together with the results from \cite{Ne08} allow us to apply $C^*$-methods to semibounded representations of $G_A$, which will be discussed in Section \ref{sectc^*algebras} below.

\subsection{Holomorphic extensions of semibounded representations of $G_A$}\label{sectholextsbrgapart2}

Recall from Section \ref{sectdefsd} the complex involutive semigroups $S_d=G_A^\cO\exp_\C(iW_d^\cO)\subset G_{A,\C}$ for $d\in \overline{\R}$. Let us consider another topology. 
\begin{dfn}
We denote by $\tau_V$ resp. $\tau_{V^\cO}$ the topology on $V=C^\infty(A)$ resp. on $V^\cO$ given by the norm $x\mapsto \|x\|+\|Ax\|$. Let $\tau_X$ denote the topology on $X\in\{G_{A,\C}, G_A^\cO, S_d\}$ induced by the topology $\tau_{V^\cO}$ on $V^\cO$.
\end{dfn}

\begin{rmk}\label{tautoprmk}
Note that the groups $(X,\tau_X)$ from the preceding definition are not Lie groups resp. Lie semigroups, but we may consider $(X,\tau_X)$ as a locally convex space for $X\in\{G_{A,\C}, G_A^\cO\}$. Note also that $\Heis(V,\omega_A)$ is a Lie group when $V$ is equipped with $\tau_V$. The topological space $(S_d,\tau_{S_d})$ may be considered as an open subset in the locally convex space $(G_{A,\C},\tau_{G_{A,\C}})$ since $S_d$ is open in $(G_{A,\C},\tau_{G_{A,\C}})$. This may be seen as follows: Let $\vp: S_\infty \rightarrow G_A^\cO \times W_\infty^\cO$ be the inverse of the polar map. By Remark \ref{openinvconesgenrmk}, the cone $W_d^\cO\subset \g_A^\cO$ is open when $\g_A^\cO$ is equipped with the topology induced by the $\|\cdot\|$-topology on $V^\cO$. Hence Proposition \ref{polarmap}(c) yields that $S_d=\vp^{-1}(G_A^\cO\times W_d^\cO)$ is open in $(G_{A,\C},\tau_{G_{A,\C}})$.
\end{rmk}

As $(G_A^\cO,W_d^\cO)$ is complexifiable by Proposition \ref{G_Acomplexifiable}, Theorem \ref{sbrepholrepcorrespondencegen} yields a one-to-one correspondence between semibounded representations $\nu: G_A^\cO \rightarrow \U(\H)$ with $W_d^\cO\subset B(I_\nu)$ and non-degenerate holomorphic representations $\rho:S_d\rightarrow B(\H)$. However, it does not say if $\nu$ is smooth when $G_A^\cO\subset G_A$ is equipped with the subspace topology nor does it say whether $\nu$ may be extended to $G_A$. This will be part of the next theorem.

Let $\pi:G_A\rightarrow \U(\H)$ be a semibounded representation. From Proposition \ref{openinvconesgen} we know that $B(I_\pi)^0$ is one of the cones $\pm W_d, d\in \overline{\R}$ or $\g$. Since we may switch from $B(I_\pi)^0$ to $-B(I_\pi)^0$, we may assume $W_d\subset B(I_\pi)^0$ for some $d\in\overline{\R}$. Note that $\pi_h:=\pi\vert_{G_A^\cO}$ is a semibounded representation since the topology on $G_A^\cO$ is finer than that on $G_A$. Note also $W_d^\cO\subset B(I_{\pi_h})^0$. Since $(G_A^\cO,W_d^\cO)$ is complexifiable by Proposition \ref{G_Acomplexifiable}, we obtain by Theorem \ref{holextgen} the holomorphic representation $\hat \pi_h:S_d\rightarrow B(\H)$ of the complex involutive semigroup $S_d$. We set $\hat\pi:=\hat \pi_h$. Note that $\hat\pi$ also depends on $d$.

\begin{thm}\label{sbrepholrepcorrespondenceG_A}
Let $d\in \overline{\R}$. The assignment $\pi\mapsto \hat\pi$ yields a bijection
$$\Phi:\left\{
\begin{array}[c]{c}
\text{$\pi:G_A\rightarrow \U(\H)$ semibounded} \\ 
\text{representations with $W_d\subset B(I_\pi)^0$}
\end{array}
\right\}
\rightarrow
\left\{
\begin{array}[c]{c}
\text{$\rho:S_d\rightarrow B(\H)$ non-degenerate}\\
\text{holomorphic representations}
\end{array}
\right\}$$
which preserves commutants, i.e., $\pi(G_A)'=\hat\pi(S_d)'$. Moreover, every $\Phi(\pi)$ is holomorphic as a map $\Phi(\pi):(S_d, \tau_{S_d})\rightarrow B(\H)$ and for every $\pi$ the representation $\pi\vert_{\Heis(V)}$ is smooth when $V$ is equipped with topology $\tau_V$.
\end{thm}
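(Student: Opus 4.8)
The plan is to reduce the stated correspondence to Theorem~\ref{sbrepholrepcorrespondencegen} applied to the complexifiable pair $(G_A^\cO,W_d^\cO)$ of Proposition~\ref{G_Acomplexifiable}, combined with the extension results of Section~\ref{extensionsofrepofgenliegroups}, and then to deduce the two ``moreover'' assertions from the polar decomposition of Proposition~\ref{polarmap} and from Corollary~\ref{smoothextensiongen}.

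First I would establish the bijection. Given a semibounded $\pi\colon G_A\to\U(\H)$ with $W_d\subseteq B(I_\pi)^0$, its restriction $\pi_h:=\pi|_{G_A^\cO}$ is smooth (the topology of $G_A^\cO$ being finer) and, since $\dd\pi_h=\dd\pi|_{\g_A^\cO}$ and hence $s_{\pi_h}=s_\pi|_{\g_A^\cO}$ while $W_d^\cO=W_d\cap\g_A^\cO$ is open in $\g_A^\cO$, it is semibounded with $W_d^\cO\subseteq B(I_{\pi_h})^0$; so $\Phi(\pi)=\hat\pi=\hat\pi_h$ is a non-degenerate holomorphic representation of $S_d$ (Theorem~\ref{holextgen}, Remark~\ref{pihatrmks}(c)), and $\Phi$ is well defined. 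The map $\pi\mapsto\pi_h$ is a bijection from the domain of $\Phi$ onto the set of semibounded $\nu\colon G_A^\cO\to\U(\H)$ with $W_d^\cO\subseteq B(I_\nu)^0$: it is injective because $G_A^\cO$ is dense in $G_A$ and $\pi$ is strongly continuous, and surjective by Corollary~\ref{smoothextension} applied with $L=V^\cO$ (note $V^\cO\subseteq V_A$ is dense and invariant under $D$ and $\gamma$, $\exp_{G_A}(\g_A^\cO)=\exp_\C(\g_A^\cO)\subseteq G_A^\cO$, and $D(V^\cO)\subseteq V^\cO$ is dense), which extends each such $\nu$ uniquely to a semibounded $\pi$ on $G_A$ with $B(I_\pi)^0\supseteq W_d$. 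Composing with the bijection $\nu\mapsto\hat\nu$ of Theorem~\ref{sbrepholrepcorrespondencegen} for $(G_A^\cO,W_d^\cO)$ shows $\Phi$ is bijective, and since $\pi_h(G_A^\cO)$ is strongly dense in $\pi(G_A)$ one gets $\pi(G_A)'=\pi_h(G_A^\cO)'=\hat\pi_h(S_d)'=\hat\pi(S_d)'$, the middle equality being part of Theorem~\ref{sbrepholrepcorrespondencegen}.

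Next I would prove that $\Phi(\pi)=\hat\pi$ is holomorphic on $(S_d,\tau_{S_d})$. By Theorem~\ref{holextgen}, $\hat\pi$ is holomorphic for the natural (i.e. $C^\cO$) structure on $S_d$; since $\tau_{S_d}$ is a coarser locally convex topology on the same underlying complex domain, restrictions of $\hat\pi$ to complex lines are still holomorphic, so $\hat\pi$ is G\^ateaux-holomorphic for $\tau_{S_d}$, and holomorphy for $\tau_{S_d}$ reduces (via the Cauchy integral for the differential) to $\tau_{S_d}$-continuity of $\hat\pi$. Writing $\hat\pi=\mu\circ\psi^{-1}$ with $\mu(g,w):=\pi(g)e^{i\dd\pi(w)}$, and using that $\psi^{-1}$ is analytic from $(S_d,\tau_{S_d})$ into $G_A^\cO\times W_d^\cO$ carried with the $\|\cdot\|$-topology on $V^\cO$ (Proposition~\ref{polarmap}(c)), it suffices to show $\mu$ is continuous for that topology. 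Here I would estimate, using that $\pi(g)$ is unitary,
$$\bigl\|
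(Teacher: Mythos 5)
Your proposal for the well-definedness and injectivity of $\Phi$, and for the commutant statement, is correct and matches the paper. However, the surjectivity argument has a genuine gap. You claim that $\pi\mapsto\pi_h:=\pi\vert_{G_A^\cO}$ surjects onto the semibounded representations $\nu$ of $G_A^\cO$ with $W_d^\cO\subseteq B(I_\nu)^0$ by applying Corollary~\ref{smoothextension} with $L=V^\cO$. But a semibounded $\nu\colon G_A^\cO\to\U(\H)$ is, by the definitions in Section~\ref{sectcomplexsemigroupsassoctoG_A}, only known to be smooth for the Fr\'echet $C^\cO$-topology on $V^\cO$, which is \emph{strictly finer} than the $C^\infty$-subspace topology that $V^\cO$ inherits from $V_A$; Corollary~\ref{smoothextension} (via Proposition~\ref{contextension} and Corollary~\ref{smoothextensiongen}) requires $\pi_L$ to be continuous, hence smooth, for the subspace topology. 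The paper flags exactly this issue in the paragraph preceding the theorem: Theorem~\ref{sbrepholrepcorrespondencegen} gives a bijection with semibounded representations of $G_A^\cO$, but it ``does not say if $\nu$ is smooth when $G_A^\cO\subset G_A$ is equipped with the subspace topology nor does it say whether $\nu$ may be extended to $G_A$.''

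The nontrivial step you must supply is therefore the downgrading of smoothness: starting from a non-degenerate holomorphic $\rho=\hat\pi_h$ on $S_d$, one shows $\hat\pi_h$ is locally bounded for $\tau_{S_d}$ by combining the $\|\cdot\|$-continuity of $s_{\pi_h}$ on $W_d^\cO$ (Proposition~\ref{supportfunctiongen}/Remark~\ref{supportfunctiongenrmk}) with the $\tau_{S_d}$-continuity of the inverse polar map (Proposition~\ref{polarmap}(c)); then deduce $\tau_{S_d}$-holomorphy of $\hat\pi_h$ from holomorphy on affine discs (Bochnak--Siciak, \cite{BS71}); then use density of $\spn(\hat\pi_h(S_d)\H)$ together with smoothness of $g\mapsto gs$ in the coarser topology to conclude that $\pi_h$ is smooth with respect to the subspace topology from $G_A$, and only then invoke Corollary~\ref{smoothextension}. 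You place some of these ingredients only in the ``moreover'' paragraph (which is moreover cut off mid-estimate), but they are in fact indispensable for the bijection itself, not just for the supplementary holomorphy statement. As written, the surjectivity step is unjustified.
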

\begin{proof}
Let $d\in \overline{\R}$. Suppose $\Phi(\pi_1)=\Phi(\pi_2)$. Then $\pi_1\vert_{G_A^\cO}=\pi_2\vert_{G_A^\cO}$ by Theorem \ref{sbrepholrepcorrespondencegen}. Since $G_A^\cO$ is dense in $G_A$ and $\pi_1, \pi_2$ are strongly continuous we conclude $\pi_1=\pi_2$. Hence $\Phi$ is injective. Now let $\rho:S_d\rightarrow B(\H)$ be a non-degenerate holomorphic representation. By Theorem \ref{sbrepholrepcorrespondencegen}, there is a semibounded representation $\pi_h:G_A^\cO \rightarrow B(\H)$ with $\hat \pi_h = \rho$ and $W_d^\cO \subset B(I_{\pi_h})^0$. From Proposition \ref{supportfunctiongen}, applied to the oscillator groups $G_A^\cO\subset G_A$ we know that $s_{\pi_h}\vert_{W_d^\cO}$ is continuous when $W_d^\cO$ is equipped with the topology induced by the $\|\cdot\|$-topology on $V^\cO$. By Proposition \ref{polarmap}, the inverse of the polar map $$\vp_d:S_d\rightarrow G_A^\cO\times W_d^\cO, g\exp_\C(iw)\mapsto (g,w)$$
is continuous when $G_A^\cO \times W_d^\cO$ is equipped with the topology induced by the $\|\cdot\|$-topology on $V^\cO$ and $S_d$ is equipped with $\tau_{S_d}$. Let $P:G_A^\cO\times W_d^\cO\rightarrow W_d^\cO,(g,w)\mapsto w$. For $s=g\exp_\C(iw)\in S_d$ we then have
$$\|\hat\pi_h(s)\|=e^{s_{\pi_h}(w)}=e^{s_{\pi_h}(P\vp_d(s))}.$$
Hence $\hat\pi_h$ is locally bounded when $S_d$ is equipped with $\tau_{S_d}$. Since $\hat\pi_h$ is holomorphic on affine discs (of complex dimension $1$) in $S_d$, we obtain that $\hat\pi_h$ is holomorphic w.r.t. $\tau_{S_d}$, c.f. \cite[Thm. 6.2]{BS71} and Remark \ref{tautoprmk}. With the explicit formula for the multiplication in $G_{A,\C}$ it is easily verified that, for every $s\in S_d$, the map
$$G_A^\cO\rightarrow S_d,g \mapsto gs$$
is smooth when $G_A^\cO$ is equipped with $\tau_{G_A^\cO}$ and $S_d$ is equipped with $\tau_{S_d}$. Thus, for every $s\in S_d$ and $v\in \H$, the map 
$$G_A^\cO \rightarrow \H,g\mapsto \pi_h(g)\hat\pi_h(s)v=\hat\pi_h(gs)v$$
is smooth when $G_A^\cO$ is equipped with the topology $\tau_{G_A^\cO}$. Since $\spn(\hat\pi_h(S_d)\H)$ is dense in $\H$, we obtain that $\pi_h\vert_{\Heis(V^\cO)}$ is smooth w.r.t. $\tau_{V^\cO}$ and that $\pi_h$ is smooth when $G_A^\cO\subset G_A $ is equipped with the subspace topology. By Corollary \ref{smoothextension}(a) and Corollary \ref{smoothextensiongen} we therefore obtain a smooth representation $\pi:G_A\rightarrow \U(\H)$ extending $\pi_h$ such that $\pi\vert_{\Heis(V)}$ is smooth when $V$ is equipped with the topology $\tau_V$. Since $W_d^\cO=W_d\cap \g_A^\cO \subset B(I_{\pi_h})^0$ and since $D(V^\cO)\subset V$ is dense, we obtain from Corollary \ref{smoothextension}(b) that $\pi$ is semibounded with $W_d\subset B(I_\pi)^0$. By construction we have $\Phi(\pi)=\hat\pi_h= \rho$. Thus $\Phi$ is surjective. Since $G_A^\cO\subset G_A$ is dense and $\pi$ is strongly continuous we have $\pi(G_A)'=\pi(G_A^\cO)'=\hat\pi(S_d)'$, where the last equation holds by Theorem \ref{sbrepholrepcorrespondencegen}.
\end{proof}

\begin{cor}\label{semibsmoothtau_V}
Let $\pi:G_A\rightarrow \U(\H)$ be a semibounded representation. Then $\pi\vert_{\Heis(V)}$ is a smooth representation when $V$ is equipped with the topology $\tau_V$.
\end{cor}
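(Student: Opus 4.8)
The plan is to obtain this as a corollary of Theorem~\ref{sbrepholrepcorrespondenceG_A}: its last sentence already asserts precisely the conclusion we want, namely that $\pi\vert_{\Heis(V)}$ is smooth with respect to $\tau_V$, under the additional hypothesis that $W_d\subset B(I_\pi)^0$ for some $d\in\overline{\R}$. So the only thing left to do is to remove that hypothesis for an arbitrary semibounded representation $\pi:G_A\to\U(\H)$.

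First I would invoke the classification of open invariant cones. Since $\pi$ is semibounded, $B(I_\pi)^0$ is a non-empty open invariant cone in $\g_A$, and $D(V_A)\subset V_A$ is dense by Definition~\ref{defstandosc}. By the dichotomy recalled at the start of Section~\ref{settingsbrepsect} (which is Theorem~\ref{conelemma}(a)) together with Proposition~\ref{openinvconesgen}, $B(I_\pi)^0$ is one of $\g_A$, $W_d$, or $-W_d$ for some $d\in\overline{\R}$. In the first case $W_d\subset B(I_\pi)^0$ for every $d$, and in the second case trivially, so in both cases Theorem~\ref{sbrepholrepcorrespondenceG_A} applies directly and gives the claim. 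For the remaining case $B(I_\pi)^0=-W_d$, I would pass to the complex-conjugate representation $\overline{\pi}$ on the conjugate Hilbert space $\overline{\H}$: one checks $s_{\overline{\pi}}(x)=s_\pi(-x)$, hence $B(I_{\overline{\pi}})=-B(I_\pi)$ and $B(I_{\overline{\pi}})^0=W_d$, so Theorem~\ref{sbrepholrepcorrespondenceG_A} applies to $\overline{\pi}$. Since $\overline{\pi}$ and $\pi$ have the same underlying real maps into the same underlying real Banach space, $\overline{\pi}\vert_{\Heis(V)}$ is $\tau_V$-smooth if and only if $\pi\vert_{\Heis(V)}$ is; this finishes the proof. (Alternatively, one may use Remark~\ref{settingsbrep}, via \cite[Rmk.~2.4]{NZ13} and Remark~\ref{rem:switchomega}, to replace $\pi$ by an equivalent semibounded representation with $W_d$ in the interior of its momentum cone; the Heisenberg factor is then transported by the $\tau_V$-homeomorphism $(t,v)\mapsto(-t,v)$ of $\Heis(V,\omega_A)$ onto $\Heis(V,-\omega_A)$, so $\tau_V$-smoothness of the Heisenberg restriction is again preserved.)

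There is no genuine obstacle here; this really is a corollary of Theorem~\ref{sbrepholrepcorrespondenceG_A}. The only point requiring a moment's care is the sign-reversal step: one must verify that the representation one passes to (the conjugate $\overline{\pi}$, or the switched representation from Remark~\ref{settingsbrep}) is still a smooth unitary semibounded representation of a standard oscillator group to which Theorem~\ref{sbrepholrepcorrespondenceG_A} is applicable, and that the replacement affects the restriction to $\Heis(V)$ only through a map that is a homeomorphism for $\tau_V$ — so that $\tau_V$-smoothness of $\pi\vert_{\Heis(V)}$ transfers. Both are immediate from the definitions.
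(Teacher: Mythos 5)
Your proposal is correct, and it does essentially what the paper does: reduce to the case $W_d\subset B(I_\pi)^0$ for some $d\in\overline{\R}$ and then read off the last assertion of Theorem~\ref{sbrepholrepcorrespondenceG_A}. The paper's one-line proof simply invokes the reduction already set up at the start of Section~\ref{sectholextsbrgapart2} (via Remark~\ref{settingsbrep}, which precomposes $\pi$ with the Lie-group isomorphism coming from Remark~\ref{rem:switchomega} and \cite[Rmk.~2.4]{NZ13}); your main route instead passes to the conjugate representation $\overline{\pi}$ on $\overline{\H}$ and uses $s_{\overline{\pi}}(x)=s_\pi(-x)$ to flip the cone. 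The two mechanisms are parallel (one changes the source group, the other changes the target Hilbert-space structure), and you correctly observe in both cases that the Heisenberg restriction is only altered by a map that preserves $\tau_V$-smoothness. Your conjugate-representation version is slightly more self-contained in that it avoids leaning on the switching remarks, and you also mention the paper's route as an alternative; either way the argument is sound.
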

\begin{proof}
Recall that we may assume $B(I_\pi)^0\subset W_\infty$ and therefore $B(I_\pi)^0=W_d$ for some $d\in \overline{\R}$. Hence the assertion follows from Theorem \ref{sbrepholrepcorrespondenceG_A}.
\end{proof}

\subsection{$C^*$-algebras associated to $G_A$}\label{sectc^*algebras}
In the preceding subsection we showed that, for $d\in\overline\R$, the semibounded representations $\pi:G_A\rightarrow \U(H)$ satisfying $W_d\subset B(I_\pi)^0$ are in one-to-one correspondence with non-degenerate holomorphic representations $\rho:S_d\rightarrow B(\H)$ (Theorem \ref{sbrepholrepcorrespondenceG_A}). From \cite{Ne08} it is known that, for every locally bounded absolute value $\alpha$ on $S_d$, there is a $C^*$-algebra $C^*(S_d,\alpha)$ such that the $\alpha$-bounded non-degenerate holomorphic representations of $S_d$ correspond to the representations of $C^*(S_d,\alpha)$. Combining these results will allow us to apply $C^*$-methods to semibounded representations of $G_A$. In particular, we show that for a separable oscillator group $G_A$ every semibounded representation of $G_A$ on a separable Hilbert space is a direct integral of semibounded factor representations.

Let $S$ be a complex involutive semigroup and $\alpha$ be a locally bounded absolute value on $S$.

\begin{dfn}
To the pair $(S,\alpha)$ we associate the $C^*$-algebra $C^*(S,\alpha)$ as defined in \cite[Def. 3.3]{Ne08}. According to \cite[Thm. 3.5]{Ne08}, there exists a holomorphic morphism $\eta_\alpha:S\rightarrow C^*(S,\alpha)$ of involutive semigroups with total range, i.e., $\eta_\alpha(S)$ spans a dense subspace of $C^*(S,\alpha)$, such that $\|\eta_\alpha(s)\| \leq \alpha(s), s\in S$, and the following holds:
\end{dfn}

\begin{prop}\label{semigrprepc*repcorrespondence}
For every $\alpha$-bounded holomorphic representation $\pi:S\rightarrow B(\H)$ there exists a unique representation $\tilde{\pi}:C^*(S,\alpha)\rightarrow B(\H)$ such that $\pi=\tilde{\pi}\circ \eta_\alpha$ holds. Conversely, for every representation $\tilde{\pi}:C^*(S,\alpha)\rightarrow B(\H)$, the representation $\tilde{\pi}\circ \eta_\alpha\colon S\rightarrow B(\H)$ is holomorphic and $\alpha$-bounded. The commutants of $\tilde{\pi}$ and $\tilde{\pi}\circ \eta_\alpha$ coincide, i.e. $\tilde{\pi}(C^*(S,\alpha))'=\tilde{\pi}(\eta_\alpha(S))'$.
\end{prop}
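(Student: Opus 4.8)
The plan is to obtain the proposition directly from the construction of $C^*(S,\alpha)$ in \cite[Def.~3.3]{Ne08} together with \cite[Thm.~3.5]{Ne08}; in fact the statement is essentially a reformulation of the latter, and I would only need to spell out the remaining bookkeeping. Recall that $C^*(S,\alpha)$ is the Hausdorff completion of the semigroup $*$-algebra $\C[S]$ with respect to the seminorm $\|f\|_\alpha := \sup_\rho\|\rho(f)\|$, the supremum ranging over all $\alpha$-bounded $*$-representations $\rho$ of $\C[S]$ on Hilbert spaces; $\alpha$-boundedness on the generators $\delta_s$ forces $\|f\|_\alpha \leq \sum_s |f(s)|\,\alpha(s) < \infty$, so this is a genuine $C^*$-seminorm. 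The morphism $\eta_\alpha$ sends $s$ to the class of $\delta_s$; by construction $\|\eta_\alpha(s)\|\leq\alpha(s)$, $\eta_\alpha$ respects products and involutions, and its image is total. That $\eta_\alpha$ is moreover holomorphic is the one genuinely nontrivial ingredient and is supplied by \cite[Thm.~3.5]{Ne08}.

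For the first assertion, given an $\alpha$-bounded holomorphic representation $\pi:S\to B(\H)$, I would first extend it linearly to $\pi_0:\C[S]\to B(\H)$, $\pi_0(\delta_s)=\pi(s)$; since $\pi$ is a morphism of involutive semigroups, $\pi_0$ is a $*$-homomorphism, and it is one of the $\alpha$-bounded $*$-representations entering the definition of $\|\cdot\|_\alpha$, whence $\|\pi_0(f)\|\leq\|f\|_\alpha$. Therefore $\pi_0$ descends to the quotient and extends continuously to a $*$-homomorphism $\tilde\pi:C^*(S,\alpha)\to B(\H)$ with $\tilde\pi\circ\eta_\alpha = \pi$. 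Uniqueness of $\tilde\pi$ is immediate from the totality of $\eta_\alpha(S)$ together with the norm continuity of any $C^*$-homomorphism.

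Conversely, given a representation $\tilde\pi:C^*(S,\alpha)\to B(\H)$, I would check that $\rho:=\tilde\pi\circ\eta_\alpha$ has the required properties: it is a semigroup morphism with $\rho(s^*)=\tilde\pi(\eta_\alpha(s)^*)=\rho(s)^*$ because $\eta_\alpha$ intertwines the involutions; it is $\alpha$-bounded since $\|\rho(s)\|\leq\|\eta_\alpha(s)\|\leq\alpha(s)$ (every $C^*$-homomorphism is norm decreasing); and it is holomorphic as the composition of the holomorphic map $\eta_\alpha$ with the bounded, hence holomorphic, linear map $\tilde\pi$.

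For the commutant statement I would use that the linear span $\cA_0$ of $\eta_\alpha(S)$ is a dense $*$-subalgebra of $C^*(S,\alpha)$ (products and adjoints of the $\eta_\alpha(s)$ again lie in $\eta_\alpha(S)$, and the range is total), so $\tilde\pi(\cA_0)$ is norm dense in $\tilde\pi(C^*(S,\alpha))$. Since multiplication in $B(\H)$ is separately norm continuous, an operator $T$ commutes with $\tilde\pi(\eta_\alpha(S))$ if and only if it commutes with $\tilde\pi(\cA_0)$, if and only if it commutes with its closure $\tilde\pi(C^*(S,\alpha))$; this yields $\tilde\pi(C^*(S,\alpha))'=\tilde\pi(\eta_\alpha(S))'$. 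The only place where any real work is hidden is the holomorphy of $\eta_\alpha$ (and hence of $\tilde\pi\circ\eta_\alpha$), which is already contained in \cite[Thm.~3.5]{Ne08}; everything else is routine $C^*$-algebra bookkeeping, so I do not expect a genuine obstacle here.
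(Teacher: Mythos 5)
Your argument is correct and follows essentially the same route as the paper: the first assertion is (as you note) exactly the content of \cite[Thm.~3.5(ii)]{Ne08}, the converse uses that $\tilde\pi$ is contractive and holomorphic together with $\|\eta_\alpha(s)\|\leq\alpha(s)$, and the commutant identity comes from totality of $\eta_\alpha(S)$ in $C^*(S,\alpha)$. You merely spell out the bookkeeping (passing through $\C[S]$ and its Hausdorff completion) that the paper delegates to the citation.
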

\begin{proof}
The first statement is \cite[Thm. 3.5(ii)]{Ne08}. The second statement holds, since $\tilde{\pi}$ is holomorphic and contractive as a morphism of $C^*$-algebras and since $\|\eta_\alpha(s)\|\leq \alpha(s)$ for all $s\in S$. The equality of commutants $\tilde{\pi}(C^*(S,\alpha))'=\tilde{\pi}(\eta_\alpha(S))'$ is easily derived from the fact that $\eta_\alpha(S)$ is total in $C^*(S,\alpha)$.
\end{proof}

\begin{rmk}
\begin{itemize}
\item[\rm(a)] A representation $\tilde{\pi}:C^*(S,\alpha)\rightarrow B(\H)$ is non-degenerate if and only if the holomorphic representation $\tilde{\pi} \circ \eta_\alpha:S \rightarrow B(\H)$ is non-degenerate. This follows from the fact that $\eta_\alpha(S)$ is total in $C^*(S,\alpha)$.
\item[\rm(b)] $C^*(S,\alpha)$ is separable if $S$ is separable, since $\eta_\alpha(S)$ is total in $C^*(S,\alpha)$. If $S=S_d=G_A^\cO\exp_\C(iW_d^\cO)$ for some $d\in \overline{\R}$, then $S$ is separable if and only if $V=C^\infty(A)\subset H_A$ is separable, which is equivalent to the separability of $H_A$.
\end{itemize}
\end{rmk}

The preceding proposition entails that the $\alpha$-bounded holomorphic representations of $S$ are in one-to-one correspondence with the representations of the $C^*$-algebra $C^*(S,\alpha)$. For separable $C^*$-algebras there is a good disintegration theory. We shall show now that the correspondence in Theorem \ref{sbrepholrepcorrespondenceG_A} behaves well under disintegration. For the concept of direct integrals we refer to \cite[Part II]{Di81}.

\begin{lem}\label{dirintlem}
Assume that $V=C^\infty(A)$ is separable. Let $\alpha:S_d \rightarrow \R$ be a locally bounded absolute value and $\eta_\alpha:S_d\rightarrow C^*(S_d,\alpha)$ as above. Let $\tilde{\pi}:C^*(S_d,\alpha)\rightarrow B(\H)$ be a non-degenerate representation. Suppose that $\H$ is separable and that
$$(\tilde{\pi}, \H) \cong \left(\int^\oplus_X \tilde{\pi}_\lambda \ d \mu(\lambda), \int^\oplus_X \H_\lambda \ d\mu(\lambda)\right),$$
where $X$ is a Borel space, $\mu$ a positive measure on $X$ and $\lambda \mapsto \tilde{\pi}_\lambda$ a $\mu$-measurable field of representations of $C^*(S_d,\alpha)$ on the $\H_\lambda$.  Then the following assertions hold.
\begin{itemize}
\item[\rm(a)] There exists a $\mu$-zero set $N\subset X$ such that $\tilde{\pi}_\lambda$ is non-degenerate for every $\lambda \in X \backslash N$.
\item[\rm(b)] Let $\pi:=\Phi^{-1}(\tilde{\pi}\circ\eta_\alpha)$, where $\Phi$ is as in Theorem \ref{sbrepholrepcorrespondenceG_A}.  Set $\pi_\lambda:=\Phi^{-1}(\tilde{\pi}_\lambda\circ \eta_\alpha)$ for all $\lambda \in X \backslash N$ and let $\pi_\lambda$ be the trivial representation for $\lambda \in N$. Then we have
$$(\pi, \H) \cong \left(\int^\oplus_X \pi_\lambda \ d \mu(\lambda), \int^\oplus_X \H_\lambda \ d\mu(\lambda)\right).$$
\end{itemize}
\end{lem}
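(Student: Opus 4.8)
The plan is to obtain (a) from the standard disintegration theory of separable $C^*$-algebras and to deduce (b) by recognising $\int_X^\oplus\pi_\lambda\,d\mu(\lambda)$ as the representation characterised by the intertwining relation that defines $\Phi^{-1}(\tilde\pi\circ\eta_\alpha)$. For (a): since $V=C^\infty(A)$ is separable, so are $S_d$ and hence $C^*(S_d,\alpha)$, and the latter therefore admits a sequential approximate identity $0\le e_1\le e_2\le\dots\le\1$; non-degeneracy of $\tilde\pi$ means $\tilde\pi(e_n)\to\1$ in the strong operator topology. In the decomposition $\tilde\pi(e_n)=\int_X^\oplus\tilde\pi_\lambda(e_n)\,d\mu(\lambda)$ the fibres $\tilde\pi_\lambda(e_n)$ increase to the orthogonal projection $P_\lambda$ onto the essential subspace $\overline{\tilde\pi_\lambda(C^*(S_d,\alpha))\H_\lambda}$, and dominated convergence gives $\int_X^\oplus P_\lambda\,d\mu(\lambda)=\1$; testing $\1-P_\lambda\ge 0$ against a fundamental sequence of vector fields forces $P_\lambda=\1_{\H_\lambda}$ off a $\mu$-null set $N$, so $\tilde\pi_\lambda$, and hence $\rho_\lambda:=\tilde\pi_\lambda\circ\eta_\alpha$, is non-degenerate for $\lambda\notin N$ (cf.\ \cite[Part II]{Di81}).

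For (b), fix $\lambda\notin N$. By Proposition~\ref{semigrprepc*repcorrespondence} and the remark following it, $\rho_\lambda$ is a non-degenerate holomorphic representation of $S_d$, so $\pi_\lambda:=\Phi^{-1}(\rho_\lambda)\colon G_A\to\U(\H_\lambda)$ is well defined and, by the construction of $\Phi^{-1}$ in Theorem~\ref{sbrepholrepcorrespondenceG_A} together with Remark~\ref{pihatrmks}(a), satisfies $\pi_\lambda(g)\rho_\lambda(s)=\rho_\lambda(gs)$ for all $g\in G_A^\cO$, $s\in S_d$ (using $G_A^\cO S_d\subseteq S_d$). The first point is that $\lambda\mapsto\pi_\lambda$ (put trivial on $N$) is a measurable field of representations of $G_A$. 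Fixing $w\in W_d^\cO$, the identity $\|\rho_\lambda(\exp_\C(iw))\|=e^{s_{\pi_\lambda}(w)}$ from the proof of Theorem~\ref{sbrepholrepcorrespondencegen} (via \eqref{sbcondholext}), compared with $\|\rho(\exp_\C(iw))\|=\mathrm{ess\,sup}_\lambda\|\rho_\lambda(\exp_\C(iw))\|=e^{s_\pi(w)}<\infty$, gives $s_{\pi_\lambda}(w)\le s_\pi(w)$ for a.e.\ $\lambda$; hence the operators $\rho_\lambda(\exp_\C(iw/n))$ are bounded uniformly in $n$ and (a.e.)\ $\lambda$, and by Remark~\ref{pihatrmks}(c) they converge strongly to $\1_{\H_\lambda}$ as $n\to\infty$. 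Therefore, for $g\in G_A^\cO$,
$$\pi_\lambda(g)=\lim_{n\to\infty}\pi_\lambda(g)\rho_\lambda(\exp_\C(iw/n))=\lim_{n\to\infty}\tilde\pi_\lambda\big(\eta_\alpha(g\exp_\C(iw/n))\big)$$
in the strong operator topology, which exhibits $\lambda\mapsto\pi_\lambda(g)$ as a strong limit of measurable operator fields, hence measurable; strong continuity of each $\pi_\lambda$ and density of $G_A^\cO$ in the separable group $G_A$ then extend measurability to all $g\in G_A$. Thus $\sigma:=\int_X^\oplus\pi_\lambda\,d\mu(\lambda)$ is a unitary representation of $G_A$ on $\H$, and it is strongly continuous since $g\mapsto\langle\sigma(g)\xi,\zeta\rangle=\int_X\langle\pi_\lambda(g)\xi(\lambda),\zeta(\lambda)\rangle\,d\mu(\lambda)$ is continuous by dominated convergence.

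It remains to identify $\sigma$ with $\pi:=\Phi^{-1}(\rho)$, $\rho:=\tilde\pi\circ\eta_\alpha$. From $\tilde\pi=\int_X^\oplus\tilde\pi_\lambda\,d\mu(\lambda)$ one has $\rho(s)=\int_X^\oplus\rho_\lambda(s)\,d\mu(\lambda)$ for every $s\in S_d$, so for $g\in G_A^\cO$ and $s\in S_d$,
$$\sigma(g)\rho(s)=\int_X^\oplus\pi_\lambda(g)\rho_\lambda(s)\,d\mu(\lambda)=\int_X^\oplus\rho_\lambda(gs)\,d\mu(\lambda)=\rho(gs)=\pi(g)\rho(s).$$
Since $\rho$ is non-degenerate, $\mathrm{span}(\rho(S_d)\H)$ is dense, so $\sigma(g)=\pi(g)$ for all $g\in G_A^\cO$, and by density of $G_A^\cO$ in $G_A$ and strong continuity $\sigma=\pi$ — which is the asserted decomposition $(\pi,\H)\cong(\int_X^\oplus\pi_\lambda\,d\mu(\lambda),\int_X^\oplus\H_\lambda\,d\mu(\lambda))$. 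The routine ingredients are the two dominated-convergence steps and the density arguments; I expect the only genuinely delicate point to be the measurability of the field $\lambda\mapsto\pi_\lambda$, since $\pi_\lambda$ is recovered from $\rho_\lambda$ only through the non-invertible operators $\rho_\lambda(\exp_\C(iw))$, and the uniform bound $s_{\pi_\lambda}(w)\le s_\pi(w)$ — which legitimises the strong-limit formula for $\pi_\lambda(g)$ — is the technical heart of the argument.
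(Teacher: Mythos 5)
Your proof is correct, and both parts are sound. Part (a) is essentially a reproof of what the paper disposes of by citing Dixmier; that is fine. For part (b), the route you take differs from the paper's in one interesting place, namely how the measurability of the field $\lambda\mapsto\pi_\lambda(g)$ is established. The paper works with the dense subspace $\H^0=\mathrm{span}(\tilde\pi(\eta_\alpha(S_d))\H)$: for $v\in\H^0$ the intertwining relation $\pi_\lambda(g)\tilde\pi_\lambda(\eta_\alpha(s))v_\lambda=\tilde\pi_\lambda(\eta_\alpha(gs))v_\lambda$ makes $\lambda\mapsto\pi_\lambda(g)v_\lambda$ manifestly measurable, and measurability for arbitrary $v\in\H$ then follows by approximating $v$ by a sequence from $\H^0$ and using that a.e.-pointwise limits of measurable vector fields are measurable. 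You instead approximate on the semigroup side, using the net $s_n=\exp_\C(iw/n)\to e$ and the strong convergence $\rho_\lambda(s_n)\to\1_{\H_\lambda}$ from Remark~\ref{pihatrmks}(c), so that $\pi_\lambda(g)$ is exhibited directly as a pointwise strong limit of the measurable operator fields $\lambda\mapsto\tilde\pi_\lambda(\eta_\alpha(g\,s_n))$. Both arguments are correct and about equally long; yours has the slight aesthetic advantage of never mentioning $\H^0$, the paper's has the advantage of never needing the quantitative norm identity $\|\rho_\lambda(\exp_\C(iw))\|=e^{s_{\pi_\lambda}(w)}$.

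One remark on your self-assessment: the uniform bound $s_{\pi_\lambda}(w)\le s_\pi(w)$ a.e., which you single out as ``the technical heart'', is in fact not needed. For each fixed $\lambda\notin N$ the operator $i\overline{\dd\pi_\lambda(w)}$ is bounded above simply because $\pi_\lambda$ is semibounded with $W_d\subset B(I_{\pi_\lambda})^0$ (Theorem~\ref{sbrepholrepcorrespondenceG_A}), so Remark~\ref{pihatrmks}(c) already gives $\rho_\lambda(\exp_\C(iw/n))\to\1_{\H_\lambda}$ strongly; and a pointwise strong limit of measurable operator fields is measurable regardless of any uniform bound, while $\|\pi_\lambda(g)\|=1$ makes essential boundedness of the limiting field automatic. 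So that step of your proof can be deleted without loss. The remainder — strong continuity of $\sigma$ by dominated convergence, the identification $\sigma(g)\rho(s)=\rho(gs)=\pi(g)\rho(s)$ on the total set $\rho(S_d)\H$, and density of $G_A^\cO$ in $G_A$ — matches the paper's closing argument.
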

\begin{proof}
Part (a) follows from \cite[8.1.5]{Di77}. We set $G:=G_A^\cO$ and $S:=S_d$. Let $X':=X\backslash N$ and let us identify $(\tilde{\pi}, \H)$ with $(\int^\oplus_X \tilde{\pi}_\lambda \ d \mu(\lambda),\int^\oplus_X \H_\lambda \ d\mu(\lambda))$. Recall 
\begin{align}\label{dirintlemeq1}
\pi(g)\tilde{\pi}(\eta_\alpha(s))=\tilde{\pi}(\eta_\alpha(gs)) \quad \text{ and } \quad \pi_\lambda(g)\tilde{\pi}_\lambda(\eta_\alpha(s))=\tilde{\pi}_\lambda(\eta_\alpha(gs))
\end{align}
for all $g\in G, s\in S, \lambda \in X'$ by Remark \ref{pihatrmks}(a). Thus the field $\lambda \mapsto \pi_\lambda(g)\tilde{\pi}_\lambda(\eta_\alpha(s))v_\lambda$ on $X$ is $\mu$-measurable for all $g\in G, s\in S, v=\int^\oplus v_\lambda \ d\mu(\lambda)\in \H$. This implies that $\lambda \mapsto \pi_\lambda(g)v_\lambda$ is $\mu$-measurable for every $v=\int^\oplus v_\lambda \ d\mu(\lambda)\in \H^0:=\spn(\tilde{\pi}(\eta_\alpha(S))\H)$. Now let $v=\int^\oplus v_\lambda \ d\mu(\lambda)\in \H$. Since $\H^0$ is dense in $\H$, we find a sequence $v_n=\int^\oplus v_{\lambda,n} \ d\mu(\lambda)$ in $\H^0$ such that $v_{\lambda,n}\rightarrow v_\lambda$ holds for $\mu$-almost all $\lambda$ in $X$. Thus, for every $g\in G$, the field $\lambda \mapsto \pi_\lambda(g)v_\lambda$ is $\mu$-measurable as a pointwise-limit of a sequence of $\mu$-measurable fields. Moreover, since $G\subset G_A$ is dense, we find for each $g\in G_A$ a sequence $g_n \in G$ with $g_n\rightarrow g$. Since $\pi_\lambda$ is strongly continuous, we now see that for every $g\in G_A$ the field $\lambda \mapsto \pi_\lambda(g)v_\lambda$ is $\mu$-measurable as a pointwise-limit of a sequence of $\mu$-measurable fields. Hence we have shown that $X \ni \lambda \mapsto \pi_\lambda(g)$ is a $\mu$-measurable field of operators for every $g\in G_A$ and we therefore may consider the decomposable operator $\int^\oplus \pi_\lambda(g)\ d\mu(\lambda)\in \U(\H)$ for $g\in G_A$. From the equations \eqref{dirintlemeq1} we easily derive that the operators $\pi(g)$ and  $\int^\oplus \pi_\lambda(g)\ d\mu(\lambda)$ coincide on the dense subspace $\H^0$ for every $g\in G$. By continuity we therefore have $\pi(g) = \int^\oplus \pi_\lambda(g)\ d\mu(\lambda)$ for all $g\in G_A$.
\end{proof}

\begin{cor}[Disintegration into factor representations]
Suppose that $C^\infty(A)$ and $\H$ are separable and let $\pi:G_A\rightarrow \U(\H)$ be a semibounded representation. Let $W_d\subset B(I_\pi)^0$ for some $d\in \overline{\R}$. Then we have
$$(\pi,\H) \cong \left(\int^\oplus_X \pi_\lambda \ d \mu(\lambda), \int^\oplus_X \H_\lambda \ d\mu(\lambda)\right),$$
where $X$ is a standard Borel space, $\mu$ a positive measure on $X$ and $\pi_\lambda:G_A\rightarrow \U(\H_\lambda)$ is a semibounded factor representation with $W_d\subset B(I_\pi)^0$ for all $\lambda \in X$.
\end{cor}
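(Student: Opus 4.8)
The plan is to move the problem to the complex semigroup $S_d$, apply the direct integral (central) decomposition theory for separable $C^*$-algebras, and move back using the commutant-preserving bijection $\Phi$ of Theorem~\ref{sbrepholrepcorrespondenceG_A}.

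First I would put $\rho:=\Phi(\pi):S_d\to B(\H)$, which by Theorem~\ref{sbrepholrepcorrespondenceG_A} is a non-degenerate holomorphic representation with $\pi(G_A)'=\rho(S_d)'$, and define the absolute value $\alpha:S_d\to\R_{\geq 0}$, $\alpha(s):=\|\rho(s)\|$. It is an absolute value because $\rho$ is a $*$-representation, and it is locally bounded because $\rho$ is holomorphic, hence continuous; moreover $\rho$ is obviously $\alpha$-bounded. By Proposition~\ref{semigrprepc*repcorrespondence} there is a unique representation $\tilde\pi:C^*(S_d,\alpha)\to B(\H)$ with $\rho=\tilde\pi\circ\eta_\alpha$; it is non-degenerate since $\rho$ is and $\eta_\alpha(S_d)$ is total, and $\tilde\pi(C^*(S_d,\alpha))'=\tilde\pi(\eta_\alpha(S_d))'=\rho(S_d)'=\pi(G_A)'$.

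Next, since $C^\infty(A)$ is separable the semigroup $S_d=G_A^\cO\exp_\C(iW_d^\cO)$ is separable, hence so is $C^*(S_d,\alpha)$ because $\eta_\alpha(S_d)$ is total in it; as $\H$ is separable as well, I would invoke the direct integral decomposition of $\tilde\pi$ into factor representations over a standard Borel space, cf.~\cite[Ch.~8]{Di77}: there are a standard Borel space $X$, a positive measure $\mu$ on $X$, Hilbert spaces $\H_\lambda$ and a $\mu$-measurable field $\lambda\mapsto\tilde\pi_\lambda$ of representations of $C^*(S_d,\alpha)$ on the $\H_\lambda$ with
$$(\tilde\pi,\H)\cong\left(\int^\oplus_X\tilde\pi_\lambda\,d\mu(\lambda),\int^\oplus_X\H_\lambda\,d\mu(\lambda)\right)$$
and such that $\tilde\pi_\lambda$ is a factor representation for $\mu$-almost every $\lambda$. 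After discarding a $\mu$-null set $N$ (enlarged, if necessary, using Lemma~\ref{dirintlem}(a) so that $\tilde\pi_\lambda$ is also non-degenerate for $\lambda\in X\setminus N$), Lemma~\ref{dirintlem}(b) applied with $\pi=\Phi^{-1}(\tilde\pi\circ\eta_\alpha)$, $\pi_\lambda:=\Phi^{-1}(\tilde\pi_\lambda\circ\eta_\alpha)$ for $\lambda\notin N$ and $\pi_\lambda$ trivial for $\lambda\in N$, yields
$$(\pi,\H)\cong\left(\int^\oplus_X\pi_\lambda\,d\mu(\lambda),\int^\oplus_X\H_\lambda\,d\mu(\lambda)\right).$$

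It remains to verify the properties of the $\pi_\lambda$. For $\lambda\notin N$ the representation $\tilde\pi_\lambda\circ\eta_\alpha$ is non-degenerate and holomorphic on $S_d$, so by Theorem~\ref{sbrepholrepcorrespondenceG_A} the representation $\pi_\lambda=\Phi^{-1}(\tilde\pi_\lambda\circ\eta_\alpha):G_A\to\U(\H_\lambda)$ is semibounded with $W_d\subset B(I_{\pi_\lambda})^0$, and $\pi_\lambda(G_A)'=(\tilde\pi_\lambda\circ\eta_\alpha)(S_d)'=\tilde\pi_\lambda(C^*(S_d,\alpha))'$ by Proposition~\ref{semigrprepc*repcorrespondence}; since $\tilde\pi_\lambda$ is a factor representation this is a factor, hence $\pi_\lambda$ is a factor representation. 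For $\lambda\in N$ the trivial representation has $\dd\pi_\lambda=0$, so $s_{\pi_\lambda}\equiv 0$, $B(I_{\pi_\lambda})^0=\g_A\supset W_d$, and $\pi_\lambda(G_A)''=\C\1$ is trivially a factor. The only genuinely delicate point is the measurability of $\lambda\mapsto\pi_\lambda(g)$ for $g\in G_A$, which is needed to form $\int^\oplus_X\pi_\lambda\,d\mu$; this is precisely the content of Lemma~\ref{dirintlem}, which deduces it from the $\mu$-measurability of the fields $\lambda\mapsto\tilde\pi_\lambda(\eta_\alpha(s))$ via the intertwining relation $\pi_\lambda(g)\tilde\pi_\lambda(\eta_\alpha(s))=\tilde\pi_\lambda(\eta_\alpha(gs))$ together with the density of $G_A^\cO$ in $G_A$. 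Beyond assembling these ingredients there is no further obstacle.
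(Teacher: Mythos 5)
Your proof is correct and follows essentially the same route as the paper: pass to $\hat\pi=\Phi(\pi)$ on $S_d$, take the absolute value $\alpha(s)=\|\hat\pi(s)\|$, move to the $C^*$-algebra $C^*(S_d,\alpha)$, apply the factor decomposition theorem from Dixmier (the paper cites \cite[Thm.~8.4.2]{Di77}), and transfer back through Lemma~\ref{dirintlem} and the commutant-preserving bijection $\Phi$. You supply slightly more bookkeeping than the paper does (explicitly recording separability of $C^*(S_d,\alpha)$, the treatment of the null set $N$, and the verification that the trivial representation has $B(I)^0=\g_A\supset W_d$), but these are all details the paper leaves implicit; the mathematical content is identical.
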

\begin{proof}
Consider the non-degenerate holomorphic representation $\hat \pi :S_d\rightarrow B(\H)$ from Theorem~\ref{sbrepholrepcorrespondenceG_A}. We set $\alpha(s):=\|\hat \pi(s)\|$ and consider the associated non-degenerate representation $\tilde{\pi}:C^*(S_d,\alpha)\rightarrow B(\H)$ satisfying $\hat \pi = \tilde{\pi}\circ \eta_\alpha$ from Proposition \ref{semigrprepc*repcorrespondence}. By applying \cite[Thm. 8.4.2]{Di77}, we obtain
$$(\tilde{\pi},\H) \cong \left(\int^\oplus_X \tilde{\pi}_\lambda \ d \mu(\lambda), \int^\oplus_X \H_\lambda \ d\mu(\lambda)\right),$$
where $X$ is a standard Borel space, $\mu$ a positive measure on $X$ and $\tilde{\pi}_\lambda$ is a factor representation for every $\lambda \in X$. Lemma \ref{dirintlem} yields
$$(\pi,\H) \cong \left(\int^\oplus_X \pi_\lambda \ d \mu(\lambda), \int^\oplus_X \H_\lambda \ d\mu(\lambda)\right).$$
Here $\pi_\lambda=\Phi^{-1}(\tilde{\pi}_\lambda\circ \eta_\alpha)$ for $\lambda \in X \backslash N$ and $\pi_\lambda$ is the trivial representation for $\lambda \in N$, where $N\subset X$ is as in Lemma \ref{dirintlem}. In particular $\pi_\lambda(G_A)'=\tilde{\pi}_\lambda(C^*(S_d,\alpha))'$ for all $\lambda \in X\backslash N$, cf. Theorem \ref{sbrepholrepcorrespondenceG_A} and Proposition \ref{semigrprepc*repcorrespondence}. Hence $\pi_\lambda$ is a semibounded factor representation for all $\lambda \in X$.
\end{proof}

\begin{cor}[Central disintegration]\label{centraldisint}
Suppose that $C^\infty(A)$ and $\H$ are separable and let $\pi:G_A\rightarrow \U(\H)$ be a semibounded representation.  Then we have
$$(\pi,\H) \cong \left(\int^\oplus_{\R} \pi_\lambda \ d \mu(\lambda), \int^\oplus_\R \H_\lambda \ d\mu(\lambda)\right),$$
where $\mu$ a positive measure on $\R$ and $\pi_\lambda:G_A\rightarrow \U(\H_\lambda)$ is a semibounded representation with $\pi_\lambda(t,0,0)=e^{i\lambda t}\1$ for all $t\in\R$ and almost all $\lambda\in \R$.
\end{cor}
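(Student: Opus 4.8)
The plan is to decompose $\pi$ over the spectrum of the self-adjoint generator of the central one-parameter group $t\mapsto\pi(t,0,0)$, carrying the decomposition through the $C^*$-algebra $C^*(S_d,\alpha)$ of the previous subsection so that semiboundedness of the pieces comes for free.

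First I reduce to the situation of Theorem~\ref{sbrepholrepcorrespondenceG_A}. By Proposition~\ref{openinvconesgen} the cone $B(I_\pi)^0$ is either $\g_A$ or one of $\pm W_d$ with $d\in\overline{\R}$; applying Remark~\ref{settingsbrep} (which only changes the spectral parameter introduced below by a sign) we may assume $W_d\subseteq B(I_\pi)^0$ for some $d\in\overline{\R}$, taking $d=\infty$ when $B(I_\pi)^0=\g_A$. The element $(1,0,0)$ is central in $\g_A$, and $\exp(t(1,0,0))=(t,0,0)$ is central in $G_A$ (and in $G_A^\cO$), so $U_t:=\pi(t,0,0)$ is a strongly continuous unitary one-parameter group commuting with $\pi(G_A)$; let $\hat Z$ be its self-adjoint generator, $U_t=e^{it\hat Z}$, so that all spectral projections of $\hat Z$ lie in $\pi(G_A)'$.

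Let $\hat\pi=\Phi(\pi)\colon S_d\to B(\H)$ be the non-degenerate holomorphic representation from Theorem~\ref{sbrepholrepcorrespondenceG_A}, put $\alpha(s):=\|\hat\pi(s)\|$ (a locally bounded absolute value) and let $\tilde\pi\colon C^*(S_d,\alpha)\to B(\H)$ be the associated non-degenerate representation with $\hat\pi=\tilde\pi\circ\eta_\alpha$ (Proposition~\ref{semigrprepc*repcorrespondence}). Since $(1,0,0)$ commutes with every element of $\g_A$, $\hat Z$ commutes with $\pi(g)$ for $g\in G_A^\cO$ and, $e^{it\hat Z}$ commuting with the unitary one-parameter group generated by $\overline{\dd\pi(w)}$, also with $e^{i\dd\pi(w)}$ for $w\in W_d^\cO$; hence $\hat Z$ commutes with $\hat\pi(S_d)=\tilde\pi(\eta_\alpha(S_d))$, and as $\eta_\alpha(S_d)$ is total in $C^*(S_d,\alpha)$ the spectral projections of $\hat Z$ lie in $\tilde\pi(C^*(S_d,\alpha))'$. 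Because $C^\infty(A)$ is separable, so are $S_d$ and $C^*(S_d,\alpha)$, and because $\H$ is separable we may fix a scalar-valued spectral measure $\mu$ of $\hat Z$ on $\R$ together with the associated decomposition $\H\cong\int_\R^\oplus\H_\lambda\,d\mu(\lambda)$ diagonalising $W^*(\hat Z)$, so that $\hat Z=\int_\R^\oplus\lambda\,\1\,d\mu(\lambda)$. As $W^*(\hat Z)\subseteq\tilde\pi(C^*(S_d,\alpha))'$, every $\tilde\pi(a)$ is decomposable for this decomposition, and the standard disintegration theory (cf.\ \cite[Ch.~8]{Di77}) produces a $\mu$-measurable field $\lambda\mapsto\tilde\pi_\lambda$ of representations of $C^*(S_d,\alpha)$ with $\tilde\pi\cong\int_\R^\oplus\tilde\pi_\lambda\,d\mu(\lambda)$ and a $\mu$-null set $N$ off which the $\tilde\pi_\lambda$ are non-degenerate (\cite[8.1.5]{Di77}, as in Lemma~\ref{dirintlem}(a)).

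Now Lemma~\ref{dirintlem}, applied with $X=\R$, this $\mu$ and this field, gives $(\pi,\H)\cong(\int_\R^\oplus\pi_\lambda\,d\mu(\lambda),\int_\R^\oplus\H_\lambda\,d\mu(\lambda))$, where $\pi_\lambda:=\Phi^{-1}(\tilde\pi_\lambda\circ\eta_\alpha)$ for $\lambda\notin N$ and $\pi_\lambda$ is trivial for $\lambda\in N$. For $\lambda\notin N$ the representation $\tilde\pi_\lambda\circ\eta_\alpha$ is a non-degenerate holomorphic representation of $S_d$ (Proposition~\ref{semigrprepc*repcorrespondence}), so by Theorem~\ref{sbrepholrepcorrespondenceG_A} its preimage $\pi_\lambda$ is a semibounded representation of $G_A$ (with $W_d\subseteq B(I_{\pi_\lambda})^0$), and the trivial representation is semibounded as well. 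Moreover the proof of Lemma~\ref{dirintlem}(b) yields $\pi(g)=\int_\R^\oplus\pi_\lambda(g)\,d\mu(\lambda)$ for all $g\in G_A$; comparing $\pi(t,0,0)=\int_\R^\oplus\pi_\lambda(t,0,0)\,d\mu(\lambda)$ with $\pi(t,0,0)=e^{it\hat Z}=\int_\R^\oplus e^{i\lambda t}\,\1\,d\mu(\lambda)$, uniqueness of decomposable operators gives $\pi_\lambda(t,0,0)=e^{i\lambda t}\1$ for each $t$ and $\mu$-almost every $\lambda$, hence, by strong continuity of the $\pi_\lambda$, for all $t\in\R$ and $\mu$-almost every $\lambda$. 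The point requiring the most care is the verification that the spectral projections of $\hat Z$ commute with $\tilde\pi(C^*(S_d,\alpha))$ — i.e.\ propagating centrality of $(1,0,0)$ through the polar decomposition $S_d=G_A^\cO\exp_\C(iW_d^\cO)$ to the operators $e^{i\dd\pi(w)}$ — and the measurable-field bookkeeping in the disintegration, which is however essentially supplied by Lemma~\ref{dirintlem} and \cite[Ch.~8]{Di77}.
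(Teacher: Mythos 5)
Your proof is correct and follows essentially the same route as the paper: reduce to $W_d\subset B(I_\pi)^0$, pass to $\hat\pi$, $\alpha$, $\tilde\pi$, disintegrate over the spectrum of the central one-parameter group, and pull back through Lemma~\ref{dirintlem}. The one place where you work harder than necessary is the verification that the spectral projections of $\hat Z$ commute with $\tilde\pi(C^*(S_d,\alpha))$: you propagate centrality of $(1,0,0)$ through the polar decomposition to the operators $e^{i\dd\pi(w)}$ by hand, whereas the paper simply invokes the commutant-preservation already proved in Theorem~\ref{sbrepholrepcorrespondenceG_A} and Proposition~\ref{semigrprepc*repcorrespondence} — namely $\pi(G_A)'=\hat\pi(S_d)'=\tilde\pi(C^*(S_d,\alpha))'$ — combined with the fact that $\R\times\{0\}\times\{0\}$ is central, giving $\tilde\pi(C^*(S_d,\alpha))''=\pi(G_A)''\subset\pi_c(\R)'$ at once. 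Both arguments are valid; yours just re-derives a fact you already have available.
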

\begin{proof}
Assume w.l.o.g. $W_d\subset B(I_\pi)^0$ for some $d\in \overline{\R}$. Consider the holomorphic extension $\hat \pi :S_d\rightarrow B(\H)$ from Theorem \ref{sbrepholrepcorrespondenceG_A}, set $\alpha(s):=\|\hat \pi(s)\|$ and recall the associated non-degenerate representation $\tilde{\pi}:C^*(S_d,\alpha)\rightarrow B(\H)$ satisfying $\hat \pi = \tilde{\pi}\circ \eta_\alpha$ from Proposition \ref{semigrprepc*repcorrespondence}. Let $\pi_c(t):=\pi(t,0,0)$ for $t\in \R$, so that $\pi_c:\R \rightarrow \U(\H)$ is a continuous unitary representation. The Spectral Theorem yields
$$(\pi_c,\H) \cong \left(\int^\oplus_\R \pi_{c,\lambda} \ d\mu(\lambda), \int^\oplus \H_\lambda \ d\mu(\lambda)\right),$$
where $\mu$ is a positive measure on $\R$ and $\pi_{c,\lambda}(t)=e^{i\lambda t}\1$, cf. e.g. \cite[sect. I.4.A.1]{GV64}. Let us consider this equivalence as an identification of the corresponding spaces. One may verify that $\pi_c(\R)''$ is the algebra of diagonalizable operators on $\H=\int^\oplus \H_\lambda \ d\mu(\lambda)$. Since $\R\times \{0\} \times \{0\}$ is the center of $G_A$ we have $\tilde{\pi}(C^*(S_d,\alpha))''=\pi(G_A)'' \subset \pi_c(\R)'$. Thus every $\tilde{\pi}(a), a\in C^*(S_d,\alpha)$, is a decomposable operator, cf. \cite[II.2.5. Cor.]{Di81}. Therefore \cite[Lem. 8.3.1]{Di77} yields a direct integral decomposition
$$(\tilde{\pi},\H) \cong \left(\int^\oplus_{\R} \tilde{\pi}_\lambda \ d \mu(\lambda), \int^\oplus_\R \H_\lambda \ d\mu(\lambda)\right).$$
With Lemma \ref{dirintlem} we obtain further
$$(\pi,\H) \cong \left(\int^\oplus_\R \pi_\lambda \ d \mu(\lambda), \int^\oplus_\R \H_\lambda \ d\mu(\lambda)\right),$$
where $\pi_\lambda:G_A\rightarrow \U(\H)$ are semibounded representations for all $\lambda \in \R$. In particular $\int^\oplus \pi_\lambda(t,0,0) \ d\mu(\lambda)=\pi(t,0,0)=\pi_c(t)=\int^\oplus e^{i\lambda t}\1 \ d\mu(\lambda)$. Hence there is a $\mu$-zero set $N\subset \R$ such that $\pi_\lambda(t,0,0)=e^{i\lambda t}\1$ holds for all $\lambda \in \R\backslash N, t\in \Q$. By continuity this also holds for all $t\in \R$. Now the statement follows.
\end{proof}

\begin{rmk}
If $\inf\Spec(A)>0$ we show in \cite{Ze15} that the $C^*$-algebras $C^*(S_d,\alpha)$ are postliminal and all semibounded representations of $G_A$ are of type I.
\end{rmk}

\subsection{A smoothness condition}

The main result of this subsection is Theorem \ref{extensionofdensesubgrepr}. It gives a criterion for the extendability of representations of certain dense direct limit subgroups of $G_A$, which are assumed to be smooth w.r.t. the finer direct limit topology, to semibounded representations of $G_A$. This plays an important role in the study of semibounded representations of $G_A$ (cf. \cite{Ze15}).

\begin{rmk}\label{complexlimitrmk}
Consider a standard oscillator group $G_A=\Heis(V,\omega_A)\rtimes_\gamma \R$, $V=C^\infty(A)$. Suppose we are given $\gamma$-invariant complex subspaces $V_n\subset V$, which are closed in $V$, with $V_n\subset V_{n+1}$ for all $n\in\N$. Further we assume that $V_{(\infty)}:=\bigcup_n V_n$ is dense in $V$. Then $\gamma$ restricts to unitary one-parameter groups $\R \rightarrow \U(V_n)$ for all $n\in\N$ with self-adjoint generator $A_n=\overline{A\vert_{V_n}}$ with domain $\D(A_n)\subset H_{A_n}$, where the Hilbert completion $H_{A_n}$ of $V_n$ may be considered as a closed subspace of $H_A$. Note that $C^\infty(A_n)=V_n$ (\cite[Lem.~4.1(b)]{NZ13}). The corresponding oscillator groups $G_{A_n}$ are subgroups of $G_A$ and we also consider the corresponding complex groups $G_{A_n,\C}$ and complex semigroups $S_{A_n}$. We set 
$$G_{(\infty)}^\cO:=\bigcup_n G_{A_n}^\cO\ ,\ \ S_{(\infty)}:= \bigcup_n S_{A_n} \ \ \text{ and } \ \ G_{(\infty),\C}:=\Heis_\C((V_{(\infty)})_\C^\cO,\omega_\C) \rtimes_\gamma \C = \bigcup_n G_{A_n,\C}$$
which are dense in $G_A^\cO, S_A$ resp. $G_{A,\C}$ by Proposition \ref{Otopprop} (note $\bigcup_n V_n^\cO =(\bigcup_n V_n)^\cO$). Recall the polar map $\psi: G_A^\cO\times W_\infty^\cO \rightarrow S_A$. Proposition \ref{polarmap} applied to $G_{A_n}, n\in\N$, entails 
$$\psi(G_{(\infty)}^\cO\times (W_\infty^\cO\cap \g_{(\infty)}^\cO))=S_{(\infty)}=\{(z,x,s) \in G_{{(\infty)},\C} : \Im s > 0 \}.$$
Note that $S_{(\infty)}=G_{(\infty),\C}\cap S_A$ is open in $G_{(\infty),\C}$. Now consider a locally bounded map $f:S_{(\infty)}\rightarrow B$ into some Banach space $B$, where $S_{(\infty)}\subset S_A$ is equipped with the subspace topology. Then $f$ is holomorphic if and only it is holomorphic on affine discs and hence if and only $f\vert_{S_{A_n}}$ is holomorphic for all $n\in \N$.
\end{rmk}

\begin{thm}[Smoothness condition for semibounded representations]\label{extensionofdensesubgrepr}
Let $V_n\subset V=C^\infty(A)$ be $\gamma$-invariant closed complex subspaces of $V$ with $V_n \subset V_{n+1}$ for all $n\in \N$ and such that $\bigcup_n V_n$ is dense in $V$. Let $\pi^0:\Heis(\bigcup_n V_n) \rtimes_\gamma \R \rightarrow \U(\H)$ be a (not necessarily continuous) representation satisfying $\pi^0(t,0,0)=e^{it}\1$ for all $t\in\R$. Assume that $\pi^0\vert_{\Heis(V_n)\rtimes_\gamma\R}$ is a smooth representation for all $n\in \N$ and that the self-adjoint generator $B$ of $s\mapsto \pi^0(0,0,s)$ is bounded from below. Then $\pi^0$ is continuous and extends uniquely to a semibounded representation $\pi:G_A \rightarrow \U(\H)$.
\end{thm}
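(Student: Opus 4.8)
The plan is to show that $\pi^0$ is a smooth representation for the \emph{coarse} topology inherited from $G_A$ and then to invoke Corollary~\ref{smoothextension}. Write $G_n:=\Heis(V_n,\omega_A)\rtimes_\gamma\R=G_{A_n}$. Since $\pi^0|_{G_n}$ is smooth, $\pi^0(t,0,0)=e^{it}\1$, and $-i\dd\pi^0(0,0,1)=B$ is bounded below, Lemma~\ref{lemma0} applies to each $G_n$: $\pi^0|_{G_n}$ is semibounded, $B(I_{\pi^0|_{G_n}})^0=\R\times V_n\times\,]0,\infty[$, and — the key point — by \eqref{suppfuncspec} its support function is the \emph{same} expression $s(t,x,r)=\tfrac{\|x\|^2}{2r}-t+rc$, $c:=-\inf\Spec B$, for every $n$. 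Applying Theorem~\ref{holextgen} to the complexifiable pairs $(G_{A_n}^\cO,W_\infty\cap\g_{A_n}^\cO)$ (Proposition~\ref{G_Acomplexifiable} with $d=\infty$) yields non-degenerate holomorphic representations $\hat\pi_n\colon S_{A_n}\to B(\H)$, $\hat\pi_n(g\exp_\C(ix))=\pi^0(g)e^{i\dd\pi^0(x)}$; these are compatible and glue to a semigroup homomorphism $\rho\colon S_{(\infty)}=\bigcup_nS_{A_n}\to B(\H)$ with $\rho(s^*)=\rho(s)^*$ and $\rho(S_{(\infty)})v=\{0\}\Rightarrow v=0$.

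The decisive observation is that $\rho$ is holomorphic for the coarse topology on $S_{(\infty)}$ induced by $\tau_{V^\cO}$. Indeed $\|\rho(g\exp_\C(iw))\|=e^{s(w)}$, and Proposition~\ref{polarmap}(c) together with the $\|\cdot\|$-continuity of $s$ shows that this norm is a continuous, hence locally bounded, function of $g\exp_\C(iw)$ for the $\tau$-topology; since $\rho$ is moreover holomorphic on affine discs (each contained in some $S_{A_n}$), the argument of Remarks~\ref{complexlimitrmk} and~\ref{tautoprmk} (via \cite[Thm.~6.2]{BS71}) gives holomorphy of $\rho$ for the $\tau$-topology. Fix $s\in S_{(\infty)}$ and $v\in\H$: from the explicit multiplication formula for $G_{A,\C}$ — in which the operators $\gamma_\C(\cdot)$ are applied only to the \emph{fixed} vectors coming from $s$ — the map $G_{(\infty)}^\cO\to S_{(\infty)}$, $g\mapsto gs$, is smooth for the $\tau$-topologies, so $g\mapsto\pi^0(g)\rho(s)v=\rho(gs)v$ is $\tau$-smooth on $G_{(\infty)}^\cO$. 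As $\mathcal D:=\spn(\rho(S_{(\infty)})\H)$ is dense and $\pi^0(G_{(\infty)}^\cO)$-invariant, it consists of smooth vectors, and therefore $\pi^0|_{G_{(\infty)}^\cO}$ is a smooth representation for the $\tau$-topology, hence also for the finer $C^\infty(A)$-subspace topology.

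It remains to assemble the pieces. Put $L:=\bigcup_nV_n^\cO$, a dense $D$- and $\gamma$-invariant subspace of $V_A=C^\infty(A)$ with $\exp_{G_A}(\g_L)\subset G_L$, so that $\pi^0|_{G_L}$ is smooth by the previous step. Since $D(L)\subset L$ is dense and, by Proposition~\ref{supportfunctiongen} applied to $G_L\subset G_A$, $s_{\pi^0|_{G_L}}(t,x,r)=\tfrac{\|x\|^2}{2r}-t+rc$ — so that $W_\infty\cap\g_L\subset B(I_{\pi^0|_{G_L}})^0$ — Corollary~\ref{smoothextension} yields a unique smooth representation $\pi\colon G_A\to\U(\H)$ extending $\pi^0|_{G_L}$, with $W_\infty\subset B(I_\pi)^0$; in particular $\pi$ is semibounded. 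Finally $\pi$ extends all of $\pi^0$: $\pi$ and $\pi^0$ agree on $\Heis(L)\rtimes_\gamma\R$, $\pi$ is continuous, and each $\pi^0|_{\Heis(V_n)\rtimes_\gamma\R}$ is continuous, so approximating $y\in V_n$ by elements of $V_n^\cO\subset L$ gives $\pi^0(0,y,0)=\pi(0,y,0)$; as $\pi^0$ and $\pi$ also agree on $\R\times\{0\}\times\{0\}$ and on $\{0\}\times\{0\}\times\R$, and these elements generate $\Heis(\bigcup_nV_n)\rtimes_\gamma\R$, we get $\pi^0=\pi|_{\Heis(\bigcup_nV_n)\rtimes_\gamma\R}$. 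Hence $\pi^0$ is continuous and $\pi$ is the asserted extension, uniqueness being part of Corollary~\ref{smoothextension}.

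The hard part is the second paragraph: upgrading ``smooth on each $G_n$'', i.e.\ for the direct-limit topology, to ``smooth for the coarse subspace topology''. It is exactly here that the hypotheses on the central character and on $B$ are used — through Lemma~\ref{lemma0} they force the semibounded data, and hence $\|\rho(\cdot)\|$, to be controlled by a \emph{single} support function $s$ that is continuous for the coarse topology, uniformly over all $n$; without this uniformity $\rho$ would not be holomorphic on $S_{(\infty)}$ for the coarse topology and the argument would break down.
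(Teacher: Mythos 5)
Your proposal is correct and follows essentially the same route as the paper's own proof: both invoke Lemma~\ref{lemma0} to get semiboundedness on each $G_{A_n}$ with a single explicit support function, glue the resulting holomorphic extensions to a locally bounded map on $S_{(\infty)}$, upgrade to holomorphy for a topology coarser than $C^\cO$ via Proposition~\ref{polarmap}(c) and the Bochnak--Siciak argument, deduce smoothness of $\pi^0|_{G_{(\infty)}^\cO}$ for the subspace topology, and close with Corollary~\ref{smoothextension}. The only differences are cosmetic (you work with the $\tau$-topology where the paper works directly with the $C^\infty$-topology, and you spell out the final approximation argument verifying that $\pi$ extends all of $\pi^0$ rather than just $\pi^0|_{G_L}$, a step the paper leaves implicit).
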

\begin{proof}
We set $V_{(\infty)}:=\bigcup_n V_n$ and $A_n:=A\vert_{V_n}$. Recall $G_{(\infty)}^\cO=\bigcup_n G_{A_n}^\cO$ and $S_{(\infty)}=\bigcup_n S_{A_n}$ from Remark \ref{complexlimitrmk}. From Proposition \ref{polarmap} we recall the polar map 
$$\psi: G_A^\cO\times W_\infty^\cO \rightarrow S_A, (g,w)\mapsto g\exp_\C(iw).$$
By Remark \ref{complexlimitrmk} we know that $\psi(G_{(\infty)}^\cO\times (W_\infty^\cO\cap \g_{(\infty)}^\cO))=S_{(\infty)}$. By Lemma \ref{lemma0} $\pi^0\vert_{G_{A_n}}$ is semibounded for all $n\in\N$ and we may define
$$\hat{\pi}^0:S_{(\infty)} \rightarrow B(\H),g\exp_\C(iw) \mapsto \pi^0(g) e^{i\dd\pi^0(w)}.$$
Let $a:=\inf(\Spec B)$. Consider
$$\alpha_a:S_A \rightarrow \R, g\exp_\C(iw) \mapsto e^{\frac{ \|x\|^2}{2s}-t- sa}\ \quad \text{ for $w=(t,x,s)\in W_\infty^\cO$}.$$
Note $\|\hat\pi^0(g\exp_\C(iw))\|=e^{s_{\pi^0}(w)}=\alpha_a(g\exp_\C(iw))$ (see equation \eqref{suppfuncspec}). Thus $\hat{\pi}^0\vert_{S_{A_n}}$ is $\alpha_a$-bounded and holomorphic for all $n\in \N$ (Theorem~\ref{sbrepholrepcorrespondenceG_A}). We conclude that $\hat{\pi}^0$ is $\alpha_a$-bounded and thus also holomorphic (cf. Remark~\ref{complexlimitrmk}). By Proposition \ref{polarmap}(c), the map $\alpha_a$ is continuous w.r.t. the $C^\infty$-topology on $S_A$. Hence $\hat{\pi}^0$ is holomorphic w.r.t. the $C^\infty$-topology on $S_{(\infty)}$. It follows that, for every $v\in \H$, the map
$$G_{(\infty)}^\cO \rightarrow \H, g \mapsto \pi^0(g)\hat\pi^0(0,0,i)v=\hat\pi^0(g\cdot(0,0,i))v$$
is smooth when $G_{(\infty)}^\cO \subset G_A$ is equipped with the subspace topology. As $\hat\pi^0(0,0,i)\H$ is dense in $\H$ (since $\ker e^{i\dd\pi^0(0,0,1)}=0$), we conclude that $\pi^0\vert_{G_{(\infty)}^\cO}$ is a smooth representation when $G_{(\infty)}^\cO \subset G_A$ is equipped with the subspace topology. With Corollary \ref{smoothextension}(a) we obtain a smooth representation $\pi:G_A \rightarrow \U(\H)$ extending $\pi^0$. By Lemma \ref{lemma0} this representation $\pi$ is semibounded.
\end{proof}

\begin{rmk}\label{extensionofdensesubgreprrmk}
The preceding theorem is quite useful. Suppose for instance that we have $V_n \subset V$ finite-dimensional $\gamma$-invariant complex subspaces with $V_n \subset V_{n+1}$ for all $n\in \N$, $\bigcup_n V_n \subset V$ dense and $\pi^H:\Heis(\bigcup_n V_n) \rightarrow \U(\H)$ a ray-continuous representation with $\pi^H(t,0,0)=e^{it}\1$. Let us further assume the existence of a continuous unitary one-parameter group $\tilde{\gamma}\colon \R \rightarrow \U(\H)$ with self-adjoint generator bounded from below such that
$$\tilde{\gamma}(t)\pi^H(x)\tilde{\gamma}(t)^{-1}=\pi^H(\gamma(t)x)$$
holds for all $x\in \bigcup_n V_n, t\in \R$. Then, with the preceding theorem, we obtain a unique semibounded representation $\pi: G_A \rightarrow \U(\H)$ with $\pi\vert_{\Heis(\bigcup_n V_n)}=\pi^H$ and $\pi(0,0,s) = \tilde{\gamma}(s)$. Therefore we obtain in this context a one-to-one correspondence between semibounded representations of $G_A$ on a Hilbert space $\H$ and ray-continuous representations of $\Heis(\bigcup_n V_n)$ on $\H$ together with an implementation of $\gamma$ by a unitary one-parameter group on $\H$ whose self-adjoint generator is bounded from below.
\end{rmk}

\appendix
\renewcommand{\thethmcount}{\Alph{section}.\arabic{thmcount}}
\renewcommand{\theequation}{\Alph{section}.\arabic{equation}}
\section{Appendix: Entire vectors for equicontinuous actions of $\R$}\label{entirevectorsapp}
In this section let $V$ be a locally convex space over $\mathbb{K}\in \{\R,\C\}$ and let $\gamma:\R\rightarrow \End(V)$ be a representation defining a smooth action $\R \times V \rightarrow V, (t,x)\mapsto \gamma(t)x$. We set $D:=\gamma'(0)\in \End(V)$. The set of continuous seminorms on $V$ is denoted by $\Gamma(V)$. By $\Gamma(V)^\gamma$ we denote the set of $\gamma$-invariant continuous seminorms on $V$.

\begin{dfn}\label{equicontdef}
The representation $\gamma$ is called \textit{equicontinuous} if the subset $\gamma(\R) \subset \End(V)$ is equicontinuous, i.e., if for every open $0$-neighborhood $U$ in $V$ there exists a $0$-neighborhood $W$ in $V$ such that $\gamma(t)W \subset U$ holds for every $t\in \R$.
\end{dfn}

\begin{prop}\label{equicontprop}{\rm(\cite[Prop. A.1]{Ne10c})}
The following conditions are equivalent:
\begin{itemize}
\item[\rm(a)] $\gamma$ is equicontinuous.
\item[\rm(b)] The topology on $V$ is defined by the set of $\gamma$-invariant seminorms $\Gamma(V)^\gamma$.
\item[\rm(c)] There exists a basis of $\gamma$-invariant absolutely convex $0$-neighborhoods in $V$.
\end{itemize}
\end{prop}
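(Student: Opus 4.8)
The plan is to prove the three equivalences through the cycle (a)$\Rightarrow$(c)$\Rightarrow$(b)$\Rightarrow$(a). The one structural fact I would invoke throughout is that each operator $\gamma(t)$ is a linear automorphism of $V$: since $\gamma$ is a representation of the group $\R$ we have $\gamma(t)\gamma(-t)=\gamma(0)=\id_V$, so $\gamma(t)$ is bijective with inverse $\gamma(-t)$, and smoothness of the action makes it a homeomorphism. In particular, preimages under $\gamma(t)$ coincide with images under $\gamma(-t)$, $\gamma(t)$ carries absolutely convex sets to absolutely convex sets, and $\gamma(s)$ commutes with arbitrary intersections.

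For (a)$\Rightarrow$(c): given an arbitrary $0$-neighborhood, first shrink it to an absolutely convex one $U$ (possible since $V$ is locally convex), and set $N:=\bigcap_{t\in\R}\gamma(t)U$. Then $N$ is absolutely convex as an intersection of absolutely convex sets, it is contained in $U$ (take $t=0$), and it is $\gamma$-invariant, because $\gamma(s)N=\bigcap_{t\in\R}\gamma(s+t)U=\bigcap_{t'\in\R}\gamma(t')U=N$. The only point requiring the hypothesis is that $N$ is a $0$-neighborhood: equicontinuity furnishes a $0$-neighborhood $W$ with $\gamma(t)W\subset U$, hence $W\subset\gamma(-t)U$, for every $t\in\R$, so $W\subset N$. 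Thus the sets $N$ obtained in this way form a basis of $\gamma$-invariant absolutely convex $0$-neighborhoods. For (c)$\Rightarrow$(b): to each member $B$ of such a basis I would associate its Minkowski gauge $p_B$, which is a continuous seminorm since $B$ is an absorbing absolutely convex neighborhood; the invariance $\gamma(-t)B=B$ translates verbatim into $p_B\circ\gamma(t)=p_B$, so $p_B\in\Gamma(V)^\gamma$. As the $B$'s form a basis, the $p_B$'s already generate the given topology, while every element of $\Gamma(V)^\gamma$ is by definition continuous; hence the topology defined by $\Gamma(V)^\gamma$ coincides with the original one.

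For (b)$\Rightarrow$(a): if the topology is generated by $\Gamma(V)^\gamma$, then any $0$-neighborhood $U$ contains a basic set $W=\{x\in V:p_1(x)<\eps,\dots,p_n(x)<\eps\}$ with $p_1,\dots,p_n\in\Gamma(V)^\gamma$ and $\eps>0$; since each $p_i$ is $\gamma$-invariant, $W$ is $\gamma$-invariant, so $\gamma(t)W=W\subset U$ for all $t$, which is exactly equicontinuity. I do not expect a genuine obstacle here. The only step that needs a little care is the construction of $N$ in (a)$\Rightarrow$(c), where one must simultaneously check that $N$ remains a neighborhood of $0$ (the place where equicontinuity is actually used), remains absolutely convex, and is genuinely $\gamma$-stable, the last point relying on the bijectivity of the $\gamma(t)$; the other two implications are routine bookkeeping with gauge functionals and bases of seminorms.
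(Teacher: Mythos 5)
The paper does not prove this proposition; it simply cites \cite[Prop.~A.1]{Ne10c}, so there is no in-text argument to compare against. Your cyclic proof (a)$\Rightarrow$(c)$\Rightarrow$(b)$\Rightarrow$(a) is correct and is the standard argument one expects to find behind the citation: for (a)$\Rightarrow$(c) the set $N=\bigcap_{t\in\R}\gamma(t)U$ is absolutely convex, genuinely $\gamma$-stable because each $\gamma(t)$ is a linear bijection commuting with the parametrized intersection, and a $0$-neighborhood precisely because equicontinuity supplies a $W$ with $W\subset\gamma(-t)U$ for all $t$; (c)$\Rightarrow$(b) via Minkowski gauges and (b)$\Rightarrow$(a) via $\gamma$-invariance of the basic seminorm neighborhoods are both routine and handled correctly. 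One tiny cosmetic remark: the homeomorphism property of each $\gamma(t)$ already follows from $\gamma(t)$ and $\gamma(-t)$ being mutually inverse continuous linear maps; the smoothness of the joint action is not needed for that particular point.
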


\begin{dfn}\label{equicontholcvectdef}
For each $q\in \Gamma(V)$ on $V$ we define
$$q_n(v):= \sum_{k\geq0} \frac{1}{k!} n^k  q(D^kv)\in [0,\infty]$$
for $v\in V, n\in \N$. Note $q(v)\leq q_n(v)$ for all $n\in \N, v\in V$. We consider the subspace
$$V^\cO:= \{ v\in V : q_n(v) <\infty \text{ for all $q\in \Gamma(V)$ and $n\in \N$} \}.$$
The elements of $V^\cO$ are called \textit{$\gamma$-holomorphic vectors} in $V$. We equip $V^\cO$ with its natural \textit{$C^\cO$-topology} given by the seminorms $q_n\vert_{V^\cO}$, where $(q,n)\in \Gamma(V)\times \N$. Note that $V^\cO$ is invariant under $\gamma(t), t\in \R$ and $D$.
\end{dfn}
\begin{rmk}\label{equicontrmk0}
\begin{itemize}
\item[\rm(a)] Suppose that $V$ is sequentially complete and let $v\in V$. Then $v\in V^\cO$ holds if and only if the orbit map $t\mapsto \gamma(t)v$ extends to a holomorphic map $\C\rightarrow V_\C$.
\item[\rm(b)] If $\Gamma(V)$ is replaced in Definition~\ref{equicontholcvectdef} by any set of seminorms generating the topology on $V$, the space $V^\cO$ and the $C^\cO$-topology remain the same. In particular, if $\gamma$ is equicontinuous then $V^\cO= \{ v\in V : q_n(v) <\infty \text{ for all $q\in \Gamma(V)^\gamma$ and $n\in \N$} \}$ and the $C^\cO$-topology is generated by the $\gamma$-invariant seminorms $q_n\vert_{V^\cO}$ where $(q,n)\in \Gamma(V)^\gamma\times \N$, cf. Proposition~\ref{equicontprop}.
\item[\rm(c)] If $\gamma$ is equicontinuous, then $\gamma(\R)\vert_{V^\cO} \subset \End(V^\cO)$ is also equicontinuous with $V^\cO$ equipped with the $C^\cO$-topology. This follows from (b) and Proposition~\ref{equicontprop}.
\end{itemize}
\end{rmk}

\begin{rmk}\label{equicontcomplexificationrmk}
Suppose that $V$ is a real locally convex space. Let $V_\C=V\otimes_\R \C$ be the complexification of $V$ with its natural topology turning it into a complex locally convex space. By complex-linear extension of $\gamma(t)$ we obtain a one-parameter group $\gamma_c: \R \rightarrow \End(V_\C)$ with generator $\gamma_c'(0)=D_\C$ inducing a smooth action $\R\times V_\C \rightarrow V_\C, (t,v)\mapsto \gamma_c(t)v$. Then $\gamma$ is equicontinuous if and only if $\gamma_c$ is equicontinuous. Furthermore $(V_\C)^\cO=(V^\cO)_\C$ holds as topological vector spaces, where $(V_\C)^\cO$ denotes the space of $\gamma_c$-holomorphic vectors in $V_\C$.
\end{rmk}

\begin{prop}\label{equicontholvecprop}
Suppose that $V$ is sequentially complete. Then the following assertions hold:
\begin{itemize}
\item[\rm(a)] If $\gamma$ is equicontinuous, then $V^\cO \subset V$ is a dense subspace.
\item[\rm(b)] The space $V^\cO$ equipped with the $C^\cO$-topology is sequentially complete.
\item[\rm(c)] Suppose, in addition, that $V$ is a complex locally convex space. Then
$$\gamma_\C: \C \times V^\cO \rightarrow V^\cO,(z,v)\mapsto \gamma_\C(z)v:= \sum_{k\geq0} \frac{1}{k!} z^k D^k v$$
defines a holomorphic action extending the action of $\gamma$ on $V^\cO$. In particular, the map $\R \times V^\cO \rightarrow V^\cO,(t,v)\mapsto \gamma(t)v$ is analytic.
\end{itemize}
\end{prop}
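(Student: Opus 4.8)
The plan is to prove the three assertions in turn; as usual $V$ is assumed Hausdorff, so that its topological dual $V'$ separates points. \textbf{Part (a).} Here the plan is a mollification by the Gaussian heat kernel. Since $\gamma$ is equicontinuous, Proposition~\ref{equicontprop} allows one to work with $\gamma$-invariant continuous seminorms, and by Remark~\ref{equicontrmk0}(b) it suffices to bound $q_n$ for $q\in\Gamma(V)^\gamma$. For $v\in V$ and $s>0$ I would form $v_s:=\int_\R h_s(t)\gamma(t)v\,dt$ with $h_s(t)=(4\pi s)^{-1/2}e^{-t^2/4s}$; this integral exists in $V$ because $V$ is sequentially complete and $t\mapsto h_s(t)\gamma(t)v$ is continuous with $q(h_s(t)\gamma(t)v)=|h_s(t)|\,q(v)$ integrable. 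Differentiating $\gamma(r)v_s=\int_\R h_s(t-r)\gamma(t)v\,dt$ repeatedly in $r$ (a routine weak-derivative argument) gives $D^kv_s=(-1)^k\int_\R h_s^{(k)}(t)\gamma(t)v\,dt$, so $q(D^kv_s)\le q(v)\,\|h_s^{(k)}\|_{L^1}$ for $\gamma$-invariant $q$. The $L^1$-norms of the derivatives of the Gaussian obey the standard Hermite bound $\|h_s^{(k)}\|_{L^1}\le(2s)^{-k/2}\sqrt{k!}$, so $q_n(v_s)\le q(v)\sum_{k\ge0}\frac{1}{\sqrt{k!}}(n/\sqrt{2s})^{k}<\infty$, that is, $v_s\in V^\cO$. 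Finally $v_s\to v$ in $V$ as $s\to0^+$ by the usual approximate-identity estimate (split $\int_\R h_s(t)\,q(\gamma(t)v-v)\,dt$ at $|t|<\delta$ and $|t|\ge\delta$, using $\gamma$-invariance of $q$ and $\int_{|t|\ge\delta}h_s\to0$). This gives density of $V^\cO$ in $V$.

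\textbf{Part (b).} This I would obtain directly from the seminorm description of the $C^\cO$-topology. Let $(v_j)$ be Cauchy in $V^\cO$. Since $q(D^kw)\le k!\,q_1(w)$, each sequence $(D^kv_j)_j$ is Cauchy in $V$; by sequential completeness of $V$ and continuity of $D$ its limit is $D^kv$, where $v:=\lim_jv_j\in V$. Truncating the series defining $q_n$ and passing to the limit in $j$ gives $\sum_{k\le N}\frac{n^k}{k!}q(D^kv)\le\sup_jq_n(v_j)<\infty$ for every $N$, hence $v\in V^\cO$; applying the same truncation to $v_j-v$, and bounding each finite partial sum of $\sum_k\frac{n^k}{k!}q(D^k(v_j-v_i))$ by $q_n(v_j-v_i)$ before letting $i\to\infty$, yields $q_n(v_j-v)\to0$. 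Hence $V^\cO$ with its $C^\cO$-topology is sequentially complete.

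\textbf{Part (c).} I would split this into well-definedness, the action/holomorphy properties, and the identification with $\gamma$. For $v\in V^\cO$ and $z\in\C$, the binomial rearrangement $\sum_{k\ge0}\frac{|z|^k}{k!}q_n(D^kv)=\sum_{m\ge0}\frac{(|z|+n)^m}{m!}q(D^mv)=q_{|z|+n}(v)\le q_{n'}(v)<\infty$ (for any integer $n'\ge|z|+n$, using $v\in V^\cO$) shows the series $\sum_k\frac{z^k}{k!}D^kv$ is absolutely convergent, hence by Part (b) convergent, in $V^\cO$; it defines $\gamma_\C(z)v\in V^\cO$. The same estimate gives the semigroup law $\gamma_\C(z)\gamma_\C(w)=\gamma_\C(z+w)$ via the Cauchy product, gives joint continuity of $(z,v)\mapsto\gamma_\C(z)v$, and — restricting to complex affine lines in $\C\times V^\cO$, where $\gamma_\C$ becomes an everywhere-convergent power series — shows $\gamma_\C$ is holomorphic, cf. \cite[Thm.~6.2]{BS71}. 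To identify $\gamma_\C$ with the $\gamma$-action on $\R\times V^\cO$, fix $v\in V^\cO$ and $\lambda\in V'$; pick $q\in\Gamma(V)$ with $|\lambda|\le q$ and, by continuity of the action $\R\times V\to V$, a continuous seminorm $\tilde q$ with $q(\gamma(r)w)\le\tilde q(w)$ for $r$ in a fixed compact interval containing $[0,t]$. The smooth function $r\mapsto\lambda(\gamma(r)v)$ has $k$-th derivative $\lambda(D^kv)$ at $0$, and its Taylor remainder is bounded by $\frac{|t|^{N+1}}{(N+1)!}\tilde q(D^{N+1}v)$, which tends to $0$ because $\sum_N\frac{|t|^{N+1}}{(N+1)!}\tilde q(D^{N+1}v)\le\tilde q_{n''}(v)<\infty$ for integer $n''\ge|t|$. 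Hence $\lambda(\gamma(t)v)=\sum_k\frac{t^k}{k!}\lambda(D^kv)=\lambda(\gamma_\C(t)v)$, and since $V'$ separates points, $\gamma(t)v=\gamma_\C(t)v$; alternatively Remark~\ref{equicontrmk0}(a) supplies this identification directly. The ``in particular'' then follows because $\gamma|_{\R\times V^\cO}$ coincides with the holomorphic map $\gamma_\C$ on $\C\times V^\cO$ and is therefore analytic.

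I expect the density statement in Part (a) to be the main obstacle: one must produce a mollifier whose smoothed vectors are genuinely $\gamma$-holomorphic rather than merely smooth, which is precisely why the Gaussian — whose derivative norms satisfy $\|h_s^{(k)}\|_{L^1}\le(2s)^{-k/2}\sqrt{k!}$, so that $\sum_k\frac{n^k}{k!}\|h_s^{(k)}\|_{L^1}$ converges — is the natural choice. Once that estimate is secured, Parts (b) and (c) are essentially bookkeeping with the seminorms $q_n$ together with one standard weak-derivative argument.
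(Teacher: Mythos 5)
Your proof is correct, and it diverges from the paper's in two places in an interesting way. For part (a), the paper simply reduces to the complex case via the complexification remark and then cites \cite[Prop.~1.13]{Ma92} for the density of entire vectors; you instead give a self-contained construction by convolving with the Gaussian heat kernel $h_s$, using the Hermite-polynomial bound $\|h_s^{(k)}\|_{L^1}\le(2s)^{-k/2}\sqrt{k!}$ to make $\sum_k\frac{n^k}{k!}\|h_s^{(k)}\|_{L^1}$ converge. This is the classical ``analytic mollifier'' argument and is genuinely different in spirit: it explains \emph{why} the density holds rather than outsourcing it, at the cost of having to verify that the improper weak integral exists under mere sequential completeness (which does go through — the truncated integrals $\int_{-N}^N$ over compact intervals form a Cauchy sequence by the $L^1$ tail estimate). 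Part (b) is essentially identical to the paper's Cauchy-sequence argument. In part (c) the seminorm rearrangement $q_m(\gamma_\C(z)v)\le q_{m+c}(v)$ and the local-boundedness/separate-holomorphy route to joint holomorphy match the paper; but for identifying $\gamma_\C|_\R$ with $\gamma$ the paper invokes the uniqueness theorem \cite[Thm.~1.5]{Ma92} (the generator $D$ determines the one-parameter group as the unique solution of $f'=Df$), whereas you argue scalarwise with Taylor's theorem with Lagrange remainder plus the Hahn--Banach separation property of $V'$. Both work; your version is more elementary and makes visible the role of the finiteness of $\tilde q_{n''}(v)$ for $v\in V^\cO$. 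One small point of hygiene: the sentence ``restricting to complex affine lines \dots shows $\gamma_\C$ is holomorphic'' is a little terse — the cleaner phrasing (which the paper uses) is that $\gamma_\C$ is locally bounded, holomorphic in $z$ for each fixed $v$, and complex-linear hence holomorphic in $v$ for each fixed $z$, whence joint holomorphy follows from \cite[Thm.~6.2]{BS71}.
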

\begin{proof}
For (a) we may assume in view of Remark~\ref{equicontcomplexificationrmk} that $\mathbb{K}=\C$. Now (a) follows from \cite[Prop 1.13]{Ma92}.\newline\newline
(b) Let $(v_n), v_n\in V^{\cO}$ be a Cauchy sequence for all $q_m, q\in \Gamma(V),m\in \N$. Then $(v_n)$ is also a Cauchy sequence for every $q\in \Gamma(V)$. Since $V$ is sequentially complete we find $v\in V$ such that $v_n\rightarrow v$ holds in $V$. Since $D:V\rightarrow V$ is continuous this implies $D^k v_n\rightarrow D^k v$ in $V$ as $n\rightarrow \infty$ for every $k\in \N_0$. Now let $q\in \Gamma(V)$ and $m\in\N$. For $\varepsilon>0$ there is an $n_0\in \N$ such that $q_{2m}(v_n-v_{n'})\leq \frac{\varepsilon}{2}$ holds for all $n,n'\geq n_0$. Hence
$$\frac{1}{k!}q(D^k(v_n-v_{n'}))m^k \leq \frac{\varepsilon}{2^{k+1}} \quad \text{ for all $n,n'\geq n_0, k\geq 0$}.$$
If we let $n'\rightarrow \infty$ we obtain $\frac{1}{k!}q(D^k(v_n-v))m^k \leq \frac{\varepsilon}{2^{k+1}}$ for all $n \geq n_0, k \geq 0$. This yields $$q_m(v_n-v)\leq \eps \quad \text{ for all $n\geq n_0$.} $$
In particular this yields $q_m(v)\leq q_m(v-v_{n_0})+q_m(v_{n_0}) <\infty$, so that $v\in V^\cO$. It also implies $v_n\rightarrow v$ with respect to the $C^{\cO}$-topology.\newline\newline
(c) First note that $\gamma_\C$ is well-defined as a map $\C \times V^\cO \rightarrow V$, since for every $v\in V^\cO$ the power series $\sum_{k\geq0} \frac{1}{k!} z^k D^k v$ converges absolutely in $V$ for all $z\in \C$. For $m\in \N, z\in \C, v\in V^{\cO}$ we calculate:
\begin{align*} 
q_m(\gamma_{\C}(z)v) & = \sum_{k\geq0} \frac{1}{k!}m^k q\Big(D^k\Big(\sum_{l\geq0} \frac{1}{l!}z^l D^l v\Big)\Big) \leq \sum_{k\geq 0} \sum_{l \geq 0} \frac{1}{k!}\frac{1}{l!}m^k|z|^l q(D^{k+l}v) \\
&=\sum_{k\geq 0} \sum_{l \geq 0} \frac{1}{(k+l)!} \binom{k+l}{k}m^k|z|^l q(D^{k+l}v) \\ 
&= \sum_{n\geq 0}\frac{1}{n!}(m+|z|)^nq(D^nv) \leq q_{m+c}(v)
\end{align*}
for $c\in \N$ with $c\geq |z|$. This shows that $\gamma_{\C}(z)v\in V^{\cO}$ and that $\gamma_\C$ is locally bounded. Certainly the map $\C\rightarrow V^\cO, z\mapsto \gamma_{\C}(z)v$ is holomorphic for each $v\in V^{\cO}$. Moreover $\gamma_\C(z)v$ is linear and holomorphic in $v$ for fixed $z$. Since $\gamma_\C$ is locally bounded we obtain that $\gamma_\C$ is holomorphic. A direct calculation shows that $\gamma_\C$ is indeed an action of $(\C,+)$ on $V^\cO$. In particular we may consider $\gamma_\C$ as a representation $\gamma_\C:\C \rightarrow \End(V^\cO)$. For $v\in V^\cO$ consider the map $f_v:\R \rightarrow V, t \mapsto \gamma_\C(t)v$. We have $f_v(0)=v$ and $f_v'(t)=Df_v(t)$. As the generator $D$ determines $\gamma$, cf. \cite[Thm. 1.5]{Ma92}, we conclude that $\gamma_\C(t)v=f_v(t)=\gamma(t)v$, i.e., $\gamma_\C$ extends the action of $\gamma$ on $V^\cO$.
\end{proof}

\begin{rmk}\label{equicontsomermks}
\begin{itemize}
\item[\rm(a)] If $V$ is complete then $V^\cO$ is also complete. This may be proven as Proposition~\ref{equicontholvecprop}(b) by considering nets instead of sequences.
\item[\rm(b)] Suppose that $V$ is complete and assume that $\gamma$ is equicontinuous. Then $\gamma(\R)\vert_{V^\cO} \subset \End(V^\cO)$ is also equicontinuous by Remark~\ref{equicontrmk0}(c). For every $f\in L^1(\R,\mathbb{K})$, where $\mathbb{K}\in \{\R,\C\}$, the continuous linear operator $\pi(f):=\int_\R f(t) \gamma(t)\ dt\in \End(V^\cO)$ exists in the sense of weak integrals. Moreover this leads to an algebra representation 
$$\pi: (L^1(\R,\mathbb{K}),+,*) \rightarrow B(V^\cO),$$
where $*$ denotes the convolution product in the group algebra $L^1(\R,\mathbb{K})$. For this we refer to \cite[Def. A.5(a)]{Ne10c}.
\end{itemize}
\end{rmk}

\begin{rmk}\label{equiconthilbertrmk}
Consider the situation when $\gamma:\R\rightarrow \U(H)$ is a unitary one-parameter group on a complex Hilbert space $H$ with self-adjoint generator $A$. With $V:=C^\infty(A)$ the space of $\gamma$-smooth vectors in $H$ equipped with its natural $C^\infty$-topology we obtain a smooth action $\R \times V \rightarrow V,(t,v)\mapsto \gamma(t)v$. Obviously, $\gamma(\R)\subset \End(V)$ is equicontinuous and $V$ is complete. Define $q_n(v):=\sum_{k\geq0}\frac{n^k}{k!} \|A^kv\|$ for $v\in V,n\in \N$. Then it is easily seen that $V^\cO =\{ v \in V : q_n(v) <\infty \ \forall n\in \N\}$ and that the $C^\cO$-topology on $V^\cO$ is given by the norms $q_n\vert_{V^\cO}, n\in \N$.
\end{rmk}

In the following, let $V:=C^\infty(A), H$ and $\gamma$ be as in Remark~\ref{equiconthilbertrmk}. By $B(H)^\gamma$ we denote the subspace of operators in $B(H)$ which commute with $\gamma(t)$ for all $t\in \R$. We equip both $B(H)^\gamma$ and $B(H)$ with the operator norm topology induced by the norm $\|\cdot\|$ on~$H$. It turns them both into Banach spaces. Set $D:=iA\vert_V$ and assume $\ker D=0$.

\begin{lem}\label{equiconthollem1}
\begin{itemize}
\item[\rm(a)] Every $T\in B(H)^\gamma$ leaves $V^\cO$ invariant, i.e. $T(V^\cO)\subset V^\cO$.
\item[\rm(b)] The map $\alpha : B(H)^\gamma \times V^\cO \rightarrow V^\cO, (T,v)\mapsto Tv$ is holomorphic. Moreover, for every $k\in \N_0$, the map $\alpha$ is holomorphic when $V^\cO$ is equipped with the norm $x\mapsto \|A^kx\|$. In particular $\alpha$ is holomorphic when $V^\cO$ is equipped with the $C^\infty$-topology.
\end{itemize}
\end{lem}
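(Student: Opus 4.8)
The plan is to deduce both parts from one elementary operator estimate together with the standard fact that a continuous complex bilinear map between complex locally convex spaces is automatically holomorphic, so no differentiation needs to be carried out by hand.

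\textbf{Part (a).} First I would use that $T\in B(H)^\gamma$ commutes with $\gamma(t)=e^{itA}$ for all $t\in\R$, hence, by the spectral theorem, commutes with the spectral measure of $A$. Consequently $T$ leaves each domain $\D(A^k)$ invariant and satisfies $A^kTv=TA^kv$ for $v\in\D(A^k)$ and $k\in\N_0$; in particular $T(V)\subseteq V$. Recalling from Remark~\ref{equiconthilbertrmk} that the $C^\cO$-topology on $V^\cO$ is given by the norms $q_n(v)=\sum_{k\geq0}\frac{n^k}{k!}\|A^kv\|$, one then simply computes, for $v\in V^\cO$ and $n\in\N$,
$$q_n(Tv)=\sum_{k\geq0}\frac{n^k}{k!}\|A^kTv\|=\sum_{k\geq0}\frac{n^k}{k!}\|TA^kv\|\leq\|T\|\,q_n(v)<\infty,$$
so that $Tv\in V^\cO$. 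This computation also records the bounds $q_n(Tv)\leq\|T\|\,q_n(v)$ and, with $q_n$ replaced by the single norm $v\mapsto\|A^kv\|$, the bound $\|A^k(Tv)\|\leq\|T\|\,\|A^kv\|$, which are the only estimates used below.

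\textbf{Part (b).} The map $\alpha$ is $\C$-bilinear, so it suffices to prove that it is continuous for each of the topologies in question. Writing $Tv-T_0v_0=T(v-v_0)+(T-T_0)v_0$ and applying the estimate from (a) to an arbitrary seminorm $p\in\{q_n:n\in\N\}$, respectively $p(\cdot)=\|A^k\cdot\|$, yields
$$p\big(\alpha(T,v)-\alpha(T_0,v_0)\big)\leq\|T\|\,p(v-v_0)+\|T-T_0\|\,p(v_0).$$
Since $\|T\|$ stays bounded in a neighbourhood of $T_0$, this shows $\alpha$ is continuous for the $C^\cO$-topology and for each norm $\|A^k\cdot\|$ (with the same topology on domain and codomain), and a continuous complex bilinear map is holomorphic. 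Finally, the $C^\infty$-topology on $V^\cO$ is the initial topology defined by the norms $v\mapsto\|A^kv\|$, $k\in\N_0$; since $\alpha$ is continuous as a map into each $(V^\cO,\|A^k\cdot\|)$, it is continuous, hence holomorphic, into $(V^\cO,C^\infty)$ as well.

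There is no real obstacle here. The only two points that need a word of care are the spectral-theoretic commutation $A^kT=TA^k$ on $\D(A^k)$ (which legitimizes the interchange in the displayed sum) and the appeal to the fact that a continuous bilinear map is holomorphic in the Michal--Bastiani calculus used throughout the paper; everything else is the one-line estimate above.
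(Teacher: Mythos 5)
Your proof is correct and follows essentially the same route as the paper's: you derive the key estimate $q_n(Tv)\leq\|T\|\,q_n(v)$ (and its analogue for $\|A^k\cdot\|$) from $TA^k=A^kT$ on $C^\infty(A)$, use it for (a), and then invoke bilinearity plus continuity for (b). The only difference is cosmetic — you make the continuity argument explicit via the decomposition $Tv-T_0v_0=T(v-v_0)+(T-T_0)v_0$ and spell out why $T$ preserves $C^\infty(A)$ (via the spectral theorem), both of which the paper leaves implicit.
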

\begin{proof}
Recall the seminorms $q_n, n\in \N$ from Remark~\ref{equiconthilbertrmk}. Let $T\in B(H)^\gamma$. Certainly $T$ leaves $V=C^\infty(A)$ invariant. For $v\in V^\cO$ we have
$$q_n(Tv)=\sum_{k\geq0}\frac{n^k}{k!} \|A^kTv\|=\sum_{k\geq0}\frac{n^k}{k!} \|TA^kv\| \leq \|T\|\cdot q_n(v).$$
This shows (a).\newline
(b) Since $\alpha$ is complex bilinear, we only have to show that $\alpha$ is continuous w.r.t. the stated topologies. This follows from the following estimates:
$$q_n(\alpha(T,v)) =q_n(Tv) \leq \|T\|\cdot q_n(v)$$
for $n\in \N, v\in V^\cO$ and
$$\|A^k\alpha(T,v)\| = \|T A^kv\| \leq \|T\| \cdot \|A^kv\|$$
for $k\in \N_0, v\in V$.
\end{proof}

Let $U\subset \C$ be an open subset and $f:U\times \R\rightarrow \C$ be a map such that $f(s,\cdot)$ is a bounded measurable function on $\R$ for every $s\in U$. We set $f_s(x):=f(s,x)$ and define $f(s,A):=f_s(A)\in B(H)$ by functional calculus of the self-adjoint operator $A$. Note that $f_s(A)\in B(H)^\gamma$.

\begin{lem}\label{equiconthollem2}
Assume that $f(\cdot,x)$ is a holomorphic function on $U$ for every $x\in \R$. Assume further that $\frac{\partial f}{\partial s}(s,x)$ is a bounded function in $x$ locally uniformly in $s$. Then the map $F:U \rightarrow B(H)^\gamma, s\mapsto f(s,A)$ is holomorphic.
\end{lem}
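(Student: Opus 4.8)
The statement to prove is: if $f(\cdot,x)$ is holomorphic on $U$ for every $x\in\R$ and $\frac{\partial f}{\partial s}(s,x)$ is a bounded function in $x$ locally uniformly in $s$, then $F\colon U\to B(H)^\gamma$, $s\mapsto f(s,A)$, is holomorphic.

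The plan is to show that $F$ is locally bounded and complex-differentiable at each point of $U$; together with the fact that $B(H)^\gamma$ is a Banach space this gives holomorphy. First I would fix $s_0\in U$ and choose, by hypothesis, a closed disc $\overline{D(s_0,r)}\subset U$ and a constant $C>0$ with $\sup_{x\in\R}\bigl|\frac{\partial f}{\partial s}(s,x)\bigr|\le C$ for all $s\in\overline{D(s_0,r)}$. Integrating along straight segments from $s_0$ gives $|f(s,x)|\le |f(s_0,x)|+C|s-s_0|$ for $s$ in the disc. Since $f(s_0,\cdot)$ is a bounded measurable function, we get that $f(\cdot,\cdot)$ is uniformly bounded on $D(s_0,r')\times\R$ for $r'<r$; by the norm estimate for the functional calculus, $\|f(s,A)\|=\sup_{x\in\spec(A)}|f(s,x)|$ (or at least $\le$), so $F$ is bounded on $D(s_0,r')$. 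Hence $F$ is locally bounded.

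Next I would establish complex-differentiability. For $s,s'\in D(s_0,r')$ define the difference quotient operator $R(s,s'):=\frac{f(s,A)-f(s',A)}{s-s'}-g(s',A)$ where $g(s',x):=\frac{\partial f}{\partial s}(s',x)$. By functional calculus this equals $h_{s,s'}(A)$ where $h_{s,s'}(x)=\frac{f(s,x)-f(s',x)}{s-s'}-\frac{\partial f}{\partial s}(s',x)$; here I use that $g(s',\cdot)$ is bounded measurable so $g(s',A)\in B(H)^\gamma$ is defined. The key pointwise estimate is that $\sup_{x\in\R}|h_{s,s'}(x)|\to 0$ as $s\to s'$, \emph{uniformly}. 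To get this I would write, for fixed $x$, using holomorphy of $f(\cdot,x)$ and Cauchy's integral formula on a small circle $|\zeta-s'|=\rho$ inside $D(s_0,r')$,
\[
h_{s,s'}(x)=\frac{1}{2\pi i}\oint_{|\zeta-s'|=\rho}f(\zeta,x)\Bigl(\frac{1}{(\zeta-s)(\zeta-s')}-\frac{1}{(\zeta-s')^2}\Bigr)\,d\zeta
=\frac{(s-s')}{2\pi i}\oint_{|\zeta-s'|=\rho}\frac{f(\zeta,x)}{(\zeta-s)(\zeta-s')^2}\,d\zeta,
\]
and then bound the right-hand side by $|s-s'|\cdot\frac{M}{\rho^2}\cdot\frac{1}{\rho-|s-s'|}$ where $M=\sup_{D(s_0,r')\times\R}|f|$ is the bound from the first paragraph. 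This bound is independent of $x$, so $\|R(s,s')\|=\sup_x|h_{s,s'}(x)|\le |s-s'|\cdot\frac{M}{\rho^2(\rho-|s-s'|)}\to 0$. Therefore $F'(s')=g(s',A)$ exists in operator norm, and $F$ is holomorphic on $U$. Finally, each $f(s,A)$ commutes with $\gamma(t)=e^{itA}$ by the functional calculus, so $F$ indeed takes values in $B(H)^\gamma$.

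The main obstacle is organizing the uniformity correctly: one must be careful that the Cauchy-formula bound on $h_{s,s'}(x)$ uses only the \emph{uniform} bound $M$ on $f$ over a disc $\times\R$ (obtained from the hypothesis on $\partial_s f$ plus boundedness of a single slice $f(s_0,\cdot)$), and does not secretly require differentiability of $x\mapsto f(s,x)$ or any regularity in $x$ beyond measurability. A cleaner alternative that sidesteps the explicit residue computation would be to verify holomorphy weakly — i.e.\ show $s\mapsto\langle f(s,A)v,w\rangle=\int_{\spec A}f(s,x)\,d\langle P_A(x)v,w\rangle$ is holomorphic for all $v,w$ by differentiation under the integral sign (justified by the locally uniform bound on $\partial_s f$ and finiteness of $\langle P_A v,w\rangle$), and then invoke the standard fact (e.g.\ \cite[Lem.~3.4]{Ne08}, as used elsewhere in this paper) that a locally bounded, weakly holomorphic $B(H)$-valued map is holomorphic. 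I would likely present this second route since it parallels arguments already made in the paper and avoids the residue bookkeeping.
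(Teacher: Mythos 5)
Your proposal is correct; let me compare the two routes you describe with what the paper actually does. The paper's proof is essentially your ``cleaner alternative'' (Route~2): it shows local boundedness of $F$ from the uniform bound on $\partial_s f$, then notes that the difference quotients $g_n(x)=\frac{f(s_n,x)-f(s,x)}{s_n-s}-\frac{\partial f}{\partial s}(s,x)$ tend to $0$ pointwise (by holomorphy of $f(\cdot,x)$) and are uniformly bounded (by the hypothesis on $\partial_s f$), so the dominated-convergence theorem for functional calculus \cite[Thm.~VII.2(d)]{RS73} yields $g_n(A)\to 0$ strongly, i.e.\ $s\mapsto F(s)v$ is holomorphic for each $v$; finally local boundedness plus weak/strong holomorphy gives norm holomorphy via \cite[Lem.~3.4]{Ne08}. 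Your Route~2 is the same skeleton with ``differentiate under the spectral integral'' replacing the explicit RS73 citation.

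Your primary Route~1, via the Cauchy integral formula applied with a uniform sup bound $M=\sup_{D(s_0,r')\times\R}|f|$, is genuinely different and somewhat stronger in flavor: it directly produces a uniform-in-$x$ estimate
$\sup_x|h_{s,s'}(x)|\le |s-s'|\,\frac{M}{\rho(\rho-|s-s'|)}$
and hence operator-norm differentiability of $F$ at $s'$, bypassing both the dominated-convergence-in-functional-calculus step and the ``locally bounded + weakly holomorphic $\Rightarrow$ holomorphic'' reduction. (Minor slip: you wrote $\rho^2(\rho-|s-s'|)$ in the denominator, but the arc-length factor $2\pi\rho$ cancels one power, leaving $\rho(\rho-|s-s'|)$ — irrelevant, the bound still tends to $0$.) The trade-off is that Route~1 requires the explicit residue bookkeeping, whereas the paper's (and your Route~2's) argument is shorter because it delegates the hard work to a black-box lemma. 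Both are valid; the paper chose the shorter delegation.
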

\begin{proof}
The assumption on $\frac{\partial f}{\partial s}$ implies that the functions $f(s,\cdot)$ are bounded locally uniformly in $s$. Hence $F$ is locally bounded. Let $s\in U$ and let $s_n\rightarrow s$ with $s_n\neq s$ for all $n\in \N$. Consider
$$g_n(x):= \frac{f(s_n,x)-f(s,x)}{s_n-s} - \frac{\partial f}{\partial s}\big(s,x\big)$$
and note $g_n(x)\rightarrow 0$ for all $x\in \R$. The assumption on $\frac{\partial f}{\partial s}$ implies that the functions $g_n$ are uniformly bounded. Hence \cite[Thm. VII.2(d)]{RS73} implies that $g_n(A)\rightarrow 0$ strongly. Thus, the map $U \rightarrow H, s\mapsto f(s,A)v$ is complex differentiable, i.e. $U \rightarrow H, s\mapsto F(s)v$ is holomorphic for every $v\in H$. Since $F$ is locally bounded we conclude that $F$ is holomorphic (cf. \cite[Lem. 3.4]{Ne08}).
\end{proof}

\begin{prop}\label{Otopprop}
The space $V^b:=\{v\in V : \text{ $v$ is $\gamma$-bounded}\}$ is dense in $V^{\cO}$. More generally, if $L\subset V$ is dense w.r.t. the $\|\cdot\|$-topology and invariant under $P_A(J)$ for all bounded intervals $J\subset\R$, then $L\cap V^\cO$ is dense in $V^\cO$. Here $P_A$ denotes the spectral measure of $A$. 
\end{prop}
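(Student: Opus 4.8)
*The space $V^b:=\{v\in V : v \text{ is } \gamma\text{-bounded}\}$ is dense in $V^{\cO}$. More generally, if $L\subset V$ is dense w.r.t. the $\|\cdot\|$-topology and invariant under $P_A(J)$ for all bounded intervals $J\subset\R$, then $L\cap V^\cO$ is dense in $V^\cO$.*

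\begin{proof}
The plan is to approximate a given $v\in V^\cO$ by spectral truncations and then, within each spectral band, by elements of $L$. So fix $v\in V^\cO$, a $\gamma$-invariant seminorm $q_n$ generating the $C^\cO$-topology (Remark~\ref{equiconthilbertrmk}), and $\eps>0$; we must find $w\in L\cap V^\cO$ with $q_n(v-w)<\eps$. Since we will want the estimates to survive multiplication by polynomials in $A$, I would actually work with $q_m$ for some $m>n$ to be chosen, so that a tail of a convergent series is small.

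First step: \emph{spectral truncation.} For $N\in\N$ set $v_N:=P_A([-N,N])v$ (note $A\geq 0$, so in fact $P_A([0,N])$). Each $v_N$ lies in $V^b$ since $A$ is bounded on the range of $P_A([0,N])$, hence a fortiori $v_N\in V^\cO$, and $v_N\in L$ if $v\in L$ and $L$ is $P_A(J)$-invariant. The key estimate is
$$q_m(v-v_N)=\sum_{k\geq 0}\frac{m^k}{k!}\|A^k(I-P_A([0,N]))v\|
=\sum_{k\geq 0}\frac{m^k}{k!}\Big(\int_{N}^\infty x^{2k}\,d\langle P_A(x)v,v\rangle\Big)^{1/2}.$$
The point is that $q_{m'}(v)<\infty$ for $m'$ slightly larger than $m$ gives a convergent majorant: writing $x^{2k}\leq (x^{2})^{k}$ and using $\sum_k \frac{m^k}{k!}\|A^k v_{N}^\perp\|$ where $v_N^\perp=(I-P_A([0,N]))v$, one has $q_m(v_N^\perp)\to 0$ as $N\to\infty$ by the Dominated Convergence Theorem applied termwise together with the summable majorant $\frac{m^k}{k!}\|A^kv\|$ (each term $\|A^k v_N^\perp\|\le \|A^k v\|$ and tends to $0$). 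Thus $q_m(v-v_N)<\eps/2$ for $N$ large, and this holds simultaneously for the finitely many relevant seminorms; for $V^b$ itself we are already done, since $v_N\in V^b$.

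Second step: \emph{approximation within a spectral band by $L$.} Fix $N$ as above and put $E:=P_A([0,N+1])$, a bounded orthogonal projection commuting with $A$ and with all $P_A(J)$. Since $L$ is $\|\cdot\|$-dense in $V$ (hence in $H$) and $E$ is continuous, $E(L)$ is $\|\cdot\|$-dense in $E(H)$; moreover $E(L)\subset L$ by the assumed invariance and $E(L)\subset V^b\subset V^\cO$ since $A$ is bounded on $E(H)$. Choose $u\in E(L)$ with $\|v_N-u\|$ small. Now $v_N-u\in E(H)$, so $A^k(v_N-u)=A^kE(v_N-u)$ and $\|A^k(v_N-u)\|\le (N+1)^k\|v_N-u\|$, whence
$$q_m(v_N-u)=\sum_{k\geq 0}\frac{m^k}{k!}\|A^k(v_N-u)\|\le e^{m(N+1)}\|v_N-u\|,$$
which is $<\eps/2$ once $\|v_N-u\|<\eps\,e^{-m(N+1)}/2$. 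Setting $w:=u\in L\cap V^\cO$ and combining the two steps gives $q_m(v-w)<\eps$, hence $q_n(v-w)<\eps$ as $q_n\le q_m$. Since $\eps$ and the seminorm were arbitrary, $L\cap V^\cO$ is dense in $V^\cO$; taking $L=V$ recovers the first assertion via $v_N\in V^b$.

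The main obstacle is the first step: one must make sure the termwise passage to the limit in the series defining $q_m(v-v_N)$ is legitimate, i.e.\ that $\sum_k \frac{m^k}{k!}\|A^k v\|$ is a genuine summable majorant for $\sum_k \frac{m^k}{k!}\|A^k v_N^\perp\|$ uniformly in $N$ and that each summand $\to 0$; this is exactly where $v\in V^\cO$ (not merely $v\in V$) is used, and it is the reason one works with an auxiliary larger index if one prefers to bound $x^{2k}$ crudely. The second step is elementary once the projection $E$ is inserted, since boundedness of $A$ on $E(H)$ trivializes all the seminorms $q_m$ there.
\end{proof}
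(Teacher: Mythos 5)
Your proof is correct and follows essentially the same strategy as the paper: approximate $v\in V^\cO$ by the spectral truncations $v_N=P_A([0,N])v\in V^b$ using dominated convergence for the series $q_m(v-v_N)$, then approximate within the (spectral-band) range of a bounded-interval projection by norm-close elements of $L$, exploiting that $A$ is bounded there so norm convergence controls all $q_m$. The only difference is organizational: the paper reduces the second step to showing $L\cap V^b$ is dense in $V^b$, while you fold the two approximations together directly; the underlying ideas are identical.
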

\begin{proof}
For $v\in V^{\cO}$ define $v_n:=P_A([-n,n])v\in V^b$. Let $m\in \N$. Then
$$q_m(v-v_n)\leq\sum_{k=0}^N\frac{m^k}{k!}\|(1-P_A([-n,n]))A^kv\| +r(N)$$
where $r(N):=\sum_{k>N}\frac{m^k}{k!}\|A^kv\|$ converges to zero for $N\rightarrow \infty$. For every $k$ we have:
$$(1-P_A([-n,n]))A^kv \rightarrow 0 \quad \text{ for $n\rightarrow \infty$}.$$ 
We conclude that $q_m(v-v_n)\rightarrow 0$ for $n\rightarrow \infty$. Thus $V^b\subset V^\cO$ is dense. For the second statement we only have to show that $L\cap V^b$ is dense in $V^b$. Let $v\in V^b$ and choose a bounded interval $J\subset\R$ with $P_A(J)v=v$. We choose $v_n\in L$ with $v_n\rightarrow v$ in the norm topology. Then $v'_n:=P(J)v_n\in L\cap V^b$ and obviously $v'_n\rightarrow v$ in the $C^\cO$-topology.
\end{proof}

\subsection*{Acknowledgements.}

I am grateful to Karl-Hermann Neeb for helpful and inspiring discussions and for reading preliminary versions of the manuscript. I acknowledge the support of DFG-grant NE 413/7-2 in the framework of SPP “Representation Theory”.

\end{document}